\documentclass[11pt,a4paper]{amsart}
\usepackage{geometry}
\usepackage[english]{babel}
\usepackage{amssymb,enumerate,latexsym,hyperref,color,graphicx,cite}

\newenvironment{proof*}[1]{\medskip\noindent\textbf{#1\ }}{\hspace*{\fill}$\Box$\medskip}
\newtheorem{theorem}{Theorem}[section]
\newtheorem{lemma}[theorem]{Lemma}
\newtheorem{proposition}[theorem]{Proposition}
\newtheorem{corollary}[theorem]{Corollary}
\newtheorem{definition}[theorem]{Definition}
\theoremstyle{remark}
\newtheorem{remark}[theorem]{Remark}
\newtheorem{example}[theorem]{Example}

\newtheorem{question}{\bfseries Question}

\newtheorem{mainthm}{\bf Theorem}

\newtheorem{coralph}[mainthm]{\bf Corollary}

\newcommand{\W}{{\mathbf W}}

\newcommand{\NN}{{\mathbb N}}

\newcommand{\PP}{{\mathbb P}}

\newcommand{\RR}{{\mathbb R}}

\newcommand{\TT}{{\mathbb T}}

\newcommand{\ZZ}{{\mathbb Z}}

\newcommand{\be}[1]{\begin{equation} \label{#1} }
\newcommand{\ee}{\end{equation}}
\newcommand{\beq}{\begin{equation}}

\def \W{{\mathcal W}}

\def \W{\mathcal{W}}

\def \hx0{\hat{x_0}}

\author{Martin Leguil}
\thanks{}
\address{\'Ecole polytechnique, CMLS\\
	Route de Saclay\\ 91128 Palaiseau Cedex\\ France}
\email{martin.leguil@polytechnique.edu}

\author{Disheng Xu}
\thanks{}
\address{School of Science, Great Bay University and Great bay institute for advanced study\\
Songshan Lake International Innovation Entrepreneurship Community A5\\ Dongguan
523000\\ China}
\email{xudisheng@gbu.edu.cn}

\author{Jiesong Zhang}
\thanks{}
\address{Department of mathematics\\ Kungliga Tekniska hogskolan, Lindstedtsvägen 25\\ SE-100 44 Stockholm\\ Sweden}
\email{jiesongz@kth.se}

\begin{document}

\title[The Lipschitz threshold]{Extremal distributions of partially hyperbolic systems: the Lipschitz threshold}

\begin{abstract}
We prove a sharp phase transition in the regularity of the extremal distribution $E^s \oplus E^u$ for $C^\infty$ volume-preserving partially hyperbolic diffeomorphisms on closed $3$-manifolds: if $E^s \oplus E^u$ is Lipschitz, then it is automatically $C^\infty$. This extends the rigidity phenomenon established by Foulon--Hasselblatt~\cite{fh03} for conservative Anosov flows in dimension $3$ to the partially hyperbolic setting.

This gain in regularity has several applications to rigidity problems. In particular, we study the relationship between the $\ell$-integrability condition introduced by Eskin--Potrie--Zhang~\cite{epz} and joint integrability in the conservative setting, yielding rigidity results for $u$-Gibbs measures. We also obtain several $C^\infty$ classification results for partially hyperbolic diffeomorphisms on $3$-manifolds under various assumptions.


\end{abstract}

\maketitle

\tableofcontents

\section{Introduction}
\subsection{A brief introduction to partially hyperbolic systems}
Partially hyperbolic systems arise naturally in smooth dynamics as a robust generalization of uniformly hyperbolic (Anosov) systems, capturing many of their key features such as stability and rich ergodic behavior. 
Recall that a diffeomorphism $f \colon M \to M$ is called~\emph{partially hyperbolic} if the tangent bundle admits a continuous $Df$-invariant splitting
\[
TM = E^s \oplus E^c \oplus E^u,
\]
where $E^s$ is uniformly contracting, $E^u$ is uniformly expanding, and both dominate the center bundle $E^c$. More precisely, there exists an integer $k \geq 1$ such that for every $x \in M$,
\[
\|Df^k(x)|_{E^s}\| < 1 < \|(Df^k(x)|_{E^u})^{-1}\|^{-1},
\]
and
\[
\|Df^k(x)|_{E^s}\| < \|(Df^k(x)|_{E^c})^{-1}\|^{-1}, 
\qquad 
\|Df^k(x)|_{E^c}\| < \|(Df^k(x)|_{E^u})^{-1}\|^{-1}.
\]

Typical examples include time-one maps of Anosov flows and skew products of the form $f(x,y) = (g(x), g_x(y))$, where $g$ is an Anosov diffeomorphism and the contraction and expansion of the fiber maps $g_x$ are dominated by the hyperbolicity of the base dynamics.

It is well-known that the invariant distributions $E^s$ and $E^u$ are uniquely integrable into H\"older continuous foliations $\mathcal{W}^s$ and $\mathcal{W}^u$. The diffeomorphism $f$ is called~\textit{dynamically coherent} if, moreover, there exist $f$-invariant foliations $\mathcal{W}^{cs}$ and $\mathcal{W}^{cu}$ tangent to $E^{cs} := E^s \oplus E^c$ and $E^{cu} := E^c \oplus E^u$ respectively. We denote by $\mathcal{W}^c$ the center foliation (tangent to $E^c$) in that case. 

In this paper, we study the regularity of the extremal distribution $E^s \oplus E^u$. We show that the Lipschitz condition is highly rigid, leading to strong regularity and dynamical consequences.
\subsection{Extremal distributions and the Foulon--Hasselblatt cocycle}
It is well known (see, e.g.,~\cite{hps77,psw97}) that the bundles $E^s$ and $E^u$ (and hence $E^s \oplus E^u$) are always H\"older continuous. On the other hand, higher regularity such as Lipschitz or $C^1$ is expected to be exceptional and to impose strong restrictions on the dynamics. This phenomenon was established by Foulon--Hasselblatt~\cite{fh03} for $3$-dimensional conservative $C^r$ Anosov flows. They showed that once $E^s \oplus E^u$ is Lipschitz, then $E^s\oplus E^u$ is automatically $C^{r-1}$, and the flow is either contact or (constant roof) suspension. In higher dimensions, Ghys~\cite{ghys2006codimension} showed that for a codimension one $C^2$ Anosov flow, $C^1$ regularity of $E^s \oplus E^u$ forces the existence of a global cross section with constant return time.

A key tool in~\cite{fh03} is the so-called ``longitudinal KAM cocycle'' (which we refer to as the~\emph{Foulon--Hasselblatt cocycle} in the following), whose cohomology class vanishes if and only if $E^s \oplus E^u$ has the aforementioned higher regularity. Roughly speaking, this cocycle represents the first nontrivial derivative of the return times with respect to a suitable family of transversals to the flow. This construction relies essentially on the flow structure and does not admit a direct analogue for diffeomorphisms, where no notion of return time is available.

In~\cite{glh25}, Gogolev--Leguil--Rodriguez Hertz developed a related cocycle-based approach in the setting of $3$-dimensional dissipative Anosov flows. They showed that the polynomial normal forms introduced by Tsujii--Zhang~\cite{tz23} can be interpreted as defining a twisted cocycle (with twist given by the Jacobian), which measures the obstruction to uniformly diagonalizing the differential of the flow. In the conservative case, this cocycle reduces to the Foulon--Hasselblatt cocycle, corresponding to the linear term of the polynomial normal form.

Motivated by this perspective, we define an analogue of the Foulon--Hasselblatt cocycle using polynomial normal forms in the spirit of Tsujii--Zhang~\cite{tz23} and Eskin--Potrie--Zhang~\cite{epz} (see Section~\ref{sec_FH_cocycle}). This construction does not rely on return times and applies to general $3$-dimensional partially hyperbolic systems. This allows us to extend the rigidity phenomenon of Foulon--Hasselblatt~\cite{fh03} to the partially hyperbolic setting. Our main result is as follows: 
\begin{mainthm}\label{theorem main}
 Let $f\colon M \to M$ be a $C^\infty$ volume-preserving partially hyperbolic diffeomorphism on a closed 3-manifold $M$. If $E^s \oplus E^u$ is Lipschitz, then 
 \begin{enumerate}
     \item \label{t1 dc}$f$ is dynamically coherent;
     \item  \label{t1 boot} the distribution $E^s\oplus E^u$ is $C^\infty$, and the distributions $E^s$, $E^u$ are $C^{1+\alpha}$ for some $\alpha>0$;
     \item  \label{t1 lambda c}the center cocycle $x \mapsto Df(x)|_{E^c}$ is cohomologous to a constant $\lambda^c$. Besides, $\lambda^c \neq \pm1$ if and only if $f$ is an Anosov diffeomorphism for which $E^s \oplus E^u$ is integrable. 
 \end{enumerate}

Moreover, we have the following dichotomy:
 \begin{itemize}
     \item either $E^s \oplus E^u$ is integrable;
     \item or $f$ is accessible and preserves a $C^\infty$ contact form, and $E^c$ is $C^\infty$. Up to a finite cover and iteration, $f$ is $C^\infty$-conjugate to one of the following:
\begin{itemize}
    \item an isometric extension of a volume-preserving Anosov diffeomorphism on $\TT^2$ with total space $M$ a Heisenberg nilmanifold;
    \item the time-one map of a contact Anosov flow.
    \end{itemize}
      \end{itemize}
\end{mainthm}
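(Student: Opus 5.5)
The plan is to build, via polynomial normal forms along the strong foliations, an analogue $\Phi$ of the Foulon--Hasselblatt cocycle for $f$, and to show that Lipschitz regularity of $E^s\oplus E^u$ is equivalent to $\Phi$ being a coboundary. The same cocycle then serves as the source of regularity, which we bootstrap. Concretely, we fix $C^\infty$ normal form coordinates along $\mathcal W^u$ and $\mathcal W^s$ in the sense of Tsujii--Zhang and Eskin--Potrie--Zhang, in which $f$ acts affinely on strong leaves. In these coordinates we expand the $E^c$-component of the holonomy along $su$-paths to second order. The first nontrivial coefficient gives a twisted cocycle $\Phi$ over $f$; in the conservative case the twist is trivial because the Jacobian is cohomologous to $1$ along $E^s\oplus E^u$ directions.

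The first step is to show that the Lipschitz assumption forces $\Phi$ to be cohomologous to a constant. The argument follows Foulon--Hasselblatt. Lipschitz regularity gives a bounded measurable transfer function. By Livšic-type regularity along stable and unstable leaves (Journé/Wilkinson style, using the normal form coordinates), this transfer function is smooth along $\mathcal W^s$ and $\mathcal W^u$. Volume preservation enters in identifying the constant, which is the contact-type twist measuring non-integrability. Once the cocycle is trivialized, we can write $E^s\oplus E^u$ as the kernel of a $1$-form $\theta$. This $\theta$ is smooth along strong leaves and satisfies $f^*\theta=\lambda^c\theta$ up to a coboundary, which gives part~(\ref{t1 lambda c}).

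Next we split into cases according to the constant. If $d\theta\wedge\theta\equiv 0$, then $E^s\oplus E^u$ is integrable. If $\lambda^c\neq\pm1$, the center direction is uniformly hyperbolic, so $f$ is Anosov with integrable $E^s\oplus E^u$. If instead $d\theta\wedge\theta\neq 0$, then $\lambda^c=\pm1$ and $\theta$ is an invariant contact form, which is a priori only smooth along leaves. We then bootstrap to $C^\infty$ using Journé's lemma iteratively, with $C^{k}$ along both transverse Hölder foliations implying $C^{k-\epsilon}$, as in Foulon--Hasselblatt's use of $C^{r-1}$. This yields part~(\ref{t1 boot}). The regularity of $E^s$ and $E^u$ comes from bunching once the center is isometric up to a coboundary. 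Dynamical coherence follows as well. In the integrable case $E^{c}$ is transverse to a $C^1$ codimension-one foliation and center-bunching holds because $\lambda^c$ is constant, giving coherence via Burns--Wilkinson/Brin. In the contact case accessibility holds and the $C^\infty$ Reeb field of $\theta$ spans $E^c$, so $E^{cs}$ and $E^{cu}$ integrate as $\mathcal W^{s}$ saturated by the Reeb flow.

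For the classification in the contact case, the Reeb flow $R_t$ commutes with $f$ up to finite cover and iteration, and it preserves $E^s$ and $E^u$. If $R_t$ is periodic, $M$ is a circle bundle, $f$ is an isometric extension of an area-preserving Anosov diffeomorphism of $\TT^2$, and the nonzero Euler class forces a Heisenberg nilmanifold. Otherwise $R_t$ is a contact Anosov flow, since it preserves the hyperbolic splitting, and $f$ is its time-$t_0$ map composed with an element of the centralizer; rigidity of the centralizer reduces this to a time-one map. I expect the main obstacle to be the first step: defining $\Phi$ intrinsically without return times, and proving that mere Lipschitz regularity, rather than $C^1$, of $E^s\oplus E^u$ makes $\Phi$ cohomologous to a constant. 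This requires controlling second-order holonomy terms with only Lipschitz bounds.
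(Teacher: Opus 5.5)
Your proposal has a genuine gap. It breaks at its first structural claim, that ``in the conservative case the twist is trivial.'' For a diffeomorphism the Foulon--Hasselblatt cocycle is twisted by $\lambda_x=\lambda^s_x\lambda^u_x/\lambda^c_{f(x)}$. Volume preservation only makes $\lambda^s\lambda^u\lambda^c$ a coboundary, so the Jacobian along $E^s\oplus E^u$ is cohomologous to $1/\lambda^c$, and $\lambda$ is cohomologous to $(\lambda^c)^{-2}$, not to $1$.

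This already fails within the hypotheses of the theorem. Take a linear Anosov automorphism of $\TT^3$ with eigenvalues $0<\lambda_1<1<\lambda_2<\lambda_3$. Then $E^s\oplus E^u$ is smooth, yet $\lambda\equiv\lambda_2^{-2}\neq 1$. The twist disappears in Foulon--Hasselblatt only because there the center is the flow direction.

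Without untwisting, Lipschitz regularity only gives an a.e.\ defined solution $\beta(x)=\lambda^c_x(\mathcal{T}^s_x)'(0)$ of a \emph{twisted} equation. Wilkinson's Livshits theorem does not apply to such an equation. Likewise, your assertion that Lipschitz forces the cocycle to be ``cohomologous to a constant'', from which you read off part~(3), has no mechanism behind it.

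The missing idea is the paper's Section~3. Write $\ker\alpha=E^s\oplus E^u$ with $\alpha$ Lipschitz and $f^*\alpha=\rho\alpha$. The $L^\infty$ density $h$ of $\alpha\wedge d\alpha$ with respect to the invariant volume satisfies $h\circ f=\rho^2h$ a.e. On an ergodic open accessibility class $U$, $h$ is a.e.\ nonzero of constant sign; otherwise the Lipschitz Frobenius theorem would make $E^s\oplus E^u$ integrable on $U$. So $\sqrt{|h|}$ is a measurable transfer function for $\rho$, which equals $\lambda^c$ in a suitable metric, and Wilkinson upgrades it to a continuous one.

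Only once $\lambda^c$ is a coboundary on $U$ can you untwist the cocycle and apply Livshits again to get $C^1$. The bootstrap beyond $C^1$ is then not an iteration of Journé's lemma. After normalizing the cocycle to $0$ and using that the center exponent vanishes, the stable and unstable templates become convergent series of polynomials, hence polynomials. That, combined with $\infty$-bunching and Journé, gives $C^\infty$.

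Second, your case split ``$\theta\wedge d\theta\equiv0$ or $\neq0$'' is not available for a merely Lipschitz form. In the non-accessible case (Hammerlindl, Fenley--Potrie), $h$ vanishes on a family of $su$-tori, possibly of positive measure, and is nonzero on open classes $U\cong\TT^2\times(0,1)$. The transfer function on $U$ may also degenerate at $\partial U$. Ruling this out is the whole of the paper's Section~5, which your ``in the contact case accessibility holds'' skips. The steps there are:
\begin{enumerate}
\item construct an $f$-invariant center field vanishing off $U$, whose flow pushes $U$ onto a boundary torus $S$;
\item use a Livshits inequality together with Ruelle/Ledrappier--Young on $f|_S$ to show $\lambda^c|_S$ is a coboundary;
\item deduce $\infty$-bunching on $\bar U$, and hence a $C^1$ invariant contact form on $U$ with Reeb field in $E^c$;
\item derive a contradiction by Stokes on a torus near $\partial U$.
\end{enumerate}

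The remaining steps also need outside input that you do not supply:
\begin{enumerate}
\item Part~(3) in the integrable case comes from the topological classification: either $f$ is center isometric, or it is Anosov on $\TT^3$, where Gan--Shi applies.
\item Center bunching does not by itself give dynamical coherence. The paper uses a coherence theorem for center-isometric systems and Potrie's result on $\TT^3$.
\item A Reeb flow commuting with $f$ and preserving $E^s,E^u$ need not be Anosov, and $f$ need not preserve its orbits. This is why the paper invokes the Avila--Viana--Wilkinson classification (center isometric, transitive, with smooth hence absolutely continuous $\W^c$) for the final step.
\end{enumerate}
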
\begin{remark}\label{remark 1.1}
Theorem~\ref{theorem main} is~\textit{sharp} in the following sense:
\begin{itemize}
    \item The Lipschitz assumption on $E^s \oplus E^u$ is essentially optimal; even if $E^s \oplus E^u$ is H\"older continuous for every exponent $\alpha \in (0,1)$, the conclusion need not hold (see Example~\ref{example su}).
    \item The foliation $\W^c$ may fail to be absolutely continuous when $E^s \oplus E^u$ is integrable; see Example~\ref{example c}.
  \item In the classification of the accessible case, one cannot in general expect further rigidity in either of the two cases; indeed, such diffeomorphisms need not even be $C^0$-conjugate to algebraic models (see Section~\ref{sec_example} and~\cite{fhflow}). To obtain further rigidity, one must assume in addition that $E^s$ and $E^u$ are $C^{1+\mathrm{Lip}}$, in which case $f$ is $C^\infty$-conjugate to an algebraic model (see Corollary~\ref{theorem c2}).
\end{itemize}

Moreover, Theorem~\ref{theorem main} admits a finite-regularity counterpart: if $f$ is $C^r$ ($r \geq 5$) and $E^s \oplus E^u$ is Lipschitz, then $E^s \oplus E^u$ is $C^{r-1}$; see Remark~\ref{finite_reg_version}.
\end{remark}


\subsection{Applications to the rigidity of $u$-Gibbs states}
Recently, the lack of regularity of $E^s$ and $E^u$ has been linked to the study of $u$-Gibbs states for $3$-dimensional partially hyperbolic diffeomorphisms with expanding center. Roughly speaking, a $u$-Gibbs state is an invariant Borel probability measure whose conditional measures along strong unstable manifolds are absolutely continuous. We focus on the case where the center Lyapunov exponent is positive.

Building on ideas from homogeneous dynamics, Teichm\"uller dynamics, and random dynamics~\cite{bq11,bh17,em18,el18}, Katz~\cite{kat23} proved a rigidity result for such measures in the $C^\infty$ setting\footnote{The smoothness assumption is essential; for an alternative approach under weaker regularity, see~\cite{alos24}.}: if a $u$-Gibbs state satisfies the quantitative non-integrability (QNI) condition, then it is in fact an SRB measure, i.e., its conditional measures along the full unstable Pesin manifolds are absolutely continuous.

A key difficulty in applying this criterion is to express the QNI condition in geometric terms. This was achieved by Eskin--Potrie--Zhang~\cite{epz}, who showed that the failure of $\ell$-integrability (see Definition~\ref{def JI l}) implies the QNI condition. In particular, given a partially hyperbolic $u$-Gibbs state $\mu$ such that $E^s \oplus E^u$ is not $C^1$ on its support, then the support of $\mu$ is not $\ell$-integrable for any $\ell \ge 2$ (see Lemma~\ref{lemma lip JI l}) and hence $\mu$ satisfies QNI (see~\cite[Theorem 8.1]{epz}). 

This naturally raises the question of when $\ell$-integrability is equivalent to the integrability of $E^s \oplus E^u$. In a forthcoming work by Avila--Crovisier--Eskin--Potrie--Wilkinson--Zhang~\cite{acepwz}, it is shown that for partially hyperbolic Anosov diffeomorphisms on $\TT^3$, $\ell$-integrability for sufficiently large $\ell$ implies the integrability of $E^s \oplus E^u$, which in particular leads to the uniqueness of the $u$-Gibbs state when $E^s \oplus E^u$ is not integrable. The uniqueness of $u$-Gibbs states in the case where $E^s \oplus E^u$ is integrable is also investigated in a joint work~\cite{aclos} by Alvarez--Crovisier--Leguil--Obata--Santiago. 


Since $\ell$-integrability for $\ell \ge2$ implies that $E^s \oplus E^u$ is $C^1$.
In the conservative setting, Theorem~\ref{theorem main} yields: 
\begin{coralph}\label{theorem QNI}
Let $f\colon M \to M$ be a $C^\infty$ volume-preserving partially hyperbolic diffeomorphism on a closed $3$-manifold. Then $f$ is $\ell$-integrable for some $\ell > 2$ if and only if $E^s \oplus E^u$ is integrable.

Moreover, assume that $E^s \oplus E^u$ is not globally jointly integrable. Let $A\subset M$ be an accessibility class, and let $\Lambda:=\overline{A}$. Then, up to passing to a finite iterate and finite covers, for some integer
 $n \geq 1$, exactly one of the following holds: 
\begin{enumerate}
    \item $E^s \oplus E^u|_\Lambda$ is integrable, and $A$ is an $f^n$-invariant $2$-torus tangent to $E^s\oplus E^u$;
    \item\label{point_deux_corb} $A$ is an $f^n$-invariant open accessibility class, and $\Lambda$ is not $\ell$-integrable for any $\ell > 2$.
\end{enumerate}
\end{coralph}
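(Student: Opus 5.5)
The plan is to derive the first assertion from Theorem~\ref{theorem main}, and then to analyse a single accessibility class using the local structure of $su$-paths. We do not see Definition~\ref{def JI l}, so we take $\ell$-integrability of a set $\Lambda$ (for $\ell\ge2$) to mean: the holonomy defects of small $su$-quadrilaterals based at points of $\Lambda$ vanish to order $o(d^{\ell})$ relative to their side lengths. By Lemma~\ref{lemma lip JI l}, such a condition forces $E^s\oplus E^u$ to be $C^1$ on $\Lambda$, and in particular Lipschitz there.

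\emph{First assertion.} If $E^s\oplus E^u$ is integrable, the quadrilaterals close exactly, so $f$ is $\ell$-integrable for every $\ell$. Conversely, suppose $f$ is $\ell$-integrable on all of $M$ for some $\ell>2$. Then $E^s\oplus E^u$ is $C^1$, hence Lipschitz, and Theorem~\ref{theorem main} applies. Assume $E^s\oplus E^u$ is not integrable. The dichotomy then gives a $C^\infty$ contact form $\theta$ with $\ker\theta=E^s\oplus E^u$. For a contact form, the area enclosed by an $su$-quadrilateral with sides $s,t$ measures the $E^c$-defect via $d\theta$, and this defect is $\asymp d\theta(X^s,X^u)\,st\neq0$. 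That is only of order $d^2$, which contradicts $o(d^\ell)$ for any $\ell>2$. Hence $E^s\oplus E^u$ is integrable.

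\emph{Local dichotomy.} Assume $E^s\oplus E^u$ is not globally integrable, and let $A$ be an accessibility class with $\Lambda=\overline A$. In dimension three, Rodriguez Hertz--Rodriguez Hertz--Ures give that each accessibility class is either open or a $C^1$ immersed surface tangent to $E^s\oplus E^u$. Moreover, the non-open classes form a compact lamination $\Gamma(f)$ whose compact leaves are tori, and these are periodic in the volume-preserving case. I would use this dichotomy as follows.

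\emph{Case 1: $A$ is not open.} Then $A$ is a leaf of the invariant lamination $\Gamma(f)$. Since $f$ is volume-preserving, the lamination cannot contain minimal non-compact components carrying a nonwandering transverse dynamics; this is the standard argument that such laminations give rise to periodic tori. After passing to a finite cover and iterate, $A$ is therefore an $f^n$-invariant $2$-torus tangent to $E^s\oplus E^u$. Along $\Lambda=A$, the distribution $E^s\oplus E^u$ is tangent to a surface, hence integrable on $\Lambda$.

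\emph{Case 2: $A$ is open.} Open classes are permuted by $f$, and volume preservation together with finiteness of volume forces $f^n(A)=A$ for some $n$. It remains to show that $\Lambda$ is not $\ell$-integrable for $\ell>2$. Suppose it were. Then $E^s\oplus E^u$ would be $C^1$ on the open set $A$, and I would run the argument of Theorem~\ref{theorem main} locally on the invariant set $A$. The Foulon--Hasselblatt cocycle of Section~\ref{sec_FH_cocycle} would be cohomologous to a function whose vanishing or nonvanishing is determined on $\Lambda$. Since $A$ is open, the $su$-paths inside $A$ are not closed, so the local contact-type nondegeneracy holds at some point. Propagating it by the dynamics over the invariant set $\Lambda$ would give a quadratic defect, contradicting $\ell$-integrability as in the first part.

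The exclusivity of the two cases is clear, since $A$ cannot be both open and a surface. I expect the main obstacle to be the localization in Case 2: Theorem~\ref{theorem main} is stated globally, so one must check that its bootstrapping of regularity through the cocycle only uses recurrence on the invariant set $\Lambda$. To secure this, I would restrict the volume to $A$, which gives an invariant measure of full support on $\Lambda$ and should be enough for the Livšic-type arguments.
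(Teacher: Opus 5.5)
Your argument for the first assertion follows the paper's route: Lemma~\ref{lemma lip JI l}, then Theorem~\ref{theorem main}, then the Stokes estimate of Lemma~\ref{lemm_qni_contact}. The gaps are in the second assertion.

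\textbf{Case~2 is not yet a proof.} Three steps are missing.
The sentence saying the Foulon--Hasselblatt cocycle is ``cohomologous to a function whose vanishing is determined on $\Lambda$'' is not a checkable step. Once $E^s\oplus E^u$ is $C^1$ on $A$, the argument of Lemma~\ref{lemme_cond_nec} makes that cocycle a twisted coboundary, so it cannot tell a contact structure from an integrable one.
``Propagating by the dynamics'' is not explained, and the localization of Theorem~\ref{theorem main} that you flag is left open.
Restricting $m$ to $A$ would cover the Livshits steps on $A$, but not the $\infty$-bunching on all of $\overline{A}$ that Proposition~\ref{lemma bootsrap} needs. In the paper's route (the arguments of Section~\ref{sec_dicho} leading to Corollary~\ref{coro c1 contact form}, then Lemma~\ref{lemm_qni_contact}), the boundary tori require the separate Livshits-inequality/SRB argument of Lemma~\ref{lemma boundary dyna}.

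In fact your own remark that nondegeneracy ``holds at some point'' closes the gap without any dynamics, once made precise:
\begin{enumerate}
\item By Lemma~\ref{lemma lip JI l}, a local defining form $\alpha$ of $E^s\oplus E^u$ is $C^1$ on the open set $A$.
\item If $\alpha\wedge d\alpha\equiv0$ on $A$, the $C^1$ Frobenius theorem (cf.\ Proposition~\ref{prop frobenius}) gives a foliation of $A$ tangent to $E^s\oplus E^u$. Each leg of an $su$-path from $x\in A$ is a $C^1$ curve in $A$ tangent to this foliation, so it stays in the leaf of $x$. The open set $A$ would then lie in a single leaf, which has empty interior, a contradiction.
\item Hence $\alpha\wedge d\alpha(x_0)\neq0$ for some $x_0\in A$.
\item The Stokes estimate of your first part is local. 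It uses only that $\alpha$ is $C^1$ near $x_0$ with $d\alpha|_{\ker\alpha}\neq0$, plus the order-$\ell$ tangency at the vertices $y\in\mathcal{W}^s_{\mathrm{loc}}(x_0)$ and $z\in\mathcal{W}^u_{\mathrm{loc}}(x_0)$. These vertices lie in $\Lambda$ because $A$ is $su$-saturated.
\item It yields $C'\epsilon^2\le C\epsilon^\ell$, which is impossible for $\ell>2$.
\end{enumerate}
Neither invariance of the form nor recurrence is used, and one point suffices because $\ell$-integrability is imposed at every point of $\Lambda$. Granting Lemma~\ref{lemma lip JI l}, the same reasoning also gives the first assertion without Theorem~\ref{theorem main}.

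\textbf{Case~1 and the definition.} Your justification that, since $f$ is volume-preserving, $\Gamma(f)$ has no non-compact leaves is not a standard argument, and volume preservation cannot be the reason. For a linear Anosov automorphism of $\mathbb{T}^3$ with real eigenvalues of distinct moduli, viewed as partially hyperbolic, every accessibility class is a dense plane.

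Even under the non-integrability hypothesis, showing that every non-open class is an $f^n$-invariant torus is a genuine theorem: Proposition~\ref{prop ham17hp25} (Hammerlindl, Fenley--Potrie). The paper invokes it directly. Non-integrability excludes case~(2). In case~\eqref{cas_accessib}, $A=M$ and the first assertion applies. In case~\eqref{cas_trois}, each class is either an $f^n$-invariant torus tangent to $E^s\oplus E^u$ or an $f^n$-invariant open class.

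Finally, you should work with the actual Definition~\ref{def JI l}. Lemma~\ref{lemma lip JI l} is proved for it, not for your holonomy-defect surrogate. For the ``if'' direction, the definition requires a continuous family of $C^\ell$ surfaces. This is where the uniform $C^\infty$ regularity of the leaves of $\mathcal{W}^{su}$ (Lemma~\ref{lemma ks06}) is needed; exact closing of quadrilaterals is not enough.
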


As a consequence, combining our results with those of~\cite{epz}, we obtain the following rigidity theorem for $u$-Gibbs states.
\begin{coralph}\label{theorem measure rigid}
Let $f\colon M \to M$ be a $C^\infty$ partially hyperbolic diffeomorphism on a closed $3$-manifold that preserves a probability volume measure $m$. Then,
\begin{itemize}
    \item \begin{enumerate}
\item either $E^s\oplus E^u$ is integrable;
\item\label{point_deux_dichot} or there exists $n \geq 1$ such that any ergodic $u$-Gibbs state $\mu$ with a positive center Lyapunov exponent and $m(\mathrm{supp}(\mu))>0$ coincides with the restriction $m_U:=\frac{m|_U}{m(U)}$ of $m$ to $U:=\cup_{k=0}^{n-1} f^k(U_0)$, where $U_0\subset M$ is an $f^n$-invariant open accessibility class (and $\mu$ satisfies the QNI condition);  
\end{enumerate}
\item If,  in addition,  the strong unstable foliation $\mathcal{W}^u$ tangent to \(E^u\) is minimal, then in case~\eqref{point_deux_dichot} above, the condition \(m(\operatorname{supp}(\mu))>0\) can be omitted, and hence \(\mu=m\). 

\noindent In particular, for a \(C^1\)-open and dense subset of $3$-dimensional volume-preserving partially hyperbolic diffeomorphisms, the invariant volume $m$ is the unique ergodic $u$-Gibbs state (or $s$-Gibbs state) with a nonzero center Lyapunov exponent. 
\end{itemize}
\end{coralph}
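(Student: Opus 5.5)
The plan is to reduce Corollary~\ref{theorem measure rigid} to Theorem~\ref{theorem main} and Corollary~\ref{theorem QNI}, and then feed the resulting non-$\ell$-integrability into the Eskin--Potrie--Zhang/Katz machinery. Throughout, assume $E^s\oplus E^u$ is not integrable; otherwise the first alternative holds and there is nothing to prove.

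\textbf{Step 1: an open accessibility class and non-$\ell$-integrability.} Apply the dichotomy of Corollary~\ref{theorem QNI}. Every accessibility class is either an $f^n$-invariant $2$-torus tangent to $E^s\oplus E^u$, or an $f^n$-invariant open accessibility class whose closure $\Lambda$ is not $\ell$-integrable for any $\ell>2$. The tori are $su$-tori, and by compactness and the standard Rodriguez Hertz--Rodriguez Hertz--Ures structure theory there are only finitely many of them. Their union $\Gamma$ is a closed set of zero volume. The complement $M\setminus\Gamma$ is then a finite union of open accessibility classes, permuted by $f$. After fixing $n$ so that each class is $f^n$-invariant, we get components $U_0$. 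Since $m$ is smooth and $f^n|_{U_0}$ is accessible and conservative, the ergodicity results of Burns--Wilkinson (center bunching is automatic in dimension $3$) show that $m_U$ is ergodic for $f$, where $U=\bigcup_{k<n} f^k(U_0)$.

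\textbf{Step 2: rigidity of the measure.} Let $\mu$ be an ergodic $u$-Gibbs state with positive center exponent and $m(\operatorname{supp}\mu)>0$. By Step 1, $\operatorname{supp}\mu$ cannot lie in $\Gamma$, so it meets some $U$. The support is $\mathcal W^u$-saturated (since $\mu$ is $u$-Gibbs) and invariant.

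We then use that $\Lambda=\overline{U_0}$ is not $\ell$-integrable for any $\ell>2$, together with \cite[Theorem 8.1]{epz}, to conclude that $\mu$ satisfies QNI. Katz's theorem~\cite{kat23} then makes $\mu$ an SRB measure for $f$, whose conditionals along the $2$-dimensional Pesin unstable manifolds (tangent to $E^c\oplus E^u$) are absolutely continuous.

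Next we show $\mu=m_U$. An ergodic SRB measure has an open basin mod zero on $\mathcal W^{cu}$ plaques. We would pass to the reverse direction using absolute continuity of the stable foliation and accessibility inside $U$: $\mu$ is absolutely continuous on unstable manifolds, and $m$ is ergodic on $U$ with a basin of full $m$-measure in $U$. A Hopf argument then shows that $\mu$-generic and $m_U$-generic points share the same forward Birkhoff averages on a positive-measure set. Hence $\mu=m_U$.

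\textbf{Step 3: the minimal case.} If $\mathcal W^u$ is minimal, then every $\mathcal W^u$-saturated closed set is all of $M$, so $\operatorname{supp}\mu=M$ and the hypothesis $m(\operatorname{supp}\mu)>0$ holds automatically. Minimality also excludes invariant $su$-tori, because such a torus would be a proper closed $\mathcal W^u$-saturated set. Hence $U=M$ and $\mu=m$. For the $C^1$-open dense statement, we invoke the known genericity results:
\begin{itemize}
\item stable accessibility is open and dense in dimension $3$ (Rodriguez Hertz--Rodriguez Hertz--Ures);
\item minimality of at least one of the strong foliations holds $C^1$-open-densely among conservative systems (Bonatti--D\'iaz--Ures / Rodriguez Hertz--Rodriguez Hertz--Ures).
\end{itemize}
Note that accessibility forces $E^s\oplus E^u$ to be non-integrable. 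We then use $f^{-1}$ in the stable-minimal case, which yields the statement for $s$-Gibbs states.

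\textbf{Main obstacle.} I expect the hardest step to be the passage in Step 2 from QNI on the support to the equality $\mu=m_U$. The delicate point is making sure the EPZ hypothesis (non-$\ell$-integrability \emph{on $\operatorname{supp}\mu$}) follows from the statement about $\Lambda$. I would handle this by showing $\operatorname{supp}\mu\supset\overline{U_0}$: $\operatorname{supp}\mu$ is $u$-saturated and invariant with positive volume, and inside an accessible region of a volume-preserving system, closed invariant $u$-saturated sets of positive volume should be forced to be everything, via density points and $s$-holonomy, using Lemma~\ref{lemma lip JI l}.
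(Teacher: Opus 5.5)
Your Step~2 follows the paper's argument: non-$\ell$-integrability of $\overline U$ from Corollary~\ref{theorem QNI}, QNI by \cite[Theorem 8.1]{epz}, SRB and hence physical via absolute continuity of $\mathcal W^s$ as in \cite[Theorem 1.2]{epz}, then Birkhoff against the ergodic $m_U$. The obstacle you single out is harmless. Once $m_U$ is ergodic, $\operatorname{supp}\mu\cap U$ is a relatively closed invariant subset of $U$ of positive volume, so it has full $m_U$-measure and is all of $U$; Lemma~\ref{lemma lip JI l} plays no role.

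\textbf{First gap: Step~1.} In case~\eqref{cas_trois} of Proposition~\ref{prop ham17hp25}, the set $\mathbb S^1\setminus V$ parametrizing the $su$-tori is a priori an arbitrary closed set (e.g.\ a Cantor set, with $V$ having infinitely many components). Nothing gives finitely many tori. So you cannot conclude that their union $\Gamma$ has zero volume, and the implication $m(\operatorname{supp}\mu)>0\Rightarrow\operatorname{supp}\mu\not\subset\Gamma$ is unsupported. What you need is a statement about the ergodic measure $\mu$, not about $\Gamma$; this is Lemma~\ref{lemma ham measure}. It says $\mu$ is carried either by a single invariant torus, which is impossible since $m(\operatorname{supp}\mu)>0$, or by the closure of a single open class $U$, whose boundary is finitely many tori of zero volume. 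Ergodicity of $f^n|_{U_0}$ is also directly part of Proposition~\ref{prop ham17hp25}, so you need not derive it from Burns--Wilkinson.

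\textbf{Second gap: the $C^1$-open and dense statement.} The first part of the corollary only controls $u$-Gibbs states with \emph{positive} center exponent. Your Step~3 never treats an ergodic $u$-Gibbs state $\mu$ with $\chi^c(\mu)<0$, yet the claim covers every nonzero exponent. The missing argument runs as follows. For such a $\mu$ the Pesin unstable manifolds are the strong unstable leaves, so $\mu$ is a hyperbolic SRB measure. It is then physical by absolute continuity of its $2$-dimensional Pesin stable lamination, so $\mu=m$ by ergodicity of $m$.

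You also need $m$ to be ergodic \emph{with nonzero center exponent} on the open dense set; the paper invokes \cite{hhu1d,bb03}. Stable accessibility gives ergodicity but not $\chi^c(m)\neq 0$, and without it $m$ is not itself one of the states in question.

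Finally, knowing only that one of $\mathcal W^s,\mathcal W^u$ is minimal gives the $u$-Gibbs statement or the $s$-Gibbs statement, depending on which foliation is minimal. The paper instead uses that both strong foliations are minimal on a $C^1$-open dense set \cite{avila2025minimalitystrongfoliationsanosov,CP}, so the argument applies to $f$ and $f^{-1}$ simultaneously.
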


\begin{remark}For a general volume-preserving partially hyperbolic diffeomorphism on a closed $3$-manifold, the condition \( m(\mathrm{supp}(\mu)) > 0 \) in case~\eqref{point_deux_dichot} of Corollary~\ref{theorem measure rigid} cannot be omitted. In fact, even when $E^s\oplus E^u$ is not jointly integrable, certain accessibility classes may be contained in  $2$-tori tangent to $E^s\oplus E^u$ (see, e.g., Proposition~\ref{prop ham17hp25} for further details). This prevents both accessibility and the minimality of the strong unstable foliation. In particular, for such systems, without assuming \( m(\mathrm{supp}(\mu)) > 0 \), the \( u \)-Gibbs measure \( \mu \) might be supported on one of these \( su \)-tori, and \( \mu \) could fail to be a physical measure for \( f \).  

On the other hand, for a $C^1$-open and dense subset of the diffeomorphisms considered in Corollary~\ref{theorem measure rigid}, the strong unstable foliation is minimal. This follows from a combination of two recent works of Avila--Crovisier--Wilkinson~\cite{avila2025minimalitystrongfoliationsanosov} and Crovisier--Potrie~\cite{CP}; see Remark~\ref{remark minimal} for more details.

\end{remark}

\subsection{Applications to the classification of 3D partially hyperbolic diffeomorphisms}
In 2001, Pujals proposed a conjecture to address the classification problem for partially hyperbolic diffeomorphisms on 3-manifolds up to leaf conjugacy which was formulated in~\cite{bw05} (see also~\cite{crru18} for a modified version of this conjecture). Both conjectures turned out to be false (see~\cite{bgp16,bpp16,bghp20}), but it was later understood that the counterexamples still had some connection with Anosov flows (see~\cite[Theorem~A]{bfp23}). This allowed for a reformulation of the conjecture which involved introducing the notion of collapsed Anosov flows, see for instance~\cite[Question~1]{bfp23}, and was recently proved by Fenley--Potrie~\cite{fp25}.

In general, one cannot expect a classification of (partially) hyperbolic diffeomorphisms up to $C^1$ (or $C^\infty$) conjugacy, as the stable and unstable distributions may be merely Hölder continuous. This lack of regularity is a fundamental obstruction to smooth rigidity. On the other hand, when the invariant distributions of a hyperbolic diffeomorphism exhibit higher ($C^k$, $k\geq 2$) regularity, it often forces strong rigidity, sometimes implying that the system is $C^\infty$-conjugate to an algebraic model. This phenomenon was first observed by A.~Avez~\cite{ave68} for Anosov diffeomorphisms on $\TT^2$ with $C^\infty$ distributions, and was further developed for general Anosov diffeomorphisms and flows (see~\cite{hk90,ghy87,ghy93,bl93,bfl92}).

It is therefore natural to ask whether a similar rigidity phenomenon persists for partially hyperbolic systems. However, in higher dimensions this question becomes substantially more subtle, as the topological types of partially hyperbolic diffeomorphisms can be very complicated~\cite{goh15}, and it is not even clear what the appropriate algebraic models should be. In dimension $3$, motivated by the Pujals conjecture and the search for smooth rigidity, Carrasco--Pujals--Hertz~\cite{cph21} posed the following question:

\begin{question}[{see~\cite[Question 2]{cph21}}]\label{que cph}
Can one obtain a $C^\infty$ classification for partially hyperbolic diffeomorphisms on $3$-manifolds when $E^s$, $E^c$, and $E^u$ are $C^\infty$?
\end{question}

Under additional assumptions, several positive results have been obtained. When the invariant distributions are $C^\infty$ and the pointwise Lyapunov exponents are constant, Carrasco--Pujals--Hertz~\cite{cph21} obtain a $C^\infty$ classification. It was later observed by Allout--Moghaddamfar~\cite{am23} that this classification remains valid in the $C^1$ setting. Mion-Mouton~\cite{mmm22} also obtains a $C^\infty$ classification when the invariant distributions are $C^\infty$ and $E^s \oplus E^u$ carries a contact structure.

By combining Theorem~\ref{theorem main} with results of Avila--Viana--Wilkinson~\cite{avw14,avw22}, we obtain the following result, showing that in the volume-preserving setting, even a surprisingly weak regularity assumption is enough to force full smooth rigidity.

\begin{coralph}\label{theorem smooth}
Let $f \colon M \to M$ be a $C^\infty$ volume-preserving partially hyperbolic diffeomorphism on a closed $3$-manifold. If $E^s \oplus E^u$ is Lipschitz and $\W^c$ is absolutely continuous, then both $E^s \oplus E^u$ and $E^c$ are $C^\infty$. Up to a finite cover and an iterate, $f$ is $C^\infty$-conjugate to one of the following:
\begin{enumerate}
    \item an Anosov automorphism of $\mathbb{T}^3$;
    \item an isometric extension of a volume-preserving Anosov diffeomorphism on $\TT^2$;
    \item the time-one map of a contact or suspension Anosov flow.
\end{enumerate}
\end{coralph}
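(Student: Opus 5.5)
The plan is to start from Theorem~\ref{theorem main} and split according to its dichotomy. In both branches we already know that $f$ is dynamically coherent, that $E^s\oplus E^u$ is $C^\infty$, and that the center cocycle $Df|_{E^c}$ is cohomologous to a constant $\lambda^c$.

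\textbf{Accessible branch.} Suppose $E^s\oplus E^u$ is not integrable. Then Theorem~\ref{theorem main} already gives that $E^c$ is $C^\infty$. It also gives that, up to a finite cover and an iterate, $f$ is $C^\infty$-conjugate either to an isometric extension over a volume-preserving Anosov diffeomorphism of $\TT^2$ (with Heisenberg nilmanifold total space) or to the time-one map of a contact Anosov flow. These are cases (2) and (3) of the corollary, and absolute continuity of $\W^c$ is not even needed here.

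\textbf{Integrable branch.} Suppose $E^s\oplus E^u$ is integrable, and let $\mathcal{W}^{su}$ be the resulting $C^\infty$ codimension-one foliation. If $\lambda^c\neq\pm1$, Theorem~\ref{theorem main} says that $f$ is Anosov. I would then use the known rigidity for conservative codimension-one Anosov diffeomorphisms of $3$-manifolds: $M=\TT^3$ and $f$ is conjugate to a linear automorphism $A$. Since $f$ is volume preserving with constant center exponent and a smooth $su$-foliation, I expect to upgrade this to a $C^\infty$ conjugacy by matching periodic data, i.e.\ by showing the periodic Lyapunov spectra are those of $A$. This would use the cohomology of $Df|_{E^c}$ to a constant together with Livšic arguments for the $E^s,E^u$ Jacobians along the smooth leaves; this gives case (1).

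If instead $\lambda^c=\pm1$, then $f$ is not accessible, since $\mathcal{W}^{su}$ is a genuine invariant foliation. Here I bring in Avila--Viana--Wilkinson~\cite{avw14,avw22}. For a $C^\infty$ volume-preserving $3$-dimensional partially hyperbolic diffeomorphism whose center foliation is absolutely continuous, their results give the following. Up to a finite cover and iterate, either $f$ is $C^\infty$-conjugate to the time-one map of an Anosov flow, or $f$ is a skew product over an Anosov map with isometric center fibers (rotation extension). Since $E^s\oplus E^u$ is integrable, in the flow case the flow has an integrable $E^s\oplus E^u$ and is therefore a constant roof suspension (Plante). In the skew-product case, the fibers are isometric circles over a conservative Anosov map of $\TT^2$, giving case (2). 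In both cases $E^c$ is $C^\infty$, since it is tangent to the smooth flow or to the smooth fiber direction.

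\textbf{Expected obstacle.} The main difficulty is matching the hypotheses of the Avila--Viana--Wilkinson dichotomy. Their theorem is stated for perturbations of time-one maps, or for the general case in~\cite{avw22}, and one must rule out the possibility that $\W^c$ is absolutely continuous with atomic disintegration. Here the constant center cocycle, with $\lambda^c=\pm1$ after removing a coboundary, forces the center exponent of $m$ to be zero. So we are in the Lebesgue-disintegration alternative of~\cite{avw22}, which yields the smooth (flow or isometric-extension) structure. I would check this step most carefully. A secondary issue is the Anosov case, where the $C^\infty$ conjugacy to an automorphism needs the Gogolev-type periodic-data rigidity on $\TT^3$. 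I would try to obtain it by showing that the smooth $\mathcal{W}^{su}$ together with the smooth $\mathcal{W}^c$ yields $C^\infty$ $E^s$ and $E^u$ inside the leaves.
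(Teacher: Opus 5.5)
There is a genuine gap in your Anosov branch ($\lambda^c\neq\pm1$). Your accessible branch is exactly the dichotomy of Theorem~\ref{theorem main}, and your center-isometric integrable branch follows the paper's route (Proposition~\ref{prop avw22}, then Plante). In the Anosov branch, however, you never use the absolute continuity of $\W^c$, and the inputs you list cannot give matching periodic data.

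Volume preservation and the cohomology of $Df|_{E^c}$ to a constant only control $\lambda^c$ and the product $\lambda^s\lambda^u$ along periodic orbits. The smoothness of the $su$-leaves gives no way to separate $\lambda^s$ from $\lambda^u$. Smoothness of $\W^c$, which you invoke at the end, is not available in this branch.

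In fact the step is false without the extra hypothesis. Compose a hyperbolic automorphism $L$ of $\TT^3$ with the time-$t$ map of a small divergence-free vector field that is tangent to the linear $su$-planes and vanishes at $0$. The result is a volume-preserving Anosov diffeomorphism with the same $C^\infty$ integrable $E^s\oplus E^u$. Its center cocycle is identically the center eigenvalue of $L$ in a suitable metric. But for suitable choices of the vector field, its unstable eigenvalue at the fixed point $0$ differs from that of $L$, so it is not even $C^1$-conjugate to $L$ (compare Gogolev's examples in Example~\ref{example c}). Any argument for case (1) must therefore use the absolute continuity of $\W^c$ essentially.

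The missing ingredient is Gogolev's criterion. Replace $f$ by $f^{-1}$ if necessary so that $E^c$ is uniformly expanding, i.e.\ the weak unstable direction. Then absolute continuity of $\W^c$ forces the strong unstable cocycle $x\mapsto\lambda^u_x$ to be cohomologous to a constant \cite[Proposition~2]{gog12}. Together with $\lambda^c$ cohomologous to a constant (Theorem~\ref{theorem main}) and volume preservation, $\lambda^s$ is also cohomologous to a constant. So $f$ has constant periodic data, and \cite[Theorem~1.1]{dg24} gives a $C^\infty$ conjugacy to the linear part. This is how the paper obtains case (1).

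Two minor remarks on your other branch. First, the $C^\infty$ classification you attribute to Avila--Viana--Wilkinson is really Proposition~\ref{prop avw22}, which combines \cite[Theorem~C]{bz20} with \cite{avw22}. It is stated for center isometric, topologically transitive systems, not as an unconditional dichotomy for systems with absolutely continuous center. Second, your concern about ruling out atomic disintegration is vacuous, since absolute continuity of $\W^c$ is assumed.
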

\begin{remark}
Note that, by Theorem~\ref{theorem main}, the Lipschitz regularity of $E^s \oplus E^u$ already implies dynamical coherence, so the additional assumption is only the absolute continuity of $\W^c$. Moreover, the regularity assumptions above are essentially optimal; see Remark~\ref{remark 1.1}, Examples~\ref{example su} and~\ref{example c}.
\end{remark}
If one further assumes that $E^s$ and $E^u$ are $C^{1+\mathrm{Lip}}$, i.e., $C^1$ with Lipschitz continuous derivatives, then a stronger classification can be obtained.\begin{coralph}\label{theorem c2}
Let $f \colon M \to M$ be a $C^\infty$ volume-preserving partially hyperbolic diffeomorphism on a closed $3$-manifold. Assume that $E^s$ and $E^u$ are $C^{1+\mathrm{Lip}}$ and $\W^c$ is absolutely continuous; then $E^s$, $E^u$, and $E^c$ are all $C^\infty$. Up to a finite cover and an iterate $f$ is $C^\infty$-conjugate to one of the following:
\begin{enumerate}
    \item an affine automorphism of a nilmanifold;
    \item a constant roof suspension of a hyperbolic automorphism of $\TT^2$;
    \item \label{hyperbolic surface}the time-one map of a smooth time-change of the geodesic flow on a closed hyperbolic surface.
\end{enumerate}
\end{coralph}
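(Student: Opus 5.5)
The plan is to reduce Corollary~\ref{theorem c2} to Corollary~\ref{theorem smooth} and then use the extra regularity of $E^s$ and $E^u$ to pin down the algebraic model in each case.

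\emph{Reduction.} Since $E^s$ and $E^u$ are $C^{1+\mathrm{Lip}}$, the sum $E^s\oplus E^u$ is in particular Lipschitz. Theorem~\ref{theorem main} then gives dynamical coherence, so $\W^c$ exists and the absolute continuity hypothesis makes sense. Corollary~\ref{theorem smooth} applies and yields that $E^s\oplus E^u$ and $E^c$ are $C^\infty$. Moreover, up to a finite cover and an iterate, $f$ is $C^\infty$-conjugate to one of three models: an Anosov automorphism of $\TT^3$, an isometric extension of a volume-preserving Anosov diffeomorphism $g$ of $\TT^2$, or the time-one map of a contact or suspension Anosov flow. A $C^\infty$ conjugacy preserves $C^{1+\mathrm{Lip}}$ regularity of $E^s$ and $E^u$. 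It therefore suffices to show, in each model, that $C^{1+\mathrm{Lip}}$ regularity of the strong bundles forces them to be $C^\infty$ and the model to be algebraic.

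\emph{Case analysis.} The Anosov automorphism case is already algebraic. For an isometric extension of $g$, the strong bundles of $f$ project onto the stable and unstable bundles of $g$, so these are $C^{1+\mathrm{Lip}}$ on $\TT^2$. By the classical rigidity for area-preserving Anosov diffeomorphisms of $\TT^2$ with $C^{1+\mathrm{Lip}}$ invariant bundles (Hurder--Katok~\cite{hk90}, building on Avez~\cite{ave68}), $g$ is $C^\infty$-conjugate to a hyperbolic automorphism. The extension is then governed by an isometric cocycle over an automorphism, and the same bootstrap shows $E^s,E^u$ are $C^\infty$. One then reduces the fiber rotation cocycle to a constant by Liv\v{s}ic regularity, or equivalently identifies the $C^\infty$ structure given by $E^s,E^u,E^c$ with a homogeneous one. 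This gives an affine nilmanifold automorphism in the non-integrable (Heisenberg) case, and in the integrable case either a toral affine map or a constant roof suspension.

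For a suspension Anosov flow, the stable and unstable bundles of the base Anosov diffeomorphism of $\TT^2$ are $C^{1+\mathrm{Lip}}$, so the base is smoothly conjugate to a linear map by~\cite{hk90}. The roof is cohomologous to a constant (because $E^s\oplus E^u$ is $C^\infty$ and integrable), which yields a constant roof suspension. For a contact Anosov flow, the weak stable and unstable bundles $E^s\oplus E^c$ and $E^u\oplus E^c$ are $C^{1+\mathrm{Lip}}$ (since $E^c$ is $C^\infty$). Hurder--Katok~\cite{hk90} then gives $C^\infty$ weak bundles, and Ghys' rigidity~\cite{ghy87,ghy93} shows the flow is $C^\infty$-orbit equivalent to, and in fact a smooth time-change of, the geodesic flow of a hyperbolic surface up to finite cover. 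Since $f$ is the time-one map of this flow, this is case~\eqref{hyperbolic surface}.

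\emph{Main obstacle.} The step I expect to be hardest is transferring the $C^{1+\mathrm{Lip}}$ hypothesis on the \emph{strong} bundles of the diffeomorphism into the hypotheses of the flow-rigidity theorems, which are usually stated for weak bundles. This requires using the $C^\infty$ smoothness of $E^c$ and of the conjugacy. In the isometric extension case there is the additional task of identifying the resulting smooth model as genuinely homogeneous. For both, I would first use the Hurder--Katok obstruction (the Anosov cocycle) to obtain $C^\infty$ bundles, and then invoke the classification of~\cite{cph21} for $C^\infty$ splittings with constant exponents (exponents become constant after smooth conjugacy to a linear base) to conclude.
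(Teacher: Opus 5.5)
Your proposal is correct in outline, and the isometric-extension case follows the paper closely. You reduce via Corollary~\ref{theorem smooth} and get rigidity of the quotient Anosov map on $\TT^2$; you cite Hurder--Katok/Avez, the paper cites \cite[Corollary 3.3]{ghy93}. You then obtain smoothness of $E^s,E^u$ and finish with a classification for splittings with constant exponents. You invoke \cite{cph21}; the paper uses its $C^1$ version \cite[Theorem 1.1]{am23}. The paper first builds a $C^\infty$ frame in which $Df$ is a constant matrix: lift the eigenfields of the linear base through $d\pi|_{E^s\oplus E^u}$, and take the invariant center field.

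The flow case is where you genuinely diverge. The paper applies Ghys's theorems directly to the strong bundles (\cite[Theorem 4.6]{ghy93}, then \cite[Theorem 1.1]{ghy87}). This treats contact and suspension flows at once and gives a conjugacy to a \emph{special} algebraic time change. You instead split the flow case:
\begin{itemize}
\item Suspensions: base rigidity, plus integrability of $E^s\oplus E^u$ (Plante) to make the roof constant.
\item Contact flows: add the smooth flow direction to get $C^{1+\mathrm{Lip}}$ weak bundles, apply Hurder--Katok (legitimate, since the flow preserves a smooth volume), then use Ghys's orbit-equivalence theorem.
\end{itemize}
This relies only on the weak-foliation results. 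However, it produces an arbitrary smooth time change. That suffices for item~\eqref{hyperbolic surface} as literally stated, but not for the sharper description in the remark after the corollary.

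A few steps need a line you did not supply.
\begin{itemize}
\item \textbf{Strong bundles in the contact case.} Your route does not by itself give $C^\infty$ strong bundles, since a generic smooth time change of the geodesic flow has non-smooth $E^s,E^u$. You need $E^s=(E^s\oplus E^c)\cap(E^s\oplus E^u)$, both factors being $C^\infty$, and similarly for $E^u$.
\item \textbf{``The same bootstrap'' in the isometric-extension case.} Replace it by the paper's observation: $d\pi|_{E^s\oplus E^u}$ is a $C^\infty$ bundle isomorphism onto $T\TT^2$ carrying $E^s$ to $\hat E^s$. Hence $E^s=(d\pi|_{E^s\oplus E^u})^{-1}(\hat E^s)$ is $C^\infty$.
\item \textbf{Reducing the fiber rotation to a constant.} Liv\v{s}ic regularity alone cannot do this, because it presupposes a solution. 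Existence would have to come from elsewhere, for instance the invariant contact form viewed as a connection, or the $su$-foliation in the integrable case. So rely on the constant-frame classification, as you suggest at the end.
\item \textbf{Area preservation for the suspension base.} To apply Hurder--Katok to the base, note that $\int_0^1(\phi_t)_*m\,dt$ is a smooth $\phi_t$-invariant volume. The return map to a section is therefore area-preserving.
\end{itemize}
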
\begin{remark}
The assumption that $E^s$ and $E^u$ are $C^{1+\mathrm{Lip}}$ is essentially optimal: for each $\alpha \in (0,1)$, there exist examples for which $E^s$ and $E^u$ are $C^{1+\alpha}$ but the conclusion of Corollary~\ref{theorem c2} fails (see Lemma~\ref{lemma c2-}). Moreover, the time-change in Case~\eqref{hyperbolic surface} is not arbitrary: it is a special algebraic time-change coming from a morphism from a lattice to the diagonal subgroup; see~\cite{ghy87} for a precise description.
\end{remark}We also have a classification for contact partially hyperbolic diffeomorphisms on $3$-manifolds. Recall that a $C^1$ $1$-form $\alpha$ is called~\textit{contact} if $\alpha \wedge d\alpha$ is non-vanishing everywhere. 
\begin{coralph}\label{theorem contact}
Let $f \colon M \to M$ be a $C^\infty$ partially hyperbolic diffeomorphism on a closed $3$-manifold preserving a $C^{1}$ contact form $\alpha$: $f^* \alpha=\alpha$.  Then $\alpha$ is $C^\infty$, $f$ is volume-preserving, and the distributions $E^s \oplus E^u$ and $E^c$ are $C^\infty$. Up to a finite cover and an iterate, $f$ is $C^\infty$-conjugate to one of the following:
\begin{itemize}
    \item an isometric extension of a volume-preserving Anosov diffeomorphism on $\TT^2$ with total space $M$ a Heisenberg nilmanifold;
    \item the time-one map of a contact Anosov flow.
\end{itemize}
\end{coralph}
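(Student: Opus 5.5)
The plan is to reduce Corollary~\ref{theorem contact} to the second branch of the dichotomy in Theorem~\ref{theorem main}. Three facts are needed: $f$ preserves a volume, $E^s\oplus E^u$ is Lipschitz, and $E^s\oplus E^u$ is not integrable. All three should follow from the invariant $C^1$ contact form $\alpha$.

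\emph{Step 1: volume.} Since $f^*\alpha=\alpha$ and $\alpha$ is $C^1$, the form $d\alpha$ is continuous and $f^*d\alpha=d\alpha$. Hence $\alpha\wedge d\alpha$ is a continuous, nowhere vanishing, $f$-invariant $3$-form. This gives a continuous invariant volume, while Theorem~\ref{theorem main} wants smooth volume preservation. To handle this, I would note that the density of $\alpha\wedge d\alpha$ relative to a smooth volume $\omega$ is a continuous solution $\phi$ of $\log|\det Df| = \phi - \phi\circ f$. Since $f$ is $C^\infty$, I would then invoke a Livšic regularity result for partially hyperbolic systems (for instance Wilkinson's, which holds after showing accessibility, or a direct bootstrapping along $\W^s$ and $\W^u$ combined with Journé-type arguments). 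An alternative route is to run the argument of Theorem~\ref{theorem main} directly with the continuous volume $\alpha\wedge d\alpha$, if the proof only uses the measure class. I would first check whether accessibility follows directly from the contact condition; this is Step 3 below.

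\emph{Step 2: identify $\ker\alpha=E^s\oplus E^u$.} Invariance gives $\alpha_{f^nx}(Df^n v)=\alpha_x(v)$. For $v\in E^s_x$, letting $n\to\infty$ makes $Df^n v\to 0$ while $\alpha$ stays bounded, so $\alpha(v)=0$. The same argument with $n\to-\infty$ gives $\alpha(v)=0$ for $v\in E^u_x$. Thus $\ker\alpha\supseteq E^s\oplus E^u$, and by dimension count $\ker\alpha=E^s\oplus E^u$. Since $\alpha$ is $C^1$, $E^s\oplus E^u$ is $C^1$ and in particular Lipschitz. Moreover, $d\alpha(X^s,X^u)=-\alpha([X^s,X^u])\neq0$ wherever the bracket is defined. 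This should be stated more carefully, because $E^s,E^u$ are only Hölder: the argument I would use is that the contact condition $\alpha\wedge d\alpha\neq0$ forbids any surface tangent to $\ker\alpha$, by Frobenius for $C^1$ forms. So $E^s\oplus E^u$ is nowhere integrable.

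\emph{Step 3: apply Theorem~\ref{theorem main}.} Non-integrability everywhere places us in the second branch. Consequently $f$ is accessible and preserves a $C^\infty$ contact form $\alpha'$, $E^s\oplus E^u$ and $E^c$ are $C^\infty$, and up to finite cover and iterate $f$ is $C^\infty$-conjugate to one of the two listed models. It remains to show that $\alpha$ itself is $C^\infty$. Both $\alpha$ and $\alpha'$ have kernel $E^s\oplus E^u$, so $\alpha=h\alpha'$ for a continuous nonvanishing function $h$ with $h\circ f=h$. Accessibility gives ergodicity of the volume, so $h$ is constant on a full-measure set and hence constant everywhere by continuity. Therefore $\alpha$ is $C^\infty$. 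Alternatively, one can use that $E^c$ is smooth and $\alpha$ is determined by its kernel together with its value on a smooth invariant center field, up to invariant functions.

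\emph{Main obstacle.} The main obstacle is Step 1, namely upgrading the continuous invariant volume $\alpha\wedge d\alpha$ to a smooth one so that Theorem~\ref{theorem main} applies. My first attempt is to establish accessibility directly from the nowhere-integrability of $\ker\alpha$. Since $\ker\alpha$ is contact, every $su$-path class is open, and by connectedness $f$ is accessible. With accessibility in hand, Wilkinson's Livšic theorem for accessible center-bunched systems (center bunching is automatic in dimension three) gives a $C^\infty$ transfer function. This yields a smooth invariant volume.
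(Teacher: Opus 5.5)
Your route is the paper's: identify $\ker\alpha=E^s\oplus E^u$ by the growth argument, get accessibility from the contact condition, upgrade the density of $\alpha\wedge d\alpha$ to a $C^\infty$ function via Proposition~\ref{prop wil13}, then apply Theorem~\ref{theorem main}. Your last step, showing that $\alpha$ itself is $C^\infty$, is correct. It uses that the ratio $h$ of two invariant contact forms with the same kernel is $f$-invariant and continuous, hence constant. This makes explicit a point the paper leaves implicit.

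However, the step you single out as the main obstacle is justified incorrectly. Proposition~\ref{prop wil13}(2) gives a $C^r$ transfer function only under \emph{strong $r$-bunching}, which includes $\lambda^s<(\lambda^c)^{-r}<\lambda^u$. A $C^\infty$ density therefore needs strong $\infty$-bunching, and this is not automatic in dimension three. The center bunching that is automatic for a one-dimensional center, together with the strong $1$-bunching you get from the continuous invariant volume, only yields low regularity of the transfer function.

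The missing idea is the Reeb field $R$ of $\alpha$. Since $f^*\alpha=\alpha$, you have $f_*R=R$, and $R$ is continuous and nowhere vanishing. The same growth argument as in your Step~2 then forces $R\in E^c$: a nonzero $E^u$- or $E^s$-component would blow up under $f^{n}$ or $f^{-n}$. In a metric with $\|R\|\equiv 1$, $f$ is center isometric and hence strongly $r$-bunched for every $r$. Only then does Proposition~\ref{prop wil13} give a $C^\infty$ density and a smooth invariant volume, which is exactly how the paper argues.

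Your accessibility step is also only asserted. The claim ``since $\ker\alpha$ is contact, every $su$-path class is open'' needs an input on non-open accessibility classes, which the paper takes from \cite[Proposition~5]{bhhtu08}. That result says that if a class $A$ is not open, then $f$ is locally jointly integrable at every point of $A$. So small $su$-quadrilaterals close up and bound a surface $\Omega$ transverse to $E^c$, with tangent planes close to $\ker\alpha$, on which $d\alpha$ is nondegenerate. This contradicts $\int_\Omega d\alpha=\int_{\partial\Omega}\alpha=0$. Alternatively, combine your Frobenius observation that no $C^1$ surface is tangent to $\ker\alpha$ with the fact that non-open accessibility classes are $C^1$ surfaces tangent to $E^s\oplus E^u$.

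With these two ingredients, the argument closes in the order: accessibility, center isometry via $R$, smooth invariant volume, Theorem~\ref{theorem main}.
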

\subsection{Further remarks and examples}\label{sec remark}
Point~\eqref{t1 lambda c} in Theorem~\ref{theorem main} is related to a result of Gan--Shi~\cite[Theorem~1.1]{gs20}. They show that for a $C^{1+\alpha}$ ($\alpha > 0$) volume-preserving partially hyperbolic diffeomorphism on $\mathbb{T}^3$ homotopic to an Anosov automorphism, if $E^s \oplus E^u$ is integrable (and hence Lipschitz; see e.g.~\cite{ks25}), then the cocycle $x \mapsto Df(x)|_{E^c}$ is cohomologous to a constant $\lambda^c$.
Our result extends this phenomenon to a broader setting: assuming only that $E^s \oplus E^u$ is Lipschitz, without any integrability or topological assumptions, we obtain the same cohomological rigidity. This suggests that the rigidity of the center dynamics can already be detected at the level of the regularity of $E^s \oplus E^u$.

There exist accessible volume-preserving partially hyperbolic diffeomorphisms on $3$-manifolds with non-solvable fundamental group that are not homotopic to the identity (see~\cite{bgp16,bghp20}). In particular, Theorem~\ref{theorem main} implies that the extremal distribution $E^s \oplus E^u$ of such diffeomorphisms is never Lipschitz. Moreover, any volume-preserving partially hyperbolic diffeomorphism homotopic to these examples also fails to admit a Lipschitz extremal distribution. This stands in contrast with the Anosov setting, where every Anosov diffeomorphism on a $3$-manifold is homotopic to an affine automorphism of $\TT^3$.

In a somewhat dual direction, several works study rigidity phenomena arising from the regularity of the center foliation. Avila--Viana--Wilkinson~\cite{avw14} showed that for perturbations of the time-one map of a geodesic flow on negatively curved surfaces, absolute continuity of the center foliation forces the system to embed into a smooth flow, while otherwise the disintegration is atomic. Related rigidity results have since been obtained in various settings; see for instance,~\cite{gog12,avw22,gks23}. Our results can be viewed as complementary to this line of work, replacing the role of $E^c$ by that of $E^s \oplus E^u$. For instance, Theorem~\ref{theorem main} implies that for any volume-preserving partially hyperbolic diffeomorphism homotopic to the time-one map of a geodesic flow on a negatively curved surface, either $E^s \oplus E^u$ fails to be Lipschitz, or the system embeds as the time-one map of a contact flow.

In our setting, we also obtain smoothness of the center bundle $E^c$ (when $f$ is accessible in Theorem~\ref{theorem main}), but via a different mechanism: we exploit the smoothness of $E^s \oplus E^u$ together with the associated contact structure (see Corollary~\ref{coro ec smooth}), which in turn yields the smoothness of $E^c$. 

It is then natural to ask whether a similar phenomenon can occur when $E^s \oplus E^u$ is integrable and the individual distributions have higher regularity. Note that the assumption that $E^s \oplus E^u$ is $C^\infty$ alone is not sufficient; see Example~\ref{example c}.

\begin{question}
Let $f\colon M \to M$ be a volume-preserving partially hyperbolic diffeomorphism on a $3$-manifold $M$. If $E^s \oplus E^u$ is integrable and both $E^s$ and $E^u$ are $C^2$, is $E^c$ absolutely continuous?
\end{question}

We conclude with several examples highlighting the sharpness of our results, showing in particular that both the Lipschitz regularity of $E^s \oplus E^u$ is indispensable for Theorem~\ref{theorem main}, and the absolute continuity of $\W^c$ are indispensable for Corollary~\ref{theorem smooth}. 
\begin{example}\label{example su}
Even if $E^s \oplus E^u$ is $\theta$-H\"older for every $\theta \in (0,1)$, it need not be Lipschitz. It is shown in~\cite{fh03} that for every volume-preserving Anosov flow $\varphi_t$ on a closed $3$-manifold, the distribution $E^s \oplus E^u$ is $\theta$-H\"older for all $\theta \in (0,1)$. However, $E^s \oplus E^u$ fails to be Lipschitz unless $\varphi_t$ is a contact or suspension Anosov flow. 

We further remark that this threshold can be improved when the center bundle $E^c$ is uniformly expanding or contracting; in that case, a sufficiently high Hölder regularity of $E^s \oplus E^u$ already forces smoothness. More precisely, Xu--Zhang~\cite{xz24} proved that for diffeomorphisms $C^1$-close to a volume-preserving partially hyperbolic system, if the Hölder exponent of $E^s \oplus E^u$ exceeds a certain critical value $\theta_0 \in (0,1)$, then $E^s \oplus E^u$ is automatically $C^\infty$.
\end{example}
\begin{example}\label{example c}
Even when $E^s \oplus E^u$ is integrable, the center foliation $\W^c$ need not be absolutely continuous. Two typical constructions are as follows:
\begin{itemize}
    \item \emph{Katok's example:} Let $f = A \times \mathrm{Id}$ be the product of an Anosov automorphism $A$ on $\TT^2$ and the identity on $\TT^1$. One can perturb $f$ along each $su$-torus so that the center holonomy fails to be continuous (see~\cite{mil97} for details), which implies that $\W^c$ is not absolutely continuous.
    
    \item \emph{Gogolev's example:} In~\cite{gog12}, Gogolev showed that for volume-preserving Anosov diffeomorphisms on $\TT^3$, the strong unstable Jacobian is a coboundary if and only if the center foliation $\W^c$ is absolutely continuous. Therefore, by perturbing the Jacobian to destroy the coboundary property, one obtains examples where $\W^c$ is not absolutely continuous.
\end{itemize}
\end{example}


\begin{example}\label{example single}
If only $E^s$ or $E^u$ is $C^1$, then $E^s \oplus E^u$ need not be Lipschitz. For instance, for a volume-preserving partially hyperbolic diffeomorphism on $\TT^3$ with uniformly expanding center bundle $E^c$, the stable bundle $E^s$ is always $C^1$. However, the unstable bundle $E^u$ may fail to be Lipschitz and can even exhibit fractal behavior~\cite{xz24} when the system is accessible.
\end{example}


\subsection{Structure of the paper}
This paper is organized as follows.
In Section~\ref{sec_prelim}, we collect the preliminary material used throughout the paper. 
In Section~\ref{sec acc and iso}, we study the $1$-form associated with the Lipschitz distribution $E^s \oplus E^u$ and show that the cocycle $Df|_{E^c}$ is a coboundary on every open accessibility class. 
In Section~\ref{sec_FH_cocycle}, we recall the notions of adapted charts and templates and introduce the Foulon--Hasselblatt cocycle for general $3$-dimensional partially hyperbolic diffeomorphisms. This cocycle allows us to bootstrap the Lipschitz regularity to $C^1$ regularity, and, under $\infty$-bunching, to $C^\infty$ regularity on every open accessibility class. 
In Section~\ref{sec_dicho}, we establish a dichotomy between accessibility and integrability. 
In Section~\ref{sec_proofs}, we prove the main theorem and its corollaries. 
Finally, in Section~\ref{sec_example}, we study a specific example on a Heisenberg manifold.

\subsection*{Acknowledgments.} We are grateful to Rafael Potrie for a very careful reading of an earlier version of this manuscript and for numerous helpful comments and suggestions that improved the paper.
We are grateful to Amie Wilkinson for many helpful discussions and for posing a question that motivated Example~\ref{example contact}.
We are also grateful to Christian Bonatti, Sylvain Crovisier, Danijela Damjanovic, Ziqiang Feng, Shaobo Gan, Andrey Gogolev, Carlos Matheus, Federico Rodriguez Hertz, Yi Shi, Masato Tsujii, Danyu Zhang, and Zhiyuan Zhang for many helpful discussions.

\section{Preliminaries}\label{sec_prelim}
\subsection{Contact form and Reeb vector field}
Let $M$ be a closed $3$-manifold and let $E\subset TM$ be a $2$-dimensional $C^r$ distribution 
($r\in[0,\infty]$ or $r=\mathrm{Lip}$). 
\begin{lemma}\label{lemma alpha}
If $E$ is co-orientable, i.e.\ there exists a continuous $1$-dimensional distribution $F$ transverse to $E$, then there exists a $C^r$ nowhere-vanishing $1$-form $\alpha$\footnote{The choice of $\alpha$ is not unique: $\varphi\alpha$ is also a 1-form satisfying the condition for any nowhere vanishing $C^r$ function $\varphi$.} such that $\ker\alpha=E$.
\end{lemma}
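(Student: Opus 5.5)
The plan is to build $\alpha$ locally from a smooth background metric and glue the local pieces with a partition of unity. The continuous transverse line field $F$ serves only to choose signs consistently.

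Fix a $C^\infty$ Riemannian metric $g$ on $M$. Consider the orthogonal complement $E^\perp$ of $E$ with respect to $g$. It is a line field with the same regularity $C^r$ as $E$: locally $E$ is spanned by two $C^r$ vector fields $X_1,X_2$, and Gram--Schmidt or a cross product in a local frame shows that $E^\perp$ is locally spanned by a $C^r$ vector field $N$. This also covers the case $r=\mathrm{Lip}$, since products, quotients by nonvanishing functions and square roots of positive functions preserve Lipschitz regularity. In fact, one can take $N$ to be the $g$-cross product of $X_1,X_2$ in local coordinates, which is a polynomial expression in the coefficients and is nonvanishing.

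Next, use $F$ to orient $E^\perp$ globally. Since $F$ is transverse to $E$, the projection of $F$ to $E^\perp$ along $E$ is a continuous isomorphism of line bundles. Hence co-orientability of $E$ (orientability of the normal bundle $TM/E$) is equivalent to orientability of $F$. Here I would interpret the hypothesis as providing an orientable, i.e.\ trivial, line bundle $F$. In other words, $F$ is spanned by a continuous nonvanishing vector field $Y$. This is the only subtle point: a non-orientable transverse line field would not suffice, so the statement's ``co-orientable'' has to be read as the existence of a continuous vector field $Y$ transverse to $E$.

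Now define locally $\alpha_i := g(N_i,\cdot)$ on a finite cover of $M$ by coordinate charts $U_i$. Here $N_i$ is a local $C^r$ unit normal to $E$, with sign chosen so that $g(N_i,Y)>0$. This sign choice is possible because $g(N_i,Y)$ never vanishes, as $Y\notin E$. On overlaps, $N_i=N_j$, since both are unit normals with the same sign against $Y$. So the local unit normals actually patch into a global $C^r$ vector field $N$, and $\alpha:=g(N,\cdot)$ is a global $C^r$ nowhere-vanishing $1$-form with $\ker\alpha=E$.

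Alternatively, if one prefers not to use unit normalization, one can set $\alpha=\sum_i\rho_i\alpha_i$ with a smooth partition of unity $\{\rho_i\}$. Then $\alpha(Y)=\sum_i\rho_i\alpha_i(Y)>0$, so $\alpha$ is nonvanishing, each $\alpha_i$ annihilates $E$, and the regularity is $C^r$.

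The main obstacle I anticipate is just the orientation issue above. The regularity claims are routine once the local frames of $E$ are $C^r$, which is the definition of $E$ being a $C^r$ distribution.
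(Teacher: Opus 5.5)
Your proof is correct and is essentially the paper's argument made explicit. The paper notes that co-orientability makes $TM/E$, hence the annihilator bundle $E^\circ\cong (TM/E)^*$, a trivial $C^r$ line bundle, and takes a global nonvanishing section; you instead realize $E^\circ$ concretely as the forms $g(N,\cdot)$ with $N\in E^\perp$ and use the transverse vector field $Y$ to fix the sign, which is exactly the detail needed to get a $C^r$ (not merely continuous) global section. Your caveat about the hypothesis is also right: a continuous line field transverse to $E$ always exists (e.g.\ $E^\perp$), so the hypothesis must be read as the existence of an \emph{oriented} transverse field, which is what the paper's proof implicitly uses when it asserts that $TM/E$ is orientable.
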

\begin{proof}
Since $E$ is co-orientable, the normal bundle $TM/E$ is an orientable line bundle and thus a trivial bundle. 
Identifying the annihilator bundle
\[
E^\circ=\{\beta\in T^*M:\beta|_E=0\}\cong (TM/E)^*,
\]
we obtain that $E^\circ$ is a trivial $C^r$ line subbundle of $T^*M$. Therefore it admits a global $C^r$ nowhere-vanishing section $\alpha$, i.e., a $C^r$ 1-form with $\ker \alpha= E$. 
\end{proof}

In particular, if $E$ is a Lipschitz distribution, the 1-form $\alpha$ is Lipschitz, and is differentiable almost everywhere. The differential $d\alpha$ is a well-defined measurable ($L^\infty$) 2-form. The Frobenius theorem about integrablity of distributions also holds for Lipschitz distributions\cite[Theorem~3]{fh03}. 
\begin{proposition}[{see~\cite[Corollary A.3]{lipfro}}]\label{prop frobenius}
  If $E$ is Lipschitz, then $E$ is uniquely integrable if and only if $\theta \equiv 0$, where $\theta := \alpha \wedge d\alpha$. 
\end{proposition}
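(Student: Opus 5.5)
To prove Proposition~\ref{prop frobenius}, I will treat the two implications separately, working locally in a chart since integrability is a local property. By Lemma~\ref{lemma alpha} (after passing to a local co-orientation if needed), I fix a Lipschitz $1$-form $\alpha$ with $\ker\alpha=E$. By Rademacher's theorem, $\alpha$ is differentiable almost everywhere, so $d\alpha$ and hence $\theta=\alpha\wedge d\alpha$ are $L^\infty$ forms. For such forms the Stokes formula $\int_{\partial S}\alpha=\int_S d\alpha$ holds for smooth surfaces $S$, obtained by mollifying $\alpha$ and passing to the limit. The condition $\theta\equiv 0$ is understood almost everywhere, and this condition does not depend on the choice of $\alpha$, since $\varphi\alpha\wedge d(\varphi\alpha)=\varphi^2\,\alpha\wedge d\alpha$.

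For the implication that unique integrability gives $\theta\equiv0$: suppose $E$ is tangent to a foliation. In a small box, the leaves are level sets of a function $u$; because $E$ is Lipschitz, $u$ can be chosen $C^{1,\mathrm{Lip}}$ via flow-box coordinates built from the Lipschitz vector fields spanning $E$. Then $\alpha=g\,du$ for a nonvanishing function $g$, and a direct computation gives $\alpha\wedge d\alpha=g\,du\wedge(dg\wedge du)=0$ almost everywhere.

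For the converse, suppose $\theta=0$ almost everywhere. I choose local coordinates $(x,y,z)$ with $\partial_z$ transverse to $E$ and normalize $\alpha=dz-a\,dx-b\,dy$, where $a,b$ are Lipschitz. The distribution $E$ is then spanned by the Lipschitz vector fields $X=\partial_x+a\partial_z$ and $Y=\partial_y+b\partial_z$. Their flows are unique because the fields are Lipschitz. I build a candidate leaf through a point $p$ as $\Phi(s,t)=\phi_Y^t\phi_X^s(p)$; this is an injective Lipschitz surface that is tangent to $E$ along the $X$-curves. It remains to show that it is tangent to $E$ everywhere, which is equivalent to the flows of $X$ and $Y$ commuting, i.e. to the vanishing of the \emph{holonomy defect}
\[
\int_{\partial R}\alpha
\]
around the small coordinate rectangles $R$ traced by $X$-flow and $Y$-flow.

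The key step is to compute this defect. By Stokes' theorem, the defect equals $\int_R d\alpha$. On $E$ we have $d\alpha(X,Y)=-\alpha([X,Y])$ in the weak sense, and $\theta=0$ almost everywhere is equivalent to $d\alpha|_E=0$ almost everywhere. The main obstacle is that the rectangles $R$ are only Lipschitz surfaces and could lie inside the null set where this fails. I will handle this with a Fubini argument: I vary the base point $p$ over a transversal so that the surfaces $\Phi_p$ foliate a neighbourhood bi-Lipschitzly. For almost every $p$, the identity then holds $\mathcal H^2$-almost everywhere on $\Phi_p$. This shows the defect vanishes for almost every $p$, and by continuity it vanishes for every $p$. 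Hence the flows commute, the surfaces $\Phi_p$ are integral manifolds of $E$, and uniqueness follows from the uniqueness of the Lipschitz flows.
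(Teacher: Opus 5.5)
The paper gives no proof of this statement; it simply quotes \cite[Corollary A.3]{lipfro}. Your overall framework for the converse is workable: normalized Lipschitz fields $X,Y$, the Lipschitz surfaces $\Phi_p$, and Stokes combined with Fubini over a bi-Lipschitz family of such surfaces. The key step, however, is circular as written. The tangent planes of $R=\Phi_p([0,s]\times[0,t])$ are spanned by $\partial_t\Phi=Y$ and $\partial_s\Phi=X+f\,\partial_z$, where $f:=\alpha(\partial_s\Phi)=\partial_s\psi-a$, writing $\Phi=(x_0+s,y_0+t,\psi)$. This $f$ is exactly the defect density you want to show vanishes. Knowing $d\alpha|_E=0$ a.e.\ on $R$ says nothing about $d\alpha|_{TR}$ unless $TR=E$, which is the conclusion. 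Concretely, since $d\alpha(\partial_z,Y)=-\partial_z b$,
\[
d\alpha(\partial_s\Phi,\partial_t\Phi)=d\alpha(X,Y)-(\partial_z b)\,f ,
\]
and your Fubini argument only kills the first term. The bottom edge of $R$ is an $X$-curve and the sides are $Y$-curves, so only the top edge contributes to $\int_{\partial R}\alpha$. Stokes therefore gives
\[
\int_0^s f(\sigma,t)\,d\sigma=\int_0^s\!\!\int_0^t (\partial_z b)\,f\,d\tau\,d\sigma ,
\]
not vanishing of the defect. That is, $f$ solves $\partial_t f=(\partial_z b)f$ with $f|_{t=0}=0$, for a.e.\ base point and a.e.\ $s$. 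The missing step is to close this with Gronwall, $|f(s,t)|\le\|\partial_z b\|_\infty\int_0^t|f(s,\tau)|\,d\tau$, which gives $f\equiv0$; then pass to all $p$ by continuity. Alternatively, note that $[X,Y]=(Xb-Ya)\,\partial_z$, so $\theta=0$ a.e.\ means $[X,Y]=0$ a.e. You can then invoke the Rampazzo--Sussmann theorem that Lipschitz fields with a.e.\ vanishing bracket have commuting flows. There the integrand is the transported bracket itself, which genuinely vanishes.

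The forward direction rests on a false claim: a foliation tangent to a Lipschitz distribution need not be defined by any $C^1$ submersion. Take $E=\mathrm{span}(\partial_x+|z|\,\partial_z,\ \partial_y)$. Its leaves are $\{z=z_0e^{x}\}$ for $z_0>0$, $\{z=0\}$, and $\{z=z_0e^{-x}\}$ for $z_0<0$. The holonomy from $\{x=0\}$ to $\{x=1\}$ is $z\mapsto ez$ for $z>0$ and $z\mapsto z/e$ for $z<0$, which is not differentiable at $0$. A $C^1$ function $u$ with $du\neq0$ that is constant on leaves would force this holonomy to be $C^1$. Flow-box coordinates built from Lipschitz fields are only bi-Lipschitz, so $u$ is only Lipschitz and $g=\alpha/du$ is only $L^\infty$; the formula $d\alpha=dg\wedge du$ then has no pointwise meaning. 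The fix is to reuse your own computation without assuming $\theta=0$. The same Stokes--Fubini argument gives $\partial_t f=-d\alpha(X,Y)+(\partial_z b)f$ along a.e.\ surface $\Phi_p$. If $E$ is integrable, uniqueness of the Lipschitz flows forces each $\Phi_p$ into a leaf, so $f\equiv0$. Hence $d\alpha(X,Y)=0$ a.e., i.e.\ $\theta=0$ a.e.
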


If $\alpha$ is $C^k$ for some $k \geq 1$ and $\theta$ is non-vanishing everywhere, then $\alpha$ is called a~\textit{contact form}. To every contact form, one can uniquely associate its~\textit{Reeb vector field} (see, for example,~\cite{bookcontact} for more details).
\begin{proposition}\label{prop frobinius}
    Let $\alpha$ be a $C^k$ contact form. Then there is a uniquely defined $C^{k-1}$ vector field $R = R_\alpha$ --- called the~\textit{Reeb vector field} --- such that
\[
\alpha(R) = 1, \qquad \iota_R d\alpha := d\alpha(R, \cdot) = 0.
\] 
\end{proposition}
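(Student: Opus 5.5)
The plan is to reduce the statement to linear algebra in each tangent space and then check regularity. Since $\alpha$ is $C^k$ with $k\ge 1$, the form $d\alpha$ is a $C^{k-1}$ $2$-form. The contact condition says $\theta=\alpha\wedge d\alpha$ is a nowhere-vanishing $3$-form. At each point $x$, I would first show that $d\alpha_x$ restricted to the plane $\ker\alpha_x$ is nondegenerate. Indeed, if some nonzero $v\in\ker\alpha_x$ had $d\alpha_x(v,w)=0$ for all $w\in\ker\alpha_x$, complete $v$ to a basis $v,w,u$ of $T_xM$ with $w\in\ker\alpha_x$. Expanding $\theta_x(v,w,u)$ as a sum of the three terms $\alpha(\cdot)\,d\alpha(\cdot,\cdot)$, each term contains either $\alpha(v)=0$, $\alpha(w)=0$, or $d\alpha(v,w)=0$. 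Hence $\theta_x=0$, a contradiction.

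Since $d\alpha_x$ is an antisymmetric form on the $3$-dimensional space $T_xM$, its rank is even. It is nonzero by the previous step, so its rank is exactly $2$. Therefore the kernel $K_x=\{v: \iota_v d\alpha_x=0\}$ is a line. This line is transverse to $\ker\alpha_x$, because $d\alpha_x$ is nondegenerate on $\ker\alpha_x$ and so no nonzero vector of $\ker\alpha_x$ lies in $K_x$. Consequently $\alpha_x$ restricts to a nonzero functional on $K_x$, and there is exactly one $R_x\in K_x$ with $\alpha_x(R_x)=1$. This proves existence and uniqueness pointwise. Uniqueness can also be seen directly: any vector satisfying the two conditions lies in $K_x$ and is normalized by $\alpha$.

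For regularity, I would rephrase the construction as solving a linear system with $C^{k-1}$ coefficients. Consider the bundle map $\Phi\colon TM\to T^*M$ given by $v\mapsto \iota_v d\alpha+\alpha(v)\alpha$. This map is an isomorphism at every point. If $\Phi_x(v)=0$, evaluating on $v$ gives $d\alpha(v,v)+\alpha(v)^2=\alpha(v)^2=0$, so $\alpha(v)=0$. Then $\iota_v d\alpha=0$ with $v\in\ker\alpha_x$, which forces $v=0$ by the first step. Moreover $R_x=\Phi_x^{-1}(\alpha_x)$: indeed $\Phi(R)=\alpha$, and conversely if $\Phi(v)=\alpha$, evaluating on $v$ gives $\alpha(v)^2=\alpha(v)$. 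Evaluating on vectors of $\ker\alpha$ shows $\iota_v d\alpha$ vanishes on $\ker\alpha$, so $\iota_v d\alpha=c\,\alpha$. Evaluating on $v$ then gives $c\,\alpha(v)=0$, which together with $\alpha(v)^2=\alpha(v)$ forces $\alpha(v)=1$ and $c=0$.

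The coefficients of $\Phi$ in local coordinates are $C^{k-1}$, since $d\alpha$ is $C^{k-1}$ and $\alpha$ is $C^k$. Matrix inversion is smooth, so $\Phi^{-1}$ is $C^{k-1}$, and hence $R=\Phi^{-1}(\alpha)$ is $C^{k-1}$. The only point requiring care is confirming that this loss of one derivative, coming from $d\alpha$, is the only regularity loss; there is no real obstacle beyond the nondegeneracy step above.
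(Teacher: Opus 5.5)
Your proof is correct and complete. The paper gives no proof of this proposition; it treats the existence, uniqueness and $C^{k-1}$ regularity of the Reeb field as standard and refers to a contact-geometry text. Your argument is a perfectly good self-contained substitute: pointwise linear algebra, followed by the bundle isomorphism $\Phi=\iota_{(\cdot)}d\alpha+\alpha(\cdot)\alpha$, which is equivalent to the nondegenerate $C^{k-1}$ bilinear form $d\alpha+\alpha\otimes\alpha$. Your nondegeneracy step is the same expansion of $\alpha\wedge d\alpha$ that the paper uses in Lemma~\ref{lemma non dege}.

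There is one small slip, in a paragraph you do not actually need. From $\alpha(v)^2=\alpha(v)$ and $c\,\alpha(v)=0$ alone you cannot conclude $\alpha(v)=1$. The pair $\alpha(v)=0$, $c=1$ satisfies both, and it corresponds to a vector $v\in\ker\alpha$ with $\iota_v d\alpha=\alpha$. That case is excluded by your first step: such a $v$ has $\iota_v d\alpha$ vanishing on $\ker\alpha$, so $v=0$, contradicting $\alpha\neq 0$. In any case the whole ``conversely'' paragraph is redundant. Once $\Phi_x$ is injective, $\Phi_x(v)=\alpha_x=\Phi_x(R_x)$ already gives $v=R_x$.
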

We also recall an elementary fact about contact forms. 
\begin{lemma}\label{lemma non dege}
Let $\beta$ be a $C^1$ contact form on a $3$-manifold $M$ and $R$ its Reeb vector field.  
Let $S \subset M$ be a $2$-dimensional $C^1$ submanifold transverse to $R$. Then $d\beta|_{TS}$ is nondegenerate. 
\end{lemma}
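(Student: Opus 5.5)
The argument is pointwise linear algebra on the $3$-dimensional space $T_xM$. Fix $x \in S$ and write $\omega := d\beta_x$, an alternating bilinear form on $T_xM$.

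First, the contact condition $\beta \wedge d\beta \neq 0$ at $x$ forces $\omega \neq 0$. A nonzero alternating form on a $3$-dimensional space has rank exactly $2$, so its kernel $\ker\omega = \{v : \iota_v\omega = 0\}$ is one-dimensional. By Proposition~\ref{prop frobinius}, $\iota_R d\beta = 0$ and $\beta(R) = 1$, so $R(x) \neq 0$ and $\ker\omega = \mathbb{R}R(x)$.

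Next, since $S$ is transverse to $R$ and $\dim T_xS = 2$, we have $T_xM = T_xS \oplus \mathbb{R}R(x)$.

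Suppose $v \in T_xS$ satisfies $\omega(v,w) = 0$ for all $w \in T_xS$. We also have $\omega(v, R(x)) = -\omega(R(x), v) = 0$, so $\omega(v,\cdot)$ vanishes on all of $T_xM$. Hence $v \in \ker\omega = \mathbb{R}R(x)$. Since $T_xS \cap \mathbb{R}R(x) = 0$, this gives $v = 0$, so $d\beta|_{T_xS}$ is nondegenerate.

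An equivalent check: taking a basis $v_1, v_2$ of $T_xS$, we get
\[
0 \neq (\beta\wedge d\beta)(R,v_1,v_2) = \beta(R)\, d\beta(v_1,v_2) = d\beta(v_1,v_2),
\]
because every other term of the expansion contains $\iota_R d\beta = 0$.

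None of these steps is a real obstacle. The only point needing care is the regularity: $\beta$ is merely $C^1$, so $R$ is only $C^0$. The argument above is entirely pointwise, however, so continuity of $R$ suffices (transversality is pointwise as well), and no integrability or flow of $R$ is ever used.
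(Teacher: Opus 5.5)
Your proposal is correct and essentially the paper's argument. Your ``equivalent check'' is exactly the paper's proof: expand $(\beta\wedge d\beta)(X,Y,R)$ and use $\iota_R d\beta=0$ and $\beta(R)=1$. Your kernel/rank argument is the same pointwise linear algebra in another form, and you also make explicit that transversality makes $R, v_1, v_2$ a basis, a point the paper leaves implicit.
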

\begin{proof}
For any vector fields $X,Y$ taking values in $TS$, using \(\iota_R d\beta= 0\) and \(\beta(R) = 1\), we have
\[
(\beta \wedge d\beta)(X, Y, R)
= \beta(X) d\beta(Y, R) - \beta(Y) d\beta(X, R) + \beta(R) d\beta(X, Y)
= d\beta(X, Y).
\]
Since \(\beta \wedge d\beta\) is a volume form, we have \(d\beta(X, Y) \neq 0\), and thus $d\beta|_{TS}$ is nondegenerate. 
\end{proof}

\subsection{Oseledets theorem and Lyapunov exponents}
Let $X$ be a compact metric space and $E \to X$ be a (continuous) vector bundle. Then for any homeomorphism $f \colon X \to X$, a continuous linear~\emph{cocycle} over $T$ is a bundle map
$F \colon E \to E$ covering $T$. 

\begin{proposition}[Oseledets theorem]
    Suppose that $f$ preserves an ergodic probability measure $\mu$ on $X$, and $E$ is equipped with a continuous Riemannian metric. Then there are real numbers
\[\chi_1<\chi_2<\cdots<\chi_k\]
and a measurable $F$-invariant splitting 
\[
E = E^{\chi_1} \oplus \cdots \oplus E^{\chi_k},
\]
such that for $v \in E_x \setminus \{0\}$,
\[
v \in E_x^{\chi_i}
\quad \Longleftrightarrow \quad
\lim_{n \to \pm \infty} \frac{1}{n}
\log \bigl\| F^n(v) \bigr\| = \chi_i .
\]
\end{proposition}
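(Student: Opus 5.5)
The plan is to deduce the two-sided statement from the one-sided (filtration) version applied to $F$ and to $F^{-1}$, and then intersect the two filtrations. We assume throughout that $F$ is fiberwise invertible, as it is for the cocycles $Df|_{E^\ast}$ we have in mind. By compactness of $X$ and continuity of $F$ and $F^{-1}$, the functions $\log^+\|F_x\|$ and $\log^+\|F_x^{-1}\|$ are bounded, so every integrability condition below holds automatically.

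\emph{Step 1: exponents via subadditivity.} For $1\le p\le d=\dim E$, consider the exterior powers $\Lambda^pF$ and the functions $a^{(p)}_n(x)=\log\|\Lambda^pF^n_x\|$. These are subadditive along orbits. Kingman's subadditive ergodic theorem, with ergodicity of $\mu$, gives $\mu$-a.e. constant limits $\frac1n a^{(p)}_n\to \Sigma_p$. Set $\lambda_1\ge\dots\ge\lambda_d$ by $\lambda_1+\dots+\lambda_p=\Sigma_p$. The $\chi_i$ are the distinct values among the $\lambda_j$, relabeled in increasing order as in the statement.

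\emph{Step 2: forward filtration (Raghunathan's argument).} Write $(F^n_x)^*F^n_x$ with eigenvalues $e^{2n\lambda_j(n,x)}$, so that $\lambda_j(n,x)\to\lambda_j$. Using the bounded one-step growth $\|F_{f^nx}\|\le C$, show that the eigenspaces of $\bigl((F^n_x)^*F^n_x\bigr)^{1/2n}$ corresponding to exponents $\le\chi_i$ converge exponentially fast in $n$. The rate comes from the gaps $\chi_{i+1}-\chi_i$. The limit subspaces are denoted $V^i_x$. This step yields an $F$-invariant measurable filtration $V^1_x\subset\dots\subset V^k_x=E_x$ with $\frac1n\log\|F^nv\|\to\chi_i$ for $v\in V^i_x\setminus V^{i-1}_x$. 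This convergence estimate is the technical core, and I expect it to be the main obstacle: one must control the angle between successive singular subspaces by a summable series $\sum_n e^{-n(\chi_{i+1}-\chi_i-\varepsilon)}$. I would follow Raghunathan's singular value estimates directly.

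\emph{Step 3: backward filtration and splitting.} Apply Steps 1--2 to $F^{-1}$ over $f^{-1}$, which preserves the same ergodic $\mu$. Its exponents are $-\chi_k<\dots<-\chi_1$, and we obtain a filtration $U^k_x\subset\dots\subset U^1_x=E_x$ with $\frac1n\log\|F^{-n}v\|\to-\chi_i$ for $v\in U^i_x\setminus U^{i+1}_x$. Define
\[
E^{\chi_i}_x:=V^i_x\cap U^i_x .
\]
Invariance of each $E^{\chi_i}_x$ is immediate. The remaining work is to show that $E_x=\bigoplus_i E^{\chi_i}_x$, and that for $v\in E^{\chi_i}_x$ both limits $n\to\pm\infty$ equal $\chi_i$.

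\emph{Step 4: transversality and the characterization.} Transversality, i.e.\ $V^{i-1}_x\cap U^i_x=\{0\}$, is proved by a volume argument. The sums of exponents on $\Lambda^p$ computed forward and backward must match, since $\log|\det F^n_x|/n$ has the same limit $\sum_j\lambda_j$ in both time directions. By Poincar\'e recurrence applied to sets where the angles are bounded below, a nontrivial intersection would force a vector that grows at rate $\le\chi_{i-1}$ forward and $\ge\chi_i$ backward along a recurrent subsequence. This contradicts tempered (subexponential) behaviour of the angles, which follows from $\frac1n\log\|F_{f^nx}\|\to0$. Dimension counting then gives $\dim V^i_x+\dim U^{i+1}_x=d$, and hence the direct sum.

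Finally, the equivalence in the statement follows. The forward direction holds because a vector with the two-sided limit $\chi_i$ lies in $V^i\setminus V^{i-1}$ and in $U^i\setminus U^{i+1}$. Conversely, decomposing a vector along the splitting and using transversality shows that only the components in $E^{\chi_i}$ can be present when both limits equal $\chi_i$.
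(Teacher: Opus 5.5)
The paper gives no proof of this proposition. It is recalled as the classical invertible Oseledets theorem, to be read for $\mu$-a.e.\ $x$ and with $F$ fiberwise invertible, as you assume. Your overall route is the standard one: one-sided filtrations for $F$ over $f$ and for $F^{-1}$ over $f^{-1}$ via Kingman and Raghunathan, then $E^{\chi_i}_x:=V^i_x\cap U^i_x$. Steps 1--3 and the closing linear algebra are fine; once $E_x=V^{i-1}_x\oplus U^i_x$, the modular law gives $V^i_x=V^{i-1}_x\oplus E^{\chi_i}_x$. Step 2, which you flag as the main obstacle, is Raghunathan's lemma and can simply be quoted.

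One correction concerns the dimension count. The identity $\dim V^{i-1}_x+\dim U^i_x=d$ needs $F^{-1}$ to have exponents $-\chi_i$ with the \emph{same multiplicities}. This follows from $\|\Lambda^pA^{-1}\|=\|\Lambda^{d-p}A\|/|\det A|$ applied to $A=F^n_{f^{-n}x}$, together with $f$-invariance of $\mu$; the determinant alone only matches the total sum. That is what your ``volume argument'' actually gives, and it says nothing about transversality.

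The genuine gap is Step 4, which is the one non-quoted step of the deduction. As written it yields no contradiction.
\begin{itemize}
\item Since $F_x(V^{i-1}_x\cap U^i_x)=V^{i-1}_{fx}\cap U^i_{fx}$, a bad point has $V^{i-1}\cap U^i\neq\{0\}$ along its entire orbit. So the angle between these spaces is identically zero there.
\item Consequently there is no set where this angle is bounded below to recur to, and a zero angle is trivially tempered. Temperedness of Oseledets angles is a consequence of the splitting, not an input.
\item Nor is a single vector with forward rate $\le\chi_{i-1}$ and backward rate $\le-\chi_i$ contradictory pointwise.
\end{itemize}

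What is missing is uniformity of the \emph{forward growth constant on $V^{i-1}$} at the return points.
\begin{itemize}
\item Fix $0<2\varepsilon<\chi_i-\chi_{i-1}$ and set $C(y):=\sup_{m\ge 0}e^{-m(\chi_{i-1}+\varepsilon)}\|F^m_y|_{V^{i-1}_y}\|$. This is finite a.e.\ because $V^{i-1}_y$ is finite-dimensional and all its vectors have forward rate $\le\chi_{i-1}$.
\item Pick $K$ with $\mu(K)>0$ and $C\le C_0$ on $K$.
\item The bad set is $f$-invariant, hence conull if not null, by ergodicity. Take a bad $x$ whose backward orbit meets $K$ at infinitely many times $n$, and $0\neq v\in V^{i-1}_x\cap U^i_x$.
\item Then $w_n:=F^{-n}_xv\in V^{i-1}_{f^{-n}x}$, and $\|w_n\|\le C'e^{-n(\chi_i-\varepsilon)}\|v\|$ for all $n\ge 0$, with $C'$ depending on $x$ and $v$.
\item At the return times,
\[
\|v\|=\|F^n_{f^{-n}x}w_n\|\le C_0\,e^{n(\chi_{i-1}+\varepsilon)}\|w_n\|\le C_0C'\,e^{-n(\chi_i-\chi_{i-1}-2\varepsilon)}\|v\|,
\]
which is impossible for large $n$.
\end{itemize}
This gives $V^{i-1}_x\cap U^i_x=\{0\}$ a.e. Combined with the dimension count, the rest of your argument goes through.
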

The splitting $E = \bigoplus_i E^{\chi_i}$ is called the~\emph{Oseledets splitting} for the cocycle, and the numbers $\chi_i$ are called~\textit{Lyapunov exponents}. 

The following well-known result allows one to deduce~\emph{uniform growth} of cocycles from knowledge about exponents for~\emph{every} invariant measure. The proof is a corollary of a classical result on subadditive sequences (see~\cite{sch98} or~\cite[Chapter 4]{kal11}).  

\begin{lemma}[{see e.g.~\cite[Lemma 10]{dwx21}}]\label{lemma dwx}
Let $f \colon X \to X$ be a continuous map of a compact  metric space, and let $F \colon E \to E$ be a continuous linear cocycle over $f$. 
\begin{enumerate}
\item If for any $f$-invariant ergodic measure $\nu$, the maximal Lyapunov exponent 
$\chi^{\max}(F,\nu)$ satisfies
$\chi^{\max}(F,\nu) \le \chi$, then for any $\varepsilon > 0$, there exists
$n \in \mathbb{Z}^+$ such that
\[
\|F^n(x)\| \le e^{n(\chi+\varepsilon)}, \quad \forall\, x \in X.
\]
\item If for any $f$-invariant ergodic measure $\nu$, the minimal Lyapunov exponent $\chi^{\min}(F,\nu)$ satisfies
$\chi^{\min}(F,\nu) \ge \chi'$, then for any $\varepsilon > 0$, there exists
$n \in \mathbb{Z}^+$ such that
\[
\bigl\|F^n(x)^{-1}\bigr\|^{-1} \ge e^{n(\chi' - \varepsilon)}, \quad \forall\, x \in X.
\]
\end{enumerate}
\end{lemma}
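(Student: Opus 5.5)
We prove the first item; the second follows by applying the first to the inverse cocycle $x\mapsto F(x)^{-1}$ over $f^{-1}$. Indeed, the maximal exponent of the inverse cocycle with respect to $\nu$ is $-\chi^{\min}(F,\nu)$, and $\|F^{-n}(f^n x)\|=\|F^n(x)^{-1}\|$.

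Set $a_n(x)=\log\|F^n(x)\|$. By the cocycle property and submultiplicativity of the norm,
\[
a_{n+m}(x)\le a_n(f^m x)+a_m(x).
\]
So $(a_n)$ is a continuous subadditive sequence over $f$. Each $a_n$ is continuous and bounded because $F$ is a continuous bundle map over compact $X$. We may assume $F(x)$ is invertible, which is the setting of the lemma; otherwise we just use the upper bound.

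The key step is the uniform subadditive ergodic statement (Schreiber, Sturman–Stark):
\[
\limsup_{n\to\infty}\frac1n\max_{x\in X} a_n(x)\le \sup_{\nu}\ \lim_{n}\frac1n\int a_n\,d\nu ,
\]
where the supremum is over ergodic invariant $\nu$. By Kingman's theorem, the right-hand limit equals $\chi^{\max}(F,\nu)$. Given the hypothesis, this produces $n$ with $\max_x a_n(x)\le n(\chi+\varepsilon)$, which is exactly the claim.

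To prove the key inequality, argue by contradiction. Suppose there are $n_k\to\infty$ and points $x_k$ with $a_{n_k}(x_k)\ge n_k(\chi+2\varepsilon)$. Form the empirical measures $\mu_k=\frac1{n_k}\sum_{j<n_k}\delta_{f^jx_k}$, and take a weak-* limit $\mu$, which is $f$-invariant.

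Fix $m$. Subadditivity gives, up to an $O(m\max|a_1|)$ boundary error,
\[
a_{n_k}(x_k)\le \frac1m\sum_{j<n_k}a_m(f^jx_k)+O(m).
\]
Dividing by $n_k$ and letting $k\to\infty$, continuity of $a_m$ yields $\chi+2\varepsilon\le \frac1m\int a_m\,d\mu$. Letting $m\to\infty$ and using ergodic decomposition, together with the fact that $\frac1m\int a_m\,d\mu$ decreases to the average of $\chi^{\max}$ over the ergodic components, some ergodic component has exponent at least $\chi+2\varepsilon$. This is a contradiction.

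The main technical point is the bookkeeping in the block decomposition: for each residue $r$ modulo $m$, split the orbit into blocks of length $m$ starting at $r$, bound the leftover pieces by $2m\max|a_1|$, and average over $r$. Alternatively, since the paper cites this result, one can simply invoke \cite{sch98} for the key inequality.
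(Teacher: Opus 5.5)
Your proposal is correct and takes the same route the paper indicates: the paper gives no argument beyond citing Schreiber's uniform subadditive inequality (\cite{sch98}, \cite[Chapter 4]{kal11}), and your empirical-measure and block-decomposition sketch is the standard proof of that inequality. One small point: the lemma only assumes $f$ continuous, so for item~(2) you should apply your argument directly to $\log\|F^n(x)^{-1}\|$, which is subadditive over $f$ itself because $F^{n+m}(x)^{-1}=F^m(x)^{-1}F^n(f^mx)^{-1}$; passing to the inverse cocycle over $f^{-1}$ only works when $f$ is a homeomorphism, which is the paper's setting but not what the lemma states.
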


\subsection{Partially hyperbolic systems on 3-manifolds}
Let $f \colon M \to M$ be a $C^1$ diffeomorphism of a closed 3-manifold $M$. We say that $f$ is~\textit{partially hyperbolic} if there exists a continuous $Df$-invariant splitting of the tangent bundle $TM = E^s \oplus E^c \oplus E^u$ such that
\[
\|Df^k(x)|_{E^s}\|
<
\min\bigl\{\|Df^k(x)|_{E^c}\|, 1\bigr\}
\le
\max\bigl\{\|Df^k(x)|_{E^c}\|, 1\bigr\}
<
\|Df^k(x)|_{E^u}\|
\]
for some $k \in \mathbb{Z}^+$ and every $x \in M$.

We denote by $\mathcal{W}^s$ and $\mathcal{W}^u$ the stable and unstable foliations, tangent respectively to $E^s$ and $E^u$; recall that they are always H\"older continuous. When $f$ is  dynamically coherent, we denote by $\mathcal{W}^{cs}$ and $\mathcal{W}^{cu}$ the foliations tangent to $E^{cs} := E^s \oplus E^c$ and $E^{cu} := E^c \oplus E^u$ respectively, and we denote by $\mathcal{W}^c$ the center foliation tangent to $E^c$. 
\medskip
\paragraph{\textbf{Accessibility classes}}
The foliations $\mathcal{W}^s$ and $\mathcal{W}^u$ induce an equivalence relation on $M$: we say that $x, y \in M$ are in the same~\textit{accessibility class} if they can be joined by an $su$-path, that is, a piecewise $C^1$ path such that each piece is contained in a single $\mathcal{W}^s$-leaf or a single $\mathcal{W}^u$-leaf.
The diffeomorphism $f$ is called~\textit{accessible} if the whole manifold $M$ itself is an accessibility class.

Recently, Fenley--Potrie~\cite{fp25} obtained a full classification of volume-preserving partially hyperbolic diffeomorphisms on a 3-manifold $M$ up to leaf conjugacy. In particular, their results imply that every $C^2$ volume-preserving partially hyperbolic diffeomorphism on a 3-manifold with non-solvable fundamental group is accessible. The accessibility classes in the case where $M$ has solvable fundamental group were studied by Hammerlindl~\cite{ham17}. Combining these results, we obtain the following classification of accessibility classes.

\begin{proposition}[{see~\cite[Theorem~3.4]{ham17},~\cite[Corollary~1.1]{fp25}}]\label{prop ham17hp25}
Let $f \colon M \to M$ be a $C^2$ volume-preserving partially hyperbolic diffeomorphism on a $3$-manifold. Up to considering a finite iterate and finite covers, one of the following occurs:
\begin{enumerate}
    \item\label{cas_accessib} $f$ is accessible;
    \item $E^s \oplus E^u$ is integrable and $f$ is topologically conjugate to one of the following:
    \begin{itemize}
        \item an affine automorphism on $\mathbb T^3$;
        \item the time-one map of a suspension Anosov flow;
    \end{itemize}
    \item\label{cas_trois} there exist $n \geq 1$, a $C^1$ surjection $p \colon M \to \mathbb{S}^1$, and an open set $\emptyset\neq V \subset \mathbb{S}^1$ such that
    \begin{itemize}
        \item for every connected component $I$ of $V$, the set $p^{-1}(I)$ is an $f^n$-invariant subset homeomorphic to $\mathbb{T}^2 \times I$, and the restriction $f^n|_{p^{-1}(I)}$ is accessible and ergodic;
        \item for every $t \in \mathbb{S}^1 \setminus V$, the fiber $p^{-1}(t)$ is an $f^n$-invariant $2$-torus tangent to $E^s \oplus E^u$.
    \end{itemize}
\end{enumerate}
\end{proposition}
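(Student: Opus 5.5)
This classification is obtained by combining the two cited results rather than by a new dynamical argument; the work is in checking that their hypotheses and conclusions fit together.

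The first step is to split according to the topology of $M$. If $\pi_1(M)$ is not solvable, we invoke \cite[Corollary~1.1]{fp25}. Its leaf-conjugacy classification of $C^2$ volume-preserving partially hyperbolic diffeomorphisms implies that $f$ is accessible, which is case~\eqref{cas_accessib}. The reason is as follows. Non-accessibility together with volume preservation produces a compact lamination by $su$-leaves. In the non-solvable case this lamination would have to consist of tori tangent to $E^s\oplus E^u$, and hence of incompressible tori, which \cite{fp25} rules out.

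It therefore remains to treat $M$ with solvable fundamental group. Here we use Hammerlindl's analysis \cite[Theorem~3.4]{ham17} of the non-accessible case; after a finite cover and iterate, we may assume that all the relevant bundles are orientable. Hammerlindl shows that if $f$ is not accessible, then the set $\Gamma(f)$ of non-open accessibility classes is a nonempty compact $f$-invariant lamination by $su$-leaves, every leaf of which is a $2$-torus. He also shows that either $\Gamma(f)=M$, or $M$ fibers over the circle with these tori appearing as fibers.

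In the first situation $E^s\oplus E^u$ is integrable, and the foliation is by tori. Hammerlindl's results then give a topological conjugacy to an affine automorphism of $\mathbb T^3$ or to the time-one map of a suspension Anosov flow. In the second situation we obtain the fibration $p\colon M\to\mathbb S^1$, which we take to be $C^1$ since the tori are $C^1$ leaves. We set $V$ to be the complement of $p(\Gamma(f))$. Each component $p^{-1}(I)$ is then an open accessibility class, homeomorphic to $\mathbb T^2\times I$. Replacing $f$ by a power $f^n$ that fixes each of these components, and using the fact that finitely many or countably many components are permuted while preserving volume, we get $f^n$-invariance. Ergodicity of $f^n$ on $p^{-1}(I)$ follows from accessibility via the Burns--Wilkinson/Hertz--Hertz--Ures ergodicity criterion in dimension~$3$ with one-dimensional center.

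The main obstacle is the bookkeeping needed to choose a single $n$ that works for all components. The components are permuted by $f$, and finiteness of orbits follows from volume preservation, since each component has positive measure and components in the same orbit have equal measure. Once this is settled, the proposition follows by direct citation.
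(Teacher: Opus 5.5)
Your overall route is the paper's. The paper gives no argument beyond combining \cite[Corollary~1.1]{fp25} (non-solvable $\pi_1(M)$ forces accessibility) with \cite[Theorem~3.4]{ham17} (the solvable case). Quoting these two results as stated is already a complete proof, since everything in case~\eqref{cas_trois} is part of Hammerlindl's conclusion: the $C^1$ surjection $p$, the integer $n$, and ergodicity. The problem is the extra reasoning you add to fit them together. It contains a false claim, and it does not close the step you yourself call the main obstacle.

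First, it is not true that every leaf of $\Gamma(f)$ is a $2$-torus. Take a hyperbolic $A\in SL(3,\mathbb{Z})$ with three real eigenvalues of distinct moduli. It is a non-accessible partially hyperbolic map with $\Gamma(f)=M$ foliated by dense planes tangent to $E^s\oplus E^u$. This Anosov subcase of case~(2) is exactly the one the paper later treats via Gan--Shi in the proof of Theorem~\ref{theorem main}. So your sentence ``the foliation is by tori'' contradicts the conclusion you then cite. Your explanation of the non-solvable case is also backwards. Tori tangent to $E^s\oplus E^u$ are Anosov tori, and by Rodriguez Hertz--Rodriguez Hertz--Ures these only occur when the fundamental group is solvable. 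So they are already excluded in the non-solvable case. What \cite{fp25} rules out there is an $su$-lamination \emph{without} compact leaves.

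Second, for the common iterate, volume preservation only shows that each component $p^{-1}(I)$ has a finite $f$-orbit. A priori $V$ may have infinitely many components, and finite orbits do not give a common period. The missing idea is the cyclic order:
\begin{enumerate}
\item $f$ permutes the $su$-tori preserving their order along the transverse circle, so it induces a circle homeomorphism with periodic points.
\item Such a map has rational rotation number, and all its periodic orbits share one period $q$ (period at most $2$ if the transverse orientation is reversed). This gives $n$.
\item Continuity and volume preservation then give $f^n$-invariance of every torus $p^{-1}(t)$ with $t\notin V$, which you do not address at all.
\end{enumerate}
Similarly, $C^1$ regularity of $p$ does not follow from the tori being $C^1$ leaves. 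The map $p$ must be $C^1$ transversally across a family of tori that may accumulate on one another.

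Either supply these arguments, or, as the paper does, drop the re-derivations and take case~\eqref{cas_trois} directly from \cite[Theorem~3.4]{ham17}.
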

We record here some more properties about the non-accessible cases. 
\begin{lemma}[{see~\cite[Theorem~2.4]{ham17}}]\label{lemma ham measure}
Let $f \colon M \to M$ be a diffeomorphism as in case~\eqref{cas_trois} of Proposition~\ref{prop ham17hp25}. Then any $f^n$-invariant ergodic measure is either supported on an $f^n$-invariant $2$-torus or on the closure of an $f^n$-invariant open accessibility class. 
\end{lemma}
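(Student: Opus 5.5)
The statement is the final lemma (cited from Hammerlindl, \cite[Theorem~2.4]{ham17}). In case~\eqref{cas_trois} of Proposition~\ref{prop ham17hp25}, every ergodic $f^n$-invariant measure is carried either by one invariant $su$-torus or by the closure of one invariant open accessibility class. My plan is to read this off from the fibered structure given by the $C^1$ surjection $p\colon M\to\mathbb{S}^1$, using ergodicity to force the pushforward measure onto a single invariant piece of the circle.

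\textbf{Step 1: the induced circle map.} Set $g=f^n$ and let $\mu$ be $g$-ergodic. Because $g$ permutes the fibers of $p$, it induces a circle homeomorphism $\bar g$ with $p\circ g=\bar g\circ p$. By the description in Proposition~\ref{prop ham17hp25}:
\begin{itemize}
\item each fiber $p^{-1}(t)$ with $t\in \mathbb{S}^1\setminus V$ is $g$-invariant, so $\bar g(t)=t$ on the closed set $K:=\mathbb{S}^1\setminus V$;
\item each component $I$ of $V$ has $p^{-1}(I)$ $g$-invariant, so $\bar g(I)=I$.
\end{itemize}

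\textbf{Step 2: the pushforward measure.} The measure $\nu:=p_*\mu$ is $\bar g$-invariant and ergodic. The decomposition $\mathbb{S}^1=K\sqcup\bigsqcup_I I$ consists of $\bar g$-invariant Borel sets, and each point of $K$ is individually invariant. Ergodicity therefore forces one of two alternatives:
\begin{enumerate}
\item $\nu$ is a Dirac mass $\delta_t$ with $t\in K$. Then $\mu$ is supported on the invariant torus $p^{-1}(t)$, which is tangent to $E^s\oplus E^u$.
\item $\nu(I)=1$ for a single component $I$. Then $\mu$ gives full mass to $p^{-1}(I)\cong \mathbb{T}^2\times I$, and $\operatorname{supp}\mu\subset \overline{p^{-1}(I)}$.
\end{enumerate}

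\textbf{Step 3: identifying $p^{-1}(I)$ as an accessibility class.} By hypothesis $g|_{p^{-1}(I)}$ is accessible. Hence $p^{-1}(I)$ is a single accessibility class: it is open, since $p$ is continuous and $I$ is open, and it is $g$-invariant. The support of $\mu$ therefore lies in the closure of an $f^n$-invariant open accessibility class, which is the second alternative of the lemma.

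\textbf{Main obstacle.} The only genuinely delicate point is the meaning of ``supported on the closure'' in Step 2(2). The measure $\mu$ might charge only a smaller invariant set inside $\overline{p^{-1}(I)}$, so "supported" has to be read as "$\operatorname{supp}\mu\subset\overline{p^{-1}(I)}$", which is exactly what the argument gives. The other thing to check is that the boundary tori of $p^{-1}(I)$ lie in $K$. This holds because the endpoints of $I$ are not in the open set $V$, so the boundary fibers are invariant $su$-tori. This suffices, since a measure giving full mass to $p^{-1}(I)$ has support inside its closure.
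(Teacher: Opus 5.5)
Your argument is correct in substance, but it takes a different route from the paper, which gives no proof of this lemma and simply imports it from \cite[Theorem~2.4]{ham17}. You show instead that, once the structure in case~\eqref{cas_trois} of Proposition~\ref{prop ham17hp25} is granted, the lemma follows formally from ergodicity. The only inputs are the $f^n$-invariance of the regions $p^{-1}(I)$ and of the individual fibres $p^{-1}(t)$ for $t\in\mathbb{S}^1\setminus V$. This is more transparent than a bare citation. It also makes explicit that ``supported on'' has to mean $\mathrm{supp}\,\mu\subset\overline{p^{-1}(I)}$ (resp.\ $\mathrm{supp}\,\mu\subset p^{-1}(t)$), which is exactly what you obtain.

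One step should be repaired. In Step~1 you assert that $g=f^n$ permutes the fibres of $p$ and so induces a circle homeomorphism $\bar g$ with $p\circ g=\bar g\circ p$. Proposition~\ref{prop ham17hp25} does not give this. Over a component $I$ of $V$, the fibres of $p$ are just some tori filling $p^{-1}(I)\cong\mathbb{T}^2\times I$, and only the region as a whole is asserted to be invariant.

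You do not need $\bar g$; work on $M$ directly. The sets $p^{-1}(I)$ are countably many pairwise disjoint $g$-invariant Borel sets. So either one of them has full $\mu$-measure, or $\mu(p^{-1}(K))=1$ with $K=\mathbb{S}^1\setminus V$. In the latter case, for every Borel $B\subset K$ the set $p^{-1}(B)$ is a union of $g$-invariant fibres, hence $g$-invariant. Therefore $B\mapsto\mu(p^{-1}(B))$ is a $\{0,1\}$-valued Borel probability measure on $K$, i.e.\ a Dirac mass $\delta_t$. Thus $\mu(p^{-1}(t))=1$, and since the fibre is closed, $\mathrm{supp}\,\mu\subset p^{-1}(t)$. (Equivalently, $p$ is a $g$-invariant function on a set of full measure, hence $\mu$-a.e.\ constant.)

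In Step~3, your observation that the boundary tori lie over $K$ is needed for a different reason than the one you give. Accessibility of $g|_{p^{-1}(I)}$ only says that $p^{-1}(I)$ is contained in one accessibility class. What prevents $su$-paths from leaving $p^{-1}(I)$ is that its frontier lies in the $su$-tori $p^{-1}(\partial I)$, which are saturated by $\mathcal{W}^s$- and $\mathcal{W}^u$-leaves. That is what identifies $p^{-1}(I)$ with a whole open accessibility class.
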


\begin{lemma}[{see~\cite[Theorem 5.1]{ham17}}]\label{lemma ab dc}
    Let $f \colon M \to M$ be a diffeomorphism as in case~\eqref{cas_trois} of Proposition~\ref{prop ham17hp25}; then $f$ is dynamically coherent. 
\end{lemma}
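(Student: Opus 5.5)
This statement is quoted from Hammerlindl~\cite[Theorem 5.1]{ham17}, so the plan is to reconstruct its proof from the structure given by case~\eqref{cas_trois} of Proposition~\ref{prop ham17hp25}. We need invariant foliations $\W^{cs}$ and $\W^{cu}$. It suffices to build them for $f^n$ on the finite cover: the constructed foliations will be unique, so they are invariant under $f$ and under deck transformations, and therefore descend to $M$.

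\textbf{Step 1: the $su$-tori.} The closed set $\mathbb{S}^1\setminus V$ gives a lamination by $f^n$-invariant tori $T_t=p^{-1}(t)$ tangent to $E^s\oplus E^u$. Each $T_t$ is a compact surface carrying the transverse $1$-dimensional foliations $\W^s|_{T_t}$ and $\W^u|_{T_t}$. The restriction $f^n|_{T_t}$ is therefore an Anosov diffeomorphism of $\TT^2$, hence topologically conjugate to a linear hyperbolic automorphism $A$. The matrix $A$ is the same for every $t$, because all tori are isotopic fibres of $p$.

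\textbf{Step 2: the complementary regions.} On each component $p^{-1}(I)\cong\TT^2\times I$, I would work in the universal cover $\RR^2\times I$ and use a Brin-type argument. The key input is that the tori bounding the region are tangent to $E^s\oplus E^u$, which forces $E^c$ to be transverse to the fibres of $p$. I would then define the center-stable leaf through $x$ as the set of points whose forward orbits stay at bounded distance from the orbit of $x$ in the lifted $\TT^2$-direction. Hyperbolicity of $A$, together with semiconjugacy of $f^n$ to $A\times(\text{map on }I)$, should make this set a properly embedded plane (times the center direction), and hence a leaf. An alternative is to obtain the leaves as limits of $f^{-k}$-iterates of a smooth surface foliation transverse to $E^u$, whose tangent planes converge to $E^{cs}$ by domination. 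Compactness of the leaf space in the $I$-direction, plus the bounded-distance characterization, would give uniqueness and a genuine foliation.

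\textbf{Step 3: gluing.} Near each boundary torus $T_t$, the $cs$-leaves of the region should accumulate onto $\W^s|_{T_t}\times$(center segments), because the boundary orbits are shadowed by $A$ on $T_t$. Points of $T_t$ receive the $cs$-leaf made of $\W^s$-leaves on $T_t$ extended by center curves. Invariance and continuity across the tori follow from the uniqueness in Step 2. The same construction applied to $f^{-n}$ gives $\W^{cu}$.

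I expect the main obstacle to be Step 2: proving that the bounded-distance sets are honest $C^1$ leaves tangent to $E^{cs}$ without branching. This requires controlling center curves inside a region where $f^n$ is only accessible and ergodic and no global topological model is yet available. I would first try to show that $f^n$ on $\RR^2\times I$ is semiconjugate to $A\times\mathrm{id}$ in the $\RR^2$-factor. Leaves would then be preimages of $A$-stable lines, and properness of the semiconjugacy would prevent branching.
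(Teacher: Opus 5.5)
Your Step~1 is fine, and the semiconjugacy you hope for in Step~2 is routine. Since $f^n$ preserves $p^{-1}(\bar I)$ and acts on the torus factor through the hyperbolic matrix $A$, Franks' shadowing argument gives a continuous $H$ on the lift with $H\circ \tilde f^n=A\circ H$ and $H-\mathrm{pr}_{\RR^2}$ bounded. The gap is in what you do with $H$. Your candidate $cs$-leaf through $x$ is $H^{-1}(H(x)+E^s_A)$, which a priori is merely a closed set. Showing that it is a $C^1$ surface tangent to $E^{cs}$, i.e.\ a single $cs$-leaf with no branching, is the entire content of dynamical coherence, and properness of $H$ says nothing about it. Likewise, limits of $f^{-k}$-iterates of a transverse surface foliation only produce a Burago--Ivanov \emph{branching} foliation, and whether it branches is again exactly the question. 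Your ``key input'' is also not available as stated: only the fibres $p^{-1}(t)$ with $t\notin V$ are tangent to $E^s\oplus E^u$, and nothing forces $E^c$ to be transverse to the fibres over $I$. Step~3 (extending stable leaves of $T_t$ ``by center curves'') and the uniqueness you use to descend from the finite cover presuppose the same missing integrability.

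More fundamentally, volume preservation never enters your argument, and an argument of this kind cannot succeed without it. The non-dynamically coherent example of Rodriguez Hertz--Rodriguez Hertz--Ures on $\TT^3$ is isotopic to $A\times\mathrm{id}$. It therefore carries exactly the topological data you use: a hyperbolic action on the torus factor and a Franks semiconjugacy onto $A$ in the $\RR^2$-factor. There, the obstruction comes from periodic tori tangent to $E^{cs}$ or $E^{cu}$.

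The missing idea is that such tori are excluded in the conservative setting. A periodic torus tangent to $E^{cs}$ (resp.\ $E^{cu}$) is normally expanded (resp.\ contracted), hence a repeller (resp.\ attractor) for some iterate, which is incompatible with preserving a volume. In case~(3), $M$ fibres over $\mathbb{S}^1$ with torus fibres, so $\pi_1(M)$ is solvable; alternatively, by Fenley--Potrie, non-accessibility already forces this. One can then apply the Hammerlindl--Potrie classification of partially hyperbolic diffeomorphisms on $3$-manifolds with solvable fundamental group, which gives dynamical coherence when there is no periodic attracting or repelling torus. The paper itself gives no argument and simply quotes \cite{ham17}. For a self-contained proof along your lines, you would have to run the branching-foliation argument and use the absence of $cs$/$cu$-tori exactly where you currently appeal to properness of $H$.
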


\begin{lemma}\label{lemma ks06}
Let $f \colon M \to M$ be a $C^\infty$ partially hyperbolic diffeomorphism on a 3-manifold $M$. If $S\subset M$ is a $C^1$ submanifold tangent to $E^s \oplus E^u$, then $S$ is a $C^\infty$ submanifold of $M$. Moreover, if $E^s\oplus E^u$ is integrable to a continuous foliation $\W^{su}$, then the  leaves of $\W^{su}$ are uniformly $C^\infty$. 
\end{lemma}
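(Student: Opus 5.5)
Since $S$ is tangent to $E^s\oplus E^u$ and $\dim S=2$, the idea is that $S$ is locally saturated by stable and unstable manifolds, which are $C^\infty$ curves because $f$ is $C^\infty$. Then $S$ looks locally like a product of two $C^\infty$ curve families, and the regularity comes from the smooth structure of these leaves.

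\emph{Step 1: $S$ is locally $s$- and $u$-saturated.} Fix $x\in S$. The line field $E^s|_S\subset TS$ is continuous, so it has integral curves in $S$. I would show that these integral curves are pieces of $\mathcal{W}^s$-leaves. Any integral curve of $E^s|_S$ is a $C^1$ curve in $M$ everywhere tangent to $E^s$. By unique integrability of $E^s$ (the standard fact that a $C^1$ curve tangent to $E^s$ lies in a single strong stable leaf, proved via the contraction/domination argument), it lies in $\mathcal{W}^s$. The same holds for $E^u$. Hence a neighborhood of $x$ in $S$ equals $\bigcup_{y\in \mathcal{W}^u_{loc}(x)}\mathcal{W}^s_{loc}(y)$. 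It also equals the corresponding $us$-product, with local leaves taken inside $S$.

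\emph{Step 2: regularity of the product parametrization.} Define $\Phi(t,r)=\mathcal{W}^s_{loc}(\gamma^u(t))$ evaluated at parameter $r$, where $\gamma^u$ is a $C^\infty$ parametrization of $\mathcal{W}^u_{loc}(x)$ and the stable leaves are parametrized by $C^\infty$ charts. Stable leaves are $C^\infty$, and they vary $C^\infty$ along unstable leaves in the following sense. Inside the $C^1$ surface $S$, the stable holonomy between two unstable curves is the holonomy of the $C^1$ foliation of $S$ by stable curves. I would then use Journé's lemma: a function on $S$ that is uniformly $C^\infty$ along two transverse continuous foliations with uniformly $C^\infty$ leaves is $C^\infty$. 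To apply it, take a local $C^\infty$ function $g$ on $M$ whose differential is nonzero on $E^c$ at $x$, and consider $S$ locally as a graph $\{z=h(p)\}$ over a $C^\infty$ surface $\Sigma$ transverse to $E^c$ near $x$. Projecting along the $z$-direction, the $s$- and $u$-leaves in $S$ project to two transverse continuous foliations of $\Sigma$ with uniformly $C^\infty$ leaves, because the projection is a local $C^\infty$ diffeomorphism on each leaf. The function $h$ restricted to each projected leaf is $C^\infty$, uniformly, since the leaves themselves are uniformly $C^\infty$ curves in $M$. Journé's lemma then gives $h\in C^\infty$, so $S$ is $C^\infty$.

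\emph{Step 3: the foliation case.} If $E^s\oplus E^u$ integrates to a continuous foliation $\mathcal{W}^{su}$, its leaves are $C^1$ surfaces tangent to $E^s\oplus E^u$, so Step 2 applies leafwise. The uniform $C^k$ bounds in Journé's lemma depend only on uniform $C^k$ bounds for the $s$- and $u$-leaves and on the uniform transversality angle between them. Both are uniform on $M$ by compactness and continuity of the splitting. This gives uniform $C^\infty$ leaves.

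The main obstacle is Step 2. One has to check carefully that the projected foliations are genuinely continuous foliations with uniformly $C^\infty$ leaves, and that the leafwise derivatives of $h$ vary continuously, so that the hypotheses of Journé's lemma are met. I would rely on the continuity of $C^k$ jets of stable and unstable leaves, which holds for $C^\infty$ partially hyperbolic diffeomorphisms.
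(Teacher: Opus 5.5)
Your proposal is correct and follows essentially the same route as the paper, which simply cites Kalinin--Sadovskaya's Lemma~4.1: write the $C^1$ surface as a graph in smooth local coordinates, note that the graph function is uniformly $C^\infty$ along the two transverse continuous foliations by projected stable and unstable curves, and apply Journ\'e's lemma, with uniformity for the leaves of $\mathcal{W}^{su}$ coming from compactness. One small slip, which your Journ\'e argument does not use: the foliation of $S$ by stable curves is only a continuous foliation with $C^\infty$ leaves, not a $C^1$ foliation.
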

\begin{proof}
The lemma is  proved by Kalinin--Sadovskaya in a more general form~\cite[Lemma 4.1]{ks06} via a beautiful  application of Journé's regularity lemma~\cite{jou88}.
\end{proof}

\paragraph{\textbf{Bunching and strong bunching}}
We then recall the bunching and strong bunching conditions for partially hyperbolic diffeomorphisms. We only focus on the 3-dimensional case for convenience (see~\cite{wil13} for the general case). 

Let $f \colon M \to M$ be a $C^\infty$ partially hyperbolic diffeomorphism of a 3-manifold $M$. For any $x \in M$ and $* = s,c,u$, denote
\[\lambda^\ast_x(n):=\|D f^n(x)|_{E^\ast}\|,\quad \forall\, n \in \ZZ.\]
We also abbreviate $\lambda^\ast_x:=\lambda^\ast_x(1)$. 
\begin{definition}
For any given $r \geq 1$, we say that $f$ is~\emph{$r$-bunched} if
\[
\lambda_x^s(k) < \bigl(\lambda_x^c(k)\bigr)^r < \lambda_x^u(k),
\quad
\lambda_x^s(k) < \bigl(\lambda_x^c(k)\bigr)^{1-r} < \lambda_x^u(k),\quad \forall\, x \in M
\]
for some $k \in \mathbb{Z}^+$. We say that $f$ is~\emph{strongly $r$-bunched} on if $f$ is $r$-bunched and 
\[\lambda_x^s(k) < \bigl(\lambda_x^c(k)\bigr)^{-r} < \lambda_x^u(k)\quad \forall\, x \in M\]
for some $k \in \mathbb{Z}^+$. Moreover, $f$ is (strongly) $\infty$-bunched if $f$ is (strongly) $r$-bunched for every $r \geq 1$. 
\end{definition}
It is direct to check that if $f$ is volume-preserving, then $f$ is strongly $1$-bunched.

\begin{proposition}[see~\cite{hps77,psw97}]\label{prop r-bunched}
If $f$ is dynamically coherent and $r$-bunched, then
    \begin{enumerate}
        \item the leaves of $\mathcal{W}^{cs}$, $\mathcal{W}^{cu}$, and $\mathcal{W}^c$ are $C^r$; 
        \item the distributions $E^s$ and $E^u$ are $C^{r-\epsilon}$ along $\mathcal{W}^c$ for any $\epsilon>0$. 
    \end{enumerate}
\end{proposition}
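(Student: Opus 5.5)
The plan is to derive both assertions from the $C^r$ section theorem of Hirsch--Pugh--Shub~\cite{hps77}, applied to a fiber contraction over a base with explicitly controlled expansion, and then to read off the rate conditions from the $r$-bunching inequalities. Throughout, replace $f$ by $f^k$, where $k$ is as in the definition of $r$-bunching. Fix smooth local charts, and write nearby leaves locally as graphs of functions with small Lipschitz constant; for instance a $\W^{cs}$-plaque is the graph of a map $E^{cs}\to E^u$. Dynamical coherence guarantees that these plaque families are genuinely invariant under the graph transform, rather than merely locally invariant fake foliations.

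\emph{Step 1: regularity of the leaves.} Consider $\W^{cu}$ first. Its plaques are fixed by the (backward) graph transform acting on graphs $E^{cu}\to E^s$. At the level of $j$-jets, $j\le r$, the induced bundle map contracts fibers at rate roughly $\lambda^s_x\cdot m(Df|_{E^c})^{-j}$, where $m$ denotes the conorm. It is therefore a fiber contraction over the base precisely when $\lambda^s_x < (\lambda^c_x)^{r}$ when $E^c$ is contracting, and when $\lambda^s_x<(\lambda^c_x)^{1-r}$-type control holds in general. Since $E^c$ is one-dimensional, $m(Df|_{E^c})=\lambda^c$. The case of $\W^{cs}$ is symmetric, using forward iterates and the inequalities with $\lambda^u$ on the right.

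The $C^r$ section theorem then gives a continuous invariant section of the $r$-jet bundle. By the standard uniqueness and integration argument, this section consists of the derivatives of the plaques, so the leaves are $C^r$, uniformly in $x$. The center leaves are the intersections $\W^c=\W^{cs}\cap\W^{cu}$, which are transverse, so they are also $C^r$.

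\emph{Step 2: regularity of $E^s$ along $\W^c$ (and symmetrically $E^u$).} Restrict to a $\W^{c}$-leaf, which is a $C^r$ curve by Step 1. View $E^s|_{\W^c}$ as a section of the projectivized bundle $\PP(E^s\oplus E^u)$, or equivalently as a graph map $E^s\to E^{u}$. Under $Df^{-1}$ along the curve, this section is invariant, with fiber contraction rate $\lambda^s_x/\lambda^u_x$ and base dilation along $\W^c$ of order $(\lambda^c_x)^{\pm1}$. The $C^j$ section theorem requires that the fiber contraction beat the $j$-th power of the base Lipschitz constant, i.e.
\[
\lambda^s_x\,(\lambda^c_x)^{-j}<\lambda^u_x\quad\text{and}\quad \lambda^s_x<\lambda^u_x\,(\lambda^c_x)^{j}.
\]
These follow from the $r$-bunching inequalities for $j<r$ after multiplying the two inequalities appropriately. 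The section theorem yields $C^{j}$ for every $j<r$, together with Hölder control of the top derivative whenever the inequalities hold with room to spare, hence $C^{r-\epsilon}$. The loss of $\epsilon$ comes from the facts that the base (the center leaf) is only $C^r$ and that the strict inequalities degenerate at the endpoint.

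\emph{Main obstacle.} The delicate point is Step 2: verifying that the pointwise, non-constant rates match the hypotheses of the section theorem. This is handled by working with $f^k$ and pointwise (rather than global) bunching in the form of~\cite{psw97}. The second delicate point is ensuring that the base dynamics along center leaves is regular enough, which is exactly what Step 1 provides. I would first try to run the argument via~\cite[Theorem~B]{psw97} directly on the $su$-holonomy-type section, and fall back on the explicit jet-bundle computation if the rate bookkeeping fails.
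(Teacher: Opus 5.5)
\paragraph{Step 2 has a genuine gap.} You realize $E^s|_{\W^c}$ as a section of $\PP(E^s\oplus E^u)$, or as graphs $E^s\to E^u$. The $C^j$ section theorem, however, needs the ambient bundle and the bundle map to be $C^j$ along the base. Nothing is known about the regularity of $E^s\oplus E^u$ or of $E^u$ along $\W^c$; that is essentially the conclusion you want. Nor can $E^s\oplus E^u$ be treated first on its own, since it is a saddle-type invariant section of the Grassmannian. As a result, your fiber rate $\lambda^s/\lambda^u$ is the wrong one.

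The correct setting is the smooth bundle $\PP(TM)$, pulled back to the $C^r$ center curves from Step 1.
\begin{itemize}
\item Under $Df^{-1}$, the direction competing with $E^s$ is $E^c$, not $E^u$. So nearby lines are attracted to $E^s$ at rate $\kappa=\lambda^s/\lambda^c$.
\item The base map $f^{-1}|_{\W^c}$ has inverse $f|_{\W^c}$, whose Lipschitz constant is the single value $\mu=\lambda^c$, not $(\lambda^c)^{\pm1}$.
\item The condition $\kappa\mu^j<1$ then reads $\lambda^s<(\lambda^c)^{1-j}$. For every $j\le r$ this follows from the hypothesis $\lambda^s<(\lambda^c)^{1-r}$ together with $\lambda^s<1$.
\item Symmetrically, for $E^u$ under $Df$ one has $\kappa=\lambda^c/\lambda^u$ and $\mu=1/\lambda^c$, and the condition is $(\lambda^c)^{1-r}<\lambda^u$.
\end{itemize}
This is exactly why the second pair of inequalities appears in the paper's definition of $r$-bunching. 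With the correct rates no ``multiplying of inequalities'' is needed: the required estimate is literally part of the hypothesis. Your displayed conditions do not correspond to it, and they are in fact the same inequality written twice.

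\paragraph{Step 1.} The paper gives no argument beyond citing Hirsch--Pugh--Shub and Pugh--Shub--Wilkinson. Your Step 1 is essentially the standard argument, with two corrections.
\begin{itemize}
\item The $\W^{cu}$ rate $\lambda^s(\lambda^c)^{-j}$ is that of the \emph{forward} graph transform, not the backward one.
\item The condition $\lambda^s<(\lambda^c)^j$ for all $j\le r$ is equivalent to $\lambda^s<(\lambda^c)^r$ in every case; it is automatic when $\lambda^c\ge 1$. So leaf regularity uses only $\lambda^s<(\lambda^c)^r<\lambda^u$, and the $(\lambda^c)^{1-r}$ inequalities play no role there.
\end{itemize}
To identify the invariant jet section with the actual leaves without any uniqueness issue for plaque families, you can bootstrap through the tangent bundle. $E^{cu}$ is the unique continuous $Df$-invariant section of the Grassmannian near it, and it attracts at rate $\lambda^s/\lambda^c$. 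If the $\W^{cu}$-leaves are $C^j$, the leafwise section theorem (base Lipschitz constant $1/\lambda^c$) makes $E^{cu}$ $C^j$ along them provided $\lambda^s<(\lambda^c)^{j+1}$, so the leaves are $C^{j+1}$. The same works for $\W^{cs}$ under the condition $(\lambda^c)^{j+1}<\lambda^u$. Center leaves are then $C^r$ as transverse intersections.
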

\subsection{Cohomological equation and Livshits theorem}
Recall that for a dynamical system $f \colon M \to M$ and a function $\phi \colon M \to \mathbb{R}$, if there exists a function $\Phi\colon M \to \mathbb{R}$ such that
\begin{equation}\label{eq:cohomological}
\phi = \Phi \circ f - \Phi,
\end{equation}
then $\Phi$ is called a solution of the~\textit{cohomological equation}~\eqref{eq:cohomological}, and $\phi$ is called a coboundary.

We then present a Livshits type theorem for a partially hyperbolic diffeomorphism on a 3-manifold, which is a special case of~\cite[Theorem A]{wil13}. 
\begin{proposition}\label{prop wil13}
Let $f \colon M \to M$ be a partially hyperbolic diffeomorphism on a 3-manifold. Let $U \subset M$ be an open accessibility class such that $f|_U$ is bijective and ergodic.
\begin{enumerate}
    \item If $f$ is $C^2$ and volume-preserving, $\phi$ is H\"older continuous, and there exists a measurable solution $\Phi$ of equation~\eqref{eq:cohomological}, then there exists a continuous solution $\tilde{\Phi}$ such that $\tilde{\Phi} = \Phi$ almost everywhere.
    \item If $f$ and $\phi$ are $C^k$, $f$ is strongly $r$-bunched for some $k \geq 2$ with $r < k - 1$, or $r = 1$, and $\Phi$ is a continuous solution of equation~\eqref{eq:cohomological}, then $\Phi$ is of class $C^r$.
\end{enumerate}
\end{proposition}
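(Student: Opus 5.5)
The plan is to deduce both items from~\cite[Theorem A]{wil13}. The only real task is to check that its hypotheses hold for the restriction $f|_U$, and that its proof is local enough to run inside the open, $f$-invariant, $su$-saturated set $U$ rather than on all of $M$.

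\emph{Step 1 (setting).} Since $U$ is an accessibility class, it is saturated by $\mathcal{W}^s$- and $\mathcal{W}^u$-leaves. Since $f|_U$ is bijective, $U$ is $f$-invariant. Any two points of $U$ are joined by an $su$-path contained in $U$. Next I check the bunching hypotheses. As observed above, volume preservation in dimension $3$ gives strong $1$-bunching, and in particular center bunching. For item (2), strong $r$-bunching is assumed directly. Wilkinson's arguments, namely fake foliations, juliennes and $su$-path holonomies, only involve local $su$-paths and local charts around points of $U$. They therefore apply verbatim on $U$, with ergodicity of $f|_U$ (for the restricted volume) replacing global ergodicity.

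\emph{Step 2 (item (1)).} Let $\Phi$ be a measurable solution. For $y\in\mathcal{W}^s(x)$, Hölder continuity of $\phi$ and uniform contraction make the series
\[
\sum_{n\ge0}\bigl(\phi(f^nx)-\phi(f^ny)\bigr)
\]
converge absolutely and uniformly on local leaves. Using Lusin's theorem and Poincaré recurrence in the usual Hopf-type argument, this shows that $\Phi(x)-\Phi(y)$ equals this sum for almost every pair. In other words, $\Phi$ is essentially $\mathcal{W}^s$-saturated up to the continuous stable transfer function; the same holds along $\mathcal{W}^u$ with backward iterates. One then passes from ``essentially $s$- and $u$-invariant'' to ``agrees almost everywhere with a continuous function'' via Wilkinson's julienne density points. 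Center bunching implies that juliennes are density bases that are preserved by $su$-holonomies. Hence the set of points where $\Phi$ is ``julienne-continuous'' is $su$-saturated and of full measure in $U$. Accessibility of $U$ then lets us propagate values along $su$-paths using the transfer functions, which defines a continuous $\tilde\Phi$ on $U$ with $\tilde\Phi=\Phi$ almost everywhere.

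\emph{Step 3 (item (2)).} Let $\Phi$ be a continuous solution. The same series shows that $\Phi$ is uniformly $C^k$ along $\mathcal{W}^s$- and $\mathcal{W}^u$-leaves, because $\phi$ and $f$ are $C^k$ and the leaves are uniformly $C^k$. Regularity transverse to $E^s\oplus E^u$ is obtained by following $su$-quadrilaterals. Strong $r$-bunching makes the local $su$-holonomies between center transversals $C^r$ (via $C^r$ fake center-stable and center-unstable foliations). Consequently $\Phi$ is $C^r$ along the center direction. A Journé-type lemma then upgrades separate regularity along transverse foliations with uniformly smooth leaves to $C^r$ regularity of $\Phi$ on $U$. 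The case $r=1$ uses only strong $1$-bunching, which is available from volume preservation in item (1)'s setting and is what keeps that case unconditional.

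\emph{Main obstacle.} I expect the julienne density argument in Step 2 to be the hardest part, specifically verifying that the density-point machinery localizes to $U$. The concern is that juliennes centered at points of $U$ must stay inside $U$ at small scales. This holds because $U$ is open. A second point is that saturation of full-measure sets by local holonomies must not leak across $\partial U$; this holds because $U$ is $su$-saturated. I would check this first, and then quote the remaining estimates from~\cite{wil13}.
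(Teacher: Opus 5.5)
Your proposal takes the same route as the paper: both obtain the statement as a special case of Wilkinson's Theorem~A \cite{wil13}, and the only extra point is passing from $M$ to the open, $f$-invariant, $su$-saturated set $U$. The paper handles that point by quoting Wilkinson's own remark in \cite[Section~12]{wil13} that the proof works on non-compact manifolds whenever the partial hyperbolicity and strong bunching inequalities hold uniformly, as they do on $U$ since they are inherited from $M$; your direct locality argument agrees with this, and your sketches of the julienne and holonomy machinery are not needed beyond that citation.
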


\begin{remark}
The original statement~\cite[Theorem~A]{wil13} is given for the case where $U = M$. As noted in~\cite[Section~12]{wil13}, the proof applies to partially hyperbolic diffeomorphisms on non-compact manifolds provided that the inequalities in the definitions of partial hyperbolicity and strong $r$-bunching hold uniformly. Therefore, the theorem extends directly to $f$-invariant open accessibility classes as above.
\end{remark}

\section{Lipschitz regularity of $E^s\oplus E^u$ and center dynamics}\label{sec acc and iso}
In this section, we show that the Lipschitz regularity of $E^s \oplus E^u$ imposes strong rigidity on the center dynamics. More precisely, the Lipschitz distribution $E^s \oplus E^u$ defines a contact form and the associated contact volume form. By comparing this contact volume with the $f$-invariant volume form, we prove that the cocycle $Df|_{E^c}$ is a coboundary on each open accessibility class.

Let $f \colon M \to M$ be a $C^2$ volume-preserving partially hyperbolic diffeomorphism on a 3-manifold, and denote by $m$ the volume form preserved by $f$. By considering a finite lift and iterate if necessary, we always assume that $f$ preserves the orientation of $E^c$. 

If $E^s \oplus E^u$ is Lipschitz, by Lemma~\ref{lemma alpha}, we can choose a Lipschitz nowhere-vanishing $1$-form $\alpha$ with $\ker \alpha = E^s \oplus E^u$. Since $f$ preserve $E^s\oplus E^u$ and the orientation of $E^c$, there is a positive Lipschitz function $\rho\colon M \to \RR^+$ such that 
\begin{equation}\label{f_etoile_alpha}
    f^\ast \alpha = \rho \alpha.
\end{equation} 
Since $\alpha$ is Lipschitz, it is differentiable $m$-almost everywhere. Let $\theta = \alpha \wedge d\alpha$;  then $\theta$ is a measurable $3$-form and there exists a measurable function $h \colon M \to \mathbb{R}$ such that $\theta_x = h(x) m_x$ for every $x \in M$. 
\begin{lemma}\label{lemma hrho}
We have $h(f(x)) = \rho^{2}(x) h(x)$, for $m$-almost every $x \in M$.
\end{lemma}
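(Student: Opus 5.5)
The plan is to compute $f^\ast\theta$ in two ways and compare the results. On one side, $f$ preserves $m$, so $f^\ast m = m$. Pulling back pointwise, $(f^\ast\theta)_x = h(f(x))\,(f^\ast m)_x = h(f(x))\, m_x$ wherever $\theta$ is defined at $f(x)$. Since $f$ is a $C^2$ diffeomorphism preserving $m$, the set of points where $\alpha$ fails to be differentiable, and its preimage under $f$, are both $m$-null. So this identity holds for $m$-a.e.\ $x$.

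On the other side, use \eqref{f_etoile_alpha}, i.e.\ $f^\ast\alpha=\rho\alpha$. Pullback commutes with $d$ at every point where $\alpha$ is differentiable at $f(x)$, by the chain rule, because $f$ is $C^2$. This gives $f^\ast d\alpha = d(f^\ast\alpha) = d(\rho\alpha)$ almost everywhere. The function $\rho$ is Lipschitz, hence differentiable a.e. (Rademacher), so the Leibniz rule $d(\rho\alpha)=d\rho\wedge\alpha+\rho\,d\alpha$ holds at every point where both $\rho$ and $\alpha$ are differentiable, which is a full-measure set. We then obtain
\[
f^\ast\theta = f^\ast\alpha\wedge f^\ast d\alpha = \rho\alpha\wedge(d\rho\wedge\alpha+\rho\,d\alpha)=\rho^2\,\alpha\wedge d\alpha=\rho^2\theta ,
\]
because $\alpha\wedge\alpha=0$. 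Hence $(f^\ast\theta)_x=\rho^2(x)h(x)m_x$ a.e. Comparing this with the first computation, and using that $m_x\neq 0$, gives $h(f(x))=\rho^2(x)h(x)$ for $m$-a.e.\ $x$.

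The only delicate step is the justification of $d(f^\ast\alpha)=f^\ast d\alpha$ and of the Leibniz rule for merely Lipschitz forms. I would handle it pointwise rather than distributionally. At any $x$ such that $\alpha$ is differentiable at $f(x)$ and at $x$, and $\rho$ is differentiable at $x$, one writes $f^\ast\alpha$ in local coordinates as $\sum_i \alpha_i(f(y))\,\partial_j f^i(y)\,dy^j$. This expression is differentiable at $x$ because $f\in C^2$, and the usual formal computation of the exterior derivative applies verbatim. The exceptional set is a finite union of null sets and their $f$-preimages. These preimages are null because $f$ preserves $m$ (or simply because $f$ is a diffeomorphism). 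The a.e.\ identity then follows.
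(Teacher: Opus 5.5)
Your proof is correct and follows the paper's argument exactly: you compute $f^\ast\theta$ once via $f^\ast m=m$ to get $h(f(x))\,m_x$, and once via $f^\ast\alpha=\rho\alpha$ to get $\rho^2\theta$, then compare. Your pointwise justification of $d(f^\ast\alpha)=f^\ast d\alpha$ and of the Leibniz rule on a full-measure set, using $f\in C^2$ and Rademacher's theorem, supplies details the paper leaves implicit.
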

\begin{proof}
Since $m$ is $f$-invariant, for $m$-almost $x \in M$ we have
\[
(f^\ast \theta)_x = (f^\ast (h m))_x = h(f(x)) (f^\ast m)_x = h(f(x)) m_x.
\]
On the other hand, since
\[f^\ast \theta =f^\ast (\alpha \wedge d\alpha)=(f^\ast \alpha) \wedge d(f^\ast \alpha)=\rho \alpha \wedge d(\rho \alpha) =\rho^2 \theta, \]
we also have
\[
h(f(x)) m_x=(f^\ast \theta)_x = \rho^{2}(x) \theta_x = \rho^{2}(x) h(x) m_x,
\]
for $m$-almost every $x \in M$, which concludes the proof. 
\end{proof}

\begin{lemma}\label{lemma coboundary}
Let $f \colon M \to M$ be a $C^2$ volume-preserving partially hyperbolic diffeomorphism on a 3-manifold preserving the orientation of $E^c$. Suppose $E^s \oplus E^u$ is Lipschitz, then for any open accessibility class $U$ such that $f|_U$ is bijective and ergodic, the cocycle $\rho|_U$ is a continuous coboundary, i.e., there is a continuous positive function $\tilde h\colon U \to \RR^+$ such that $\tilde h(x)=|h(x)|$ for $m$-almost every $x \in U$, and  
\begin{equation}\label{eq rho}
    \rho(x) = \sqrt{\tilde h(f(x))}/\sqrt{\tilde h(x)}, \qquad \forall\, x \in U.
\end{equation}

\end{lemma}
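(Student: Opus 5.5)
The plan is to convert the multiplicative relation of Lemma~\ref{lemma hrho} into an additive cohomological equation for $\log\rho$ on $U$, and then upgrade the measurable solution to a continuous one using the Livshits-type result, Proposition~\ref{prop wil13}(1). Concretely, Lemma~\ref{lemma hrho} gives $|h|\circ f = \rho^2 |h|$ $m$-a.e. Wherever $h\neq 0$, this is equivalent to
\[
\log \rho = \tfrac12 \log|h|\circ f - \tfrac12 \log|h| .
\]
Here $\phi:=\log\rho$ is Lipschitz, hence H\"older, because $\rho$ is a positive Lipschitz function on a compact manifold. Moreover $\Phi:=\frac12\log|h|$ is measurable. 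So the only thing missing before we can apply Proposition~\ref{prop wil13}(1) is that $\Phi$ is finite a.e. on $U$, i.e.\ that $h\neq 0$ for $m$-a.e.\ point of $U$.

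\textbf{Step 1: $h\neq0$ a.e.\ on $U$.} Let $Z:=\{x\in U: h(x)=0\}$. Since $\rho>0$, the relation $h\circ f=\rho^2 h$ shows that $Z$ is $f$-invariant mod $m$. By ergodicity of $f|_U$, either $m(Z)=0$ or $Z$ has full measure in $U$. Suppose $Z$ has full measure in $U$. Then $\theta=\alpha\wedge d\alpha$ vanishes a.e.\ on the open set $U$. The Lipschitz Frobenius theorem (Proposition~\ref{prop frobenius}, applied in its local form from~\cite{lipfro}, since the argument there is local) then shows that $E^s\oplus E^u$ is uniquely integrable on $U$. Every $su$-path starting at $x\in U$ stays in $U$ and is tangent to this $2$-dimensional integrable distribution. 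Unique integrability forces such a path to remain in the integral surface through $x$. Hence the accessibility class of $x$ is contained in a $2$-dimensional leaf, which contradicts $U$ being open in the $3$-manifold $M$. Therefore $m(Z)=0$.

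\textbf{Step 2: continuous solution.} Now $\Phi$ is a measurable, a.e.\ finite solution of $\phi=\Phi\circ f-\Phi$ on $U$. Since $f|_U$ is bijective and ergodic, $f$ is $C^2$ and volume preserving, and $\phi$ is H\"older, Proposition~\ref{prop wil13}(1) applies. It yields a continuous $\tilde\Phi\colon U\to\RR$ with $\tilde\Phi=\Phi$ a.e. Set $\tilde h:=e^{2\tilde\Phi}$; this is continuous and positive, and $\tilde h=|h|$ a.e.\ on $U$. The identity $\log\rho=\tilde\Phi\circ f-\tilde\Phi$ then holds a.e. Both sides are continuous on $U$, and $m$ has full support, so the identity holds everywhere on $U$. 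Exponentiating gives~\eqref{eq rho}.

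\textbf{Main obstacle.} The delicate point is Step~1, namely ruling out $\theta\equiv0$ on $U$. This requires a local version of the Lipschitz Frobenius theorem on the open set $U$, together with the observation that integrability of $E^s\oplus E^u$ confines accessibility classes to $2$-dimensional leaves. A secondary check is that the hypotheses of Proposition~\ref{prop wil13}(1) hold for the noncompact set $U$; this is covered by the remark following that proposition.
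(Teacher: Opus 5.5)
Your proposal is correct and follows the paper's proof: you use Lemma~\ref{lemma hrho}, ergodicity of $f|_U$ and the Lipschitz Frobenius theorem to rule out $h=0$ a.e.\ on $U$, and then upgrade the measurable solution $\sqrt{|h|}$ to a continuous one via Proposition~\ref{prop wil13}(1). Working with $|h|$ and the additive form $\log\rho$, instead of splitting into $\{h>0\}$ and $\{h<0\}$ as the paper does, is only cosmetic; your Step~1 also spells out a contradiction the paper leaves implicit, namely that integrability of $E^s\oplus E^u$ on $U$ would confine accessibility classes to $2$-dimensional leaves, which is incompatible with $U$ being open.
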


\begin{proof}
Let $A^+:=\{x \in M:h(x) >0\}$,  $A^-:=\{x \in M:h(x)<0\}$ and $A=A^+\cup A^-$. By Lemma~\ref{lemma hrho}, the sets  $A^+,A^-$ and $A$ are all essentially $f$-invariant. Since $f$ is ergodic, $A$ has either full or zero measure. If $m(A) = 0$, then $E^s \oplus E^u$ is integrable by Proposition~\ref{prop frobenius}, which is a contradiction. 

Therefore, either $A^+$ or $A^-$ have full measure. Without loss of generality, we assume that $A^+$ has full measure. Then by Lemma~\ref{lemma hrho}, $\sqrt{h(x)}$ is a measurable solution to the cohomological equation~\eqref{eq rho}. By Proposition~\ref{prop wil13}, there is a continuous function $\tilde h\colon U \to \RR^+$  solving~\eqref{eq rho}, with $\tilde h=h$ for $m$-almost every $x \in U$. 
\end{proof}

We record two direct corollaries that will be used later.
\begin{corollary}\label{coro center iso}
Let $f$ be as in Lemma~\ref{lemma coboundary}. Then $x \mapsto \lambda_x^c := \|Df|_{E^c(x)}\|$ is a coboundary on $U$.
\end{corollary}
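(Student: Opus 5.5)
Since $\ker\alpha = E^s\oplus E^u$, the form $\alpha$ is determined by its value on any vector transverse to $E^s\oplus E^u$. We therefore evaluate the identity $f^\ast\alpha=\rho\,\alpha$ from~\eqref{f_etoile_alpha} on a unit vector of $E^c$. Concretely, fix a continuous unit vector field $X^c$ spanning $E^c$; this is possible after the orientation assumption, passing to a finite cover if needed. Then $Df(x)X^c(x)=\pm\lambda^c_x X^c(f(x))$, and the sign is $+$ because $f$ preserves the orientation of $E^c$. Set $a(x):=\alpha_x(X^c(x))$. This is a continuous function, and it is nowhere vanishing since $E^c$ is transverse to $\ker\alpha$.

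Evaluating $(f^\ast\alpha)_x(X^c(x))=\rho(x)\,a(x)$ gives
\[
\lambda^c_x\, a(f(x)) = \rho(x)\, a(x), \qquad\text{so}\qquad \lambda^c_x=\rho(x)\,\frac{a(x)}{a(f(x))}.
\]
Taking absolute values (note $\rho>0$), and using that $\rho|_U=\sqrt{\tilde h\circ f}/\sqrt{\tilde h}$ by~\eqref{eq rho} of Lemma~\ref{lemma coboundary}, we obtain
\[
\lambda^c_x=\frac{\Psi(f(x))}{\Psi(x)},\qquad \Psi:=\frac{\sqrt{\tilde h}}{|a|},
\]
for all $x\in U$. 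Here $\Psi$ is continuous and positive on $U$, so $\lambda^c$ is a multiplicative coboundary on $U$. Equivalently, $\log\lambda^c$ is an additive coboundary of the form~\eqref{eq:cohomological} with the continuous transfer function $\log\Psi$.

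The only point needing care is the choice of norm. $\lambda^c_x$ is defined with respect to the Riemannian metric, so $X^c$ must be taken of unit norm for that metric; then $\|Df|_{E^c(x)}\|$ is exactly the scalar $\lambda^c_x$ above. If orientability of $E^c$ only holds on a cover, we carry out the argument there and note that $|a|$ descends to $U$, since it does not depend on the choice of orientation. There is no real obstacle: the substance is already in Lemma~\ref{lemma coboundary}.
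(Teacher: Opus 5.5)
Your proof is correct and is essentially the paper's argument: both evaluate $f^\ast\alpha=\rho\,\alpha$ on a center vector and then invoke~\eqref{eq rho}. The only difference is cosmetic. The paper normalizes the center field by $\alpha(X)=1$, changes the metric so that $\|X\|=1$, and then notes that the coboundary property does not depend on the metric; you instead keep the given metric and absorb $a=\alpha(X^c)$ into the transfer function $\Psi=\sqrt{\tilde h}/|a|$, which makes that metric-independence explicit.
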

\begin{proof}
Let $X$ be the vector field such that  $X_x\in E^c(x)$ and $\alpha_x(X_x) = 1$,  for every $x \in M$. We choose a Riemannian metric $\|\cdot\|$ such that $\|X_x\| = 1$ for all $x \in M$. Then for every $x \in U$,
\[
\lambda_x^c
= \alpha_{f(x)}\bigl(Df|_{E^c(x)}(X_x)\bigr)
= (f^\ast \alpha)_x(X_x)
= \rho(x)= \sqrt{\tilde h(f(x))}/\sqrt{\tilde h(x)}
\]
is a coboundary. Note that the property that $\lambda_x^c|_U$ is a coboundary does not rely on the choice of a metric;  this finishes the proof. 
\end{proof}\begin{remark}
We will show later that if $f$ is as in Lemma~\ref{lemma coboundary}, then $U=M$ (see Proposition~\ref{prop dic}). In particular, by Corollary~\ref{coro center iso}, the center Lyapunov exponent of $f$ vanishes for every ergodic measure.
\end{remark}
\begin{corollary}\label{coro ec smooth}
Let $f$ be as in Lemma~\ref{lemma coboundary}. If $f$ is $C^k$ and strongly $r$-bunched, and $E^s \oplus E^u$ is $C^k$, for some $k \geq 2$ with $r < k - 1$ or $r = 1$, then $f|_U$ preserves a $C^{r}$ contact form $\beta$ with $\ker \beta = E^s \oplus E^u$ and whose Reeb vector field is tangent to $E^c$. In particular, $E^c$ is $C^{r-1}$.
\end{corollary}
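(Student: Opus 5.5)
The idea is to rescale $\alpha$ by the coboundary from Lemma~\ref{lemma coboundary} so that the new form becomes $f$-invariant. Then we upgrade its regularity with the Livshits theorem, and finally read off the Reeb field.

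Since $E^s\oplus E^u$ is now $C^k$, Lemma~\ref{lemma alpha} lets us choose $\alpha$ to be $C^k$. Then $\rho$, defined by $f^\ast\alpha=\rho\alpha$, is a positive $C^{k-1}$ function. It is at least $C^k$ along the relevant directions, and in fact $\rho(x)=\alpha_{f(x)}(Df_x X_x)$ with $X$ the $C^0$ field in $E^c$ normalized by $\alpha(X)=1$.

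To avoid losing a derivative, I would work with $\log\rho$ directly. Write $\rho=(f^\ast\alpha)(Y)/\alpha(Y)$ for any $C^k$ vector field $Y$ transverse to $E^s\oplus E^u$. This exhibits $\rho$ as a $C^{k-1}$ function, and I expect $C^{k-1}$ to suffice after adjusting indices. The hypothesis $r<k-1$ is tailored to Proposition~\ref{prop wil13}(2), applied with $\phi=\tfrac12\log\rho$ of class $C^{k-1}$ if necessary. I would check carefully which of $k$ or $k-1$ is needed; this is the main bookkeeping obstacle.

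By Lemma~\ref{lemma coboundary}, $\log\rho=\Phi\circ f-\Phi$ on $U$ with $\Phi=\tfrac12\log\tilde h$ continuous. Proposition~\ref{prop wil13}(2) then gives that $\Phi$ is $C^r$ on $U$. Set $\beta:=\tilde h^{-1/2}\alpha$ on $U$. Then
\[
f^\ast\beta=\tilde h(f(x))^{-1/2}\rho\,\alpha=\tilde h(x)^{-1/2}\alpha=\beta,
\]
so $\beta$ is a $C^r$ $f$-invariant form with $\ker\beta=E^s\oplus E^u$.

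Next, $\beta\wedge d\beta=\tilde h^{-1}\alpha\wedge d\alpha=\tilde h^{-1}h\,m$, which equals $m$ almost everywhere on $U$. By continuity (for $r\ge1$, $\beta\wedge d\beta$ is continuous), $\beta\wedge d\beta=m$ everywhere on $U$, so $\beta$ is contact.

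Proposition~\ref{prop frobinius} gives a $C^{r-1}$ Reeb field $R$. By invariance of $\beta$ and uniqueness of the Reeb field, $Df\,R=R\circ f$. I would then show $R\in E^c$. Decompose $R=R^s+R^c+R^u$. The field $R$ has bounded norm and is $Df$-invariant, so iterating forward and backward shows the $E^u$ and $E^s$ components must vanish: otherwise the norm of $Df^{\pm n}R$ would blow up, because $E^u$ dominates $E^c$ and the center is bounded, given that the center cocycle is a coboundary by Corollary~\ref{coro center iso}. More carefully, I would use domination of the projections relative to the center component together with uniform boundedness of $Df^n R=R\circ f^n$. A cleaner alternative is to use $\iota_R d\beta=0$ and the $f$-invariance of $d\beta|_{E^s\oplus E^u}$, which is nondegenerate by Lemma~\ref{lemma non dege}.

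Hence $E^c=\mathbb{R}R$ is $C^{r-1}$ on $U$.
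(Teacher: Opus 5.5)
Your construction of $\beta=\tilde h^{-1/2}\alpha$, the use of Proposition~\ref{prop wil13}(2), and the invariance of the Reeb field $R$ are the paper's argument. Your explicit check $\beta\wedge d\beta=\tilde h^{-1}h\,m=\pm m$ is a welcome addition.

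\textbf{The gap is in the last step, $R\in E^c$.} You use that $R$ has bounded norm on $U$, i.e.\ uniform boundedness of $R\circ f^n=Df^nR$, together with boundedness of the center cocycle. Neither is available when $U\subsetneq M$. This is exactly the situation where the corollary is used in Section~\ref{sec_dicho} (Corollary~\ref{coro c1 contact form}). There $U$ is not compact, and $\tilde h=|h|$ tends to $0$ at $\partial U$, because $\alpha\wedge d\alpha$ vanishes on the boundary tori, which are tangent to $E^s\oplus E^u$. So $\beta$ blows up near $\partial U$. Since $\iota_R d\beta=0$ involves $d\log\tilde h$, there is no a priori bound on $R$ near $\partial U$; continuity of $R$ on $U$ only bounds it on compact subsets. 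Likewise, Corollary~\ref{coro center iso} only gives $\lambda^c_x(n)=\sqrt{\tilde h(f^n(x))}/\sqrt{\tilde h(x)}$. Its transfer function degenerates at $\partial U$, so this need not be uniformly bounded on $U$.

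\textbf{The missing ingredient is recurrence, which is how the paper argues.} Suppose $R\notin E^c$ somewhere. By continuity, $R$ then has (say) a nonzero $E^u$-component on a nonempty open set $V$, which you may take relatively compact in $U$, so $\|R\|\le C$ on $V$. By Poincar\'e recurrence (or ergodicity of $f|_U$), a.e.\ $x\in V$ returns to $V$ at times $n_k\to\infty$. Then $C\ge\|R_{f^{n_k}(x)}\|=\|Df^{n_k}R_x\|\ge c\,\|Df^{n_k}R^u_x\|\to\infty$. Here $c>0$ depends only on the angle between $E^u$ and $E^s\oplus E^c$ on the compact manifold $M$. No domination or center bound is needed, and $f^{-1}$ handles the $E^s$-component. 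Your ``cleaner alternative'' via $\iota_R d\beta=0$ would also need bounds along orbits, hence the same kind of recurrence input.

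\textbf{On your bookkeeping worry:} it is legitimate. Since $f^*\alpha$ involves $Df$, $\rho$ (and $\log\rho$) is in general only $C^{k-1}$, whereas the paper's proof simply asserts $\rho\in C^k$. Proposition~\ref{prop wil13}(2), applied with $k-1$ in place of $k$, then yields $C^r$ only for $r<k-2$, or for $r=1$ when $k\geq 3$. Taking logarithms does not recover the lost derivative. This is immaterial for the paper, which only uses the corollary with $k=\infty$, but you should state it rather than expect it away.
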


\begin{proof}
Since $E^s \oplus E^u$ is $C^k$, the $1$-form $\alpha$ and the function $\rho$ are $C^k$. Since $h$ is a continuous solution of the cohomological equation~\eqref{eq rho}, Proposition~\ref{prop wil13} implies that $h$ is $C^r$. Let $\beta := \alpha / \sqrt{\tilde h}$. Then $\beta$ is a $C^r$ contact form and by~\eqref{eq rho}, 
\[
(f^\ast \beta)_x
= \frac{(f^\ast \alpha)_x}{\sqrt{\tilde h(f(x))}}
= \frac{\rho(x)\alpha_x}{\sqrt{\tilde h(f(x))}}
= \beta_x.
\]
Thus, $\beta$ is $f$-invariant, and so is its Reeb vector field $R$.

We now show that $R_x \subset E^c(x)$, for all $x \in M$. Let us argue by contradiction. We may assume that $R$ has a non-trivial component in $E^u$ at an $m|_U$-typical point $x$. Choose an open neighborhood $V \subset U$ of $x$ such that $\|R\|$ is bounded on $V$. By ergodicity, there exists a sequence $n_k \to \infty$ such that $f^{n_k}(x) \in V$. Since $R$ is $f$-invariant, we must have $\|R_{f^{n_k}(x)}\| \to \infty$, which is a contradiction.

In particular, $E^c$ is a $C^{r-1}$ distribution, because it is spanned by the $C^{r-1}$ Reeb vector field $R$.
\end{proof}

\section{Foulon--Hasselblatt cocycle and the regularity of the $E^s\oplus E^u$ distribution}\label{sec_FH_cocycle}

Let us recall that for a conservative 3-dimensional Anosov flow, Foulon and Hasselblatt~\cite{fh03} introduced a cocycle --- which they call the~\emph{longitudinal KAM cocycle} --- which characterizes the regularity of the distribution $E^s \oplus E^u$. This cocycle is essentially defined by considering the second-order derivative (along the stable/unstable directions) of the hitting times of the flow with respect to a family of uniformly smooth transverse sections. While this cocycle is~\emph{a priori} only defined for continuous dynamical systems, Gogolev--Leguil--Rodriguez Hertz~\cite{glh25} observed that it can be generalized using the templates constructed by Tsujii--Zhang~\cite{tz23} and Eskin--Potrie--Zhang~\cite{epz}. 
Our goal in what follows is to define this cocycle for 3-dimensional volume-preserving partially hyperbolic diffeomorphisms and explore its relationship with the regularity of the distribution $E^s \oplus E^u$ in this context.

\subsection{Adapted charts and templates}
Let $r \in \mathbb{N}_{\geq 5}\cup \{\infty\}$. 
In the following, we consider a volume-preserving $C^r$ partially hyperbolic diffeomorphism $f\colon M \to M$ of a $3$-manifold $M$, with splitting $TM=E^s \oplus E^c \oplus E^u$. Recall that we denote by $\mathcal{W}^s$ and $\mathcal{W}^u$ the stable and unstable foliations respectively. For any $x \in M$ and $*=s,c,u$,  we denote 
\[\lambda^\ast_x(n):=\|D f^n(x)|_{E^\ast}\|,\quad \forall\, n \in \ZZ.\]
We also abbreviate $\lambda^\ast_x:=\lambda^\ast_x(1)$. 

Let us recall the non-stationary normal forms which linearize the dynamics along $1$-dimensional stable and unstable foliations:
\begin{proposition}[see e.g.~Katok-Lewis~\cite{kl91}]
	\label{norm foms}
	For $*=s,u$, there exists a continuous family of $C^r$ charts $\{\Phi_x^*\colon T_x \mathcal{W}^*(x)\to \mathcal{W}^*(x)\}_{x\in M}$ such that for any $x\in M$, we have 
	\begin{enumerate}
		\item $\Phi_x^*(0)=x$, and $D\Phi_x^*(0)=\mathrm{Id}$;
		\item $f(\Phi_x^*(\xi))=\Phi_{f(x)}^*(\lambda_x^* \xi)$, for any $\xi \in \mathbb{R}$. 
	\end{enumerate}
\end{proposition}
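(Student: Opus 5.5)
The plan follows the classical Katok--Lewis construction of non-stationary linearizations for one-dimensional expanding or contracting foliations. We treat $*=u$; the case $*=s$ follows by applying the same argument to $f^{-1}$.

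\emph{Step 1: reduce to a family of maps on the line.} Fix a continuous family of $C^r$ parametrizations $\psi_x\colon \RR \to \mathcal{W}^u(x)$ with $\psi_x(0)=x$ and $D\psi_x(0)$ an isometry onto $E^u(x)$; for instance, take arclength parametrizations of the leaves. Such a family exists because the leaves of $\mathcal{W}^u$ are uniformly $C^r$ and vary continuously in the $C^r$ topology. Conjugating $f|_{\mathcal{W}^u(x)}$ by these parametrizations gives $C^r$ maps
\[
f_x := \psi_{f(x)}^{-1}\circ f\circ \psi_x\colon \RR\to\RR,
\]
with $f_x(0)=0$ and $f_x'(0)=\lambda^u_x$ (up to sign, which is harmless). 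Their $C^r$ norms are bounded uniformly on compact intervals and depend continuously on $x$.

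\emph{Step 2: define the chart as a limit.} Set
\[
h_x := \lim_{n\to\infty} \bigl(\lambda_{f^{-n}x}^u(n)\bigr)\, f_{f^{-1}x}^{-1}\circ\cdots\circ f_{f^{-n}x}^{-1},
\]
that is, the limit of $Df^n$ at the base point composed with the inverse dynamics, read through the charts. Equivalently, write $h_x=\lim_n g_n$ with $g_n := (\lambda^u_{f^{-n}x}(n))\cdot (f^{-n})_x$ in local coordinates. Convergence in $C^r$ on compact sets is the main point. Since $\log g_n'$ telescopes into the sum
\[
\sum_k \bigl[\log f'_{f^{-k}x}(\xi_k)-\log f'_{f^{-k}x}(0)\bigr], \qquad \xi_k=(f^{-k})_x(\xi),
\]
and $|\xi_k|\le C\mu^k|\xi|$ with $\mu<1$ by uniform contraction of $f^{-1}$ along $\mathcal{W}^u$, the terms are bounded by $C\mu^k|\xi|$. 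This gives a uniformly convergent series for $\log h_x'$. Higher derivatives are handled the same way, using the standard distortion estimates for one-dimensional maps: differentiate the series termwise, and each term gains a factor $\mu^k$ from the chain rule. Hence $h_x$ is $C^r$, and equally $C^\infty$ when $r=\infty$, with derivatives uniformly bounded and depending continuously on $x$. Moreover $h_x(0)=0$ and $h_x'(0)=1$.

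\emph{Step 3: equivariance and conclusion.} The defining limit immediately yields $h_{fx}\circ f_x=\lambda^u_x\, h_x$, by shifting the index $n$ to $n+1$. Each $h_x$ is a $C^r$ diffeomorphism of $\RR$, since $h_x'>0$ and $h_x$ is proper by the equivariance combined with expansion. We then set
\[
\Phi^u_x := \psi_x\circ h_x^{-1},
\]
after identifying $T_x\mathcal{W}^u(x)$ with $\RR$ via $D\psi_x(0)$. This map satisfies both items of the statement, and continuity in $x$ follows from the uniform convergence in Step 2.

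The main obstacle is the $C^r$ convergence in Step 2 for all orders of derivatives, not just $C^1$. This requires the one-dimensional structure: there is no resonance issue, because the linear term already commutes with scalar multiplication. Getting it to work also requires careful bookkeeping of the Fa\`a di Bruno terms, which are controlled by the exponential contraction $\mu^k$.
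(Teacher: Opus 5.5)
Your proposal is correct. It is essentially the standard Katok--Lewis construction that the paper simply cites without giving a proof: your $g_n'(\xi)$ is the partial product $\prod_{k=1}^{n}\lambda^u_{f^{-k}x}/\lambda^u_{f^{-k}(\psi_x(\xi))}$, whose limit is the usual density $\rho_x$ that one integrates along the leaf, and your inductive distortion bound $\|(f^{-k})_x^{(i)}\|=O(\mu^k)$ for $i\le r$ is the usual way to get $C^r$ convergence.

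The one slip is that you renormalize by the norm $\lambda^u_{f^{-n}x}(n)$. If $Df$ reverses the orientation of $E^u$, or if $E^u$ is non-orientable so that no global continuous family $\psi_x$ exists, then $g_n'(0)=\pm1$ can oscillate and $g_n$ need not converge. The fix is to renormalize with the signed map $Df^n(f^{-n}x)|_{E^u}$ read through the charts, or equivalently to use the intrinsic signed arclength coordinate $\mathcal{W}^u(y)\to E^u(y)$. The limit is then independent of the local choices of $\psi$, and the rest of your argument goes through.
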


Now we recall the construction of adapted charts, due to Tsujii~\cite{tsu18} for $3$-dimensional volume-preserving Anosov flows, to Tsujii--Zhang~\cite{tz23} for general $3$-dimensional Anosov flows, and later extended to the partially hyperbolic setting by Eskin--Potrie--Zhang~\cite{epz}.

 
\begin{proposition}[Adapted charts]\label{propo o good}
Let $f$ be a $C^r$ volume-preserving $3$-dimensional partially hyperbolic diffeomorphism as above, $r \geq 5$. Then, there exists 
a continuous family of  uniformly $C^{r-1}$ charts $\{\imath_x\colon (-1,1)^3\to M\}_{x \in M}$ such that for any $x \in M$, we have:
	\begin{enumerate}
		\item\label{normal un} $\imath_x(\xi,0,0)=\Phi_x^s(\xi)$, for any $\xi \in (-1,1)$; 
		\item\label{normal deux} $\imath_x(0,0,\eta)=\Phi_x^u(\eta)$, for any $\eta \in (-1,1)$; 
		\item\label{flw dir} $\partial_2\imath_x(\xi,0,\eta)\in E^c(x)$, for any $(\xi,\eta)\in (-1,1)^2$;
		\item\label{pt cocyc nf} let $F_x:=(\imath_{f(x)})^{-1}\circ f\circ \imath_x=(F_{x,1},F_{x,2},F_{x,3})$; then, 
        there exist quadratic polynomials $P_x^s (z)=
        \alpha_x^{\mathrm{FH}} z+\beta_x^s z^2$, $P_x^u (z)=
        \alpha_x^{\mathrm{FH}} z+\beta_x^u z^2$, 
        such that for $\xi\in (-1,1)$ and $\eta \in (-(\lambda_x^u)^{-1},(\lambda_x^u)^{-1})$, we have  
		\begin{align*} 
			\begin{bmatrix}
				\partial_1 F_{x,1} & \partial_2 F_{x,1}\\
				\partial_1 F_{x,2} & \partial_2 F_{x,2} 
			\end{bmatrix}(0,0,\eta)&=\begin{bmatrix}
				\lambda_x^s  & 0\\
				P_x^s(\eta) & \lambda_x^c
			\end{bmatrix},\\
			\begin{bmatrix}
				\partial_2 F_{x,2} & \partial_3 F_{x,2}\\
				\partial_2 F_{x,3} & \partial_3 F_{x,3} 
			\end{bmatrix}(\xi,0,0)&=\begin{bmatrix}
				\lambda_x^c & P_x^u(\xi)\\
				0 & \lambda_x^u 
			\end{bmatrix}. 
        \end{align*}
        In particular,  
        \begin{equation}\label{def_FHcoc}
            \alpha_x^{\mathrm{FH}}:=\partial_{13} F_{x,2}(0,0,0),\quad \forall\, x \in M.
        \end{equation}
We emphasize that the notation $\alpha_x^{\mathrm{FH}}$ here is unrelated to the contact form $\alpha$ introduced in the previous section.
	\end{enumerate}
\end{proposition}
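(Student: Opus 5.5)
We construct the charts in stages, starting from the non-stationary normal forms of Proposition~\ref{norm foms} and fixing the derivatives along the two axes order by order. First we fix the axes: for each $x$ set $\imath_x(\xi,0,0)=\Phi^s_x(\xi)$ and $\imath_x(0,0,\eta)=\Phi^u_x(\eta)$. These depend continuously on $x$ and are uniformly $C^r$ along leaves, which gives \eqref{normal un} and \eqref{normal deux}.

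Next we prescribe the central direction along the $su$-plane. For $(\xi,\eta)$ we need $\partial_2\imath_x(\xi,0,\eta)\in E^c(x)$; note this is the fixed fiber at $x$, transported in the chart, not $E^c$ at the image point. We take a continuous, uniformly smooth choice of vector field on $(-1,1)^2$ with values in the fixed vector $X_x\in E^c(x)$ of norm one, written in some smooth background coordinates. Then we extend $\imath_x$ to $(-1,1)^3$ by a first-order Taylor-type formula in the second variable, for instance $\imath_x(\xi,t,\eta)=\exp_{\sigma_x(\xi,\eta)}(t\,\tilde X)$, where $\sigma_x$ is a smooth surface through the two axes. This loses one derivative, which gives uniform $C^{r-1}$ charts and condition \eqref{flw dir}.

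The main work is \eqref{pt cocyc nf}. With these conventions, $F_x$ maps the $\xi$-axis to the $\xi$-axis, the $\eta$-axis to the $\eta$-axis, and is linear on them with slopes $\lambda^s_x$ and $\lambda^u_x$. The entries $\partial_2F_{x,1}(0,0,\eta)=0$ and $\partial_2F_{x,3}(\xi,0,0)=0$, together with $\partial_2F_{x,2}=\lambda^c_x$ along both axes, do not follow from this construction. They require adjusting the parametrization in the second coordinate along each axis by a function of $(\xi,\eta)$ and $t$. Following Tsujii--Zhang and Eskin--Potrie--Zhang, we handle this by a graph-transform / cohomological argument, as follows.

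We write the unknown corrections as Taylor polynomials in $\eta$ along the $\eta$-axis, and in $\xi$ along the $\xi$-axis. The invariance requirement becomes a twisted cohomological equation for each Taylor coefficient, whose twist is a product of powers of $\lambda^s,\lambda^c,\lambda^u$. Partial hyperbolicity together with volume preservation, which gives $\lambda^s\lambda^c\lambda^u$ cohomologous to $1$ and hence strong $1$-bunching, makes all coefficients of degree $\ge 3$ contracting, resp.\ expanding, cocycle equations. These we solve uniquely by a convergent series over the past or future orbit. The linear and quadratic terms are the resonant ones, which is why $P^s_x$ and $P^u_x$ are quadratic polynomials.

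The key point is that the linear coefficients of $P^s$ and $P^u$ coincide. Both equal $\partial_{13}F_{x,2}(0,0,0)$, because mixed partials commute at the origin. This single number is defined to be $\alpha^{\mathrm{FH}}_x$.

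I expect the main obstacle to be making the two adjustments, along the $s$-axis and along the $u$-axis, compatible and continuous in $x$ with uniform $C^{r-1}$ bounds. In particular I need to check that the non-resonance estimates hold using only $r\ge5$ derivatives. I would first try to follow \cite{epz} verbatim, since it already treats the partially hyperbolic case.
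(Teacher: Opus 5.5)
The decisive step in your proposal is the same as the paper's. After the preliminary normalization, the paper writes the off-diagonal entry as $r^u_x=P^u_x+\hat r^u_x$ with $\hat r^u_x=O(\xi^3)$ and removes $\hat r^u_x$ by a series. Its general term is controlled by $\lambda^u_x(k)(\lambda^s_x(k))^3/\lambda^c_x(k)=\lambda^u_x(k)\lambda^c_x(k)\lambda^s_x(k)\,(\lambda^s_x(k)/\lambda^c_x(k))^2$. This is a bounded factor (volume preservation) times an exponentially small one (domination $\lambda^s<\lambda^c$; bunching plays no role). Your mixed-partials remark correctly explains why $P^s_x$ and $P^u_x$ share their linear coefficient.

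The cubic tail must, however, be removed all at once. One solves the single functional equation $u_{f(x)}(\lambda^s_x\xi)=(\lambda^u_x)^{-1}(\hat r^u_x(\xi)+\lambda^c_x u_x(\xi))$ using the uniform bound $|\hat r^u_x(\xi)|\le C|\xi|^3$. A coefficient-by-coefficient scheme does not close, because a $C^{r-2}$ (or even $C^\infty$) function is not the sum of its Taylor series.

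The genuine gap is in the preliminary normalization, which the paper takes from \cite{epz} and which you describe incorrectly.

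\textbf{Missing diagonal entries.} Linearity of $F_x$ on the axes gives $\partial_1F_{x,1}(\xi,0,0)=\lambda^s_x$ and $\partial_3F_{x,3}(0,0,\eta)=\lambda^u_x$. But (4) requires $\partial_1F_{x,1}(0,0,\eta)=\lambda^s_x$ and $\partial_3F_{x,3}(\xi,0,0)=\lambda^u_x$. These are transverse derivatives at points of the \emph{other} axis, and they are missing from your list of entries still to be normalized. Each needs its own rescaling of $\partial_1\imath_x(0,0,\eta)$ (resp.\ $\partial_3\imath_x(\xi,0,0)$). This is a multiplicative cohomological equation, solved by a product along backward (resp.\ forward) orbits.

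\textbf{Your construction of (3) is incompatible with (4).} You take $\partial_2\imath_x$ to be a fixed background vector. Given $\partial_2\imath_x(0,0,0)\in E^c(x)$, the conditions $\partial_2F_{x,1}(0,0,\eta)=0$ for all $x,\eta$ force, by invariance and domination, $D\imath_x(0,0,\eta)\,\mathrm{span}(\partial_2,\partial_3)=E^{cu}(\Phi^u_x(\eta))$. Symmetrically, $\mathrm{span}(\partial_1,\partial_2)$ must be mapped onto $E^{cs}$ along the $\xi$-axis.

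A reparametrization $t\mapsto\psi_x(\xi,t,\eta)$ only rescales $\partial_2\imath_x$ along the axes and cannot achieve this. What is needed are shears $\xi\mapsto\xi+a_x(\eta)t$ and $\eta\mapsto\eta+b_x(\xi)t$. Here $a_x$ and $b_x$ are given by graph transforms contracting at rates roughly $\lambda^s/\lambda^c$ and $\lambda^c/\lambda^u$. This is where one uses that $E^{cu}$ (resp.\ $E^{cs}$) is $C^{r-1}$ along unstable (resp.\ stable) leaves by the $C^r$ section theorem. By contrast, $E^c$ is in general only H\"older there.

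\textbf{Regularity.} Your exponential-map extension loses no derivative, so it does not explain the $C^{r-1}$ bound. That bound is the point the paper delegates to \cite{glh25}: the tail solution $u_x$ is only $C^{r-2}$, and \cite{tz23} yields only $C^{r-2}$ charts.
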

\begin{remark}\label{remarque_dis}
    While not required for the current paper, such adapted charts can in fact be defined for any $C^\infty$ partially hyperbolic diffeomorphism on a $3$-manifold, even in the dissipative case. More generally, this construction extends to $C^r$ partially hyperbolic diffeomorphisms satisfying an appropriate pinching condition (see, e.g.,~\cite{glh25}). In these cases, however, the polynomials appearing in the off-diagonal entries of the differentials may have higher degrees.
\end{remark}
\begin{proof}[Proof of Proposition~\ref{propo o good}]
Detailed proofs are provided in~\cite{tz23,epz}; for an explanation of why the charts can be chosen to be $C^{r-1}$ (and not only $C^{r-2}$ as in~\cite{tz23}), see also~\cite{glh25}. Below, we only sketch the proof, focusing on why the polynomials in the preceding formulas are quadratic.

By Proposition~\ref{norm foms}, there exist non-stationary linearizing charts $\{\Phi_x^s\}_{x \in M}$ and $\{\Phi_x^u\}_{x \in M}$ along the stable and unstable manifolds, respectively. The construction of these charts ensures that their regularity matches that of the diffeomorphism $f$, i.e., they are $C^r$. It is standard to extend these non-stationary linearizations to full $3$-dimensional charts $\{\jmath_x\colon (-1,1)^3 \to M\}_{x \in M}$ such that the first three properties of Proposition~\ref{propo o good} are satisfied.

The proof then involves a series of chart adjustments. By solving appropriate cohomological equations (see, e.g.,~\cite{epz}), we obtain a family $\{\hat{h}_x\}_{x \in M}$ of uniformly $C^{r-1}$ diffeomorphisms such that the adjusted charts $\hat{\jmath}_x := \jmath_x \circ \hat{h}_x$ place the dynamics in the form $\hat{F}_x:=(\hat{\jmath}_{f(x)})^{-1}\circ f\circ \hat{\jmath}_x=(\hat{F}_{x,1},\hat{F}_{x,2},\hat{F}_{x,3})$, where for $\xi\in (-1,1)$ and $\eta \in (-(\lambda_x^u)^{-1},(\lambda_x^u)^{-1})$,  
    \begin{align*} 
			\begin{bmatrix}
				\partial_1 \hat{F}_{x,1} & \partial_2 \hat{F}_{x,1}\\
				\partial_1 \hat{F}_{x,2} & \partial_2 \hat{F}_{x,2} 
			\end{bmatrix}(0,0,\eta)&=\begin{bmatrix}
				\lambda_x^s  & 0\\
				r_x^s(\eta) & \lambda_x^c
			\end{bmatrix},\\
			\begin{bmatrix}
				\partial_2 \hat{F}_{x,2} & \partial_3 \hat{F}_{x,2}\\
				\partial_2 \hat{F}_{x,3} & \partial_3 \hat{F}_{x,3} 
			\end{bmatrix}(\xi,0,0)&=\begin{bmatrix}
				\lambda_x^c & r_x^u(\xi)\\
				0 & \lambda_x^u 
			\end{bmatrix}, 
        \end{align*}
for two $C^{r-2}$ functions $r_x^s,r_x^u$. 

To obtain the desired polynomials as stated in Proposition~\ref{propo o good}, two additional 
cohomological equations must be solved. We focus on the case of $r_x^u$; the other case follows similarly. We decompose $r_x^u(\xi)$ as $r_x^u(\xi) = P_x^u(\xi) + \hat{r}_x^u(\xi)$, where $P_x^u(\xi) = \alpha_x^{\mathrm{FH}} \xi + \beta_x^u \xi^2$ and $\hat{r}_x^u(\xi) = O(\xi^3)$. 
As in~\cite[Claim 3.5]{epz}, the required chart adjustment involves solving the following cohomological equation for $u$:
\[
u_{f(x)}(\lambda_x^s \xi) = \frac{1}{\lambda_x^u} \left( \hat{r}_x^u(\xi) + \lambda_x^c u_x(\xi) \right).
\]
This can be rewritten as
\[
u_x(\xi) - \frac{\lambda_x^u}{\lambda_x^c} u_{f(x)}(\lambda_x^s \xi) = -\frac{1}{\lambda_x^c} \hat{r}_x^u(\xi).
\]
By telescopic summation, a formal solution is given by
\[
u_x(\xi) = -\sum_{k=0}^{+\infty} \frac{1}{\lambda^c_{f^k(x)}} \frac{\lambda_x^u(k-1)}{\lambda_x^c(k-1)} \hat{r}_{f^k(x)}^u(\lambda_x^s(k)\xi), \quad \forall\, \xi \in (-1,1).
\]
In fact, since $\hat{r}_x^u(\xi) = O(\xi^3)$, the general term of the series decays as
\[
\frac{\lambda_x^u(k)(\lambda_x^s(k))^3}{\lambda_x^c(k)} = \lambda_x^u(k)\lambda_x^c(k)\lambda_x^s(k) \left( \frac{\lambda_x^s(k)}{\lambda_x^c(k)} \right)^2.
\]
Under our volume-preserving assumption, the cocycle $(x,n) \mapsto \lambda_x^u(n)\lambda_x^c(n)\lambda_x^s(n)$ is bounded. Meanwhile, by partial hyperbolicity, the term $\frac{\lambda_x^s(k)}{\lambda_x^c(k)}$ decays exponentially with $k$. Thus, the series converges and defines a $C^{r-2}$ solution to the cohomological equation. Following the argument in~\cite{glh25}, we can further construct a $C^{r-1}$ chart adjustment that transforms the differential of $f$ into the desired form, as given in point~\eqref{pt cocyc nf} of Proposition~\ref{propo o good}. 
\end{proof}
We observe that in adapted charts, along the coordinate axes $\{(\xi,0,0)\}_{\xi \in (-1,1)}$ and $\{(0,0,\eta)\}_{\eta \in (-1,1)}$, the distribution $E^s\oplus E^u$ is controlled by two angles $\mathcal{T}_x^u$, $\mathcal{T}_x^s$, called respectively the~\emph{unstable and stable templates}; more precisely,
\begin{itemize}
    \item for any $\xi \in (-1,1)$, $\mathcal{T}_x^u(\xi)$ is the ``slope'' of the distribution $(E^s \oplus E^u)(\Phi_x^s(\xi))$ with respect to the horizontal plane $\{t=0\}$, i.e., 
    $$
    (E^s \oplus E^u)(\Phi_x^s(\xi))=D\imath_x(\xi,0,0)\left(\mathbb{R}\frac{\partial}{\partial \xi} \oplus \mathbb{R}(0,\mathcal{T}_x^u(\xi),1)^\top\right). 
    $$
    \item for any $\eta \in (-1,1)$, $\mathcal{T}_x^s(\eta)$ is the ``slope'' of the distribution $(E^s \oplus E^u)(\Phi_x^u(\eta))$ with respect to the horizontal plane $\{t=0\}$, i.e., 
    $$
    (E^s \oplus E^u)(\Phi_x^u(\eta))=D\imath_x(0,0,\eta)\left( \mathbb{R}(1,\mathcal{T}_x^s(\eta),0)^\top\oplus \mathbb{R}\frac{\partial}{\partial \eta} \right). 
    $$
\end{itemize}
In normal coordinates, the invariance of $E^s \oplus E^u$ under the differential $Df$ yields:
\begin{lemma} 
	For any $x \in M$, $\xi \in (-1,1)$ and $\eta \in (-(\lambda_x^u)^{-1},(\lambda_x^u)^{-1})$,  
	we have 
	\begin{align} 
		P_x^s(\eta) &=\lambda^s_x\mathcal{T}^s_{f(x)}(\lambda_x^u\eta)-\lambda_x^c\mathcal{T}^s_{x}(\eta),\label{stable_templ_eq} \\
        P_x^u(\xi) &=\lambda^u_x\mathcal{T}^u_{f(x)}(\lambda_x^s\xi)-\lambda_x^c\mathcal{T}^u_{x}(\xi). \label{unstable_templ_eq}
	\end{align} 
\end{lemma}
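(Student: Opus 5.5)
We prove \eqref{unstable_templ_eq}; \eqref{stable_templ_eq} is the same argument with the roles of the first and third coordinates exchanged. The idea is to write the invariance $Df(E^s\oplus E^u)=E^s\oplus E^u$ in the adapted charts of Proposition~\ref{propo o good}, along the stable axis $\{(\xi,0,0)\}$, and read off the second (center) component.

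First, the chart map sends the stable axis at $x$ into the stable axis at $f(x)$. Indeed, by point~\eqref{normal un} of Proposition~\ref{propo o good} and Proposition~\ref{norm foms},
\[
F_x(\xi,0,0)=(\imath_{f(x)})^{-1}\bigl(f(\Phi_x^s(\xi))\bigr)=(\imath_{f(x)})^{-1}\bigl(\Phi^s_{f(x)}(\lambda_x^s\xi)\bigr)=(\lambda_x^s\xi,0,0).
\]
Hence $\partial_1F_x(\xi,0,0)=(\lambda_x^s,0,0)^\top$.

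Next, fix $\xi$ and set $p=\imath_x(\xi,0,0)=\Phi_x^s(\xi)$. By the definition of the unstable template, $(E^s\oplus E^u)(p)$ is spanned in chart coordinates by $e_1$ and $v=(0,\mathcal{T}_x^u(\xi),1)^\top$. We apply $DF_x(\xi,0,0)$ to $v$, using the second matrix identity of point~\eqref{pt cocyc nf}:
\[
\partial_2F_{x,2}=\lambda_x^c,\quad \partial_3F_{x,2}=P_x^u(\xi),\quad \partial_2F_{x,3}=0,\quad \partial_3F_{x,3}=\lambda_x^u.
\]
The second component of $DF_x v$ is $\lambda_x^c\mathcal{T}_x^u(\xi)+P_x^u(\xi)$, and the third is $\lambda_x^u$. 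The first component is some number $a$ that we do not need. So
\[
DF_x v=\bigl(a,\;\lambda_x^c\mathcal{T}_x^u(\xi)+P_x^u(\xi),\;\lambda_x^u\bigr)^\top .
\]

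Now use invariance. Since $f(p)=\Phi^s_{f(x)}(\lambda_x^s\xi)$, the vector $DF_x v$ must lie in the plane spanned by $e_1$ and $(0,\mathcal{T}^u_{f(x)}(\lambda_x^s\xi),1)^\top$. Subtracting a multiple of $e_1$, the vector $DF_xv-ae_1$ is then proportional to $(0,\mathcal{T}^u_{f(x)}(\lambda_x^s\xi),1)^\top$. Comparing the third component fixes the factor as $\lambda_x^u$. Comparing the second component then gives
\[
\lambda_x^c\mathcal{T}_x^u(\xi)+P_x^u(\xi)=\lambda_x^u\mathcal{T}^u_{f(x)}(\lambda_x^s\xi),
\]
which is \eqref{unstable_templ_eq}.

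The only delicate points are domain and bookkeeping. The point $\lambda_x^s\xi$ stays in $(-1,1)$, so the template at $f(x)$ is defined there. For the stable case, the restriction $\eta\in(-(\lambda_x^u)^{-1},(\lambda_x^u)^{-1})$ is exactly what ensures $\lambda_x^u\eta\in(-1,1)$. In that case the chart map preserves the unstable axis, $F_x(0,0,\eta)=(0,0,\lambda_x^u\eta)$. The first matrix identity then gives
\[
DF_x(1,\mathcal{T}_x^s(\eta),0)^\top=\bigl(\lambda_x^s,\;P_x^s(\eta)+\lambda_x^c\mathcal{T}_x^s(\eta),\;b\bigr)^\top
\]
for some number $b$. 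Normalizing the first entry yields \eqref{stable_templ_eq}.
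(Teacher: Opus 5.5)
Your proof is correct and is essentially the paper's argument: you write the $Df$-invariance of $E^s\oplus E^u$ in the adapted charts along the relevant axis, apply the block form of $DF_x$ from Proposition~\ref{propo o good}\eqref{pt cocyc nf} to the template vector, and match the result against the template at the image point. The paper treats the stable case using only the $2\times 2$ block, while you treat the unstable case in full and explicitly discard the extra component along $e_1$ (resp.\ $e_3$). That step is harmless because this axis direction lies in $E^s\oplus E^u$, so yours is just a slightly more careful version of the same computation.
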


\begin{proof}
We show it for $\mathcal{T}_x^s$, the other case is analogous. For $\eta \in (-(\lambda_x^u)^{-1},(\lambda_x^u)^{-1})$, the image of $(1,\mathcal{T}^s_{x}(\eta))^\top$ by the differential $Df(x)$ in normal coordinates, namely
	$$
	\begin{bmatrix}
		\lambda_x^s & 0\\
		P_x^s(\eta) & \lambda_x^c
	\end{bmatrix}\begin{bmatrix}
		1 \\
		\mathcal{T}^s_{x}(\eta)
	\end{bmatrix}= \begin{bmatrix}
		\lambda_x^s\\
		P_x^s(\eta) +\lambda_x^c\mathcal{T}^s_{x}(\eta)
	\end{bmatrix},
	$$
	is still in $D\imath_{f(x)}(0,0,\lambda_x^u\eta)^{-1}(E^s \oplus E^u)(\Phi_{f(x)}^u(\lambda_x^u\eta))$, and thus is proportional to $(1,\mathcal{T}^s_{f(x)}(\lambda_x^u\eta))^\top$, which yields
	\begin{equation*}
		P_x^s(\eta)=\lambda^s_x\mathcal{T}^s_{f(x)}(\lambda_x^u\eta)-\lambda_x^c\mathcal{T}^s_{x}(\eta).\qedhere
	\end{equation*}\end{proof}
\subsection{The Foulon--Hasselblatt cocycle}
Let us denote by $\lambda\colon M \times \mathbb{Z} \ni (x,n)\mapsto \lambda_x (n)$ the multiplicative cocycle associated to the map $x \mapsto \lambda_x=\frac{\lambda_x^s \lambda_x^u}{\lambda_{f(x)}^c}$, specifically, $\lambda_x(n):=\frac{\lambda_x^s(n) \lambda_x^u(n)}{\lambda_{f(x)}^c(n)}$. 
\begin{definition}
We define the (twisted)~\emph{Foulon--Hasselblatt cocycle} $\alpha^{\mathrm{FH}}$ to be the map 
\begin{equation*} 
    M \times \mathbb{Z} \ni (x,n)\mapsto \alpha_x^{\mathrm{FH}} (n):=
    \left\{
    \begin{array}{ll}
	\sum_{\ell=0}^{n-1} \lambda_x(\ell)\alpha_{f^\ell(x)}^{\mathrm{FH}},& n \geq 0,\\
    -\sum_{\ell=1}^{-n} \lambda_x(-\ell)\alpha_{f^{-\ell}(x)}^{\mathrm{FH}},& n < 0,
    \end{array}
    \right.
\end{equation*}
with $\alpha_x^{\mathrm{FH}}$ as in~\eqref{def_FHcoc}. 
Note that the twist is given by the cocycle $\lambda$ defined above. 
\end{definition}

\begin{remark}
    For $C^\infty$ dissipative partially hyperbolic diffeomorphisms on $3$-manifolds, Remark~\ref{remarque_dis} ensures that adapted charts can still be defined. Consequently, the Foulon--Hasselblatt cocycle can be defined analogously to expression~\eqref{def_FHcoc}.
\end{remark}
\begin{remark}
The Foulon--Hasselblatt cocycle $\alpha^{\mathrm{FH}}$ is a~\emph{twisted coboundary} if 
\[\alpha_x^{\mathrm{FH}}=\lambda_x\beta(f(x))-\beta(x),\quad \forall\, x \in M,\]
for some continuous function $\beta\colon M \to \RR$. 
\end{remark}
\begin{lemma}
    The twisted cohomology class of the Foulon--Hasselblatt cocycle is independent of the choice of adapted charts. In other words, if $\{\tilde \imath_x\}_{x \in M}$ is another family of adapted charts, and $\tilde{\alpha}^{\mathrm{FH}}$ denotes the associated Foulon--Hasselblatt cocycle, then the difference $\tilde{\alpha}^{\mathrm{FH}}-\alpha^{\mathrm{FH}}$ is a twisted coboundary, i.e., 
$$
\tilde{\alpha}_x^{\mathrm{FH}}-\alpha_x^{\mathrm{FH}}=\lambda_x\beta(f(x))-\beta(x),\quad \forall\, x \in M,
$$
for some continuous function $\beta \colon M \to \mathbb{R}$. 
    \end{lemma}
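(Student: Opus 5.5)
Two families of adapted charts differ by a change of coordinates, and I will express this through the templates. Fix $x$ and compare $\imath_x$ and $\tilde\imath_x$. Both satisfy properties \eqref{normal un}--\eqref{flw dir} of Proposition~\ref{propo o good}, so the transition map $H_x:=\imath_x^{-1}\circ\tilde\imath_x$ fixes the $\xi$- and $\eta$-axes pointwise. Indeed, the non-stationary normal forms $\Phi^s_x,\Phi^u_x$ of Proposition~\ref{norm foms} are unique for $1$-dimensional foliations, since the linearization is determined by $D\Phi(0)=\mathrm{Id}$. Moreover, $H_x$ sends the center direction $\partial_2$ along the plane $\{t=0\}$ to a multiple of itself. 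Because the charts are only constrained on the axes and the first-order data, I will work entirely with the unstable template $\mathcal{T}^u_x$, or the stable one, together with identity~\eqref{unstable_templ_eq}:
\[
\alpha_x^{\mathrm{FH}}=\tfrac{d}{d\xi}\big|_{\xi=0}\bigl(\lambda^u_x\mathcal{T}^u_{f(x)}(\lambda^s_x\xi)-\lambda^c_x\mathcal{T}^u_x(\xi)\bigr)=\lambda^u_x\lambda^s_x(\mathcal{T}^u_{f(x)})'(0)-\lambda^c_x(\mathcal{T}^u_x)'(0).
\]
This is differentiable because the charts are $C^{r-1}$ and the distribution $E^s\oplus E^u$ restricted to the $C^r$ stable leaf $\mathcal W^s(x)$ is smooth along it (the template $\mathcal T^u_x$ is at least $C^1$ in $\xi$, as seen from the telescoping formula for \eqref{unstable_templ_eq} in the proof of Proposition~\ref{propo o good}).

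Dividing by $\lambda^c_x$ and rewriting in terms of $\lambda_x$ would give a coboundary of the same kind with $\beta=-(\mathcal T^u)'(0)$, but that $\beta$ need not be continuous, or even defined, in general. I therefore avoid it and use only the \emph{difference} of templates for the two chart families. The key step is to compute the effect of $H_x$ on the slopes. Along the $\xi$-axis, $DH_x(\xi,0,0)$ preserves $\partial_\xi$ and maps $(0,1,0)$ to $(a_x(\xi),b_x(\xi),0)$ and $(0,0,1)$ to $(c_x(\xi),d_x(\xi),e_x(\xi))$, where $b_x(0)$ and $e_x(0)$ are fixed by the normalizations; in particular $e_x\equiv1$ because the unstable normal form is unique. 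The slope transforms by a M\"obius-type formula, and differentiating at $\xi=0$ (where $\mathcal T^u_x(0)=\tilde{\mathcal T}^u_x(0)=0$) gives
\[
(\tilde{\mathcal T}^u_x)'(0)=\kappa_x\,(\mathcal T^u_x)'(0)+\gamma_x,
\]
with $\kappa_x,\gamma_x$ continuous in $x$ and built from first and second derivatives of $H_x$ at $0$.

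Next I pin down $\kappa_x$. The invariance $F_x=\imath_{f(x)}^{-1}f\imath_x$ together with the prescribed diagonal entries $\lambda^c_x$ (same metric) forces $\partial_2H_x(0)$ to be invariant up to the center cocycle. Since both charts satisfy $D\imath_x(0)$-normalization along $E^c(x)$ in the same way, I expect $\kappa_x=1$; if the center normalization is not fixed, one gets $\kappa_x=c_{f(x)}/c_x$-type factors, which only change the twist by a coboundary. Substituting into the formula for $\alpha^{\mathrm{FH}}$ then gives
\[
\tilde\alpha^{\mathrm{FH}}_x-\alpha^{\mathrm{FH}}_x=\lambda^u_x\lambda^s_x\gamma_{f(x)}-\lambda^c_x\gamma_x .
\]
Dividing through by $\lambda^c_x$ to match the twist $\lambda_x=\lambda^s_x\lambda^u_x/\lambda^c_{f(x)}$ yields the claimed identity with $\beta(x)=\gamma_x$ up to the harmless normalization factor $\lambda^c$; if needed, I will rescale $\beta$ by $\lambda^c$-dependent continuous factors to absorb the mismatch between $\lambda^c_x$ and $\lambda^c_{f(x)}$ in the definition of $\lambda_x$. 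Continuity of $\beta$ follows from the continuity of the chart families in the $C^2$ topology.

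I expect the main obstacle to be the exact bookkeeping of $\kappa_x$ and of the normalizations of the center coordinate, so that the twist comes out precisely as $\lambda_x$. I will handle this by computing $DH_x(0)$ and $\partial_{1}DH_x(0)$ directly from the conjugacy relation $\tilde F_x=H_{f(x)}^{-1}F_xH_x$, evaluating the $(2,3)$-entry derivative $\partial_{13}$ at the origin. This is an alternative, template-free derivation, and I will use it as a cross-check.
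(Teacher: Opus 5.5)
The gap is in your very first step. The identity $\alpha^{\mathrm{FH}}_x=\lambda^u_x\lambda^s_x(\mathcal T^u_{f(x)})'(0)-\lambda^c_x(\mathcal T^u_x)'(0)$ assumes that each individual template is differentiable at $0$. So does your relation $(\tilde{\mathcal T}^u_x)'(0)=\kappa_x(\mathcal T^u_x)'(0)+\gamma_x$. This is false in general, because $E^s\oplus E^u$ is typically only H\"older along the leaves of $\mathcal W^s$ and $\mathcal W^u$.

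Indeed, by Lemma~\ref{lemme_cond_nec}, $C^1$ templates would already make $\alpha^{\mathrm{FH}}$ a twisted coboundary. A nontrivial class is exactly what the cocycle is designed to detect, for example for volume-preserving Anosov flows that are neither contact nor suspensions (cf.\ Example~\ref{example su}). This is why the paper differentiates the template identities only under a $C^1$ hypothesis (Lemma~\ref{lemme_cond_nec}), or almost everywhere under a Lipschitz one (Lemma~\ref{lemma lip boot}).

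The telescoping series you cite from the proof of Proposition~\ref{propo o good} concerns the chart correction $u_x$. It converges only because $\hat r^u_x=O(\xi^3)$. With the linear term $\alpha^{\mathrm{FH}}_x\xi$ present, the analogous series for the derivative of the template diverges. Your text is also internally inconsistent here: you assert that $\mathcal T^u_x$ is $C^1$, and then that $(\mathcal T^u_x)'(0)$ need not be defined.

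Your idea of using only the \emph{difference} of templates is the right repair, but you must take the difference before differentiating. Write $\Phi_x=\tilde\imath_x^{-1}\circ\imath_x=(\tilde\xi_x,\tilde t_x,\tilde\eta_x)$ and $D\Phi_x(\xi,0,0)\partial_2=b_x(\xi)\partial_2$. The slope transformation then reads
\[
\tilde{\mathcal T}^u_x(\xi)=\frac{b_x(\xi)\,\mathcal T^u_x(\xi)+\partial_3\tilde t_x(\xi,0,0)}{\partial_3\tilde\eta_x(\xi,0,0)},\qquad b_x(0)=\partial_3\tilde\eta_x(0,0,0)=1,\quad \partial_3\tilde t_x(0,0,0)=0 .
\]
Keep the denominator: uniqueness of $\Phi^u_x$ only fixes it at $\xi=0$, so your claim $e_x\equiv 1$ is unjustified, though harmless at first order.

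Since $\mathcal T^u_x(\xi)\to0$, the cross term is $o(\xi)$. Hence $\delta_x:=\tilde{\mathcal T}^u_x-\mathcal T^u_x$ is differentiable at $0$, with $\delta_x'(0)=\tau_x:=\partial_{13}\tilde t_x(0,0,0)$, even when $\mathcal T^u_x$ is not. Subtract the two instances of \eqref{unstable_templ_eq} and only then differentiate at $0$. This gives $\tilde\alpha^{\mathrm{FH}}_x-\alpha^{\mathrm{FH}}_x=\lambda^s_x\lambda^u_x\tau_{f(x)}-\lambda^c_x\tau_x=\lambda_x\beta(f(x))-\beta(x)$ with $\beta:=\lambda^c\tau$; you multiply by $\lambda^c$, no division is needed.

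The paper's proof is exactly the computation you relegate to a cross-check. It expands $\tilde F_x\circ\Phi_x=\Phi_{f(x)}\circ F_x$ on $\{t=0\}$ to second order and compares the $\xi\eta$-coefficients of the center component. That argument never involves templates and is complete by itself.

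On $\kappa_x$: the prescribed diagonal entries $\lambda^c_x$ force the center rescaling $b_x(0)$ to be $f$-invariant, not of the form $c_{f(x)}/c_x$. A nontrivial invariant rescaling would multiply the class of $\alpha^{\mathrm{FH}}$ by an invariant function rather than alter the twist. So you need the normalization $b_x(0)=1$, which the paper's computation also uses implicitly.
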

\begin{proof}
    For any $x \in M$, we let $\Phi_x:=\tilde \imath_x^{-1}\circ \imath_x\colon (\xi,t,\eta)\mapsto (\tilde \xi_x,\tilde t_x,\tilde \eta_x)$ be the change of coordinates. By our requirements on the adapted charts, we have $\Phi_x|_{\{(0,0)\}\times (-1,1)}=\mathrm{Id}$ and $\Phi_x|_{(-1,1)\times \{(0,0)\}}=\mathrm{Id}$. Let $\tilde F_x:=(\tilde\imath_{f(x)})^{-1}\circ f\circ \tilde\imath_x=(\tilde F_{x,1},\tilde F_{x,2},\tilde F_{x,3})$, so that, for $\eta \in (-(\lambda_x^u)^{-1},(\lambda_x^u)^{-1})$,   
    \begin{align*} 
			\begin{bmatrix}
				\partial_1 \tilde F_{x,1} & \partial_2 \tilde F_{x,1}\\
				\partial_1 \tilde F_{x,2} & \partial_2 \tilde F_{x,2} 
			\end{bmatrix}(0,0,\eta)&=\begin{bmatrix}
				\lambda_x^s  & 0\\
				\tilde \alpha_x^{\mathrm{FH}} \eta+\tilde{\beta}_x^s \eta^2 & \lambda_x^c
			\end{bmatrix}.
        \end{align*}
    For $|\xi|,|\eta|\ll 1$, we have 
    $$
    \Phi_x(\xi,0,\eta)=(\xi+H.O.T.,\tau_x \xi\eta+H.O.T., \eta+H.O.T.),
    $$
    where $\tau_x:=\partial_{13} \tilde t_x(0,0,0)$. By definition,   $\tilde{F}_x\circ \Phi_x=\Phi_{f(x)} \circ F_x$. On the one hand, we have 
    \begin{align*}
    \tilde{F}_{x,2}\circ \Phi_x(\xi,0,\eta)&=\tilde{F}_{x,2}(\xi,\tau_x \xi\eta, \eta)+H.O.T.\\
    &=\tilde{F}_{x,2}(\xi,0,\eta)+\partial_2 F_{x,2}(\xi,0,\eta)\tau_x \xi \eta+H.O.T.\\
    &=(\tilde{\alpha}_x^{\mathrm{FH}}+\lambda_x^c \tau_x) \xi \eta+ H.O.T.
    \end{align*}
    On the other hand, we have 
    \begin{align*}
    \Phi_{f(x)}\circ \tilde{F}_{x}(\xi,0,\eta)&=\Phi_{f(x)}(\lambda_x^s \xi+H.O.T.,\alpha_x^{\mathrm{FH}} \xi\eta+H.O.T., \lambda_x^u \eta+H.O.T.)\\
    &=(*,(\alpha_x^{\mathrm{FH}}+\tau_{f(x)}\lambda_x^s \lambda_x^u) \xi\eta +H.O.T.,*). 
    \end{align*}
    By comparing the two expressions, we deduce that
   \begin{equation}\label{change_FH_cocy}
   \tilde{\alpha}_x^{\mathrm{FH}}-\alpha_x^{\mathrm{FH}}=\lambda_x^s \lambda_x^u\tau_{f(x)}-\lambda_x^c\tau_x=\lambda_x \beta(f(x))-\beta(x), 
   \end{equation}
    letting $\beta(y):=\lambda_y^c \tau_y=\lambda_y^c \partial_{13} \tilde t_y(0,0,0)$, for $y \in M$. 
\end{proof}
Moreover, if the Foulon--Hasselblatt cocycle is a twisted coboundary, then by
changing the adapted charts one can normalize it to vanish identically.

\begin{lemma}\label{vanish_FH}
Assume that the Foulon--Hasselblatt cocycle is a twisted coboundary. Then, we can find a family $\{\tilde \imath_x\}_{x \in M}$ of adapted charts in which the Foulon--Hasselblatt cocycle vanishes identically. 
    \end{lemma}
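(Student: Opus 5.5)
The plan is to run the computation of the preceding lemma backwards. Suppose $\alpha_x^{\mathrm{FH}}=\lambda_x\beta(f(x))-\beta(x)$ for a continuous $\beta\colon M\to\RR$. We look for new adapted charts $\tilde\imath_x:=\imath_x\circ\Phi_x^{-1}$, where $\Phi_x$ is a shear along the central coordinate:
\[
\Phi_x(\xi,t,\eta):=(\xi,\,t+\tau_x\,\xi\eta\,\chi(\xi,\eta),\,\eta).
\]
Here $\tau_x$ is a continuous function still to be determined, and $\chi$ is a fixed smooth cutoff equal to $1$ near the origin (or simply $\chi\equiv 1$, after shrinking the domain so that $\Phi_x$ is a diffeomorphism onto its image, since $\tau$ is bounded). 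Then $\Phi_x$ is the identity on the axes $\{(\xi,0,0)\}$ and $\{(0,0,\eta)\}$. It also preserves $\partial_2$, because $\partial_2\Phi_x=(0,1,0)$. Hence properties~\eqref{normal un}--\eqref{flw dir} of Proposition~\ref{propo o good} hold for $\tilde\imath_x$. The family $\{\tilde\imath_x\}$ remains continuous in $x$ and uniformly $C^{r-1}$, since $\tau$ is continuous and bounded.

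Next I would check that the new charts are again adapted in the sense of point~\eqref{pt cocyc nf}, with quadratic off-diagonal polynomials. For the new dynamics $\tilde F_x=\Phi_{f(x)}\circ F_x\circ\Phi_x^{-1}$, I would compute the relevant partial derivatives along the axes. On $\{(0,0,\eta)\}$ we have $D\Phi_x^{-1}(0,0,\eta)$ sending $\partial_1\mapsto\partial_1-\tau_x\eta\,\partial_2$ and $\partial_2\mapsto\partial_2$. Moreover, $F_x$ maps the point $(0,0,\eta)$ to $(0,0,\lambda_x^u\eta)$, where $D\Phi_{f(x)}$ sends $\partial_1\mapsto \partial_1+\tau_{f(x)}\lambda_x^u\eta\,\partial_2$. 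Multiplying the three matrices, the $(2,1)$ entry becomes
\[
P_x^s(\eta)+\lambda_x^s\lambda_x^u\tau_{f(x)}\eta-\lambda_x^c\tau_x\eta .
\]
The diagonal entries are unchanged and the $(1,2)$ entry stays $0$. This is again a quadratic polynomial, with linear coefficient $\tilde\alpha_x^{\mathrm{FH}}=\alpha_x^{\mathrm{FH}}+\lambda_x^s\lambda_x^u\tau_{f(x)}-\lambda_x^c\tau_x$. The computation on $\{(\xi,0,0)\}$ is symmetric and yields the same linear coefficient in $P^u$, consistent with~\eqref{change_FH_cocy}.

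It then remains to choose $\tau$ so that $\tilde\alpha^{\mathrm{FH}}\equiv0$. Setting $\lambda_y^c\tau_y=-\beta(y)$, that is $\tau_y:=-\beta(y)/\lambda_y^c$, gives
\[
\tilde\alpha_x^{\mathrm{FH}}=\alpha_x^{\mathrm{FH}}-\lambda_x\beta(f(x))+\beta(x)=0,
\]
using $\lambda_x^s\lambda_x^u\tau_{f(x)}=-\lambda_x\beta(f(x))$. The main point needing care is that the shear does not spoil the exact form of the Jacobian blocks along the axes, in particular that the diagonal entries stay $\lambda^s_x,\lambda^c_x,\lambda^u_x$ and that no higher-order terms enter the off-diagonal polynomials beyond degree two. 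This holds because $\Phi_x$ is linear in $t$ and bilinear in $(\xi,\eta)$, so along each axis its differential is affine in the single remaining variable. I expect this verification to be routine.
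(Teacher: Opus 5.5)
Your proposal is correct and is essentially the paper's argument: a shear of the central coordinate by a multiple of $\xi\eta$, with the coefficient read off from the change-of-charts formula \eqref{change_FH_cocy}. Just keep $\chi\equiv1$, since a cutoff in $(\xi,\eta)$ equal to $1$ only near the origin would destroy the exact polynomial form of the off-diagonal entries along the full axes. Your sign is also the right one, namely $\lambda_x^c\tau_x=-\beta(x)$, i.e.\ $\tilde\imath_x(\xi,t,\eta)=\imath_x\bigl(\xi,t+\frac{\beta(x)}{\lambda_x^c}\xi\eta,\eta\bigr)$. By contrast, the paper's written normalization $\lambda_x^c\partial_{13}\tilde t_x(0,0,0)=\beta(x)$, with chart $\imath_x\bigl(\xi,t-\frac{\beta(x)}{\lambda_x^c}\xi\eta,\eta\bigr)$, would give $\tilde\alpha_x^{\mathrm{FH}}=2\alpha_x^{\mathrm{FH}}$; this is a harmless slip, fixed by replacing $\beta$ with $-\beta$.
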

\begin{proof}
Assume that for some continuous function $\beta \colon M \to \mathbb{R}$, we have 
$$
\alpha_x^{\mathrm{FH}}=\lambda_x\beta(f(x))-\beta(x),\quad \forall\, x \in M. 
$$
    We look for a family $\{\tilde \imath_x\}_{x \in M}$ such that the associated family of (quadratic) polynomials $\tilde P_x^*$ have a vanishing linear coefficient, i.e., $\tilde \alpha_x^{\mathrm{FH}}=0$. As above, we let $\Phi_x:=\tilde \imath_x^{-1}\circ \imath_x\colon (\xi,t,\eta)\mapsto (\tilde \xi_x,\tilde t_x,\tilde \eta_x)$ be the change of coordinates; by~\eqref{change_FH_cocy}, we then have 
   $$
   \tilde \alpha_x^{\mathrm{FH}}=\alpha_x^{\mathrm{FH}}+\lambda_{x} \lambda_{f(x)}^c \partial_{13}\tilde t_{f(x)}(0,0,0)-\lambda_x^c\partial_{13}\tilde t_x(0,0,0).
   $$
   To ensure that $\tilde \alpha_x^{\mathrm{FH}}=0$, it suffices to solve $\lambda_x^c\partial_{13}\tilde t_x(0,0,0)=\beta(x)$; we can then take  
   $$
   \tilde \imath_x(\xi,t,\eta)=\imath_x\left(\xi,t-\frac{\beta(x)}{\lambda_x^c} \xi\eta,\eta\right),\quad \forall\, x \in M,\, \forall\, \xi,t,\eta\in (-1,1). \qedhere
   $$ 
\end{proof}
The non-triviality of the Foulon--Hasselblatt cocycle $\alpha^{\mathrm{FH}}$ is an obstruction to the regularity of the distribution $E^s \oplus E^u$. 
\begin{lemma}\label{lemme_cond_nec}
    If $E^s \oplus E^u$ is $C^1$ along the leaves of the unstable foliation $\mathcal{W}^u$, then $\alpha^{\mathrm{FH}}$ is a twisted coboundary. 
The same conclusion holds if $E^s \oplus E^u$ is $C^1$ along the leaves of the stable foliation $\mathcal{W}^s$.  
\end{lemma}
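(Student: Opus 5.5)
Assume $E^s\oplus E^u$ is $C^1$ along $\mathcal{W}^u$-leaves; the stable case is symmetric, using \eqref{unstable_templ_eq} in place of \eqref{stable_templ_eq}. The plan is to read the coboundary off the template identity \eqref{stable_templ_eq} by differentiating it at $\eta=0$.

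First I check that the stable template $\mathcal{T}^s_x$ is $C^1$ at $0$, with derivative depending continuously on $x$. Recall that $\mathcal{T}^s_x(\eta)$ is the slope of $(E^s\oplus E^u)(\Phi^u_x(\eta))$, read in the chart $D\imath_x(0,0,\eta)$ and along the curve $\eta\mapsto\Phi^u_x(\eta)$. That curve is a $C^r$ parametrization of the unstable leaf, since $\Phi^u_x$ is a $C^r$ chart. The charts $\imath_x$ are uniformly $C^{r-1}$ and depend continuously on $x$. So if $E^s\oplus E^u$ is $C^1$ along unstable leaves, then $\eta\mapsto\mathcal{T}^s_x(\eta)$ is $C^1$, and $\beta(x):=(\mathcal{T}^s_x)'(0)$ is well defined. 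Continuity of $\beta$ in $x$ needs the leafwise $C^1$ hypothesis to be uniform, meaning the leafwise derivative of the distribution is continuous on $M$; this is the natural reading of ``$C^1$ along the leaves''. Note also that $\mathcal{T}^s_x(0)=0$, because at $x$ the plane $E^s\oplus E^u$ is spanned by $\partial_\xi,\partial_\eta$, by points~\eqref{normal un},~\eqref{normal deux} of Proposition~\ref{propo o good}. This is consistent with $P^s_x(0)=0$.

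Next I differentiate \eqref{stable_templ_eq}, namely $P_x^s(\eta)=\lambda^s_x\mathcal{T}^s_{f(x)}(\lambda^u_x\eta)-\lambda^c_x\mathcal{T}^s_x(\eta)$, at $\eta=0$. Since $P^s_x(\eta)=\alpha^{\mathrm{FH}}_x\eta+\beta^s_x\eta^2$, this gives
\[
\alpha^{\mathrm{FH}}_x=\lambda^s_x\lambda^u_x\,\beta(f(x))-\lambda^c_x\,\beta(x).
\]
Set $\tilde\beta(x):=\lambda^c_x\beta(x)$, which is continuous. Then
\[
\alpha^{\mathrm{FH}}_x=\frac{\lambda^s_x\lambda^u_x}{\lambda^c_{f(x)}}\tilde\beta(f(x))-\tilde\beta(x)=\lambda_x\tilde\beta(f(x))-\tilde\beta(x).
\]
This is exactly the twisted coboundary equation. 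It matches the normalization in \eqref{change_FH_cocy}, where $\beta(y)=\lambda^c_y\tau_y$.

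The only delicate step is the first one: making precise that leafwise $C^1$ regularity of the distribution gives continuity in $x$ of $(\mathcal{T}^s_x)'(0)$. I would write the template explicitly. Take a continuous leafwise-$C^1$ unit normal $1$-form $\alpha$ with $\ker\alpha=E^s\oplus E^u$, as in Lemma~\ref{lemma alpha}. Then
\[
\mathcal{T}^s_x(\eta)=-\frac{\alpha(D\imath_x(0,0,\eta)\partial_1)}{\alpha(D\imath_x(0,0,\eta)\partial_2)}.
\]
The denominator is nonzero near $\eta=0$, because $\partial_2\imath_x\in E^c$ by point~\eqref{flw dir}. The derivative in $\eta$ then involves only the leafwise derivative of $\alpha$ along $\mathcal{W}^u$ and the $C^1$ data of $\imath_x$, and both are continuous in $x$.
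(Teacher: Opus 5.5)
Your proposal is correct and follows the same route as the paper: you differentiate \eqref{stable_templ_eq} at $\eta=0$ and use $\beta(x)=\lambda^c_x(\mathcal{T}^s_x)'(0)$ as the transfer function, and the stable case is symmetric via \eqref{unstable_templ_eq}. Your extra argument that $x\mapsto(\mathcal{T}^s_x)'(0)$ is continuous fills in a point the paper leaves implicit; it rests on reading the hypothesis as uniform leafwise $C^1$ regularity.
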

\begin{proof}
    We first observe that for any point $x \in M$, the distribution $E^s \oplus E^u$ is $C^1$ along $\mathcal{W}^u(x)$ if and only if the stable template $\mathcal{T}_x^s$ is $C^1$. In particular, if the latter is $C^1$ along $\mathcal{W}^u(x)$, then by differentiating~\eqref{stable_templ_eq} at $\eta=0$, we have 
    \begin{align*}
    \alpha_x^{\mathrm{FH}}&=\lambda_x^s \lambda_x^u(\mathcal{T}_{f(x)}^s)'(0)-\lambda_x^c(\mathcal{T}_x^s)'(0)\\
    &=\lambda_x\beta(f(x))-\beta(x),
    \end{align*}
with $\beta \colon x \mapsto \lambda_x^c(\mathcal{T}_x^s)'(0)$. 
\end{proof}
\subsection{Bootstrap of regularity}  
Although the Foulon--Hasselblatt cocycle can be defined for any $C^\infty$ partially hyperbolic diffeomorphism on 3-manifold, we focus on its application in a specific case. 

To be precise, we assume that $f\colon M \to M$ is a  $C^\infty$~\emph{dynamically coherent} and~\emph{volume-preserving} partially hyperbolic diffeomorphism on a 3-manifold $M$. By Proposition~\ref{prop ham17hp25}, after taking a finite iterate if necessary, we can assume that every open accessibility class $U$ (if it exists) is $f$-invariant. 

\begin{lemma}\label{lemma lip boot}
    If $E^s \oplus E^u$ is Lipschitz, then $E^s \oplus E^u$ is $C^1$. 
\end{lemma}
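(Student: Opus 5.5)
The plan is to follow the Foulon--Hasselblatt strategy: first show that the cocycle $\alpha^{\mathrm{FH}}$ is a twisted coboundary, then use the template equations~\eqref{stable_templ_eq}--\eqref{unstable_templ_eq} with vanishing linear term to show the templates are $C^1$ at the origin, uniformly in $x$. This gives $C^1$ regularity of $E^s\oplus E^u$ along $\mathcal{W}^s$ and $\mathcal{W}^u$. We then combine this with regularity along $\mathcal{W}^c$ and Journé's lemma.

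\emph{Step 1: the cocycle is a measurable, then continuous, twisted coboundary.} Since $E^s\oplus E^u$ is Lipschitz, each stable template $\mathcal{T}^s_x$ is Lipschitz with uniform constant. Lipschitz regularity of the distribution is inherited by its restriction to $\mathcal{W}^u$-leaves, composed with the uniformly $C^{r-1}$ charts. By Rademacher along the leaves of $\mathcal W^u$ (Fubini with absolute continuity of $\mathcal{W}^u$), for $m$-a.e.\ $x$ the derivative $(\mathcal{T}^s_x)'(0)$ exists. Here we use that $\mathcal{T}^s_{y}$ for $y\in\mathcal W^u(x)$ is related to $\mathcal T^s_x$ by a smooth reparametrization from the change of base point in the charts. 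Differentiating~\eqref{stable_templ_eq} at $\eta=0$ then gives, almost everywhere,
\[
\alpha^{\mathrm{FH}}_x=\lambda_x\beta(f(x))-\beta(x),\qquad \beta(x):=\lambda^c_x(\mathcal{T}^s_x)'(0),
\]
with $\beta\in L^\infty$. To upgrade $\beta$ to a continuous function, we apply a twisted Livshits argument. Note that $\lambda_x(n)\approx \lambda^s_x(n)\lambda^u_x(n)/\lambda^c(n)$, and by volume preservation this is comparable to $(\lambda^c_x(n))^{-2}$. The latter is a bounded cocycle on each open accessibility class by Corollary~\ref{coro center iso}. On integrable regions the templates vanish outright, so there is nothing to prove there.

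After conjugating by $\tilde h$, the twisted equation becomes an untwisted one. For this we substitute $\beta=\gamma\cdot\tilde h^{-1}$ (up to the appropriate power). Proposition~\ref{prop wil13}(1) then yields a continuous solution on each open accessibility class $U$. On the complementary $su$-tori (Proposition~\ref{prop ham17hp25}) the distribution is tangent to smooth tori by Lemma~\ref{lemma ks06}, and the templates are smooth there. We expect to glue these pieces into a global continuous $\beta$. This gluing near the boundary of $U$ is the point I expect to be most delicate.

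\emph{Step 2: change charts and bootstrap.} By Lemma~\ref{vanish_FH}, we can choose adapted charts with $\alpha^{\mathrm{FH}}\equiv0$, so that $P^s_x(\eta)=\beta^s_x\eta^2$. Rewrite~\eqref{stable_templ_eq} as $\mathcal{T}^s_x(\eta)=(\lambda^c_x)^{-1}(\lambda^s_x\mathcal{T}^s_{f(x)}(\lambda^u_x\eta)-\beta^s_x\eta^2)$ and iterate backwards. This expresses $\mathcal{T}^s_x$ as a series in quadratic terms, whose $k$-th term is weighted by $\lambda^s(k)/\lambda^c(k)\cdot(\lambda^u(k))^{-2}$. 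Here $|\eta|\lesssim(\lambda^u_x(k))^{-1}$, and the remainder is controlled by the Lipschitz bound. The series converges in $C^1$ near $0$ because
\[
\lambda^s\lambda^u/\lambda^c\cdot(\lambda^u)^{-1}=\lambda^s/\lambda^c
\]
decays exponentially. This is the same domination used in the proof of Proposition~\ref{propo o good}. Hence $\mathcal{T}^s_x$ is differentiable at $0$ with derivative $0$, uniformly in $x$. Since the base point is arbitrary, we get differentiability everywhere along $\mathcal{W}^u$ with continuous derivative. The symmetric argument with~\eqref{unstable_templ_eq} gives $C^1$ along $\mathcal{W}^s$.

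\emph{Step 3: conclude.} Volume-preserving systems are strongly $1$-bunched, so Proposition~\ref{prop r-bunched} (with $r$ slightly above $1$ available from center-isometric behaviour on $U$, or directly $r=1$) gives that $E^s$, $E^u$, and hence $E^s\oplus E^u$, are $C^{1}$ along the center leaves $\mathcal{W}^c$, which are $C^1$ by dynamical coherence. Being uniformly $C^{1+}$ along the three transverse foliations $\mathcal{W}^s,\mathcal{W}^c,\mathcal{W}^u$, the distribution is $C^1$ by Journé's lemma~\cite{jou88}, applied twice: first to $\mathcal{W}^s,\mathcal{W}^c$ inside center-stable leaves, then to $\mathcal{W}^{cs},\mathcal{W}^u$. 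Journé needs a Hölder modulus of the derivative along each foliation. The geometric decay in Step 2 provides this, and I would check that the center direction does as well; this is the secondary obstacle.
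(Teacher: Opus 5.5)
Your Step 1 is essentially the paper's argument on an open accessibility class $U$: differentiate \eqref{stable_templ_eq} at $\eta=0$, untwist using that $\lambda$ is a coboundary on $U$, and apply Proposition~\ref{prop wil13}(1). The genuine gap is Step 2, which is where you actually try to extract $C^1$ regularity. Iterating the template equation cannot turn the Lipschitz bound into differentiability. After $n$ backward steps, the remainder is $\frac{\lambda^s_x(-n)}{\lambda^c_x(-n)}\mathcal{T}^s_{f^{-n}(x)}(\lambda^u_x(-n)\eta)$. With $L$ the uniform Lipschitz constant of the templates, all you know is
\[
\Bigl|\frac{\lambda^s_x(-n)}{\lambda^c_x(-n)}\,\mathcal{T}^s_{f^{-n}(x)}\bigl(\lambda^u_x(-n)\eta\bigr)\Bigr|
\le L\,\frac{\lambda^s_x(-n)\,\lambda^u_x(-n)}{\lambda^c_x(-n)}\,|\eta|\asymp L\,\bigl(\lambda^c_x(-n)\bigr)^{-2}|\eta| ,
\]
since volume preservation makes $\lambda^s\lambda^c\lambda^u$ bounded above and below. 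On $U$ the factor $(\lambda^c_x(-n))^{-2}$ is bounded above and below (Corollary~\ref{coro center iso}). So the remainder is $O(|\eta|)$ but not $o(|\eta|)$: you recover exactly the Lipschitz estimate you started from. The exponentially convergent quadratic series carries no information about the linear part, and forward iteration with $|\eta|\lesssim(\lambda^u_x(n))^{-1}$ has the same defect.

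The orbitwise equation genuinely admits non-differentiable Lipschitz solutions. At a fixed point with $\lambda^c=1=\lambda^s\lambda^u$ and $P^s(\eta)=\beta^s\eta^2$, every function $\eta\,p(\log|\eta|)+\frac{\beta^s}{\lambda^u-1}\eta^2$, with $p$ Lipschitz and $\log\lambda^u$-periodic, solves \eqref{stable_templ_eq}. Nor is ``derivative $0$'' forced. With $\alpha^{\mathrm{FH}}\equiv 0$, the quantity $\lambda^c_x(\mathcal{T}^s_x)'(0)$ only satisfies the homogeneous twisted equation, and on $U$ this has the nonzero solutions $c/\gamma$. Two further problems: Step 2 calls Lemma~\ref{vanish_FH}, which needs a globally continuous $\beta$ (precisely the gluing you left open). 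And near $\partial U$ you have no control on $\lambda^c$ at this stage, since Lemma~\ref{lemma boundary dyna} is proved later using the present lemma. So ``uniformly in $x$'' is not available either.

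The remedy is that Step 1 already finishes the argument on $U$, and this is how the paper concludes. The function $\beta=\lambda^c(\mathcal{T}^s)'(0)$ is bounded because the templates are uniformly Lipschitz, so the continuous $\tilde\beta$ is bounded too. The Lipschitz templates therefore have a.e.\ derivative equal to a continuous function: first at $0$, then along a.e.\ unstable plaque via the smooth change of base point in the charts. A Lipschitz function whose a.e.\ derivative agrees with a continuous function is $C^1$.

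For the $su$-tori, do not try to glue $\beta$ across $\partial U$. By Lemma~\ref{lemma ks06} the tori are $C^\infty$, so the templates at their points are smooth; they need not vanish, contrary to your remark. Since $E^s\oplus E^u$ is uniformly $C^1$ along $\W^c$, templates at nearby points differ from these by composition with $C^1$ maps varying continuously along $\W^c$. This gives uniform $C^1$ control near $\partial U$.

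Finally, for plain $C^1$ you need neither Journé's lemma nor a H\"older modulus of the derivatives. Uniform $C^1$ regularity along $\W^s$, $\W^c$, $\W^u$, together with the local product structure, gives $C^1$ by an elementary mean-value argument, so your ``secondary obstacle'' disappears.
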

\begin{proof}
Note that $f$ is dynamically coherent and center bunched, hence the distribution $E^s \oplus E^u$ is uniformly $C^1$ along $\W^c$. It therefore suffices to show that $E^s \oplus E^u$ is uniformly $C^1$ along the local manifolds $\W^s_{\mathrm{loc}}(x)$ and $\W^u_{\mathrm{loc}}(x)$ for all $x \in M$. By construction, this is equivalent to proving that the templates $\mathcal{T}^s_x$ and $\mathcal{T}^u_x$ are uniformly $C^1$. We prove this for $\mathcal{T}^s_x$; the argument for $\mathcal{T}^u_x$ is analogous.

We first prove that $\mathcal{T}^s_x$ is uniformly $C^1$ on every open accessibility class $U$. Since $E^s \oplus E^u$ is Lipschitz, the stable template $\mathcal{T}_x^s$ is Lipschitz and is differentiable at $0$ for $m$-almost $x \in M$. Then by differentiating~\eqref{stable_templ_eq} at $\eta=0$, we have 
\[\alpha_x^{\mathrm{FH}}=\lambda_x^s \lambda_x^u(\mathcal{T}_{f(x)}^s)'(0)-\lambda_x^c(\mathcal{T}_x^s)'(0)=\lambda_x\beta(f(x))-\beta(x)\]
for $m$-almost $x \in M$ with $\beta \colon x \mapsto \lambda_x^c(\mathcal{T}_x^s)'(0)$. Since $f$ is volume-preserving, the multiplicative cocycle $x \mapsto \lambda_x^s\lambda_x^u\lambda_x^c$ is a coboundary. By Corollary~\ref{coro center iso}, the multiplicative cocycle $x \mapsto \lambda^c_x|_U$ is also a coboundary.  Therefore, the product 
    \[    x \mapsto \lambda_x=\frac{\lambda_x^s\lambda_x^u}{\lambda_{f(x)}^c}=\left(\lambda_x^s\lambda_x^u\lambda_x^c\right) \frac{\lambda_x^c}{\lambda_{f(x)}^c}(\lambda_x^c)^{-2}\]
is a coboundary on $U$. In particular, we have $\lambda_x = \gamma(f(x))/\gamma(x)$ for some continuous function $\gamma\colon U \to \RR$, and then 
\[\alpha'_x:=\gamma(x)\alpha_x^{\mathrm{FH}}=\gamma(x)(\lambda_x\beta(f(x))-\beta(x)) =\beta(f(x))\gamma(f(x))-\beta(x)\gamma(x)\]
for $m$-almost $x \in U$. This implies that $\alpha'_x$ is a measurable coboundary. By Proposition~\ref{prop wil13}, there is a continuous function $\tilde \beta\colon U \to \RR$ such that $\tilde \beta(x) =\beta(x)$ for $m$-almost every $x \in U$ and 
\[\alpha'_x=\tilde \beta(f(x))\gamma(f(x))-\tilde \beta(x)\gamma(x), \quad \forall\, x\in U.\]
In particular, $(\mathcal{T}_x^s)'(0)=\tilde \beta(x)/\lambda^c_x$ can be extended to a continuous function on $U$. This implies that $\mathcal{T}^s_x$ is uniformly $C^1$ on $U$. 

We then prove $\mathcal{T}^s_x$ is uniformly $C^1$ when $x \in S$ for some 2-torus tangent to $E^s\oplus E^u$. By Lemma~\ref{lemma ks06}, the 2-torus $S$ tangent to $E^s\oplus E^u$ is a $C^\infty$ submanifold. Therefore, the template $\mathcal{T}^s_x$ is uniformly $C^1$ (actually uniformly $C^\infty$) along $S$. Note that the adapted chart $\{\imath_x\}_{x \in M}$ is uniformly $C^\infty$ and $E^s \oplus E^u$ is uniformly $C^1$ along $\W^c$, the templates $\mathcal{T}^s_x$ change by composition with a family of $C^1$ functions when $x$ moves along $\W^c$. In particular, if $(\mathcal{T}^s_x)'(0)$ is well-defined, it is automatically continuous along $\W^c$. This implies $\mathcal{T}^s_x$ is uniformly $C^1$ for every $x \in S$. 
\end{proof}

We then present the main result of this section. In the following, $U$ will denote an open accessibility class of $f$. By the previous discussion, we may assume without loss of generality that $U$ is $f$-invariant. 
\begin{proposition}\label{lemma bootsrap}
If $E^s \oplus E^u$ is Lipschitz and $f|_{\bar{U}}$ is $\infty$-bunched, then $E^s \oplus E^u$ is $C^\infty$ on $U$.
\end{proposition}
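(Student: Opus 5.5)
The plan is to upgrade the $C^1$ regularity from Lemma~\ref{lemma lip boot} to $C^\infty$ regularity along each of the three foliations $\mathcal{W}^s$, $\mathcal{W}^u$, $\mathcal{W}^c$ inside $U$, and then to glue the leafwise regularity with Journé's lemma~\cite{jou88}. The route along $\mathcal{W}^s$ and $\mathcal{W}^u$ is to show that, in suitably normalized adapted charts, the templates are exactly quadratic polynomials with continuous coefficients.

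\textbf{Normalizing the cocycle.} By Lemma~\ref{lemma lip boot}, $E^s\oplus E^u$ is $C^1$. Lemma~\ref{lemme_cond_nec} (in the version restricted to $U$, as in the proof of Lemma~\ref{lemma lip boot}) then shows that $\alpha^{\mathrm{FH}}$ is a twisted coboundary on $U$, with continuous transfer function $\beta(x)=\lambda^c_x(\mathcal{T}^s_x)'(0)$. Applying Lemma~\ref{vanish_FH} on $U$, I change the adapted charts by the shear $t\mapsto t-\beta(x)\xi\eta/\lambda^c_x$. In the new charts $\alpha^{\mathrm{FH}}\equiv 0$, so $P^s_x(\eta)=\beta^s_x\eta^2$. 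Moreover, the new derivative $(\mathcal{T}^s_x)'(0)$ vanishes; I will check this directly from how the shear acts on the template slope. Note also that $\mathcal{T}^s_x(0)=0$, since $E^s\oplus E^u(x)$ is the horizontal plane.

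\textbf{The templates are exact quadratics.} Equation~\eqref{stable_templ_eq} now reads
\[
\lambda^s_x\mathcal{T}^s_{f(x)}(\lambda^u_x\eta)-\lambda^c_x\mathcal{T}^s_x(\eta)=\beta^s_x\eta^2 .
\]
\begin{itemize}
\item For the quadratic part, I seek a continuous $c\colon U\to\RR$ with $\lambda^s_x(\lambda^u_x)^2c_{f(x)}-\lambda^c_xc_x=\beta^s_x$. Solving backwards in time, the coefficients behave like $\lambda^c(n)/(\lambda^s(n)\lambda^u(n)^2)$. By volume preservation this is comparable to $\lambda^c(n)^2/\lambda^u(n)$, which decays exponentially since $\lambda^c$ is a bounded coboundary on $U$ (Corollary~\ref{coro center iso}). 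So a unique bounded continuous solution exists.
\item For the remainder $R_x(\eta):=\mathcal{T}^s_x(\eta)-c_x\eta^2$, the homogeneous relation iterated $n$ times backwards gives
\[
R_x(\eta)=\frac{\lambda^s_{f^{-n}x}(n)}{\lambda^c_{f^{-n}x}(n)}\,R_{f^{-n}x}\bigl(\eta/\lambda^u_{f^{-n}x}(n)\bigr).
\]
Since $R$ is uniformly $C^1$ with $R'(0)=0$, we have $R_y(t)=o(t)$ uniformly in $y$. The prefactor times $1/\lambda^u(n)$ is bounded by volume preservation together with the boundedness of $\lambda^c(n)$ on $U$. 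Letting $n\to\infty$ therefore forces $R\equiv 0$.
\end{itemize}
Hence $\mathcal{T}^s_x(\eta)=c_x\eta^2$. Since the charts are uniformly $C^\infty$, $E^s\oplus E^u$ is uniformly $C^\infty$ along $\mathcal{W}^u$-leaves in $U$. The same argument with \eqref{unstable_templ_eq} gives uniform $C^\infty$ regularity along $\mathcal{W}^s$-leaves.

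\textbf{Gluing.} By $\infty$-bunching and Proposition~\ref{prop r-bunched}, the leaves of $\mathcal{W}^{cs}$, $\mathcal{W}^{cu}$ and $\mathcal{W}^c$ are $C^\infty$, and $E^s$, $E^u$ (hence $E^s\oplus E^u$) are $C^{\infty}$ along $\mathcal{W}^c$ with uniform bounds. I then apply Journé's lemma twice, both times on $U$:
\begin{enumerate}
\item Inside each $\mathcal{W}^{cs}$-leaf, use the transverse foliations $\mathcal{W}^s$ and $\mathcal{W}^c$, whose leaves are uniformly smooth. This gives uniform $C^\infty$ regularity of $E^s\oplus E^u$ along $\mathcal{W}^{cs}$.
\item Then use the pair $\mathcal{W}^{cs}$, $\mathcal{W}^u$ to get $C^\infty$ regularity on $U$.
\end{enumerate}

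I expect the main obstacle to be the second step. The delicate points are the uniformity and continuity of the quadratic coefficients $c_x$ and of the renormalized charts up to the boundary behaviour inside $U$, and the fact that the rigidity $R\equiv 0$ relies on $\lambda^c$ being a bounded coboundary. That boundedness is only available on $U$, not on $\bar U$, so one has to be careful that all estimates feeding into Journé's lemma remain locally uniform in $U$.
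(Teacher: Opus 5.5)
Your argument follows the paper's route. You normalize $\alpha^{\mathrm{FH}}$ to zero (Lemma~\ref{vanish_FH}), iterate \eqref{stable_templ_eq} backwards so the quadratic terms sum to a bounded quadratic $c_x\eta^2$ (the paper's $\tilde P^s_x$), show the rescaled remainder vanishes, and glue with Journé's lemma. You differ from the paper only in the remainder step, and there your version is the one that closes the argument. The paper bounds $\frac{\lambda^s_x(-n)}{\lambda^c_x(-n)}\mathcal{T}^s_{f^{-n}(x)}(\lambda^u_x(-n)\eta)$ using only the Lipschitz constant. That gives boundedness along return times but no decay. The subsequential limit is a priori a function of $\eta$: a solution of the homogeneous equation that is linear in $\eta$ is unchanged by the rescaling. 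So evaluating at $\eta=0$ does not make it vanish. What is needed is your combination of the uniform $C^1$ bound from Lemma~\ref{lemma lip boot} with $R_y'(0)=0$, which gives $R_y(t)=o(t)$.

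Four repairs are needed, none serious.

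(i) Set $b(x):=(\mathcal{T}^s_x)'(0)$. The chart $(\xi,t,\eta)\mapsto\imath_x(\xi,t-b(x)\xi\eta,\eta)$ turns the slope into $\mathcal{T}^s_x(\eta)+b(x)\eta$ and doubles $\alpha^{\mathrm{FH}}$. Use $\imath_x(\xi,t+b(x)\xi\eta,\eta)$ instead, i.e.\ take $t-b(x)\xi\eta$ as the new coordinate.

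(ii) The backward iteration reads $R_x(\eta)=\frac{\lambda^c_{f^{-n}x}(n)}{\lambda^s_{f^{-n}x}(n)}R_{f^{-n}x}\bigl(\eta/\lambda^u_{f^{-n}x}(n)\bigr)$, so you inverted the prefactor. After dividing by $\lambda^u_{f^{-n}x}(n)$ it is $\asymp\lambda^c_{f^{-n}x}(n)^2$, so control of the center is genuinely used.

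(iii) When $U\neq M$ (the case of Section~\ref{sec_dicho}), $\lambda^c$ is not a bounded coboundary on $U$. Its continuous transfer function $\sqrt{\tilde h}$ tends to $0$ at $\partial U$, where $\alpha\wedge d\alpha$ vanishes. Two changes follow.
For the series defining $c$, get uniform exponential decay of $\lambda^c(n)^2/\lambda^u(n)$ from $2$-bunching on $\bar U$; this also makes $c$ continuous.
For $R$, let $n\to\infty$ only along times with $f^{-n_j}(x)\to x$, which exist for almost every $x$ by Poincaré recurrence. Then pass from almost every $x$ to all $x\in U$ by continuity of $x\mapsto R_x$.

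(iv) For the unstable template, redo the normalization with $(\mathcal{T}^u_x)'(0)$ rather than reusing the same charts. One shear cannot kill both derivatives. The difference $(\mathcal{T}^s_x)'(0)-(\mathcal{T}^u_x)'(0)$ is the density of $\alpha\wedge d\alpha$ at $x$ in the chart (normalizing $\alpha(\partial_t)=1$), which is nonzero on $U$. Alternatively, subtract that linear term, since it solves the homogeneous equation.
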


\begin{proof}
Since $f|_{\bar{U}}$ is $\infty$-bunched, the leaves of $\W^c$ are $C^\infty$ and $E^s \oplus E^u$ is $C^\infty$ along $\W^c$. By Journé's Lemma~\cite{jou88}, in order to show that $E^s \oplus E^u$ is $C^\infty$ on $U$, it suffices to show it is (uniformly) $C^\infty$ along the leaves of $\mathcal{W}^s|_U$ and $\mathcal{W}^u|_U$ respectively. As we observed above, $E^s\oplus E^u$ is $C^\infty$ along the leaves of $\mathcal{W}^u|_U$ if and only if the family of stable templates $\{\mathcal{T}_x^s\}_{x \in U}$ are uniformly $C^\infty$. Similarly, $E^s\oplus E^u$ is $C^\infty$ along the leaves of $\mathcal{W}^s|_U$ if and only if the family of unstable templates $\{\mathcal{T}_x^u\}_{x \in U}$ are uniformly $C^\infty$. We only deal with the case of stable templates; the other one is similar. 

By Lemma~\ref{lemma lip boot}, the distribution $E^s\oplus E^u$ is $C^1$. This implies that $x \mapsto \alpha_x^{\mathrm{FH}}$ is a twisted coboundary by Lemma~\ref{lemme_cond_nec}. Then by Lemma~\ref{vanish_FH}, up to replacing the charts $\{\imath_x\}_{x \in M}$ by new charts $\{\tilde{\imath}_x\}_{x \in M}$, we can assume that the Foulon--Hasselblatt cocycle vanishes identically. In other words,  for all $x \in M$, the degree-one coefficient $\alpha_x^{\mathrm{FH}}=(P_x^s)'(0)=0$ of the polynomial $P_x^s$ vanishes; consequently, $P_x^s(\eta)=\beta_x^s \eta^2$.  
    By~\eqref{stable_templ_eq}, for $x \in M$ and $\eta\in (-1,1)$, we have 
    $$
    \mathcal{T}_x^s(\eta)-\frac{\lambda_x^s(-1)}{\lambda_x^c(-1)}\mathcal{T}_{f^{-1}(x)}^s(\lambda_x^u(-1)\eta)=\lambda_{x}^s(-1)P_{f^{-1}(x)}^s(\lambda_x^u(-1)\eta).
    $$
    By iterating the previous relation, for every $x \in M$ and any integer $n \geq 0$, we thus obtain 
    \begin{equation}\label{iter_relat}
    \mathcal{T}_x^s(\eta)-\frac{\lambda_x^s(-n)}{\lambda_x^c(-n)}\mathcal{T}_{f^{-n}(x)}^s(\lambda_x^u(-n)\eta)=\sum_{\ell=1}^{n} \frac{\lambda_{x}^s(-\ell)}{\lambda_x^c(-(\ell-1))}P_{f^{-\ell}(x)}^s(\lambda_x^u(-\ell)\eta).
    \end{equation}
    The templates $\{\mathcal{T}_x^s\}_{x \in M}$ are uniformly bounded, hence by~\eqref{stable_templ_eq}, so are the coefficients of the polynomials $\{P_x^s\}_{x \in M}$. Since $f$ is volume-preserving, the map $x \mapsto \lambda^s_x\lambda^c_x\lambda^u_x$ is a coboundary. We deduce that for every $x \in M$, there exist constants $C,C'>0$ such that
    \begin{equation}\label{eq 1term}
           \left|\frac{\lambda_{x}^s(-\ell)}{\lambda_x^c(-(\ell-1))}P_{f^{-\ell}(x)}^s(\lambda_x^u(-\ell)\eta)\right|\leq C \frac{\lambda_{x}^s(-\ell)\lambda_x^u(-\ell)}{\lambda_x^c(-(\ell-1))}\lambda_x^u(-\ell)\leq C'\frac{\lambda^u_x(-\ell)}{\lambda_x^c(-(\ell-1))\lambda_x^c(-\ell)}.
    \end{equation}
By Corollary~\ref{coro center iso}, the center cocycle $x \mapsto \lambda_x^c$ restricted to $U$ is a coboundary, i.e., there exists a continuous function $\Phi\colon U \to \mathbb{R}\setminus \{0\}$ such that 
$$
\lambda_x^c(n)=\frac{\Phi(f^n(x))}{\Phi(x)},\quad \forall\, x \in U.
$$
Since $f|_U$ is accessible and ergodic, for $m$-a.e. $x \in U$, we can choose a subsequence $(n_j)_j$ of integers $n_j \to +\infty$ such that that $f^{-n_j}(x) \to x$. Therefore, the center Lyapunov exponent satisfies
\[\chi^c(x)= \lim_{j \to \infty}\frac{1}{n_j}\log \lambda^c_x(n_j)= \lim_{j \to \infty}\frac{1}{n_j}\log \frac{\Phi(f^{n_j}(x))}{\Phi(x)}=0,\]
for $m_U$-a.e. $x \in U$. We deduce that for $m_U$-a.e. $x \in U$, the right hand side in~\eqref{eq 1term}
decreases exponentially fast when $\ell \to \infty$, and the right hand side in~\eqref{iter_relat} converges to some polynomial 
\begin{equation}\label{eq polynomial term}
    \tilde P_x^s(\eta)=\sum_{\ell=1}^{+\infty} \frac{\lambda_{x}^s(-\ell)}{\lambda_x^c(-(\ell-1))}P_{f^{-\ell}(x)}^s(\lambda_x^u(-\ell)\eta). 
\end{equation}
Moreover, by the previous discussion, the coefficients of the polynomials $\{\tilde P_x^s\}$ are uniformly bounded with respect to $x$. 
    
Since the stable templates $\{\mathcal{T}_x^s\}_{x \in M}$ are uniformly Lipschitz continuous and $f$ is volume-preserving, there exist constants $K,K'>0$ such that  
\[\left|\frac{\lambda_x^s(-n)}{\lambda_x^c(-n)}\mathcal{T}_{f^{-n}(x)}^s(\lambda_x^u(-n)\eta)\right|\leq K\left|\frac{\lambda_x^s(-n)\lambda_x^u(-n)}{\lambda_x^c(-n)}\right| \leq \frac{K'}{\left| \lambda^c_x(-n)\right|^{2}}, \]
for every $x \in M$. Again since $f|_U$ is ergodic, for $m$-a.e. $x \in U$, we can choose a subsequence $(n_j)_j$ of integers $n_j \to +\infty$ such that $f^{-n_j}(x) \to x$, and
\[\left|\frac{\lambda_x^s(-n_j)}{\lambda_x^c(-n_j)}\mathcal{T}_{f^{-n_j}(x)}^s(\lambda_x^u(-n_j)\eta)\right| \leq \frac{K'}{\left| \lambda^c_x(-n_j)\right|^{2}}=K'\left( \frac{\Phi(x)}{\Phi(f^{-n_j}(x))} \right)^2 \to K'\]
is uniformly bounded. In particular, for $m_U$-a.e. $x \in U$, we can choose a subsequence $(n'_j)_j$ of $(n_j)_j$ such that 
\begin{equation}\label{template_bdd}
     \lim_{j \to+\infty} \frac{\lambda_x^s(-n'_j)}{\lambda_x^c(-n'_j)}\mathcal{T}_{f^{-n'_j}(x)}^s(\lambda_x^u(-n'_j)\eta)=C_\infty \in \mathbb{R}.
\end{equation}
By~\eqref{iter_relat}-\eqref{template_bdd}, we deduce that 
    \begin{equation*} 
    \mathcal{T}_x^s(\eta)=C_\infty+\tilde P_x^s(\eta),
    \end{equation*}
for $m_U$-a.e. $x \in U$. In fact, as $\mathcal{T}_x^s(0)=\tilde P_x^s(0)=0$, we see that $C_\infty=0$. 
Therefore, $\mathcal{T}_x^s=\tilde P_x^s$ is a polynomial with uniformly bounded coefficients for $m_U$-a.e. $x\in U$. Since the limit of polynomials with uniformly bounded degrees and coefficients is still a polynomial, the stable templates $\{\mathcal{T}_x^s\}_{x \in U}$ are uniformly $C^\infty$ for every $x \in U$.
\end{proof}

\begin{remark}\label{finite_reg_version}
   A finite-regularity version of Proposition~\ref{lemma bootsrap} can also be stated. Specifically, if $f$ is assumed to be merely $C^r$ (with $r \geq 5$) rather than $C^\infty$, Proposition~\ref{propo o good} still guarantees the existence of $C^{r-1}$ adapted charts. By following the same proof as above, we conclude that if $E^s \oplus E^u$ is Lipschitz and $f|_{\bar{U}}$ is $\infty$-bunched, then $E^s \oplus E^u$ is $C^{r-1}$ on $U$.
\end{remark}

\section{A dichotomy: accessibility or integrability}\label{sec_dicho}
Recall that by Proposition~\ref{prop ham17hp25}, for a $C^\infty$ volume-preserving partially hyperbolic diffeomorphism $f\colon M \to M$ on a $3$-manifold, if $f$ is not accessible and $E^s \oplus E^u$ is not integrable, then, up to a finite iterate, there exists an $f$-invariant open accessibility class $U$ such that 
\begin{itemize}
    \item $U$ is a $C^1$ submanifold homeomorphic to $\mathbb T^2 \times (0,1)$;
    \item $f|_U$ is ergodic;
    \item the boundary $\partial U$ consists of two \( f \)-invariant $2$-tori tangent to \( E^s \oplus E^u \).
\end{itemize}
We then prove by contradiction that, if $E^s \oplus E^u$ is Lipschitz, this case cannot happen. 

\begin{proposition}\label{prop dic}
Let $f\colon M \to M$ be a $C^\infty$ volume-preserving partially hyperbolic diffeomorphism with $E^s \oplus E^u$ being Lipschitz. Then either $f$ is accessible or $E^s \oplus E^u$ is integrable. 
\end{proposition}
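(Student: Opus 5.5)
We argue by contradiction, using the setup described just before the statement. Suppose $f$ is neither accessible nor has $E^s\oplus E^u$ integrable. By Proposition~\ref{prop ham17hp25}, after passing to a finite iterate and cover, we are in case~\eqref{cas_trois}. So there is an $f$-invariant open accessibility class $U\cong \TT^2\times(0,1)$ with $f|_U$ ergodic, and its boundary consists of $f$-invariant $2$-tori $T_0,T_1$ tangent to $E^s\oplus E^u$. By Lemma~\ref{lemma ab dc}, $f$ is dynamically coherent. By Lemma~\ref{lemma ks06}, the tori $T_i$ are $C^\infty$.

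The idea is to compare two pieces of information about the center dynamics near $T_0$:
\begin{itemize}
\item Lemma~\ref{lemma coboundary} and Corollary~\ref{coro center iso} say that on $U$ the center cocycle $\lambda^c$ (measured in the normalization $\alpha(X)=1$) is a coboundary $\rho=\sqrt{\tilde h\circ f}/\sqrt{\tilde h}$. Here $\tilde h$ is continuous and positive on $U$ and equals $|h|$ almost everywhere, where $\alpha\wedge d\alpha=h\,m$.
\item At the boundary the form $\alpha$ becomes integrable: on $T_0$ the restriction $\alpha|_{T_0}$ vanishes identically, since $T_0$ is tangent to $\ker\alpha$.
\end{itemize}

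The first key step is to show that $\tilde h(x)\to 0$ as $x\to T_0$ from inside $U$. Since $\alpha$ is Lipschitz, $h$ is only $L^\infty$, so this cannot be read off pointwise. I would instead integrate. For a $C^1$ surface $\Sigma\subset U$ parallel to $T_0$ (a level set of the $C^1$ map $p$ of Proposition~\ref{prop ham17hp25}), consider the slab $\Sigma_{t}$ between $T_0$ and $\Sigma$. Stokes' formula for Lipschitz forms gives $\int_{\text{slab}} \alpha\wedge d\alpha = \int_{\partial}\alpha\wedge\alpha - \dots$, which is not directly useful. A better route is to use the $su$-geometry near $T_0$. Along an $su$-quadrilateral based at a point at distance $\delta$ from $T_0$, the center holonomy defect is controlled by $\int d\alpha$ over the enclosed disc, and hence by $\|h\|$ times the area. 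On the other hand, the $f$-invariant torus $T_0$ forces this defect to be $o(\delta)$, because it is attracted or repelled in the center direction only subexponentially.

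A cleaner route, which I would try first, uses the dynamics. Take $x\in U$ close to $T_0$ and iterate backwards and forwards. By Corollary~\ref{coro center iso}, $\lambda^c_x(n)=\Phi(f^n x)/\Phi(x)$ with $\Phi=\sqrt{\tilde h}$ continuous on $U$. Because $T_0$ is $f$-invariant, the center distance $d^c(f^nx,T_0)$ evolves like $\lambda^c_x(n)\,d^c(x,T_0)$ up to a bounded distortion factor. This uses dynamical coherence and the uniform $C^1$ structure of $\W^c$ transverse to the $C^\infty$ torus $T_0$ (bounded distortion holds because $\lambda^c$ is Hölder and the orbits shadow $T_0$). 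Hence
\[
\frac{d^c(f^n x,T_0)}{\Phi(f^n x)} \asymp \frac{d^c(x,T_0)}{\Phi(x)}.
\]
Now choose, by ergodicity of $f|_U$, a typical $x$ whose orbit enters both a fixed compact $K\subset U$ and arbitrarily small neighbourhoods of $T_0$. On $K$ both $d^c(\cdot,T_0)$ and $\Phi$ are bounded above and below, so $\Phi(y)\asymp d^c(y,T_0)$ along the orbit. In particular $\Phi\to 0$ near $T_0$, so $|h|\asymp d^c(\cdot,T_0)^2$ there. This step, the bounded distortion comparison near the boundary torus, is where I expect the main difficulty.

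The final step derives a contradiction from $|h|\lesssim d^2$ near $T_0$, where $d=d^c(\cdot,T_0)$. Restrict $f$ to $T_0$, which is a $C^\infty$ invariant torus carrying an Anosov diffeomorphism with splitting $E^s\oplus E^u$. On $U$ we also know $\lambda^c=\Phi\circ f/\Phi$ with $\Phi\asymp d$, so $\lambda^c|_{T_0}$ is the center derivative transverse to the invariant torus. The comparison then shows that the Jacobian of $f$ in the normal direction to $T_0$ is cohomologous to $1$. Moreover the quantity $\Phi/d$ extends continuously and positively to $T_0$, giving a continuous transfer function there.

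Now use volume preservation. The cocycle $\lambda^s\lambda^c\lambda^u$ is a coboundary, and $f|_{T_0}$ has a coboundary normal Jacobian, so $f|_{T_0}$ preserves a smooth measure in the class of area. This is not yet contradictory on its own. The contradiction instead comes from the Lipschitz, and by Lemma~\ref{lemma lip boot} $C^1$, regularity of $E^s\oplus E^u$. Since $\alpha$ is $C^1$, $h$ is continuous on all of $M$ and vanishes on $T_0$, and $\alpha\wedge d\alpha$ is a continuous $3$-form. Consider the modified form $\beta=\alpha/\Phi$ on $U$. By Corollary~\ref{coro ec smooth}, $\beta$ is an $f$-invariant contact form with $\beta\wedge d\beta=\pm m$, which has \emph{constant} nonzero density, while $\beta$ blows up like $1/d$ near $T_0$. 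I would then compute $\int_{\Sigma}\beta\wedge d\beta$-type flux. Take an $su$-disc $D$ of fixed size in a center-transverse surface at distance $d$ from $T_0$, transverse to the Reeb field $R\subset E^c$. By Lemma~\ref{lemma non dege}, $\int_D d\beta = \int_{\partial D}\beta$. The left side has size $\asymp \text{area}(D)$, uniformly positive, by the volume normalization. The right side is $\int_{\partial D}\alpha/\Phi$, where $\int_{\partial D}\alpha = \int_D d\alpha$ is of order $\text{area}\cdot \sup|d\alpha|_{D}|$, and $d\alpha|_{su}$ is of order $d$ by continuity and the vanishing on $T_0$. This gives a quantity of order $d/d=O(1)$, so matching the constants requires more care.

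If this flux estimate is inconclusive, I would fall back on Lemma~\ref{lemma ham measure}. Take the $f$-invariant volume and push to the limit: since $h\asymp d^2$ is bounded and $f$-equivariant by Lemma~\ref{lemma hrho}, the measure $|h|\,m$ restricted to a collar is invariant. Its mass behaves like $\int_0 d^2\,dd<\infty$, which does not contradict anything. So the flux argument above is the intended route, and pinning down its constants is the real obstacle.
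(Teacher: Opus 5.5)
Your proposal has a genuine gap: by your own account the final flux estimate is inconclusive and the fallback yields nothing, so no contradiction is reached. The decisive problem is that you invoke Corollary~\ref{coro ec smooth} with only $C^1$ regularity. That corollary requires $E^s\oplus E^u$ of class $C^k$ with $k\ge 2$, so that Proposition~\ref{prop wil13}(2) can upgrade $\tilde h$. With $\alpha\in C^1$ and $\tilde h$ merely continuous, $\beta=\alpha/\sqrt{\tilde h}$ is only $C^0$, and then $d\beta$, its Reeb field and Lemma~\ref{lemma non dege} are unavailable.

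The paper gets beyond $C^1$ through Proposition~\ref{lemma bootsrap}. That proposition needs $f|_{\bar U}$ to be $\infty$-bunched, which the paper obtains via Lemma~\ref{lemma dwx} from the vanishing of the center exponent of \emph{every} ergodic measure on $\bar U$, including those carried by the boundary tori. On $U$ this vanishing comes from Corollary~\ref{coro center iso}. On $\partial U$ it is the idea you are missing, namely Lemma~\ref{lemma boundary dyna}:
\begin{itemize}
\item The paper builds the $f$-invariant field $X\subset E^c$ with $\alpha(X)=\sqrt{\tilde h}$, extended by $0$ off $U$ (Lemma~\ref{lemma inv X}). Its flow commutes with $f$ and drives all of $U$ to a single boundary torus $S$ (Lemma~\ref{lemma flow to S}).
\item Comparing $f^n$ with this flow on the center segment at a periodic point $p\in S$ gives a sign condition on $\lambda^c_p(n)$. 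Volume preservation converts it into a sign condition on $\mathrm{Jac}(Df^n|_S)(p)$.
\item The Livshits inequality (Lemma~\ref{lemma livsic inequality}) and the Ruelle/Ledrappier--Young bound (Lemma~\ref{lemma srb}) then force the SRB measure of $f|_S$ to be smooth. Hence $\lambda^c|_S$ is a coboundary.
\end{itemize}

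You do state that the normal Jacobian along $T_0$ is cohomologous to $1$, but only as a consequence of $\Phi\asymp d^c(\cdot,T_0)$, and you never use it to obtain bunching. That asymptotic is itself unsupported. The distortion of $\lambda^c(n)$ along the center segment from $x$ to $T_0$ is controlled by $\sum_{k<n}d^c(f^kx,T_0)^{\gamma}$. This sum need not be bounded for orbits that linger near a torus with subexponential center behaviour, let alone for orbits that also revisit a compact $K\subset U$. Your ``first key step'' is also unnecessary: Lemmas~\ref{lemma ab dc} and~\ref{lemma lip boot} already give $E^s\oplus E^u\in C^1$, so $h$ is continuous and vanishes on $\partial U$.

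Second, suppose you grant an $f$-invariant $C^1$ contact form $\beta$ on $U$ with Reeb field in $E^c$. Even then, working with discs that have boundary is what stops your constants from matching. The paper uses a \emph{closed} surface instead. Take a $C^1$ torus $\Sigma\subset U$ parallel and close to $\partial U$, transverse to $E^c$ and hence to the Reeb field. By Lemma~\ref{lemma non dege}, $d\beta|_{T\Sigma}$ is nondegenerate, hence an area form of constant sign on the connected torus, so $\int_\Sigma d\beta\neq 0$. But $\beta$ is a global $C^1$ form on $U\supset\Sigma$ and $\partial\Sigma=\emptyset$, so Stokes gives $\int_\Sigma d\beta=0$. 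No estimates of $\beta$ near $T_0$ are needed.
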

The proof is divided into several parts. In what follows, we assume that $E^s \oplus E^u$ is Lipschitz but not integrable and show that $f$ is accessible. Otherwise, there exists a non-trivial $f$-invariant open accessibility class $U\subsetneq M$, as described above. We begin by proving that $E^s\oplus E^u$ is $C^1$ and constructing an $f$-invariant vector field $X$ tangent to $E^c|_U$, and study the flow $\phi_t$ generated by $X$. This flow enables us to analyze the center dynamics both in the interior of $U$ and on its boundary $\partial U$. On the boundary, we apply a Livshits-type inequality together with the Ledrappier--Young formula to show that the SRB measure is a volume measure. This implies that the center Lyapunov exponent vanishes, and as a consequence, $f$ is $\infty$-bunched on the closure $\bar{U}$.

We then apply the bootstrap argument (Proposition~\ref{lemma bootsrap}) to upgrade the regularity of $E^s \oplus E^u$ to $C^\infty$. As a result, we obtain the existence of a $C^1$ contact form preserved by $f|_U$, whose Reeb vector field is tangent to $E^c$. This leads to a contradiction via Stokes' theorem, thereby completing the proof.

\subsection{An $f$-invariant vector field}

By considering an iterate if necessary, we always assume that $f$ preserves the orientation of $E^c$.
\begin{lemma} The distribution 
$E^s\oplus E^u$ is $C^1$. 
\end{lemma}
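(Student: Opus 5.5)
The plan is to reduce the statement to Lemma~\ref{lemma lip boot}. That lemma gives $C^1$ regularity of $E^s\oplus E^u$ from Lipschitz regularity, provided $f$ is $C^\infty$, volume-preserving and dynamically coherent. So the only missing hypothesis is dynamical coherence. In the present situation we are in case~\eqref{cas_trois} of Proposition~\ref{prop ham17hp25}: $f$ is not accessible and $E^s\oplus E^u$ is not integrable. Lemma~\ref{lemma ab dc} then says $f$ is dynamically coherent, possibly after passing to a finite iterate and finite cover. Both operations preserve Lipschitz and $C^1$ regularity of the distribution, so it is harmless to pass to them.

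Having dynamical coherence, I would follow the proof of Lemma~\ref{lemma lip boot}, checking that every step applies in our configuration. The steps, in order:
\begin{enumerate}
\item Since $f$ is volume-preserving, it is center bunched (strongly $1$-bunched). Proposition~\ref{prop r-bunched} then makes $E^s\oplus E^u$ uniformly $C^1$ along $\W^c$.
\item By Journé's lemma, it suffices to show that the templates $\mathcal{T}^s_x$ and $\mathcal{T}^u_x$ are uniformly $C^1$.
\item On the open accessibility class $U$, which is $f$-invariant and on which $f$ is ergodic, Lemma~\ref{lemma coboundary} and Corollary~\ref{coro center iso} make $\lambda^c|_U$ a continuous coboundary. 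Hence the twist $\lambda_x$ is a coboundary on $U$.
\item Differentiating~\eqref{stable_templ_eq} at $\eta=0$ at points of differentiability exhibits $\alpha^{\mathrm{FH}}$, after untwisting by $\gamma$, as a measurable coboundary on $U$. Proposition~\ref{prop wil13}(1) upgrades it to a continuous one. This gives continuity of $(\mathcal{T}^s_x)'(0)$, and likewise of $(\mathcal{T}^u_x)'(0)$, on $U$.
\item The complement of the open accessibility classes consists of the $f$-invariant $su$-tori $p^{-1}(t)$ with $t\notin V$. These are $C^\infty$ by Lemma~\ref{lemma ks06}, so the templates are smooth there.
\end{enumerate}

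I expect the main obstacle to be gluing these two regions: showing that $x\mapsto(\mathcal{T}^s_x)'(0)$ is continuous on all of $M$, and not just separately on $U$ and on the tori. In particular, the continuous solution $\tilde\beta$ on $U$ must extend continuously up to $\partial U$, and it must agree there with the derivative computed from the smooth boundary tori. A priori the Livshits solution on the noncompact set $U$ could blow up near $\partial U$.

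My first attempt at this would use uniformity. The Lipschitz bound on $E^s\oplus E^u$ gives uniform Lipschitz bounds on all templates, and hence bounded difference quotients. For $x_k\in U$ converging to $x\in\partial U$, the template derivatives along the center leaf through $x$ vary continuously by step~1, because the templates change by composition with uniformly $C^1$ maps along $\W^c$. Each center leaf through a boundary point enters $U$, so the value at a boundary point is propagated into $U$ along the center direction. This should force the derivatives to converge.

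If that argument falls short, the fallback is to note that the cohomological equation~\eqref{stable_templ_eq} is valid on all of $M$ at every point where the templates are differentiable. A bounded measurable solution on $M$ together with the uniform $C^1$ structure along $\W^c$ should then pin down a continuous global representative. With continuity of the template derivatives established everywhere, uniform $C^1$ regularity of $E^s\oplus E^u$ on $M$ follows by Journé's lemma as in Lemma~\ref{lemma lip boot}.
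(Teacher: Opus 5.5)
Your proposal is correct and coincides with the paper's proof, which is exactly your first paragraph: Lemma~\ref{lemma ab dc} gives dynamical coherence, and Lemma~\ref{lemma lip boot} then applies directly. The gluing issue you raise belongs to the proof of Lemma~\ref{lemma lip boot} itself, where the paper treats the $su$-tori via Lemma~\ref{lemma ks06} and continuity of the template derivatives along $\W^c$ (the same mechanism as your first attempt), so here you may simply cite that lemma.
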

\begin{proof}
By Lemma~\ref{lemma ab dc}, $f$ is dynamically coherent. The lemma then follows from Lemma~\ref{lemma lip boot}. 
\end{proof}

\begin{lemma}\label{lemma inv X}
There is an $f$-invariant continuous vector field $X \subset E^c$ such that $X$ is no-where vanishing on $U$ and vanishes everywhere on $M \setminus U$. 
\end{lemma}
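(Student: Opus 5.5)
The vector field will come from the Lipschitz $1$-form and the coboundary of Lemma~\ref{lemma coboundary}. As in Section~\ref{sec acc and iso}, fix a Lipschitz nowhere-vanishing $1$-form $\alpha$ with $\ker\alpha=E^s\oplus E^u$, and write $f^*\alpha=\rho\alpha$ as in~\eqref{f_etoile_alpha}. Let $Y$ be the continuous vector field in $E^c$ normalized by $\alpha(Y)\equiv 1$. Since $f$ preserves the orientation of $E^c$, we have $Df(Y_x)=\rho(x)\,Y_{f(x)}$, with the same computation as in Corollary~\ref{coro center iso}.

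By Lemma~\ref{lemma coboundary}, applied to the $f$-invariant ergodic open accessibility class $U$, there is a continuous positive function $\tilde h$ on $U$ with $\rho=\sqrt{\tilde h\circ f}/\sqrt{\tilde h}$. On $U$ we set
\[
X:=\sqrt{\tilde h}\,Y, \qquad\text{on } U.
\]
Then
\[
Df(X_x)=\sqrt{\tilde h(x)}\,\rho(x)\,Y_{f(x)}=X_{f(x)},
\]
so $X$ is $f$-invariant, continuous and nowhere vanishing on $U$. On $M\setminus U$ we set $X:=0$. Invariance holds there trivially, because $M\setminus U$ is $f$-invariant.

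The real work is continuity of $X$ across $\partial U$, that is, showing $\tilde h(x)\to 0$ as $x\to\partial U$. Since $\tilde h=|h|$ almost everywhere, where $h$ is the density of $\alpha\wedge d\alpha$, the natural first attempt is to show that $h$ vanishes on the boundary tori, since these are tangent to $E^s\oplus E^u$ and $d\alpha|_{TS}=0$ there. However, $h$ is only $L^\infty$, so this alone gives nothing pointwise.

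I would instead argue dynamically. Fix a boundary torus $S\subset\partial U$, which is $f$-invariant and $C^\infty$ by Lemma~\ref{lemma ks06}. Suppose some sequence $x_k\to S$ in $U$ had $\tilde h(x_k)\ge c>0$. Because $\tilde h$ is continuous on $U$ and satisfies the invariance relation, the same lower bound propagates along the $\mathcal W^s$- and $\mathcal W^u$-leaves through $x_k$: moving along these leaves relates $\tilde h$ by $su$-holonomy of the cocycle $\rho$, which is bounded since $\rho$ is Lipschitz and $E^s\oplus E^u$ is $C^1$.

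The contradiction should come from volume. Since $f$ preserves $m$ and $S$ is $f$-invariant, the set $\{x\in U: \tilde h(x)\ge c\}$ is $f$-invariant. Accumulating on $S$, it would force, via the continuous extension of $\rho$ to $S$, that $\rho|_S=\lambda^c|_S$ is a continuous coboundary with bounded transfer function. The center direction is transverse to $S$, so one iterates a small center segment near $S$ and compares: $f$ preserves $X$ on $U$ while center segments near $S$ stay uniformly bounded in length. Together with the $f$-invariance of the region between $S$ and a nearby center-transversal, I expect this to give $\int_U\tilde h\,dm<\infty$ and a uniform lower bound on $\tilde h$ near $S$ that is incompatible with the fact that $S$ is a boundary of the accessibility class.

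If that route stalls, I would fall back on the simpler observation that one may choose $\tilde h$ so that $\sqrt{\tilde h}\,Y$ is replaced by $\psi\sqrt{\tilde h}\,Y$ for a suitable $f$-invariant function $\psi$ decaying toward $\partial U$, for instance one built from the flow-time along $\mathcal W^c$ measured from $S$ on the $\mathbb{T}^2\times(0,1)$ structure. This would still keep $X$ invariant, nonvanishing on $U$ and zero on $\partial U$. I expect this boundary continuity to be the main obstacle; the invariance and nonvanishing on $U$ are immediate from Lemma~\ref{lemma coboundary}.
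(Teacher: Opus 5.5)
\textbf{There is a genuine gap: continuity of $X$ across $\partial U$, which you yourself single out as the real work, is never proved.} Your construction $X=\sqrt{\tilde h}\,Y$ on $U$, and your checks of $f$-invariance and non-vanishing on $U$, are the same as the paper's.

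Your reason for abandoning the direct route is mistaken. At this point $E^s\oplus E^u$ is already known to be $C^1$: this is the lemma immediately preceding, obtained from dynamical coherence (Lemma~\ref{lemma ab dc}) and Lemma~\ref{lemma lip boot}. You even invoke this fact later. The missing steps are:
\begin{enumerate}
\item By Lemma~\ref{lemma alpha} with $r=1$, you may take $\alpha$ of class $C^1$. Then $d\alpha$ and $\theta=\alpha\wedge d\alpha=h\,m$ are continuous on all of $M$, so $h$ is continuous, not merely $L^\infty$.
\item Since $\tilde h$ and $|h|$ are continuous on the open set $U$ and agree $m$-a.e., they agree everywhere on $U$.
\item Each boundary torus $S$ is a $C^1$ (indeed $C^\infty$, by Lemma~\ref{lemma ks06}) integral surface of $\ker\alpha$. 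Hence $\alpha|_{TS}=0$, so $d\alpha|_{TS}=0$. Evaluating $\theta$ on $e_1,e_2\in T_xS$ and a transverse $e_3$, only the term $\alpha(e_3)\,d\alpha(e_1,e_2)=0$ survives, so $h=0$ on $\partial U$.
\item Therefore $\tilde h(x)=|h(x)|\to 0$ as $x\to\partial U$, and $X$ extended by zero is continuous.
\end{enumerate}
This is exactly the paper's argument.

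Your substitute arguments do not close the gap.
\begin{itemize}
\item The claim that $\{x\in U:\tilde h(x)\ge c\}$ is $f$-invariant is false. What holds is $\tilde h\circ f=\rho^2\,\tilde h$, not $\tilde h\circ f=\tilde h$.
\item The remainder of that paragraph, about propagation along $su$-leaves and a uniform lower bound near $S$, is heuristic. It is not a proof.
\item The fallback cannot work. $f|_U$ is ergodic for $m$ and $U$ is open, so any continuous $f$-invariant function $\psi$ on $U$ is a.e.\ constant, hence constant on $U$. Such a $\psi$ cannot decay toward $\partial U$ unless it vanishes identically, which would destroy non-vanishing of $X$ on $U$.
\end{itemize}
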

\begin{proof}
As in Section~\ref{sec acc and iso}, let $\alpha$ be a Lipschitz nowhere-vanishing $1$-form with $\ker \alpha = E^s \oplus E^u$. Let $X \subset E^c$ be the vector field on $U$ with $\alpha_x(X_x) = \sqrt{\tilde h(x)}$ for every $x \in U$. Recall that by~\eqref{f_etoile_alpha}, we have $f^\ast \alpha = \rho \alpha$, for some positive Lipschitz function $\rho\colon M \to \RR^+$. Since $E^s \oplus E^u$ is Lipschitz, by Lemma~\ref{lemma coboundary}, $\rho(x)=\sqrt{\tilde h(f(x))}/\sqrt{\tilde h(x)}$ is a coboundary on $U$ for some continuous function $\tilde h\colon U \mapsto \RR^+$. We then have
\[\alpha_{f(x)}(Df_x(X_x))=(f^\ast\alpha)_x(X_x)=\rho(x) \sqrt{\tilde h(x)}=\sqrt{\tilde h(f(x))}=\alpha_{f(x)}(X_{f(x)}). \]
Since $E^s \oplus E^u$ is not only Lipschitz but is actually $C^1$, the $1$-form $\alpha$  is $C^1$, and the function $h\colon U \to \mathbb R$ is the density of the continuous 3-form $\theta =\alpha\wedge d\alpha$. In particular, we have $|h(x)|= \tilde h(x)$ for every $x \in U$. Since $\partial U$ is tangent to $E^s \oplus E^u$, the 3-form $\theta$ vanishes on $\partial U$; we can therefore extend $X$ to be an $f$-invariant continuous vector field on $M$ by letting $X(x)=0$ for every $x \notin U$. 
\end{proof}
Let $\phi_t$ be the flow generated by $X$; in particular, we have $\phi_t\circ f=f\circ \phi_t$ and $\phi_t(U)=U$ for every $t \in \RR$. 

\begin{lemma}\label{lemma leaf dynamic}
For any $x \in U$, if $\W^c_U(x):=\W^c(x) \cap U$ is an $f^n|_U$-invariant center leaf for some $n \in \NN$, then $f^n|_{\W^c_U(x)}=\phi_{t_0}|_{\W^c_U(x)}$ for some $t_0  \in \RR^+$. 
\end{lemma}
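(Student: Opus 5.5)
Since $X$ is nowhere vanishing on $U$ and tangent to $E^c$, the flow $\phi_t$ acts on each center leaf $L:=\W^c_U(x)$. The plan is to show that $L$ is a single $\phi$-orbit, and that $f^n|_L$ commutes with $\phi_t$ and preserves $X$. Then $f^n|_L$ is a translation along this orbit, and the translation time $t_0$ is constant on $L$.

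\emph{Step 1: $L$ is one $\phi$-orbit.} $L$ is a connected $1$-dimensional $C^1$ manifold contained in $U$, and $X|_L$ is a continuous nowhere-vanishing tangent vector field on it. Hence every $\phi$-orbit through a point of $L$ is open in $L$. Since $U$ is $\phi$-invariant and $\phi$ moves points along center curves, these orbits stay in $L$. By connectedness, $L=\{\phi_t(x)\}_{t\in\RR}$. One subtlety is that $X$ is only continuous, so uniqueness of integral curves is not automatic. To handle this, I will use that $X=\sqrt{\tilde h}\,\cdot(\text{unit }C^1\text{ center field along }\W^c)$ restricted to a $1$-dimensional leaf. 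On a $1$-dimensional manifold, a continuous nonvanishing field has unique integral curves: parametrize $L$ by arclength $s$ and write $\dot s=g(s)$ with $g\neq 0$, so that $t=\int ds/g$.

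\emph{Step 2: $f^n$ acts as a translation.} Since $f_\ast X=X$ (Lemma~\ref{lemma inv X}) and $f^n(L)=L$, we have $f^n\circ\phi_t=\phi_t\circ f^n$ on $L$. Write $f^n(x)=\phi_{t_0}(x)$, using Step 1. Then for any $y=\phi_s(x)\in L$,
\[
f^n(y)=f^n(\phi_s(x))=\phi_s(f^n(x))=\phi_{s+t_0}(x)=\phi_{t_0}(y).
\]
If $L$ is a closed orbit, $t_0$ is defined only modulo the period, and we may simply choose it positive.

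\emph{Step 3: $t_0>0$.} Since $f$ preserves the orientation of $E^c$ and $X$, we have $t_0\ge 0$. If $t_0=0$ or $L$ is a circle, we can always add periods, so the real content is the noncompact case with $t_0=0$, i.e.\ $f^n|_L=\mathrm{Id}$. I expect excluding this to be the main obstacle. If it happens, every point of $L$ is $f^n$-fixed, so the center derivative is $1$ along $L$ and $\lambda^s\lambda^u=1$ along $L$. This is consistent with volume preservation, so there is no immediate contradiction. However, the statement only asks for $t_0\in\RR^+$, and I would read this as allowing $t_0\ge 0$. The plan is therefore to assert only $t_0\ge0$ from orientation, and to note that the degenerate case $t_0=0$ is harmless for later use.
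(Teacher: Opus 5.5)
Your Steps~1 and~2 are the paper's argument. The leaf $L=\W^c_U(x)$ is a single $\phi$-orbit because $X$ is nowhere vanishing on $U$. Once $f^n(x)=\phi_{t_0}(x)$, the commutation $f^n\circ\phi_s=\phi_s\circ f^n$ gives $f^n=\phi_{t_0}$ on all of $L$. Your observation that a continuous nonvanishing field on a $1$-dimensional leaf has unique integral curves is left implicit in the paper, and it is what justifies the commutation on $L$. One small point: writing $L=\{\phi_t(x)\}_{t\in\RR}$ tacitly assumes completeness. That is not automatic, since $X$ is only continuous and vanishes on $\partial U$. However, the commutation shows that the maximal existence interval of the orbit is invariant under translation by $t_0$, so either the orbit is complete or $t_0=0$.

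The genuine gap is in Step~3. The inference ``$f$ preserves the orientation of $E^c$ and $X$, hence $t_0\ge 0$'' is invalid. On $L\cong\RR$ with $X=\partial_s$, the map $s\mapsto s-1$ preserves $X$ (hence the orientation) and equals $\phi_{-1}$. Preservation of $X$ is what makes $f^n|_L$ a translation in flow time. The sign of $t_0$ only records whether forward $f^n$-iterates of points of $L$ move towards the $\omega$-end or the $\alpha$-end of $L$. None of the local data in the lemma (one invariant leaf, $f_*X=X$, orientation) determines it.

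The paper's proof makes exactly the same leap (``orientation preserving \ldots, so there is some $t_0\in\RR^+$''). So what your argument and the paper's actually prove is $f^n|_L=\phi_{t_0}|_L$ for some $t_0\in\RR$.

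You have also misidentified the dangerous case. The case $t_0=0$, i.e.\ $f^n|_L=\mathrm{Id}$, is indeed harmless downstream, but $t_0<0$ is not. Lemma~\ref{lemma boundary dyna} deduces $\lambda^c_p(n)\le 1$ at periodic points $p\in S$ precisely from the assumption that forward $f^n$-iterates on the half-leaf at $p$ follow the direction of $\phi_t$, which points towards $S$. Getting the sign requires some global input, and neither your proposal nor the paper supplies it. For instance, it would suffice to know that $t_0$ has the same sign on every $f^n$-invariant center leaf of $U$, after which one may replace $X$ by $-X$ before choosing $S$.
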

\begin{proof}
Note that both $f^n|_{\W^c_U(x)}$ and $\phi_t|_{\W^c_U(x)}$ are orientation preserving diffeomorphisms of the open interval $\W^c_U(x)$ , so there is some $t_0 \in \RR^+$ such that $f^n(x)=\phi_{t_0}(x)$. Since $X$ is nowhere vanishing on $U$, for any $x' \in \W^c_U(x)$, there is some $t' \in \RR$ such that $x'=\phi_{t'}(x)$. Then
\[f^n(x')= f^n\circ \phi_{t'}(x)=\phi_{t'}\circ f^n(x)=\phi_{t'}\circ \phi_{t_0}(x)=\phi_{t_0}(x').\qedhere\]
\end{proof}
Since $X$ is nowhere vanishing on $U$, we have $\lim_{t \to \infty} \phi_t(x) \in \partial U$ for every $x \in U$. Let $S \subset \partial U$ be a connected component of the boundary $\partial U$ such that $\lim_{t \to \infty} \phi_t(x_0) \in S$ for some $x_0 \in U$. 
\begin{lemma}\label{lemma flow to S}
We have    $\lim_{t \to \infty} \phi_t(x) \in S$, for every $x\in U$. 
\end{lemma}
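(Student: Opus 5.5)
Consider the set $U_S:=\{x\in U:\ \lim_{t\to\infty}\phi_t(x)\in S\}$; it is nonempty since $x_0\in U_S$. The plan is to show that $U_S$ is open, that its complement in $U$ is open as well, and then conclude $U_S=U$ by connectedness of $U$, which is homeomorphic to $\mathbb{T}^2\times(0,1)$.

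\textbf{Invariance.} First, $U_S$ is $\phi_s$-invariant for every $s$, since it is defined through the forward limit of the flow. Second, $U_S$ is $f$-invariant. Indeed $f\circ\phi_t=\phi_t\circ f$ gives
\[
\lim_{t\to\infty}\phi_t(f(x))=f\Bigl(\lim_{t\to\infty}\phi_t(x)\Bigr),
\]
and $S$ is $f$-invariant because the boundary components of $U$ are $f$-invariant $2$-tori (after the finite iterate already taken).

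\textbf{Openness.} Since $\partial U$ has exactly two components $S$ and $S'$, the set $U\setminus U_S$ is exactly the set of points whose forward orbit under $\phi_t$ converges to $S'$. Fix $x\in U_S$ and let $N$ be a thin collar neighborhood of $S$ in $\bar U$, homeomorphic to $S\times[0,\varepsilon)$, whose inner boundary $S_\varepsilon=S\times\{\varepsilon\}$ lies in $U$ and is disjoint from $S'$.

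Choose $T$ with $\phi_T(x)\in N$ deep inside the collar. By continuity of $\phi_T$, the same holds for $y$ near $x$. It then remains to show that a forward $\phi$-orbit entering the collar deep enough cannot leave it.

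\textbf{Collar trapping (main obstacle).} A priori an orbit could exit the collar through $S_\varepsilon$ and travel over to $S'$. I would rule this out using the one-dimensional structure. The flow lines in $U$ are exactly the connected pieces $\W^c_U(x)$ of center leaves, since $X\subset E^c$ is nonvanishing on $U$; each such piece is an open interval with one end on $S$ or $S'$.

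The key claim is that a center segment starting deep near $S$ and crossing to $S'$ must have length bounded below. Near $S$, center curves are uniformly transverse to $S$, because $E^c$ is transverse to $E^s\oplus E^u=TS$ and the center leaves are uniformly $C^1$. Hence a center curve that is close to $S$ moves monotonically in the collar coordinate, with monotonicity forced by the orientation of $X$ along $E^c$. Since $X$ vanishes on $S$ while the orbit of $x$ approaches $S$, the direction of $X$ near $S$ in the collar coordinate points toward $S$ on the component containing $x$.

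By continuity of $X$ and of $E^c$, and by connectedness of $S\times(0,\varepsilon)$, the transverse component of $X$ keeps a constant sign in the collar, with $S\times(0,\varepsilon)$ taken small enough. Therefore every flow line in the collar moves monotonically toward $S$ and converges to $S$, which shows that $U_S$ is open.

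\textbf{Conclusion.} Applying the same reasoning with $S'$ in place of $S$ shows that $U\setminus U_S$ is open. Since $U$ is connected and $U_S\neq\emptyset$, we get $U_S=U$.

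The step I expect to be delicate is the constant-sign claim. The difficulty is that $X$ vanishes on $S$, so the sign is only determined by $E^c$'s orientation together with $f$-invariance. If continuity alone does not suffice, I would argue using the $f$-invariance of $X$ and of the orientation of $E^c$.
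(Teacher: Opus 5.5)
Your proof is correct and uses the same core mechanism as the paper, but you globalize it differently.

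\textbf{Shared step.} Near $S$, the field $X$ is nonvanishing and transverse to the slices of a thin collar. The paper uses a single torus parallel to $S$ inside $U$. By connectedness, the transverse component of $X$ has constant sign there. That sign must point toward $S$, because the orbit of $x_0$ (or of your $x$) converges to $S$.

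\textbf{Where you differ.} The paper stops at ``$X$ points toward $S$ along the parallel torus, which implies the claim for every $x\in U$.'' It leaves implicit why every flow line in $U$ meets that torus, i.e.\ why no orbit can converge to the other boundary component. Your open--closed argument supplies exactly this step. $U_S$ is open by collar trapping at $S$, its complement is open by the same trapping at the other component, and $U$ is connected.

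\textbf{What this relies on.} The price is that you use the assertion, made just before the lemma, that $\lim_{t\to\infty}\phi_t(x)$ exists and lies in $\partial U$ for every $x\in U$. This is what identifies $U\setminus U_S$ with the set of points converging to the other component, so you should cite it explicitly. With it in hand, trapping in $S\times(0,\varepsilon)$ already forces the limit to lie in $S$, so you need no separate convergence argument inside the collar.

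\textbf{Unneeded parts.} Your worry about the constant-sign step is unfounded, and the $f$-invariance material is unnecessary. The open collar $S\times(0,\varepsilon)$ lies in $U$, where $X\neq 0$. So continuity of $X$ and $E^c$, transversality of $E^c$ to the slices, and connectedness suffice; the vanishing of $X$ on $S$ itself plays no role. Likewise, your claim that center segments crossing to the other component have length bounded below is never used.
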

\begin{proof}
Since $TS=E^s\oplus E^u$, the center distribution $E^c$ is transverse to $S$. Let $S'\subset U$ be a surface homeomorphic to $S$ and sufficiently close to $S$; then, the no-where vanishing vector field $X \subset E^c$ is transverse to $S'$. Given that
 $\lim_{t \to \infty} \phi_t(x_0) \in S$ for some $x_0 \in U$, the vector $X(x_0')$ points toward $S$, where $x'_0=\W^c_U(x_0) \cap S'$. Therefore, $X(x)$ points  toward
 $S$ for every $x \in S'$, which implies that $\lim_{t \to \infty} \phi_t(x) \in S$ for every $x\in U$. 
\end{proof}

\subsection{Dynamics on the boundary $S$} 
Let $S \subset \partial U$ be a connected component of the boundary $\partial U$ such that $\lim_{t \to \infty} \phi_t(x) \in S$ for every $x \in U$. By Lemma~\ref{lemma ks06}, $S$ is a $C^\infty$ submanifold tangent to $E^s\oplus E^u$ (in fact, a torus, as discussed at the beginning of Section~\ref{sec_dicho}). Without loss of generality, we may assume $f(S)=S$, so the restriction $f|_S\colon S \to S$ is a $C^\infty$ Anosov diffeomorphism.

We first introduce a Livshits type inequality for $C^2$ Anosov diffeomorphisms. 
\begin{lemma}[{see~\cite[Corollary 2]{lt03}}]\label{lemma livsic inequality}
    Let $\varphi\colon X\to X$ be a $C^2$ Anosov diffeomorphism of a closed manifold $X$, and $A\colon X \to \RR$ a H\"older continuous additive cocycle. Then the following statements are equivalent:
    \begin{enumerate}
        \item  $A^n(p) \geq 0$ for every periodic point $p$ with $f^n p=p$;
        \item $A \geq V\circ f-V$ for some H\"older continuous function $V\colon X \to \RR$;
    \item $\int_X A\, d\mu \geq 0$ for any $\varphi$-invariant measure $\mu$. 
    \end{enumerate}
\end{lemma}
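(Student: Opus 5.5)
The plan is to prove the cycle $(2)\Rightarrow(3)\Rightarrow(1)\Rightarrow(2)$, plus the side implication $(1)\Rightarrow(3)$, which is used for the main step. The first two implications are formal. If $A \ge V\circ\varphi - V$ with $V$ continuous, then integrating against any $\varphi$-invariant probability $\mu$ gives
\[
\int A\,d\mu \ \ge\ \int V\circ\varphi\,d\mu - \int V\,d\mu = 0 .
\]
For $(3)\Rightarrow(1)$, apply $(3)$ to the periodic orbit measure $\mu_p = \frac1n\sum_{k<n}\delta_{\varphi^k p}$, which gives $A^n(p)=n\int A\,d\mu_p\ge 0$. For $(1)\Rightarrow(3)$, use Sigmund's theorem (a consequence of specification/closing for transitive Anosov diffeomorphisms, after passing to basic pieces if needed): periodic orbit measures are weak$^*$-dense in the invariant measures. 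Since $A$ is continuous, the nonnegativity of $\int A\,d\mu_p$ passes to every invariant $\mu$.

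The real content is $(1)\Rightarrow(2)$, the construction of a Hölder sub-action $V$. I would first establish a quantitative closing estimate. Let $\theta$ be the Hölder exponent of $A$. By the Anosov closing lemma, if $d(\varphi^n x, x)<\varepsilon$, there is a periodic point $p$ of period $n$ whose orbit shadows $x,\dots,\varphi^{n}x$ with $d(\varphi^k x,\varphi^k p)\le C\,d(\varphi^n x,x)\,(e^{-\lambda k}+e^{-\lambda(n-k)})$. Summing the Hölder differences along the orbit gives
\[
|A^n(x)-A^n(p)|\le C' d(\varphi^n x,x)^\theta ,
\]
and combined with $(1)$ this yields $A^n(x)\ge -C' d(\varphi^nx,x)^\theta$ for all $n$ and all $x$ with $d(\varphi^n x,x)<\varepsilon$.

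Next I would define the candidate
\[
V(y):=\inf\bigl\{A^n(x)+K\,d(\varphi^n x,y)^\theta:\ n\ge 0,\ x\in X\bigr\},
\]
normalized at a reference point and with $K$ a large constant. I expect three checks:
\begin{enumerate}
\item \emph{$V>-\infty$.} Any long orbit segment can be cut at near-returns, using compactness and pigeonhole, into closable loops, each with Birkhoff sum bounded below by the closing estimate, plus a bounded remainder.
\item \emph{The sub-action inequality $V(\varphi y)\le V(y)+A(y)$.} Extend a near-minimizing segment ending near $y$ by one step and use $|A(\varphi^n x)-A(y)|\le C d(\varphi^nx,y)^\theta$. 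The error terms are the delicate point: $d(\varphi^{n+1}x,\varphi y)$ may grow by the Lipschitz constant of $\varphi$.
\item \emph{$V$ is Hölder.} This comes from the penalty term $K d(\cdot,y)^\theta$.
\end{enumerate}

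The main obstacle is the error terms in the sub-action inequality. Since $\varphi$ expands the unstable direction, the naive penalty is not self-consistent. I would fix this in the standard way, using the local product structure. I would replace the distance penalty by a Birkhoff-sum comparison along local stable manifolds: connect $\varphi^n x$ to $y$ through $[\varphi^nx,y]$ and use the uniformly bounded sums $\sum_{k\ge0}|A(\varphi^k a)-A(\varphi^k b)|$ for $b\in\W^s_{\mathrm{loc}}(a)$, and symmetrically for backward iterates along unstable manifolds. Equivalently, one first makes $A$ cohomologous, via a Hölder transfer function, to a cocycle that is constant along local stable manifolds; this is Bowen's trick. Then $V$ is defined by the infimum above, where the infimum is now essentially over symbolic pasts, and the one-step inequality becomes exact up to these summable corrections. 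This is the strategy of Lopes--Thieullen, which I would follow in detail.
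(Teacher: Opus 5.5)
The paper gives no proof of this lemma; it quotes Lopes--Thieullen \cite[Corollary 2]{lt03}. Your implications $(2)\Rightarrow(3)\Rightarrow(1)$ and $(1)\Rightarrow(3)$ are correct. Since you also hand $(1)\Rightarrow(2)$ over to Lopes--Thieullen in the end, you follow the same route as the paper. However, two of the steps you present in your sketch of $(1)\Rightarrow(2)$ would fail as written.

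\emph{Finiteness of $V$.} $V>-\infty$ is equivalent to $\inf_{n,x}A^n(x)>-\infty$. This is necessary for (2) anyway, since (2) gives $A^n\ge-2\|V\|_\infty$. Your pigeonhole decomposition does not give it. Each closable loop only has Birkhoff sum $\ge -C'\varepsilon^\theta$, not $\ge 0$. The number of loops in a segment of length $n$ can grow linearly in $n$, so you only get $A^n(x)\ge -C-cn$.

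You must close the whole segment at once. By transitivity (hence mixing, since $X$ is connected) and Bowen's specification, there is $M$ such that $x,\dots,\varphi^n x$ is $\varepsilon$-shadowed by a periodic point $p$ of period $n+M$. Exponential closeness inside Bowen balls gives $|A^n(x)-A^n(p)|\le C$ uniformly in $n$. Then (1) gives $A^n(p)\ge -A^M(\varphi^n p)\ge -M\|A\|_\infty$. Transitivity is available in the paper's application, where $\varphi=f|_S$ is an Anosov diffeomorphism of $\TT^2$.

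\emph{Bowen's trick.} Your ``equivalent'' reformulation is false on $X$. A continuous function constant on local stable manifolds is constant on each connected global stable leaf. These leaves are dense for a transitive Anosov diffeomorphism, so the function is constant, and you would be asserting that $A$ is cohomologous to a constant.

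Bowen's normalization exists only on the symbolic space $\Sigma$ of a Markov partition. There the infimum over pasts does give a H\"older sub-action. But the transfer function and that sub-action live on $\Sigma$. Pushing them down through the non-injective coding $\Sigma\to X$ to a H\"older $V$ on $X$ is exactly the nontrivial point.

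The mechanism that can work directly on $X$ is the one you mention first: build the stable corrections $\sum_{k\ge0}[A(\varphi^k a)-A(\varphi^k b)]$, $b\in\W^s_{\mathrm{loc}}(a)$, into the infimum. Getting the one-step inequality and H\"older continuity to hold simultaneously in that construction is the actual content of the cited result, and your sketch does not supply it.
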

We then introduce a property about SRB measures. Recall that for an Anosov diffeomorphism $\varphi\colon X \to X$, an $\varphi$-invariant measure $\mu$ is an~\emph{SRB measure} if the conditionals of $\mu$ along the leaves of the unstable foliation $\W^u$ are absolutely continuous with respect to the volume measure along these leaves (see for example,~\cite{bdv04} for more details). 
\begin{lemma}\label{lemma srb}
    Let $\varphi \colon X\to X$ be a $C^2$ Anosov diffeomorphism on a closed manifold $X$. Let $\mu$ be an SRB measure. Then 
    \[\int_X \log \mathrm{Jac} (D\varphi)\, d\mu \leq 0.\]
The equality holds if and only if $\mu$ is equivalent to the Riemannian volume. 
\end{lemma}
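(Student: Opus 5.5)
The plan is to prove the inequality through the Ledrappier--Young (Pesin) entropy formula and Ruelle's inequality applied to $\varphi$ and $\varphi^{-1}$. Because $\mu$ is SRB for the $C^2$ Anosov diffeomorphism $\varphi$, the entropy formula gives
\[
h_\mu(\varphi)=\int_X \log \mathrm{Jac}\bigl(D\varphi|_{E^u}\bigr)\,d\mu .
\]
For the inverse, Ruelle's inequality gives
\[
h_\mu(\varphi^{-1})\le \int_X \log \mathrm{Jac}\bigl(D\varphi^{-1}|_{E^s}\bigr)\,d\mu=-\int_X \log\mathrm{Jac}\bigl(D\varphi|_{E^s}\bigr)\,d\mu .
\]
Since $h_\mu(\varphi)=h_\mu(\varphi^{-1})$, these combine to $\int \log\mathrm{Jac}(D\varphi|_{E^u})\,d\mu+\int\log\mathrm{Jac}(D\varphi|_{E^s})\,d\mu\le 0$.

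To pass to the full Jacobian, I write $\log\mathrm{Jac}(D\varphi)=\log\mathrm{Jac}(D\varphi|_{E^u})+\log\mathrm{Jac}(D\varphi|_{E^s})+\log(\text{angle correction})$. The correction term has the form $\psi\circ\varphi-\psi$, where $\psi$ is the logarithm of the sine-type angle factor between $E^s$ and $E^u$. That function is continuous, so the correction integrates to zero against any invariant measure. Equivalently, by Oseledets, $\int\log\mathrm{Jac}\,d\mu$ equals the sum of the Lyapunov exponents. This yields $\int\log\mathrm{Jac}(D\varphi)\,d\mu\le 0$. If $\mu$ is not ergodic, I would first decompose it into ergodic components, each of which is again SRB (for Anosov systems the SRB measure on each basic set is unique anyway).

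For the equality case, one direction is straightforward. If $\mu$ is equivalent to volume, then $\mu$ has a density $g>0$ a.e. with $g\circ\varphi\cdot\mathrm{Jac}(D\varphi)=g$. Hence $\log\mathrm{Jac}$ is a measurable coboundary, and integrability of $\log g$ (or the Birkhoff theorem combined with Poincaré recurrence) gives integral zero. More cleanly, $\mu$ is then also an SRB measure for $\varphi^{-1}$, so both entropy formulas become equalities and the sum vanishes.

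The converse is the main obstacle. Equality forces $h_\mu(\varphi^{-1})=-\int\log\mathrm{Jac}(D\varphi|_{E^s})\,d\mu$, i.e.\ equality in Ruelle's inequality for $\varphi^{-1}$. By Ledrappier--Young, this means the conditional measures of $\mu$ along stable leaves are absolutely continuous. Then $\mu$ has absolutely continuous conditionals on both $\mathcal{W}^u$ and $\mathcal{W}^s$, and absolute continuity of the stable and unstable holonomies for $C^2$ Anosov systems (local product structure) gives that $\mu$ is absolutely continuous with respect to volume on its support. Since $\mu$ is then a Gibbs state with positive density on an open set, and a transitive Anosov diffeomorphism has full support for its SRB measure, I conclude that $\mu$ is equivalent to the Riemannian volume. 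Alternatively, I would cite the classical characterization via the Livshits theorem: the equilibrium state of $-\log\mathrm{Jac}(D\varphi|_{E^u})$ coincides with that of $\log\mathrm{Jac}(D\varphi|_{E^s})$ exactly when the two potentials are cohomologous, which is equivalent to $\log\mathrm{Jac}$ being a coboundary and hence to the existence of an absolutely continuous invariant measure.
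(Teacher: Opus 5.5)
Your proposal is correct and follows essentially the same route as the paper: the entropy formula $h_\mu(\varphi)=\int\log\mathrm{Jac}(D\varphi|_{E^u})\,d\mu$ for the SRB measure, Ruelle's inequality for $\varphi^{-1}$, and the Ledrappier--Young characterization of the equality case, which gives absolutely continuous conditionals along $\mathcal{W}^s$ and hence absolute continuity of $\mu$. You also supply details the paper leaves implicit: the continuous angle coboundary linking $\mathrm{Jac}(D\varphi)$ to $\mathrm{Jac}(D\varphi|_{E^u})\,\mathrm{Jac}(D\varphi|_{E^s})$, the converse direction, and the passage from absolute continuity to equivalence; in that last step you do not need transitivity, because $\mathrm{supp}\,\mu$ is then saturated by both $\mathcal{W}^s$ and $\mathcal{W}^u$, hence open and closed.
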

\begin{proof}
By Ruelle inequality~\cite{rue78} and Ledrappier--Young formula~\cite{ly85}, we always have
    \[\int_X \log \mathrm{Jac} (D \varphi|_{E^u})\, d \mu =h_{\mu}(\varphi) \leq -\int_X \log \mathrm{Jac} (D \varphi|_{E^s})\, d \mu,\]
where $h_\mu(\varphi)$ denotes the metric entropy of $f$ with respect to $\mu$. Therefore, 
   \[\int_X \log\mathrm{Jac} (D\varphi)\, d\mu = h_{\mu}(\varphi) +\int_X \log \mathrm{Jac} (D \varphi|_{E^s})\, d \mu \leq 0.\]
The equality holds if and only if 
\[h_{\mu}(\varphi)  +\int_X \log \mathrm{Jac} (D \varphi|_{E^s})\, d \mu=0,\]
which implies that $\mu$ is absolutely continuous along $\W^s$. Therefore, $\mu$ is a $\varphi$-invariant measure which is absolutely continuous and thus equivalent to the Riemannian volume. 
\end{proof}

We then apply the lemmas above to $S \subset \partial U$ and to $\varphi=f|_S$. 
\begin{lemma}\label{lemma boundary dyna}
    The cocycle $\lambda^c|_S\colon S \to \RR^+$, $S \ni x \mapsto \lambda_x^c=\|Df(x)|_{E^c}\|$  is a coboundary. In particular, $\lambda^c_x(n)$ is uniformly bounded on $S$.  
\end{lemma}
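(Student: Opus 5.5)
We will prove that $\log\lambda^c|_S$ is cohomologous to zero over the Anosov diffeomorphism $\varphi=f|_S$. The route is to sandwich its integrals against all invariant measures and then apply the Livshits inequality, Lemma~\ref{lemma livsic inequality}, in both directions. The integrals will be controlled by two facts: the invariant vector field $X$ from Lemma~\ref{lemma inv X}, and volume preservation on $M$ combined with the SRB property on $S$.

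\emph{Step 1: an upper bound from $X$.} Let $p\in S$ be periodic, $f^n(p)=p$. Its center leaf meets $U$ on the side of $S$ into which $U$ accumulates. By Lemma~\ref{lemma flow to S}, every $\phi_t$-orbit in $U$ converges to $S$ as $t\to+\infty$, so points of $\W^c_U$ near $p$ are carried toward $S$. Take the branch $\W^c(p)\cap U$ adjacent to $p$. It is $f^n$-invariant, because $f^n$ fixes $p$, preserves $U$ and preserves the orientation of $E^c$. By Lemma~\ref{lemma leaf dynamic}, on this branch $f^n=\phi_{t_0}$ for some $t_0\geq 0$ (the case $t_0=0$ is allowed). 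Since the flow moves points toward $p$, $p$ is a weakly attracting fixed point of $f^n$ restricted to the center leaf. This forces $\lambda^c_p(n)\le 1$. Indeed, if $\lambda^c_p(n)>1$, then $p$ would be repelling on that branch, contradicting that $f^n=\phi_{t_0}$ pushes points toward $p$. Hence $\log\lambda^c_p(n)\le0$ for every periodic $p\in S$. Since $\log\lambda^c$ is H\"older continuous, Lemma~\ref{lemma livsic inequality} gives $\int_S\log\lambda^c\,d\mu\le 0$ for every $\varphi$-invariant $\mu$.

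\emph{Step 2: a lower bound from the SRB measure.} Because $f$ preserves a volume on $M$, $\log(\lambda^s\lambda^c\lambda^u)$ is a coboundary on $M$, and hence on $S$, with $\lambda^s\lambda^u=\mathrm{Jac}(D\varphi)$ up to a continuous coboundary coming from the choice of metric. Let $\mu$ be the SRB measure of $\varphi$. Lemma~\ref{lemma srb} gives $\int\log\mathrm{Jac}(D\varphi)\,d\mu\le 0$, and therefore $\int\log\lambda^c\,d\mu\ge0$. Together with Step 1 this yields $\int\log\mathrm{Jac}(D\varphi)\,d\mu=0$. By the equality case of Lemma~\ref{lemma srb}, $\mu$ is equivalent to the volume on $S$. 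Then $\log\mathrm{Jac}(D\varphi)$ is a coboundary: a $C^2$ Anosov diffeomorphism preserving a smooth-equivalent measure has a Jacobian that is a coboundary, by Livshits. Consequently $\log\lambda^c|_S$ is also a coboundary.

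\emph{Step 3: uniform boundedness.} Write $\log\lambda^c=V\circ f-V$ with $V$ continuous on the compact torus $S$. Then $\lambda^c_x(n)=e^{V(f^nx)-V(x)}$, which is bounded above and below uniformly in $x$ and $n$.

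The main obstacle is Step 1. One must check carefully that the branch of $\W^c(p)$ lying in $U$ is invariant, and that the conjugation to $\phi_{t_0}$ really forces a non-expanding derivative at $p$. If the strict attraction argument is delicate (for instance when $t_0=0$), I would instead compare derivatives directly: since $Df^n X=X\circ f^n$ and $X$ is continuous with $X(p)=0$, near $p$ the map $f^n$ on the leaf has the form $\phi_{t_0}$, and a fixed point of a $C^1$ flow time-map cannot have derivative greater than one if orbits accumulate on it from that side.
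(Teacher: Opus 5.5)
Your Steps~2 and~3 are the paper's argument: the Livshits inequality, then the equality case of Lemma~\ref{lemma srb} for the SRB measure of $f|_S$. Applying Lemma~\ref{lemma livsic inequality} to $-\log\lambda^c$ rather than to $\log\mathrm{Jac}(Df|_S)$ is cosmetic, since by volume preservation the two are cohomologous on $S$. Your Step~1 is the paper's one-line periodic bound written out in detail. Relative to Lemma~\ref{lemma leaf dynamic} as stated, your proof is complete and coincides with the paper's. However, Step~1 rests on an input that neither you nor the paper actually establish.

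Everything depends on $f^n=\phi_{t_0}$ with $t_0\ge 0$ on the branch at $p$, i.e.\ on $f^n$ moving that branch in the forward direction of $X$. You quote this from Lemma~\ref{lemma leaf dynamic}, but its proof only produces some $t_0\in\RR$; that both maps preserve orientation says nothing about the sign.

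Your fallback does not supply the sign either. The relation $Df^nX=X\circ f^n$ is equally consistent with $t_0<0$. In that case $f^n=\phi_{t_0}$ expands at $p$, even though every $\phi_t$-orbit on the branch accumulates on $p$; compare $\dot y=-y$ and its time~$-1$ map. Moreover, $\phi_t$ is only the flow of a continuous field, so it is not known to be $C^1$ at $p$.

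The sign also does not look like a formality. Take a torus $\Sigma\subset U$ transverse to $E^c$ near $S$. In flow-box coordinates $U\cong\Sigma\times\RR$, $f$ becomes $(s,t)\mapsto(\bar f(s),t+\tau(s))$, and $t_0$ is a periodic Birkhoff sum of $\tau$. Suppose all of these sums were $\ge 0$ and $\tau$ were H\"older.
\begin{enumerate}
\item Lemma~\ref{lemma livsic inequality} gives $\tau=\psi+V\circ\bar f-V$ with $\psi\ge 0$.
\item Testing invariance of the finite measure $m|_U$ on a bounded strictly increasing function of $t-V(s)$ forces $\psi\equiv 0$, using continuity of $\psi$ and full support of the projected measure.
\item Then $t-V(s)$ is a non-constant continuous $f$-invariant function on $U$, contradicting ergodicity of $f|_U$.
\end{enumerate}
So positivity of $t_0$ on every periodic branch is roughly as strong as the contradiction Section~\ref{sec_dicho} is aiming for. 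It cannot be taken as a free consequence of orientation, and your Step~1 stands or falls with it.
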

\begin{proof}
For any periodic point $p=f^n(p) \in S$, we have $\lambda^c_p(n) \le 1$. 
Otherwise, for $x \in \mathcal{W}^c(p) \cap U$ sufficiently close to $p$, we would have $\lim_{t \to \infty} \phi_t(x)\notin S$, which contradicts with Lemma~\ref{lemma flow to S}. Since $f$ is volume-preserving, we have
\[\mathrm{Jac}(Df^n|_S(p))=\lambda^s_p(n)\lambda^u_p(n)=\frac{1}{\lambda^c_p(n)} \geq 1.\]
Therefore, by Lemma~\ref{lemma livsic inequality}, we obtain
\[   \int_S\log \mathrm{Jac}(Df|_S)\, d\mu_S\geq 0,\]
for any $f_S$-invariant measure $\mu_S$. 

Let us consider the case when $\mu_S$ is the SRB measure. By Lemma~\ref{lemma srb}, $\mu_S$ is an $f|_S$-invariant measure equivalent to the Riemannian volume. Thus, $\mathrm{Jac}(Df|_S) = \lambda^s \lambda^u$ is a coboundary. Combined with the fact that $f$ is volume-preserving, this implies that $\lambda^c$ is also a coboundary.
\end{proof}
\subsection{Smoothness of $E^s \oplus E^u$}\label{subs_smoothn_open_acc}
We then show that if $E^s\oplus E^u$ is Lipschitz then it is actually $C^\infty$. Recall that we have fixed a non-trivial $f$-invariant open accessibility class $U\subsetneq M$ of $f$. By applying Proposition~\ref{lemma bootsrap}, in order to show that  $E^s\oplus E^u$ is $C^\infty$ on $U$, it suffices to check that $f_{\bar U}$ is $\infty$-bunched. 
\begin{lemma}\label{lemma infty bunched}
The center Lyapunov exponent $\chi^c(\nu)$ vanishes for every $f$-invariant ergodic measure $\nu$ supported on $\bar{U}$. In particular, the map $f_{\bar U}$ is $\infty$-bunched. 
\end{lemma}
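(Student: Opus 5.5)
We prove the vanishing of the center exponent by splitting ergodic measures $\nu$ on $\bar U$ according to whether they are supported in the open class $U$ or on the boundary $\partial U$. Because $U$ is $f$-invariant and $\partial U$ is a finite union of $f$-invariant $su$-tori (after an iterate), an ergodic $\nu$ on $\bar U$ gives full measure either to $U$ or to one component of $\partial U$.

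\emph{Measures on the boundary.} Let $S'$ be a component of $\partial U$. For the component $S$ singled out above, Lemma~\ref{lemma boundary dyna} shows that $\lambda^c|_S$ is a continuous coboundary, so $\lambda^c_x(n)$ is bounded above and below on $S$ uniformly in $n\in\mathbb Z$. Hence $\chi^c(\nu)=0$ for every ergodic $\nu$ on $S$.

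For the other boundary component, if there is one, we run the same argument with $f^{-1}$, equivalently with the reversed flow $\phi_{-t}$. Every backward $\phi$-orbit in $U$ accumulates on a single component of $\partial U$; this follows from the proof of Lemma~\ref{lemma flow to S} applied to $-X$. At periodic points $p=f^n(p)$ on that component we then get $\lambda^c_p(n)\ge 1$. Otherwise points of $\mathcal W^c(p)\cap U$ would be pushed toward $p$ under $f^{-n}$, which contradicts the backward limit argument combined with Lemma~\ref{lemma leaf dynamic}. This gives $\mathrm{Jac}(Df^{-n}|_{S'}(p))\ge 1$. We then apply Lemma~\ref{lemma livsic inequality} and Lemma~\ref{lemma srb} to $f^{-1}|_{S'}$ and its SRB measure. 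This shows that $\mathrm{Jac}(Df|_{S'})$ is a coboundary, and by volume preservation so is $\lambda^c|_{S'}$.

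\emph{Measures inside $U$.} By Corollary~\ref{coro center iso}, $\lambda^c_x(n)=\Phi(f^nx)/\Phi(x)$ on $U$ for a continuous positive $\Phi$. Let $\nu$ be ergodic with $\nu(U)=1$. Take a compact set $K\subset U$ with $\nu(K)>0$. On $K$ the function $\log\Phi$ is bounded. By Poincaré recurrence, $\nu$-a.e.\ $x\in K$ returns to $K$ along a sequence $n_j\to\infty$, so along this sequence $\frac1{n_j}\log\lambda^c_x(n_j)\to0$. By Oseledets, the limit $\chi^c(\nu)$ exists, so $\chi^c(\nu)=0$. This is the same argument as in the proof of Proposition~\ref{lemma bootsrap}, with $m_U$ replaced by an arbitrary ergodic $\nu$.

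\emph{From zero exponents to $\infty$-bunching.} We now know $\chi^c(\nu)=0$ for every ergodic $\nu$ on the compact invariant set $\bar U$. Fix $r\ge1$. For the cocycles $(Df|_{E^s})(Df|_{E^c})^{-r}$, $(Df|_{E^s})(Df|_{E^c})^{r}$ and so on, every ergodic measure has exponents $\chi^s\pm r\cdot 0<0<\chi^u\pm r\cdot0$. We then need uniform upper bounds $\chi^s(\nu)\le -a<0$ and lower bounds $\chi^u(\nu)\ge a>0$; these follow from uniform partial hyperbolicity. Lemma~\ref{lemma dwx}, applied to these one-dimensional cocycles over $f|_{\bar U}$, then yields $k$ with
\[
\lambda^s_x(k)<\bigl(\lambda^c_x(k)\bigr)^{\pm r}<\lambda^u_x(k),\qquad \bigl(\lambda^c_x(k)\bigr)^{1-r}\in\bigl(\lambda^s_x(k),\lambda^u_x(k)\bigr)
\]
for all $x\in\bar U$. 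Since $r$ is arbitrary, $f|_{\bar U}$ is (strongly) $\infty$-bunched.

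\emph{Main obstacle.} The delicate step is the second boundary component. We must justify the sign of $\lambda^c$ at periodic points there, which uses the reversed flow and the fact that $\phi_{-t}$ orbits converge to that component. If $\partial U$ has only one component, this step is vacuous.
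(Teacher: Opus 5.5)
Your proposal is correct and follows the paper's proof: ergodic measures in $U$ get zero center exponent from the coboundary of Corollary~\ref{coro center iso}, measures on $\partial U$ from Lemma~\ref{lemma boundary dyna}, and Lemma~\ref{lemma dwx} then gives $\infty$-bunching; it is more careful than the paper on two points the paper glosses over, namely the Poincar\'e recurrence needed because $\log\Phi$ is unbounded near $\partial U$, and the second boundary component, which Lemma~\ref{lemma boundary dyna} as stated does not cover. In that second-component step, fix the direction (if $\lambda^c_p(n)<1$, points of $\mathcal{W}^c(p)\cap U$ move toward $p$ under $f^{n}$, i.e.\ away from $p$ under $f^{-n}$, and that is what contradicts $f^{-n}=\phi_{-t_0}$ with $t_0>0$), and be aware that, exactly as in the paper's proof of Lemma~\ref{lemma boundary dyna}, the uniform sign over all periodic points rests entirely on the positivity of $t_0$ asserted in Lemma~\ref{lemma leaf dynamic}, which is not argued there and cannot follow from the construction alone, since $-X$ (obtained from $-\alpha$) satisfies the same hypotheses and would give the opposite sign.
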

\begin{proof}
Let $x \in \bar{U}$ be a $\nu$-typical point. If $x \in U$, then by Corollary~\ref{coro center iso}, the center Lyapunov exponent satisfies
\[
\chi^c(\nu) 
= \lim_{n \to \infty} \frac{1}{n} \log \lambda^c_x(n)
= \lim_{n \to \infty} \frac{1}{n} \log \frac{\sqrt{h(f^n (x))}}{\sqrt{h(x)}} 
= 0.
\]
If $x \in \partial U$, we also have $\chi^c(\nu) = 0$ by Lemma~\ref{lemma boundary dyna}. 

Therefore, by Lemma~\ref{lemma dwx}, for every $\varepsilon > 0$ there exists $n \in \mathbb{N}$ such that
\[
e^{-n\varepsilon} \le \lambda^c_x(n) \le e^{n\varepsilon}, 
\quad \forall\, x \in \bar{U}.
\]
Since $\varepsilon$ can be chosen arbitrarily small, this implies that $f$ is $r$-bunched on $\bar{U}$ for every $r > 0$.
\end{proof}
As a corollary, we have:
\begin{corollary}\label{coro c1 contact form}
The restriction $f|_U$ preserves a $C^1$ contact form $\beta$, with $\ker \beta = E^s \oplus E^u$, and whose Reeb vector field is tangent to $E^c$. 
\end{corollary}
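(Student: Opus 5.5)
The plan is to reduce the statement to Corollary~\ref{coro ec smooth} with $k=\infty$ and $r=1$, using the setting of Section~\ref{sec_dicho}. There, $U$ is a non-trivial $f$-invariant open accessibility class, $f|_U$ is ergodic, and we have assumed (after an iterate) that $f$ preserves the orientation of $E^c$.

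First I upgrade the regularity of $E^s\oplus E^u$ on $U$. By Lemma~\ref{lemma infty bunched}, $f|_{\bar U}$ is $\infty$-bunched. By Lemma~\ref{lemma ab dc}, $f$ is dynamically coherent. The hypotheses of Proposition~\ref{lemma bootsrap} therefore hold, and $E^s\oplus E^u$ is $C^\infty$ on $U$. In particular, the $1$-form $\alpha$ with $\ker\alpha=E^s\oplus E^u$ can be taken $C^\infty$ on $U$, and the function $\rho$ in \eqref{f_etoile_alpha} is then $C^\infty$ on $U$. Since $f$ is volume-preserving, it is strongly $1$-bunched. This places us in Corollary~\ref{coro ec smooth} with $r=1$ and $k$ arbitrary, applied on $U$, where the smoothness is now known.

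Next I run the proof of Corollary~\ref{coro ec smooth}. Lemma~\ref{lemma coboundary} gives a continuous positive $\tilde h$ on $U$ solving \eqref{eq rho}, that is, $\sqrt{\tilde h}$ solves $\rho=\sqrt{\tilde h}\circ f/\sqrt{\tilde h}$. Taking logarithms, $\log\rho$ is a $C^\infty$ additive coboundary with continuous transfer function. Part (2) of Proposition~\ref{prop wil13}, with $r=1$, then makes $\tilde h$ of class $C^1$.

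I set $\beta:=\alpha/\sqrt{\tilde h}$. This is a $C^1$ form with $\ker\beta=E^s\oplus E^u$, and \eqref{eq rho} gives $f^*\beta=\beta$. It is contact because $\beta\wedge d\beta=\tilde h^{-1}\,\alpha\wedge d\alpha=\tilde h^{-1}h\,m$, and $|h|=\tilde h>0$ on $U$ (see the proof of Lemma~\ref{lemma inv X}). Hence $\beta\wedge d\beta=\pm m$ is a volume form on $U$.

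Finally, I show that the Reeb field $R$ of $\beta$ is tangent to $E^c$. Here $R$ is continuous, since $d\beta$ is continuous, and $f$-invariant. I argue by contradiction using the recurrence argument of Corollary~\ref{coro ec smooth}. If $R$ had a nonzero $E^u$ (or $E^s$) component at an $m_U$-typical point $x$, then along forward (or backward) iterates that component is expanded uniformly. Meanwhile, by ergodicity of $f|_U$, the orbit returns infinitely often to a small neighbourhood of $x$ on which $\|R\|$ is bounded, which is impossible. So $R\in E^c$ on a full-measure set, and hence everywhere on $U$ by continuity.

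The main obstacle is the regularity bookkeeping on the non-compact set $U$. Proposition~\ref{prop wil13} must be applied on $U$, where the bunching estimates are uniform because they hold on $\bar U$; the remark following Proposition~\ref{prop wil13} covers this. One also has to check that the $C^\infty$ regularity from Proposition~\ref{lemma bootsrap} is uniform enough on $U$ for Wilkinson's theorem to apply. I would handle this by noting that the templates are uniformly polynomial on $U$, and that $\rho$ is $C^\infty$ with uniform bounds on $U$.
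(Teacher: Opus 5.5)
Your proposal is correct and follows the paper's route. You take the $C^\infty$ regularity of $E^s\oplus E^u$ on $U$ from Lemma~\ref{lemma infty bunched} and Proposition~\ref{lemma bootsrap}, then apply Corollary~\ref{coro ec smooth} with $r=1$, using the strong $1$-bunching that comes from volume preservation; your explicit check that $\beta\wedge d\beta=\tilde h^{-1}\alpha\wedge d\alpha=\pm m$ on $U$ simply fills in a step the paper leaves implicit.
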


\begin{proof}
Since $\dim M = 3$ and $f$ is volume-preserving, $f$ is $1$-strongly bunched. The corollary then follows from Corollary~\ref{coro ec smooth}. 
\end{proof}
\subsection{Proof of Proposition~\ref{prop dic}}\label{sec stokes} As previously, we assume that $E^s \oplus E^u$ is not integrable, but that $f$ is not accessible, and consider a non-trivial $f$-invariant open accessibility class $U\subsetneq M$ of $f$. We derive a contradiction via Stokes' theorem. 

By Corollary~\ref{coro c1 contact form}, $f|_U$ preserves a $C^1$ contact form $\beta$ whose Reeb vector field $R$ is tangent to $E^c$. Since the boundary $\partial U$ consists of \( f \)-invariant $2$-tori tangent to \( E^s \oplus E^u \), we see that $E^c$ is uniformly transverse to $\partial U$. In particular, we can choose a $C^1$ surface $S \subset U$ homeomorphic to $\mathbb T^2$ and sufficiently close to $\partial U$ such that $E^c$ is transverse to $S$. By Lemma~\ref{lemma non dege}, $d\beta|_{TS}$ is nondegenerate, and hence
\[
\int_S d\beta \neq 0,
\]
which contradicts Stokes' theorem.

\section{Proof of the main results}\label{sec_proofs}
\subsection{Proof of Theorem~\ref{theorem main}}
For~\eqref{t1 lambda c}, Proposition~\ref{prop dic} implies that either $f$ is accessible or $E^s \oplus E^u$ is integrable. Note that $\lambda^c_x$ is a coboundary if and only if $f$ is center isometric, i.e., there is a continuous metric such that $\lambda^c_x=1$, for all $x \in M$. In the accessible case, $\lambda^c_x$ is a coboundary by Corollary~\ref{coro center iso}. 
If $E^s \oplus E^u$ is integrable, then by Proposition~\ref{prop ham17hp25} the map $f$ is either center isometric or topologically conjugate to an Anosov automorphism of $\TT^3$. In the latter case $f$ is Anosov, and $\lambda^c$ is cohomologous to a constant by~\cite[Theorem 1.1]{gs20}.

For~\eqref{t1 dc}, center isometric partially hyperbolic systems are dynamically coherent by~\cite[Theorem 7.5]{hhucoherence}, while Anosov diffeomorphisms on $\TT^3$ are always dynamically coherent~\cite{pot14}.

For~\eqref{t1 boot}, if $f$ is center isometric and accessible, then $E^s \oplus E^u$ is $C^\infty$ by Proposition~\ref{lemma bootsrap}. If $E^s \oplus E^u$ integrates to a foliation $\W^{su}$, then the leaves of $\W^{su}$ are $C^\infty$ by Lemma~\ref{lemma ks06}. Since $f$ is $\infty$-bunched, the holonomy of $\W^{su}$ along $\W^c$ is uniformly $C^\infty$. Journé's lemma~\cite{jou88} then yields that $\W^{su}$ is a $C^\infty$ foliation, and hence $E^s \oplus E^u$ is $C^\infty$. If $f$ is Anosov, smoothness of $E^s \oplus E^u$ follows from~\cite[Theorem 1.3]{ks25}. 

Moreover, since $E^s\oplus E^u$ is $C^\infty$, the projectivization $N:=\PP E^{su}$ is a $C^\infty$ fiber bundle over $M$. Let $F\colon N \to N$ be the bundle map induced by $Df|_{E^s\oplus E^u}$. Then $\PP E^s$ and $\PP E^u$ are $F$-invariant sections of $N$. By the $C^r$ section Theorem~\cite{hps77}, both sections are $C^{1+\alpha}$ for some $\alpha>0$, and hence $E^s$ and $E^u$ are also $C^{1+\alpha}$.

To complete the proof of Theorem~\ref{theorem main}, it remains to establish the rigidity result in the accessible case. In this setting $f$ is ergodic, hence topologically transitive. By~\eqref{t1 lambda c} and~\eqref{t1 boot} in Theorem~\ref{theorem main}, $f$ is center isometric and $E^s \oplus E^u$ is $C^\infty$. Corollary~\ref{coro ec smooth} then yields that $E^c$ is $C^\infty$ and $f$ preserves a $C^\infty$ contact form. The $C^\infty$ classification follows from the next proposition.

\begin{proposition}[{see~\cite[Theorem C]{bz20},~\cite[Theorems A and F]{avw22}}]\label{prop avw22}
Let $f \colon M \to M$ be a $C^\infty$ volume-preserving partially hyperbolic diffeomorphism on a closed $3$-manifold. Assume that $f$ is center isometric and topologically transitive, and that $\W^c$ is absolutely continuous. Then up to finite lifts and iterates, $f$ is $C^\infty$-conjugate to one of the following:
\begin{itemize}
    \item an isometric extension of a volume-preserving Anosov diffeomorphism on $\TT^2$; 
    \item the time-one map of a $C^\infty$ Anosov flow.
\end{itemize}
\end{proposition}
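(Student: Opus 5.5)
This proposition is cited from \cite[Theorem C]{bz20} and \cite[Theorems A and F]{avw22}, so the plan is to reduce to those results and check their hypotheses in our setting. Throughout we pass to a finite cover and an iterate so that $f$ preserves the orientations of $E^s$, $E^c$, $E^u$. Center isometry gives a continuous metric with $\|Df|_{E^c}\|\equiv 1$. From the unit center vector field $Y$ (continuous and $f$-invariant), we get a continuous flow $\psi_t$ along the leaves of $\W^c$. This flow commutes with $f$ and preserves each center leaf. The dynamical coherence needed to speak of $\W^c$ comes from \cite[Theorem 7.5]{hhucoherence}.

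The first key step uses absolute continuity of $\W^c$ together with volume preservation. I would show that the disintegration of $m$ along center leaves is given by smooth densities equivalent to the induced arclength. Then, following \cite{avw22}, I would invoke the invariance principle and the Journé-type regularity argument to upgrade the center holonomies and $Y$ to $C^\infty$. The idea is that $su$-holonomies between center leaves preserve the conditional measures. Along with accessibility or transitivity, this should force the center conditionals to be $\psi_t$-invariant Lebesgue-type measures and the flow $\psi_t$ to be $C^\infty$. This is the step I expect to be the main obstacle. The difficult regularity input is exactly what Avila--Viana--Wilkinson prove, so here I would quote \cite[Theorem A]{avw22} rather than reprove it.

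Next, split into cases according to the topology of the center foliation.
\begin{itemize}
\item If all center leaves are compact, the leaf space of $\W^c$ is a surface. The map $f$ descends to an area-preserving Anosov diffeomorphism of $\TT^2$, using Franks--Manning and the classification in dimension $2$. Since $f$ is center isometric with smooth fibrations, it is then an isometric extension.
\item Otherwise $\psi_t$ is a smooth Anosov flow. Here $f$ commutes with $\psi_t$ and preserves each orbit. By the argument of Lemma~\ref{lemma leaf dynamic}, $f=\psi_{\tau(x)}$ with $\tau$ constant along orbits, and hence constant by transitivity. This realizes $f$ as a time-one map after reparametrization, as in \cite[Theorem F]{avw22} and \cite[Theorem C]{bz20}.
\end{itemize}

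Finally, I would check that conjugacies to the models are $C^\infty$. In the fibered case this uses smoothness of the quotient map. In the flow case it uses smoothness of $\psi_t$ and of the time-change.
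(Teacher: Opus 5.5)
\textbf{There is a genuine gap in your non-fibered branch.} The paper gives no argument of its own here. It first takes from \cite[Theorem C]{bz20} the topological classification of transitive systems with isometric (neutral) center in dimension $3$: up to finite lifts and iterates, either $\W^c$ is a circle fibration and $f$ is a skew product over an Anosov map of $\TT^2$, or $f$ is the time-one map of a topological Anosov flow tangent to $E^c$. It then uses absolute continuity of $\W^c$ and \cite[Theorems A and F]{avw22} to upgrade each branch to $C^\infty$. You instead do the topological step by hand, and the argument breaks at the sentence ``$f$ commutes with $\psi_t$ and preserves each orbit''. Nothing you have established gives $f(\W^c(x))=\W^c(x)$. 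Center isometry and commutation with $\psi_t$ only say that $f$ permutes center leaves isometrically. In the fibered models it really does permute them: the fiber over $q$ goes to the fiber over the image of $q$ under the base Anosov map. That some iterate of $f$ fixes \emph{every} center leaf once one leaf is non-compact is exactly the non-trivial content of \cite[Theorem C]{bz20}. It is not a consequence of the argument of Lemma~\ref{lemma leaf dynamic}, which presupposes an invariant leaf.

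Without that input the rest of the branch has no support. The function $\tau$ is not defined, so you cannot conclude $f=\psi_\tau$. The claim that $\psi_t$ is an Anosov flow is then circular: its Anosov property is a \emph{consequence} of $f=\psi_\tau$ (a partially hyperbolic time-$\tau$ map whose center is the flow direction), not something available beforehand. Even granting orbit preservation, you would still need continuity of $\tau$ near the compact center leaves (the periodic orbits) before transitivity makes it constant.

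Your first step has an ordering problem. You invoke \cite[Theorem A]{avw22} as a global smoothing result for $\W^c$ and $Y$ before knowing which topological model you are in. In the paper's scheme the AVW results are applied to the model furnished by \cite[Theorem C]{bz20}. Moreover, the mechanism you sketch (su-invariance of the center conditionals forcing continuous, then smooth, densities) uses accessibility to propagate su-invariance across $M$, whereas you are only given transitivity.

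The repair is to apply \cite[Theorem C]{bz20} first and then run the AVW upgrade branch by branch. With that order, your compact-leaf branch is fine: Seifert structure, a finite cover to remove exceptional fibers, the quotient Anosov map of $\TT^2$, and a smooth invariant fiber metric from the conditional densities.
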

Finally, if $f$ is $C^\infty$-conjugate to an isometric extension of an Anosov diffeomorphism, then $M$ is either a torus or a Heisenberg manifold. We prove by contradiction that $M \neq \TT^3$. Suppose instead that $M = \TT^3$, then $\W^c$ is a trivial bundle over $\TT^2$. Let $S \subset \TT^3$ be a global section transverse to $\W^c$, and let $\beta$ be the $C^\infty$ contact form preserved by $f$. Arguing as in Section~\ref{sec stokes}, Lemma~\ref{lemma non dege} implies that $d\beta|_{TS}$ is nondegenerate. Consequently,
\[
\int_S d\beta \neq 0,
\]
which contradicts Stokes' theorem.

\subsection{Proof of Corollary~\ref{theorem QNI}}
For $\ell \geq 1$, we first recall the definition of $\ell$-integrability. 
\begin{definition}[{see~\cite[Definition 1.1]{epz}}]\label{def JI l}
Let $f\colon M \to M$ be a $C^\infty$ partially hyperbolic diffeomorphism on a closed $3$-manifold. We say that a compact invariant set $\Lambda$ is~\emph{$\ell$-integrable} if there exists a continuous family of $C^\ell$ smooth surfaces $\{S_x\}_{x \in \Lambda}$ such that:
\begin{itemize}
    \item $\W^u_{\mathrm{loc}}(x) \cup \W^s_{\mathrm{loc}}(x)\subset S_x$;
    \item for every $x \in \Lambda$ and every $y \in \W^u_{\mathrm{loc}}(x)\cap \Lambda$ (resp.\ $y \in \W^s_{\mathrm{loc}}(x)\cap \Lambda$), the curve $\W^s_{\mathrm{loc}}(y)$ is tangent up to order $\ell$ to $S_x$ at $y$ (resp.\ $\W^u_{\mathrm{loc}}(y)$ is tangent up to order $\ell$ to $S_x$ at $y$).
\end{itemize}
In particular, we say that $f$ is $\ell$-integrable if the ambient manifold $M$ is $\ell$-integrable. 
\end{definition}
Here, when we say that a curve $\gamma$ is tangent to order $\ell$ to $S_x$ at $y$, we mean that there exists a constant $C > 0$ such that, when parametrized by arc length, the distance from a point $z \in \gamma$ to the surface $S_x$ is less than $C t^\ell$, where $t$ denotes the arc length from $z$ to $y$. 

Since we do not assume that $f$ is dynamically coherent, we first recall the definition of~\emph{weak dynamical coherence}~\cite{bbi04} as an alternative. A $C^0$ distribution $E$ on a manifold $M$ is called~\emph{weakly integrable} if for each point $x$ there exists an immersed, complete $C^1$ manifold $W(x)$ containing $x$ and everywhere tangent to $E$. We refer to $W(x)$ as an~\emph{integral manifold} of $E$. Note that, a priori, the integral manifolds $W(x)$ may be self-intersecting and may not form a foliation of $M$. In particular, if the invariant distributions $E^c$, $E^{cs}$, and $E^{cu}$ of a partially hyperbolic diffeomorphism $f$ are weakly integrable, we call $f$~\emph{weakly dynamically coherent}.

\begin{lemma}[{see~\cite[Proposition 3.4]{bbi04}}]
Let $f \colon M \to M$ be a $C^1$ partially hyperbolic diffeomorphism of a compact manifold $M$ with $\dim E^c = 1$. Then $f$ is~\emph{weakly dynamically coherent}.
\end{lemma}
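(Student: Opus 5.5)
The statement is the lemma of Brin--Burago--Ivanov: when $\dim E^c=1$, the bundles $E^c$, $E^{cs}$ and $E^{cu}$ are weakly integrable. I would follow their branching-foliation construction, treating $E^{cs}$ in detail; $E^{cu}$ follows by applying the argument to $f^{-1}$.

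First I would handle $E^c$. Since $\dim E^c=1$, Peano's existence theorem gives, for each $x$, a $C^1$ curve through $x$ tangent to the continuous line field $E^c$. Orienting $E^c$ locally, such a curve can be continued to a complete immersed curve. So $E^c$ is weakly integrable with essentially no dynamics needed.

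For $E^{cs}$, I would build integral surfaces as unions of stable leaves. Fix $x$ and a complete center curve $\gamma$ through $x$, as above. Set
\[
W(x):=\bigcup_{y\in\gamma}\W^s(y).
\]
This is an immersed surface that is topologically transverse to the stable foliation, because $\gamma$ is transverse to $E^s$. The difficulty is that $W(x)$ is tangent to $E^{cs}$ only along $\gamma$; elsewhere its tangent plane is whatever the stable holonomy carries, and that need not lie in $E^{cs}$.

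To fix this, I would iterate backwards. Define $W_n(x):=f^n\bigl(W'(f^{-n}x)\bigr)$, where $W'$ is built as above from $C^1$ curves uniformly close to $E^c$. Choose local $C^1$ surfaces $D(y)$ near $y$ whose tangent planes lie in a thin cone around $E^{cs}$. Domination implies that forward iteration contracts the cone family around $E^{cs}$: vectors off $E^{cs}$ have their $E^u$ component grow relative to the $E^{cs}$ component, but backwards this means $Df^{-n}$ pushes cone-$E^{cs}$ planes toward $E^{cs}$. So the correct operation is to start from $cs$-cone surfaces at $f^{-n}x$ and push them forward by $f^n$. Their images remain uniformly in the $cs$-cone, and their angle to $E^{cs}$ shrinks exponentially near the forward images.

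Then I would use Arzelà--Ascoli to extract a subsequence of these local surfaces with uniform $C^1$ bounds, coming from the uniform cone condition, converging to a $C^1$ graph tangent to $E^{cs}$ everywhere. Taking unions and continuing leafwise gives a complete immersed surface.

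The main obstacle is keeping the tangency to $E^{cs}$ in the limit, given that only $C^0$ control of the distribution is available. I would handle it by a graph-transform argument: write the surfaces as graphs over $E^{cs}$ in exponential charts with slope bounded by the cone width. The width tends to zero, so any limit has tangent plane exactly $E^{cs}$ at each point, and uniform Lipschitz bounds give the compactness. Completeness of the leaves comes from the uniform size of these local graphs.
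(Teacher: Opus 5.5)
Your treatment of $E^c$ (Peano plus continuation) is fine. The $E^{cs}$ part has a genuine gap, and it starts with the direction of iteration. Domination means that $Df$ increases the ratio of the $E^u$-component to the $E^{cs}$-component of any vector outside $E^{cs}$. So the $cs$-cone is invariant (and contracted toward $E^{cs}$) under $Df^{-1}$, not under $Df$. You say this yourself, and then push $cs$-cone surfaces, and curves merely close to $E^c$, \emph{forward} by $f^n$. Their tangent planes leave the cone and tilt toward planes containing $E^u$. So the claims that the images ``remain uniformly in the $cs$-cone'' with angle to $E^{cs}$ ``shrinking exponentially'' are false. If the curves are instead exactly tangent to $E^c$, then $f^n(W'(f^{-n}x))$ is again just the stable saturation of a center curve through $x$, and the scheme produces nothing new.

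Reversing the direction does not rescue the plan. Pulling back $cs$-cone disks from $f^n(x)$ by $f^{-n}$ aligns tangent planes with $E^{cs}$, but $f^{-n}$ may contract $E^c$ exponentially. The center size of the disks can then go to zero, and an Arzel\`a--Ascoli limit may collapse to a stable arc. Your plan has no lower bound on that size. It also does not explain how subsequential local limits glue into one complete immersed surface, and a cone condition gives only Lipschitz bounds, not $C^1$ bounds.

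The missing idea is that the obstacle you identify does not exist. If $\gamma$ is \emph{exactly} tangent to $E^c$, then $\W^s(\gamma)=\bigcup_{y\in\gamma}\W^s(y)$ is already a $C^1$ surface tangent to $E^{cs}$. This is the mechanism behind the Brin--Burago--Ivanov result; the paper itself gives no proof and only quotes it. The argument runs as follows.

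Since $E^c$ is $Df$-invariant, $f^n\gamma$ is again tangent to $E^c$ and $f^n(\W^s(\gamma))=\W^s(f^n\gamma)$. Take $p\in\W^s_{\mathrm{loc}}(z)$ and $q\in\W^s_{\mathrm{loc}}(z')$ with $z,z'\in\gamma$ close, and suppose the secant from $p$ to $q$ (in charts) lies in a $u$-cone. Under forward iteration it stays in a narrowing $u$-cone, and its $E^u$-component grows exponentially. Stop at the first time $n$ at which either $d(f^np,f^nq)$ or the length of $f^n([z,z']_\gamma)$ reaches a fixed small $\delta$; then $n\to\infty$ as $q\to p$.

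At that time, $f^np$ and $f^nq$ lie within $C\lambda^n$ ($\lambda<1$) of $f^nz$ and $f^nz'$. The chord of a short arc tangent to $E^c$ is within angle $\omega(\delta)$ of $E^c$ and comparable to its length, uniformly over all such arcs, by continuity of $E^c$. Comparing $E^u$-components then forces both $d(f^np,f^nq)$ and $d(f^nz,f^nz')$ to be $O(\lambda^n)$, which contradicts the stopping rule.

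Hence small-scale secants of $\W^s_{\mathrm{loc}}(\gamma)$ are uniformly asymptotic to $E^{cs}$, which makes it a local $C^1$ graph tangent to $E^{cs}$. Applying this to $f^n\gamma$ covers points far out along stable leaves. A complete $\gamma$ then gives a complete immersed surface, and $E^{cu}$ follows by using $f^{-1}$.
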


For weakly dynamically coherent partially hyperbolic diffeomorphisms, we denote by $\W^\ast(x)$ the $C^1$ manifold tangent to $E^\ast$ for $\ast = c, cs, cu$, in order to distinguish them from the true foliations $\W^\sigma$, $\sigma = s, u$.

\begin{lemma}[{see~\cite[Proposition B.7]{hhu1d}}]\label{lemma hhu1d}
Let $f \colon M \to M$ be a $C^{2}$ partially hyperbolic diffeomorphism with $\dim E^c = 1$. Then the stable (resp. unstable) foliation restricted to $\W^{cs}_{\mathrm{loc}}(x)$ (resp. $\W^{cu}_{\mathrm{loc}}(x)$) is uniformly $C^1$ for every $x \in M$.
\end{lemma}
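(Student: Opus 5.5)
The plan is to prove that, inside each local center-stable leaf, the stable holonomies between center transversals are $C^1$ with uniformly continuous derivatives. Since stable leaves are already uniformly $C^2$ curves, this gives a uniformly $C^1$ foliation. The key structural point is that inside the $2$-dimensional $C^1$ surface $\W^{cs}_{\mathrm{loc}}(x)$ (which exists by weak dynamical coherence), $\W^s$ restricts to a codimension-one foliation whose transversals are center curves. Codimension-one holonomies are one-dimensional maps, so their derivative can be written as an explicit infinite product, much as in the classical proof of absolute continuity in codimension one.

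\emph{Setup and derivative formula.} Fix $x$, two nearby points $y, y'$ on the same local stable leaf inside $\W^{cs}_{\mathrm{loc}}(x)$, and let $\gamma,\gamma'$ be the local center curves $\W^c_{\mathrm{loc}}(y)$, $\W^c_{\mathrm{loc}}(y')$. Let $h\colon\gamma\to\gamma'$ be the stable holonomy. Invariance gives $h = f^{-n}\circ h_n\circ f^n$, where $h_n$ is the holonomy between $f^n\gamma$ and $f^n\gamma'$. For $z\in\gamma$ this suggests the formal derivative
\[
h'(z)=\prod_{k=0}^{\infty}\frac{\lambda^c_{f^k(z)}}{\lambda^c_{f^k(h(z))}},
\]
where $\lambda^c$ is the center derivative computed along the curves $f^k\gamma$, $f^k\gamma'$.

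\emph{Convergence.} Since $f$ is $C^2$ and $E^c$ is H\"older, $\log\lambda^c$ is H\"older on $M$. The distance $d(f^k z, f^k h(z))$ decays like $(\lambda^s)^k$, so the product converges uniformly. It also depends continuously on $z$ and on the pair $(\gamma,\gamma')$. I would first check that this candidate is positive and continuous.

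\emph{Identifying the product with the derivative of $h$.} This is the step I expect to be the main obstacle. The plan is a graph-transform argument. For large $n$, the curves $f^n\gamma$ and $f^n\gamma'$ are exponentially close. By domination, their tangent lines are exponentially close to $E^c$; this is where one uses $\|Df|_{E^s}\|<\|(Df|_{E^c})^{-1}\|^{-1}$, so that $f^n\gamma$ and $f^n\gamma'$ are pushed $C^1$-close to each other. Replacing $h_n$ by a smooth comparison map $\pi_n$ (say, projection along a smooth line field uniformly transverse to $E^c$ inside the leaf) gives $|\pi_n'-1|\to0$ uniformly, at an exponential rate.

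Then $g_n := f^{-n}\circ\pi_n\circ f^n$ are $C^1$ maps $\gamma\to\gamma'$. They converge uniformly to $h$, because $\pi_n$ and $h_n$ differ by $o\bigl((\lambda^c)^{-n}\text{-scale}\bigr)$ thanks to the exponential contraction of stable leaves relative to the center. Their derivatives converge uniformly to the infinite product by the chain rule and the H\"older estimate above. Uniform convergence of $C^1$ maps with uniformly convergent derivatives then yields $h\in C^1$ with $h'$ equal to the product.

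\emph{Conclusion.} Stable leaves are uniformly $C^2$, and the holonomies between center transversals are uniformly $C^1$ with derivatives depending continuously on all data. Hence the foliation charts $(s,t)\mapsto$ (point on the stable leaf through $\gamma(t)$ at arclength $s$) are $C^1$ with continuous derivatives, uniformly in $x$ by compactness of $M$. For the unstable foliation inside $\W^{cu}_{\mathrm{loc}}(x)$, apply the same argument to $f^{-1}$.
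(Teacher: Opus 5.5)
The paper does not prove this lemma itself but imports it from \cite[Proposition~B.7]{hhu1d}. Your outline is the classical argument behind such statements: with $\dim E^c=1$ center bunching is automatic, $h'$ is the convergent product of ratios of center derivatives, and this product is identified with the true derivative through the approximations $f^{-n}\circ\pi_n\circ f^n$. Your proof is correct in substance, and the points that need more care than you indicate are routine.

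\begin{enumerate}
\item Choose $\gamma,\gamma'$ as integral curves of $E^c$ inside the $C^1$ surface $\W^{cs}_{\mathrm{loc}}(x)$. Without coherence, a center curve through $y$ need not lie in that surface.
\item Build $\pi_n$ from ambient smooth data, e.g.\ smooth discs transverse to $E^c$. A $C^1$ leaf carries no smooth line field.
\item When passing to the limit in $g_n'$, you need domination to control $d(f^kg_n(z),f^kh(z))$ for all $0\le k\le n$. The H\"older bound along the orbits of $z$ and $h(z)$ alone does not cover this.
\item To get genuine $C^1$ foliation charts, run the same argument for holonomies onto arbitrary $C^1$ transversals. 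Their forward images become tangent to $E^c$ exponentially fast, so only summable angle corrections enter the product. This is needed because the $t$-derivative of your arclength chart involves the transverse derivative of leafwise arclength, which is not a holonomy derivative between center curves.
\end{enumerate}
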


\begin{lemma}\label{lemma lip JI l}
Let $\Lambda$ be a compact $f$-invariant set. If $\Lambda$ is $\ell$-integrable for some $\ell \ge 2$, then $E^s \oplus E^u$ is $C^1$ on $\Lambda$. 
\end{lemma}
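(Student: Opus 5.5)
The idea is to show that $E^s\oplus E^u$ coincides with $TS_x$ along $\W^s_{\mathrm{loc}}(x)\cup\W^u_{\mathrm{loc}}(x)$, and then to read off $C^1$ regularity from the uniformly $C^1$ family $\{S_x\}_{x\in\Lambda}$.

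\emph{Step 1: identify the plane field on the local stable and unstable curves.} Fix $x\in\Lambda$ and $y\in\W^u_{\mathrm{loc}}(x)\cap\Lambda$. Since $\W^u_{\mathrm{loc}}(x)\subset S_x$, we have $E^u(y)\subset T_yS_x$. Tangency of order $\ell\ge 2$ of $\W^s_{\mathrm{loc}}(y)$ to $S_x$ at $y$ gives $\mathrm{dist}(z,S_x)\le Ct^2$, which in particular forces $E^s(y)\subset T_yS_x$. Hence $(E^s\oplus E^u)(y)=T_yS_x$, and symmetrically for $y\in\W^s_{\mathrm{loc}}(x)\cap\Lambda$.

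\emph{Step 2: $C^1$ regularity along $\W^u$ and $\W^s$.} Because $S_x$ is $C^\ell$ with $\ell\ge2$, the field $y\mapsto T_yS_x$ is $C^1$ along $\W^u_{\mathrm{loc}}(x)$, uniformly in $x$ by continuity and compactness. In the adapted charts of Proposition~\ref{propo o good}, the stable template $\mathcal{T}^s_x$ is then the slope of $TS_x$ along the $\eta$-axis, restricted to points of $\Lambda$. So $\mathcal{T}^s_x$ agrees on $\Lambda$ with a uniformly $C^1$ function, and likewise for $\mathcal{T}^u_x$.

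There is one subtlety here. When $\Lambda\cap\W^u_{\mathrm{loc}}(x)$ is not all of the curve, "$C^1$ on $\Lambda$" should be understood in the Whitney sense, namely as agreement with a $C^1$ extension. Point~(2) of Definition~\ref{def JI l} supplies exactly this extension.

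\emph{Step 3: the center direction.} Here I use weak dynamical coherence (\cite[Proposition 3.4]{bbi04}) together with Lemma~\ref{lemma hhu1d}: the stable foliation inside $\W^{cs}_{\mathrm{loc}}$ and the unstable foliation inside $\W^{cu}_{\mathrm{loc}}$ are uniformly $C^1$. By center bunching (volume preservation gives strong $1$-bunching), $E^s$ is $C^1$ along center curves inside $\W^{cs}$, and $E^u$ along center curves inside $\W^{cu}$. Consequently $E^s\oplus E^u$ has continuous derivatives along $\W^c$ as well.

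\emph{Step 4: assemble.} Combining the three directions via a Journé-type argument (\cite{jou88}) gives a continuous derivative of $E^s\oplus E^u$ on $\Lambda$. Alternatively, one can observe that $\Lambda\ni x\mapsto T S_x$ already provides a continuous candidate derivative at every point, and check the first-order Taylor estimate directly in the adapted charts.

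\emph{Main obstacle.} I expect Step 3/4 to be the delicate part, since $\Lambda$ need not be open and $f$ need not be dynamically coherent. My first attempt would avoid Journé entirely and argue pointwise: for $y\in\Lambda$ near $x$, reach $y$ through $x\to\W^u_{\mathrm{loc}}(x)\to\W^{cs}_{\mathrm{loc}}$ and estimate $\angle\bigl((E^s\oplus E^u)(y),T_yS_x\bigr)=O(d(x,y))$. This would use the $C^1$ holonomies from Lemma~\ref{lemma hhu1d} to control the center displacement. The resulting Lipschitz/Whitney-$C^1$ bound with derivative $T S_x$ is continuous in $x$, which would conclude the proof.
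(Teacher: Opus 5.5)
Your Steps 1--4, with the Journ\'e-type assembly via the local product structure, are essentially the paper's proof. Order-$\ell$ tangency forces $(E^s\oplus E^u)(y)=T_yS_x$; the $C^\ell$ surfaces then give uniform $C^1$ regularity along $\W^s$ and $\W^u$; Lemma~\ref{lemma hhu1d} handles the center direction; and a standard calculus argument combines the three. Your Whitney remark is a fair refinement that the paper leaves implicit.

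The pointwise variant in your last paragraph would not work as written, for two reasons. An $O(d(x,y))$ angle bound is only a Lipschitz estimate, not a $C^1$ one. And $x\mapsto TS_x$ carries no information about the center derivative, so it cannot serve as the derivative. The $o(d(x,y))$ remainder needed for $C^1$ must instead come from the continuity of all three leafwise derivatives, as in your first assembly.
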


\begin{proof}
We first prove that $E^s \oplus E^u$ is uniformly $C^1$ along $\W^u$; the proof for $\W^s$ is similar. Since $\Lambda$ is $\ell$-integrable, for every $x \in \Lambda$ and $y \in \W^u_{\mathrm{loc}}(x) \cap \Lambda$, we have $E^u(y) \subset T_y S_x$ for some $C^\ell$ surface $S_x$ (see Definition~\ref{def JI l}). We claim that $E^s(y) \subset T_y S_x$. Otherwise, the curve $\W^s_{\mathrm{loc}}(y)$ cannot be tangent to order $\ell$ to $S_x$ at $y$ for any $\ell > 1$, which contradicts our assumption. Therefore,
\[
(E^s \oplus E^u)(y) = T_y S_x.
\]
Since $\{S_x\}$ is a continuous family of $C^\ell$ surfaces with $\ell \ge 2$, this implies that $E^s \oplus E^u$ is uniformly $C^1$ along $\W^u$.

By Lemma~\ref{lemma hhu1d}, the bundle $E^s \oplus E^u$ is uniformly $C^1$ along each center leaf $\W^c_{\mathrm{loc}}(x)$ for every $x \in M$. Since the foliations $\W^s_{\mathrm{loc}}$, $\W^u_{\mathrm{loc}}$, and $\W^c_{\mathrm{loc}}$ form a local product structure near every point and vary continuously, it follows from a standard calculus argument that $E^s \oplus E^u$ is $C^1$ on $\Lambda$.
\end{proof}

We now present the proof of Corollary~\ref{theorem QNI}. 

If $E^s \oplus E^u$ is integrable, then by Lemma~\ref{lemma ks06} there exists an integral foliation $\W^{su}$ with uniformly $C^\infty$ leaves, and $f$ is $\ell$-integrable for any $\ell$. 

Conversely, assume that \( f \) is \( \ell \)-integrable for some \( \ell > 2 \).
Then, by Lemma~\ref{lemma lip JI l}, \( E^s \oplus E^u \) is Lipschitz.
Since \( f \) is volume-preserving, Theorem~\ref{theorem main} implies that either \( E^s \oplus E^u \) is integrable, or \( f \) preserves a \( C^\infty \) contact form.
The following lemma rules out the second case, thereby completing the proof of the first point of Corollary~\ref{theorem QNI}.
\begin{lemma}\label{lemm_qni_contact}
    If $f$ preserves a $C^1$ contact form $\alpha$ with $\ker \alpha =E^s\oplus E^u$, then $f$ is not $\ell$-integrable for any $\ell>2$. 
\end{lemma}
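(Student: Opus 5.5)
We argue by contradiction: assume $f$ preserves a $C^1$ contact form $\alpha$ with $\ker\alpha=E^s\oplus E^u$ and that $f$ (i.e. $M$) is $\ell$-integrable for some $\ell>2$, with a continuous family of $C^\ell$ surfaces $\{S_x\}_{x\in M}$. The idea is that $\ell$-integrability with $\ell>2$ forces the second-order contact of the surfaces $S_x$ with the stable/unstable curves, which translates into the vanishing of the ``mixed'' second derivative that the contact condition $\alpha\wedge d\alpha\neq 0$ controls. Concretely, we work in the adapted charts $\imath_x$ of Proposition~\ref{propo o good}. There we write $S_x$ as a graph $t=g_x(\xi,\eta)$ of class $C^\ell$, with $g_x(\xi,0)=g_x(0,\eta)=0$ since $S_x$ contains $\W^s_{\mathrm{loc}}(x)\cup\W^u_{\mathrm{loc}}(x)$. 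Hence $g_x(\xi,\eta)=\xi\eta\, k_x(\xi,\eta)$ for some continuous $k_x$.

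\textbf{Step 1: template equals slope of $S_x$ to second order.} For $y=\imath_x(0,0,\eta)\in\W^u_{\mathrm{loc}}(x)$, the curve $\W^s_{\mathrm{loc}}(y)$ is tangent to order $\ell>2$ to $S_x$ at $y$. In particular its tangent direction, namely the stable template direction $(1,\mathcal{T}^s_x(\eta),0)$, lies in $T_yS_x$. This gives
\[
\mathcal{T}^s_x(\eta)=\partial_\xi g_x(0,\eta)=\eta\,k_x(0,\eta).
\]
Since $g_x$ is $C^\ell$ with $\ell\ge 2$, $\mathcal{T}^s_x$ is $C^1$ with $(\mathcal{T}^s_x)'(0)=\partial_{\xi\eta}g_x(0,0)$. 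Symmetrically, $(\mathcal{T}^u_x)'(0)=\partial_{\xi\eta}g_x(0,0)$. So the two template derivatives coincide at every $x$; call the common value $\sigma(x)$.

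\textbf{Step 2: contact form measures the difference of template derivatives.} Write $E^s\oplus E^u$ near $x$ in the chart as $\ker(dt-a\,d\xi-b\,d\eta)$ with $a(0,0,\eta)=\mathcal{T}^s_x(\eta)$ and $b(\xi,0,0)=\mathcal{T}^u_x(\xi)$, and $a=b=0$ along the axes where appropriate. A direct computation of $\alpha\wedge d\alpha$ at the origin, for $\alpha=dt-a\,d\xi-b\,d\eta$, gives up to sign $\partial_\eta a-\partial_\xi b=(\mathcal{T}^s_x)'(0)-(\mathcal{T}^u_x)'(0)$ times the coordinate volume form (the center component vanishes because $\partial_t$ is centered along the axes and $a,b$ vanish at the origin). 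Since $\alpha$ is $C^1$ and contact, this quantity is nonzero at every $x$.

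\textbf{Conclusion and main obstacle.} Step 2 says the two template derivatives differ at every $x$, while Step 1 says they are equal, so $\ell$-integrability for $\ell>2$ is impossible. The main obstacle is Step 1: we must justify that the order-$\ell$ tangency of the curve implies equality of tangent directions along the whole axis (not just at $x$). We must also check that the stable/unstable template directions are exactly $\partial_\xi$-graphs over $S_x$. For the first point, order $\ell>1$ tangency already suffices, as in the proof of Lemma~\ref{lemma lip JI l}. The fine point is rather in Step 2: making sure the chart's center direction along the axes does not contribute extra terms to $\alpha\wedge d\alpha$ at the origin. This follows from point~\eqref{flw dir} of Proposition~\ref{propo o good}.
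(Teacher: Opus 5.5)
Your argument is correct, and it takes a genuinely different route from the paper.

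\textbf{The paper's route.} It works at the scale of a small $su$-quadrilateral. On one side, $\ell$-integrability forces the gap $d(w_1,w_2)$ between the two closing corners on a common center plaque to be $O(\epsilon^\ell)$. On the other side, Stokes' theorem and the contact condition force $\int_\gamma\alpha=\int_\Omega d\alpha\gtrsim\epsilon^2$. Comparing the two is exactly where $\ell>2$ is used.

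\textbf{Your route.} You work infinitesimally at a single point. Write $\imath_x^*\alpha=\varphi\,(dt-a\,d\xi-b\,d\eta)$; you should keep the factor $\varphi$, but it only contributes $\varphi^2$. Then the pullback of $\alpha\wedge d\alpha$ at the origin is $\varphi(0)^2\bigl((\mathcal{T}^s_x)'(0)-(\mathcal{T}^u_x)'(0)\bigr)\,dt\wedge d\xi\wedge d\eta$. Tangency of $S_x$ to $E^s\oplus E^u$ along both axes identifies both template derivatives with $\partial_{\xi\eta}g_x(0,0)$, and symmetry of mixed partials finishes the proof.

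\textbf{What each buys.} Your route is sharper and more economical:
\begin{itemize}
\item It only needs $S_x$ to be $C^2$ and $T_yS_x=(E^s\oplus E^u)(y)$ along $\W^s_{\mathrm{loc}}(x)\cup\W^u_{\mathrm{loc}}(x)$, i.e.\ tangency of any order $>1$. So it already rules out $\ell=2$, and it uses the contact condition at only one point.
\item It dispenses with the point $w_2\in\W^s(z)\cap\W^c(w_1)$, the transverse surface $\Omega$, and the uniform lower bound on $\int_\Omega d\alpha$.
\item It does not really need adapted charts. Any $C^2$ chart sending the two local leaves to the axes, with $\partial_t$ transverse to $E^s\oplus E^u$ at $x$, works.
\end{itemize}
The paper's route, in contrast, is coordinate-free and directly measures the failure of $su$-quadrilaterals to close, which is the mechanism behind QNI.

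\textbf{Two small corrections.}
\begin{itemize}
\item The vector $(1,\mathcal{T}^s_x(\eta),0)$ is in general not $E^s(y)$. It only spans the intersection of $(E^s\oplus E^u)(y)$ with the $(\xi,t)$-plane. So the identity $\mathcal{T}^s_x(\eta)=\partial_\xi g_x(0,\eta)$ should be derived from $T_yS_x=E^s(y)\oplus E^u(y)$.
\item The $\partial_t a$ and $\partial_t b$ terms vanish at the origin simply because they are wedged against $dt$. So point (3) of Proposition~\ref{propo o good} is needed only for the transversality of $\partial_t$, not to kill extra terms.
\end{itemize}
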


\begin{proof}
Assume on the contrary that $f$ is $\ell$-integrable for some $\ell>2$. For any $x \in M$ and sufficiently small $\epsilon>0$, we take points $y,z,w_1,w_2 \in M$ such that
    \begin{itemize}
        \item $y \in \W^s(x)$, $z \in \W^u(x)$, $w_1 \in \W^u(y)$, $w_2 \in \W^s(z)\cap \W^c(w_1)$,
        \item $d_{\W^s}(x,y), d_{\W^u}(y,w_1), d_{\W^u}(x,z), d_{\W^s}(z,w_2) \in (\epsilon,2\epsilon)$, where $d_{\W^\ast}$ denotes the leafwise distance, $*=s,u$. 
    \end{itemize}

Let $S_x$ be the $C^\ell$ surface as in Definition~\ref{def JI l}. Then we have $y,z \in S_x$. Moreover, let $w' := \W^c_\text{loc}(w_1) \cap S_x = \W^c_\text{loc}(w_2) \cap S_x$; then, we have
\[
d(w_1,w_2) \le d(w_1,w') + d(w',w_2) \le C\epsilon^\ell
\]
for some constant $C>0$. 

On the other hand, let $\Omega$ be a surface transverse to $E^c$ on its interior and whose boundary consists of stable and unstable plaques joining $x,y,z,w_1,w_2$ and the geodesic segment $\gamma$ joining $w_1$ and $w_2$. Since $\ker \alpha =E^s\oplus E^u$, the integration of $\alpha$ along  stable/unstable arcs (contained in leaves of $\W^s$ or $\W^u$) vanishes. By Stokes' formula, we have

\[
\int_\gamma \alpha = \int_{\partial \Omega}\alpha = \int_\Omega d\alpha \ge C'\epsilon^2,
\]  
for some $C'>0$. This gives a contradiction since $\alpha$ is bounded and $|\gamma| = d(w_1,w_2) < C\epsilon^\ell$, with $\ell>2$. 
\end{proof}

We now consider the second point of Corollary~\ref{theorem QNI}.
Assume that \( E^s \oplus E^u \) is not globally jointly integrable.
In particular, we fall into either case~\eqref{cas_accessib} or case~\eqref{cas_trois} of Proposition~\ref{prop ham17hp25}.

In case~\eqref{cas_accessib}, the whole manifold \( M \) is a single accessibility class.
Consequently, by the first point of Corollary~\ref{theorem QNI}, \( M \) is not \( \ell \)-integrable for any \( \ell > 2 \).

Assume we are now in case~\eqref{cas_trois} of Proposition~\ref{prop ham17hp25}.
Then, up to passing to a finite iterate and finite covers, the following holds for some integer \( n \geq 1 \). 
Let \( A \subset M \) be an accessibility class. Then,
\begin{enumerate}
    \item either \( E^s \oplus E^u|_{A} \) is integrable, and \( A \) is an \( f^n \)-invariant \( 2 \)-torus tangent to \( E^s \oplus E^u \);
    \item or \( A \) is an \( f^n \)-invariant open accessibility class.
\end{enumerate}
In the latter case, it remains to prove that \( \overline{A} \) is not \( \ell \)-integrable for any \( \ell > 2 \).
Let \( U \) denote the \( f \)-invariant set \( U := \bigcup_{k=0}^{n-1} f^k(A) \), and let \( \Lambda := \overline{U} \).
Assume, by contradiction, that the compact set \( \Lambda \) is \( \ell \)-integrable for some \( \ell > 2 \).
By Lemma~\ref{lemma lip JI l}, \( E^s \oplus E^u \) would then be Lipschitz on \( \Lambda \).
Following the argument in Subsection~\ref{subs_smoothn_open_acc}, Corollary~\ref{coro c1 contact form} would imply that \( f|_U \) preserves a \( C^1 \) contact form \( \beta \) with \( \ker \beta = E^s \oplus E^u \).
However, by Lemma~\ref{lemm_qni_contact}, this leads to a contradiction, thereby completing the proof of Corollary~\ref{theorem QNI}.

\subsection{Proof of Corollary~\ref{theorem measure rigid}}

Assume that \( E^s \oplus E^u \) is not integrable.
In particular, this corresponds to either case~\eqref{cas_accessib} or case~\eqref{cas_trois} of Proposition~\ref{prop ham17hp25}. 
Fix an ergodic \( u \)-Gibbs state \( \mu \) with a positive center Lyapunov exponent and \( m(\mathrm{supp}(\mu)) > 0 \).

First, consider case~\eqref{cas_accessib}, i.e., when \( f \) is accessible.
In this case, \( f \) is ergodic with respect to \( m \), and since \( m(\mathrm{supp}(\mu)) > 0 \), it follows that \( \mathrm{supp}(\mu) = M \).
As \( E^s \oplus E^u \) is not integrable, Corollary~\ref{theorem QNI} implies that \( f \) is not \( \ell \)-integrable for any \( \ell > 2 \), and by~\cite[Theorem 8.1]{epz}, \( \mu \) satisfies the QNI condition.
Furthermore,~\cite[Theorem 1.2]{epz} ensures that \( \mu \) is physical, meaning its basin has positive \( m \)-measure (hence full measure, by ergodicity of the volume measure $m$).  
Applying Birkhoff's ergodic theorem to a point in the intersection of the basins of \( m \) and \( \mu \), we conclude that \( \mu = m \).

We now consider case~\eqref{cas_trois} of Proposition~\ref{prop ham17hp25}.
Since \( m(\mathrm{supp}(\mu)) > 0 \), Lemma~\ref{lemma ham measure} implies that \( \mathrm{supp}(\mu) = \overline{U} \), where \( U := \bigcup_{k=0}^{n-1} f^k(U_0) \) for some \( f^n \)-invariant open accessibility class \( U_0 \) and integer \( n \geq 1 \).  
Let us consider the $f$-invariant compact set $\Lambda:=\overline{U}=\mathrm{supp}(\mu)$. 
By point~\eqref{point_deux_corb} of Corollary~\ref{theorem QNI}, \( \mathrm{supp}(\mu) =\Lambda \) is not \( \ell \)-integrable for any \( \ell > 2 \);  it follows from~\cite[Theorem 8.1]{epz} that \( \mu \) satisfies the QNI condition.
Furthermore,~\cite[Theorem 1.2]{epz} implies that \( \mu \) is physical.
As in case~\eqref{cas_accessib}, Birkhoff's ergodic theorem yields \( \mu = m_U \), where \( m_U := \frac{m|_U}{m(U)} \).
This completes the proof of the dichotomy in Corollary~\ref{theorem measure rigid}. 

The second part of Corollary~\ref{theorem measure rigid} follows from the first part, combined with the following two results (Propositions~\ref{propo_ergo_vol} and~\ref{lemma minimal} below). Indeed, if the invariant volume $m$ is ergodic and has a non-zero center Lyapunov exponent, and $\mathcal{W}^u$ is minimal, then for any ergodic $u$-Gibbs state $\mu$ with a nonzero center Lyapunov exponent $\chi^c\neq 0$,
\begin{itemize}
    \item either $\chi^c>0$, and then $\mu=m$, by the first part of Corollary~\ref{theorem measure rigid};
    \item or $\chi^c<0$, in which case $\mu$ is physical (by the Hopf argument); by the ergodicity of $\mu$ and $m$, taking a point in the intersection of their basins, we also conclude that $\mu=m$. 
\end{itemize} 
\begin{proposition}[{see~\cite[Main Theorem]{hhu1d},\cite[Corollary 0.3]{bb03}}]\label{propo_ergo_vol}
There exists a $C^1$-open and dense subset of volume-preserving partially hyperbolic diffeomorphisms on $3$-manifolds such that $f$ is ergodic with respect to the invariant volume and has non-zero center Lyapunov exponent. 
\end{proposition}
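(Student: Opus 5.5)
The statement just made is Proposition~\ref{propo_ergo_vol}. It asserts that, $C^1$-open and densely among volume-preserving partially hyperbolic diffeomorphisms on $3$-manifolds, $f$ is ergodic for the invariant volume and has a non-zero center Lyapunov exponent. I plan to obtain it by intersecting two $C^1$-open and dense sets.

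The first set, $\mathcal{A}$, consists of the accessible diffeomorphisms. By~\cite[Main Theorem]{hhu1d}, for one-dimensional center, stable accessibility is $C^1$-open and dense among volume-preserving partially hyperbolic diffeomorphisms. The same reference shows that accessibility implies ergodicity of the volume, through the Hopf argument combined with the center bunching that holds automatically here. The conservative setting makes $f$ strongly $1$-bunched in dimension $3$. This gives ergodicity on $\mathcal{A}$.

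The second set, $\mathcal{B}$, consists of the diffeomorphisms with non-zero center exponent. Since $m$ is ergodic on $\mathcal{A}$, the center exponent $\chi^c(m)=\int \log\|Df|_{E^c}\|\,dm$ is a single number. I would show that $\chi^c(m)\neq 0$ holds on an open and dense subset of $\mathcal{A}$.

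Openness is the delicate part. The integral depends continuously on $f$ in the $C^1$ topology, because $E^c$ varies continuously with $f$ and the volume $m$ is fixed. Hence $\chi^c\neq 0$ is an open condition once a value is nonzero. Strictly speaking, one works with the smooth volume-preserving class where this continuity holds, or invokes~\cite{bb03} directly.

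Density is exactly~\cite[Corollary 0.3]{bb03}. Baraviera--Bonatti show that the integrated center exponent can be made non-zero by an arbitrarily $C^1$-small volume-preserving perturbation, using local perturbations supported in small balls. Intersecting this dense open set with the open dense set $\mathcal{A}$ yields the claim.

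The main obstacle is compatibility of the two references' settings. The perturbation of Baraviera--Bonatti must be performed within the accessible open set, which is harmless because $\mathcal{A}$ is open. The regularity class, $C^1$ versus $C^{1+}$ for ergodicity, must also agree. For ergodicity I would restrict to $C^2$ diffeomorphisms, which are $C^1$-dense, and argue that the set is open and dense relative to that class, as the cited works do.
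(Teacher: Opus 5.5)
Your proposal is correct and matches the paper, which gives no independent argument and simply combines the $C^1$-open and dense (stable) accessibility, and hence ergodicity for $C^2$ volume-preserving maps with one-dimensional center, from~\cite{hhu1d} with the open and dense nonvanishing of the integrated center exponent from~\cite{bb03}. Your continuity argument for openness of $\int\log\|Df|_{E^c}\|\,dm\neq 0$ works in any regularity class, so the hedging there is unnecessary; the only genuine regularity restriction is the $C^2$ (or $C^{1+\alpha}$) hypothesis needed for ergodicity, and you handle it correctly by working relative to that class.
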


\begin{proposition}[{see~\cite{avila2025minimalitystrongfoliationsanosov,CP}}]\label{lemma minimal}
There exists a $C^1$-open and dense subset of volume-preserving partially hyperbolic diffeomorphisms on $3$-manifolds such that both $\W^s$ and $\W^u$ are minimal.
\end{proposition}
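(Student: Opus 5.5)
The plan is to obtain the statement by intersecting finitely many $C^1$-open and dense sets of volume-preserving partially hyperbolic diffeomorphisms, each obtained from an earlier result or a cited theorem. Since $\W^s(f)=\W^u(f^{-1})$ and $f\mapsto f^{-1}$ is a homeomorphism of the space of volume-preserving partially hyperbolic diffeomorphisms preserving $C^1$-openness and density, it suffices to produce a $C^1$-open and dense set $\mathcal{G}^u$ on which $\W^u$ is minimal; then $\mathcal{G}^u\cap(\mathcal{G}^u)^{-1}$ works for both foliations.

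First I restrict to the $C^1$-open and dense set from Proposition~\ref{propo_ergo_vol} and~\cite{hhu1d}, on which $f$ is accessible, ergodic for $m$, and has nonzero center exponent. Accessibility excludes the non-accessible alternatives of Proposition~\ref{prop ham17hp25}, so only case~\eqref{cas_accessib} remains. Then I split by the leaf-conjugacy classification of Fenley--Potrie~\cite{fp25}, after a finite cover and iterate, which is harmless: minimality of $\W^u$ for $f^k$ on a finite cover implies it for $f$. The two types are:
\begin{itemize}
\item $f$ is a (collapsed) Anosov flow type system, i.e.\ homotopic and leaf conjugate to a time-one map of an Anosov flow;
\item $M$ has solvable fundamental group, and $f$ is leaf conjugate to an algebraic model on $\TT^3$, a nilmanifold, or a suspension manifold.
\end{itemize}
The first case is the setting of Avila--Crovisier--Wilkinson~\cite{avila2025minimalitystrongfoliationsanosov}, and the second that of Crovisier--Potrie~\cite{CP}. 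Each supplies a $C^1$-open and dense set of volume-preserving maps in its class on which both strong foliations are minimal. These classes are unions of connected components of the space, by homotopy invariance of the type, so the pieces glue to a global open and dense set.

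The core mechanism I expect to underlie both references, and would check, is the standard criterion. Let $K$ be a nonempty closed $\W^u$-saturated $f$-invariant set; by Zorn, take $K$ minimal, and aim to show $K=M$. Since $f$ preserves volume, $\W^u$-minimal sets cannot be "repelling" in the center direction. Using nonzero center exponents and accessibility, I would produce a periodic point $p$ whose stable manifold $\W^s(p)$ is dense. Robust density of $\W^s(p)$ comes from a blender-type or Bonatti--Díaz--Ures argument, in the form of an $su$-path of positive center twist, which is an open condition after a perturbation. Then every $u$-disk in $K$ accumulates on $p$ by the $\lambda$-lemma, and so $K\supset\overline{\W^u(\mathcal{O}(p))}$. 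A symmetric argument for the center-unstable side shows $\overline{\W^u(\mathcal{O}(p))}=M$. Openness follows because the property of containing a $u$-disk crossing $\W^s_{\mathrm{loc}}(p)$ with definite size is $C^1$-robust.

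The main obstacle is excluding compact proper $\W^u$-saturated sets that are not detected by accessibility, for instance laminations by $u$-leaves inside center-unstable leaves. This is exactly why one needs the topological input of~\cite{fp25} and the case analysis of~\cite{avila2025minimalitystrongfoliationsanosov,CP}, rather than a uniform argument. I would treat it by first showing such a lamination must be $f$-invariant inside a $\W^{cu}$-leaf, then using volume preservation to rule out periodic compact $\W^{cu}$-leaves, as in the Stokes-type argument of Section~\ref{sec stokes}.
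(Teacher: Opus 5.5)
\textbf{The case split and the attribution of the two inputs are wrong.} The paper gets the proposition as follows (see Remark~\ref{remark minimal}). Crovisier--Potrie provide a $C^1$-open and dense set $\mathcal{O}$ of volume-preserving partially hyperbolic maps with one-dimensional center. Each map in $\mathcal{O}$ is \emph{either Anosov or} has both $\W^s$ and $\W^u$ minimal. The Anosov maps, which in dimension $3$ live on $\TT^3$, are handled by Avila--Crovisier--Wilkinson, who prove minimality of the strong unstable foliation for Anosov diffeomorphisms of $\TT^3$. One applies this to $f$ or to $f^{-1}$ according to the sign of $E^c$; the remaining weak foliation is minimal by the classical result for transitive Anosov maps.

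So the relevant split is Anosov versus non-Anosov, not the Fenley--Potrie topological type. You assign ACW to collapsed Anosov flows and CP to solvable manifolds, and neither reference supplies that. The split fails exactly on Anosov maps of $\TT^3$: they lie in your ``solvable'' class, where CP's conclusion is vacuous. The gluing must also be redone, since being Anosov is $C^1$-open but not closed and so is not a union of components. Let $\mathcal{A}$ be the Anosov maps and $\mathcal{G}\subset\mathcal{A}$ the open dense set from ACW. Then take $(\mathcal{O}\setminus\overline{\mathcal{A}})\cup\mathcal{G}$, and use that $\partial\mathcal{A}$ is nowhere dense to get density.

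\textbf{Your ``core mechanism'' cannot replace these inputs.} The $\lambda$-lemma step needs a periodic point $p$ with two-dimensional (center-contracting) stable manifold, so that every $u$-disk crosses $\W^s(p)$. For an Anosov map with $E^c$ uniformly expanding, every periodic point has a one-dimensional stable manifold, which a generic $u$-curve does not meet. Even outside the Anosov case, a positive center exponent of $m$ only produces, via Katok, periodic points with expanding center. This is precisely why ACW need the SH property and their theorem that every $s$-transverse partially hyperbolic lamination contains a disk tangent to $E^c\oplus E^u$.

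Your last paragraph does not close this gap. A minimal $\W^u$-saturated set need not lie in a single $\W^{cu}$-leaf. The Stokes argument of Section~\ref{sec stokes} also relies on an invariant contact form built from a Lipschitz $E^s\oplus E^u$. By Theorem~\ref{theorem main}, that regularity occurs only for very rigid systems, so it is unavailable for a generic $f$. Periodic compact $cu$-surfaces are not the real obstruction: volume preservation already excludes them, since they would be attractors.
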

\begin{remark}\label{remark minimal}
In the work of Avila--Crovisier--Wilkinson~\cite{avila2025minimalitystrongfoliationsanosov}, it is shown that under a weak expansion condition on the center bundle (called~\emph{some hyperbolicity}, or ``SH''), any $s$-transverse partially hyperbolic lamination contains a disk tangent to the center-unstable direction. As an application, the authors prove the minimality of the (strong) unstable foliation for Anosov diffeomorphisms on $\mathbb{T}^3$ (see also~\cite{acepwz}). In a forthcoming work of Crovisier--Potrie~\cite{CP}, the SH property is also used to study the minimality of the (strong) unstable foliation. In particular, the authors show that for a $C^1$-open and dense set of volume-preserving partially hyperbolic systems with one-dimensional center, either the system is Anosov or both the stable and unstable foliations are minimal. 
\end{remark}

\subsection{Proof of Corollary~\ref{theorem smooth}}
By Theorem~\ref{theorem main} together with the $C^0$ classification for volume-preserving partially hyperbolic diffeomorphisms with integrable $E^s \oplus E^u$~\cite{fp25,ham17} (see Proposition~\ref{prop ham17hp25}), we obtain the following topological classification.
\begin{proposition}\label{prop topo}
Let $f\colon M \to M$ be a $C^\infty$ volume-preserving partially hyperbolic diffeomorphism on a closed $3$-manifold. If $E^s \oplus E^u$ is Lipschitz, then up to a finite cover and iteration, $f$ is $C^0$-conjugate to one of the following:
\begin{itemize}
    \item an Anosov automorphism of $\mathbb{T}^3$;
    \item an isometric extension of an Anosov diffeomorphism on $\mathbb{T}^2$;
    \item the time-one map of an Anosov flow.
\end{itemize}
\end{proposition}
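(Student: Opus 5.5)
We split according to the dichotomy of Proposition~\ref{prop dic}: since $E^s\oplus E^u$ is Lipschitz, either $f$ is accessible or $E^s\oplus E^u$ is integrable.

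\emph{Integrable case.} Here $f$ is not accessible, so we are in case~(2) or case~\eqref{cas_trois} of Proposition~\ref{prop ham17hp25}. Case~\eqref{cas_trois} cannot occur when $E^s\oplus E^u$ is globally integrable. Indeed, it produces an open accessibility class $U\cong\mathbb T^2\times I$, and such a class is a union of $su$-paths spanning an open set, which is impossible if every accessibility class lies in a leaf of the $2$-dimensional foliation $\W^{su}$. We are therefore in case~(2). After a finite cover and iterate, $f$ is then topologically conjugate to an affine automorphism of $\TT^3$ or to the time-one map of a suspension Anosov flow. An affine automorphism of $\TT^3$ that is partially hyperbolic is either hyperbolic, giving an Anosov automorphism, or has a neutral eigenvalue, giving an isometric (translation) extension of an Anosov automorphism of $\TT^2$. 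The time-one map of a suspension Anosov flow is itself the time-one map of an Anosov flow. This covers the integrable case.

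\emph{Accessible case.} By point~\eqref{t1 lambda c} of Theorem~\ref{theorem main} (via Corollary~\ref{coro center iso} with $U=M$), $f$ is center isometric. By point~\eqref{t1 boot}, $E^s\oplus E^u$ is $C^\infty$, and Corollary~\ref{coro ec smooth} then gives a $C^\infty$ invariant contact form whose Reeb field spans $E^c$. The Reeb flow therefore preserves the contact volume, and its orbits are the center leaves. This makes $\W^c$ a smooth one-dimensional foliation, so in particular it is absolutely continuous. Accessibility gives ergodicity and hence transitivity. Proposition~\ref{prop avw22} now applies and yields, up to finite lifts and iterates, a $C^\infty$ (hence $C^0$) conjugacy to an isometric extension of a volume-preserving Anosov diffeomorphism of $\TT^2$, or to the time-one map of an Anosov flow. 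In the first alternative, we pass to an algebraic model: a volume-preserving Anosov diffeomorphism of $\TT^2$ is topologically conjugate to a linear one by Franks--Manning, and the conjugacy lifts to the circle-bundle extension. Since the proposition only asks for a $C^0$ conjugacy to an isometric extension of an Anosov diffeomorphism, we may in fact keep the smooth model directly.

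\emph{Main obstacle.} The delicate step is ruling out case~\eqref{cas_trois} in the integrable case. My plan is to use Lemma~\ref{lemma ks06}, which says the $su$-leaves are smooth surfaces, and argue that every accessibility class is contained in a single leaf. That contradicts the existence of an open accessibility class.
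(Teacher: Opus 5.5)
Your proposal is correct and follows essentially the paper's argument. The paper combines Theorem~\ref{theorem main} in the accessible case, whose classification is itself derived from Corollary~\ref{coro ec smooth} and Proposition~\ref{prop avw22} as you do, with case~(2) of Proposition~\ref{prop ham17hp25} in the integrable case; your exclusion of case~\eqref{cas_trois} and your identification of the affine models only make explicit what the paper leaves implicit.

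Two small cautions. First, partial hyperbolicity is not a topological invariant, so justify the affine dichotomy via the positive entropy of $f$ instead: it forces the integer linear part either to be hyperbolic or to have an eigenvalue $\pm1$ with hyperbolic complement. Second, drop the Franks--Manning lifting remark, which is unnecessary and, if read as producing an affine model, is false by the Heisenberg example of Section~\ref{sec_example}.
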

\begin{remark}
Proposition~\ref{prop topo} is also of interest on its own. Without additional regularity assumptions on $E^s \oplus E^u$, one cannot expect a classification up to topological conjugacy in general. For example, time changes of an Anosov flow yield time-one maps that are leaf conjugate but, in general, not topologically conjugate.
\end{remark}
Assuming in addition that $\W^c$ is absolutely continuous, we can upgrade the conjugacy from $C^0$ to $C^\infty$.

In the first case, Theorem~\ref{theorem main} implies that $f$ is Anosov and that the center cocycle $x \mapsto\lambda^c_x$ is cohomologous to a constant. Replacing $f$ by $f^{-1}$ if necessary, we may assume that $f$ is uniformly expanding along $\W^c$. Since $\W^c$ is absolutely continuous, it follows from~\cite[Proposition~2]{gog12} that the unstable cocycle $x \mapsto\lambda^u_x$ is cohomologous to a constant. As $f$ preserves volume, the multiplicative cocycle $x \mapsto \lambda^s_x \lambda^u_x \lambda^c_x$ is a coboundary, and hence so is the stable cocycle $x \mapsto\lambda^s_x$. Therefore, by~\cite[Theorem~1.1]{dg24}, $f$ is $C^\infty$-conjugate to its linear part.

In the remaining cases, Proposition~\ref{prop avw22} implies that $f$ is $C^\infty$-conjugate to the corresponding model. Moreover, if $f$ is $C^\infty$-conjugate to the time-one map of an Anosov flow $\varphi_t$ with $E^s \oplus E^u$ of class $C^1$, then $\varphi_t$ is either a contact flow or a suspension flow~\cite{pla72} (see also~\cite[Theorem~3]{fh03}).

\subsection{Proof of Corollary~\ref{theorem contact}}
\begin{lemma}
We have $R(x) \in E^c(x)$ for all $x \in M$, and $\ker \alpha = E^s \oplus E^u$, where $R \subset TM$ is the Reeb vector field associated with the contact form $\alpha$.
\end{lemma}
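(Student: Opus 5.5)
The approach is to exploit the invariance $f^\ast\alpha=\alpha$ together with uniqueness of the Reeb field, so that $R$ is $Df$-invariant, and then run a growth argument along orbits. This first shows $R\in E^c$, and afterwards $\ker\alpha=E^s\oplus E^u$.

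\emph{Step 1: $R$ is invariant.} Since $f^\ast\alpha=\alpha$, also $f^\ast d\alpha=d\alpha$. Hence $f^\ast R:=Df^{-1}R\circ f$ satisfies $\alpha(f^\ast R)=1$ and $\iota_{f^\ast R}d\alpha=0$. By the uniqueness in Proposition~\ref{prop frobinius}, $Df(x)R(x)=R(f(x))$ for every $x$. Since $\alpha$ is only $C^1$, $R$ is merely continuous. In particular $\|R\|$ is bounded above and below on $M$, and so $\|Df^n(x)R(x)\|=\|R(f^nx)\|$ is bounded uniformly in $n\in\ZZ$.

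\emph{Step 2: $R(x)\in E^c(x)$.} Write $R(x)=R^s+R^c+R^u$. If $R^u\neq 0$, then forward iterates grow exponentially relative to the center and stable components, by the domination and expansion of $E^u$. Uniform expansion of $E^u$ alone gives $\|Df^nR^u\|\to\infty$, and since the splitting is continuous with angles bounded away from zero, this forces $\|Df^nR(x)\|\to\infty$. This contradicts Step 1. The argument is the same as in Corollary~\ref{coro ec smooth}, but simpler here because boundedness holds everywhere, so no recurrence is needed. Symmetrically, backward iteration rules out $R^s\neq0$. Hence $R\in E^c$ everywhere.

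\emph{Step 3: $\ker\alpha=E^s\oplus E^u$.} Take $v\in E^u(x)$. Invariance gives $\alpha_x(v)=\alpha_{f^{-n}x}(Df^{-n}v)$. Since $\|Df^{-n}v\|\to0$ and $\alpha$ is bounded, $\alpha_x(v)=0$. The same holds for $E^s$ with forward iterates. Thus $E^s\oplus E^u\subset\ker\alpha$, and equality follows by dimension. This step actually needs no input from Step 2, so I would present it first.

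I expect no serious obstacle. The only point needing care is Step 1, namely that invariance of $R$ requires uniqueness of a continuous Reeb field for a $C^1$ contact form, which Proposition~\ref{prop frobinius} provides (with $k=1$). Nondegeneracy of $d\alpha$ on $\ker\alpha$ is pointwise, so no regularity issue arises there.
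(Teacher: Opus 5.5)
Your proof is correct and follows essentially the same route as the paper's. Both arguments get $Df$-invariance of $R$ from $f^\ast\alpha=\alpha$, use uniform boundedness of $\|Df^nR\|$ to force $R\in E^c$, and use contraction along $E^s$ (and along $E^u$ under $f^{-1}$) to force $\alpha$ to vanish there; you simply spell out the uniqueness-of-Reeb-field and bounded-projection details that the paper leaves implicit.
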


\begin{proof}
Since $\alpha$ is $f$-invariant, the Reeb vector field $R$ is $f$-invariant and bounded, which implies that $R(x) \in E^c(x)$, for all $x \in M$. For any $v \in E^s(x)$, we have
\[
\alpha(v) = ((f^n)^\ast \alpha)(v) = \alpha(Df^n(v)) \to 0
\]
as $n \to \infty$, which implies $E^s \subset \ker \alpha$. Similarly, $E^u \subset \ker \alpha$, and hence $\ker \alpha = E^s \oplus E^u$.
\end{proof}

To apply Theorem~\ref{theorem main}, it remains to show that $f$ preserves a $C^\infty$ volume form.

\begin{lemma}
The diffeomorphism $f$ is accessible.
\end{lemma}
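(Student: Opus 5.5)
The key input is that $\ker \alpha = E^s \oplus E^u$ carries a contact structure, so it cannot be tangent to any surface. This rules out every non-accessible alternative in the classification of accessibility classes.

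\textbf{The obstruction to accessibility.} If $f$ is not accessible, some accessibility class $A \neq M$ exists. For partially hyperbolic diffeomorphisms with one-dimensional center, the boundary of an accessibility class, or any non-open class, is laminated by complete $C^1$ surfaces tangent to $E^s \oplus E^u$. This is the Rodriguez Hertz--Rodriguez Hertz--Ures structure result that underlies Proposition~\ref{prop ham17hp25}. I would invoke that result directly rather than Proposition~\ref{prop ham17hp25}, since we do not yet know that $f$ preserves volume.

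\textbf{Deriving the contradiction.} Let $L$ be such a $C^1$ surface, tangent to $E^s \oplus E^u = \ker\alpha$. Then $\alpha|_{TL} \equiv 0$. Since $\alpha$ is $C^1$, we may differentiate along $L$, and $d\alpha|_{TL} = d(\alpha|_L) = 0$.

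Now take $X, Y$ spanning $T_xL$, and let $R$ be the Reeb field. By the preceding lemma, $R \in E^c$, which is transverse to $L$. The computation in Lemma~\ref{lemma non dege} gives
\[
(\alpha \wedge d\alpha)(X,Y,R) = d\alpha(X,Y) = 0.
\]
This contradicts $\alpha \wedge d\alpha \neq 0$. Equivalently, Lemma~\ref{lemma non dege} applied to $S = L$, which is transverse to $R$, says $d\alpha|_{TL}$ is nondegenerate, which is impossible. Hence no such surface exists, and $f$ is accessible.

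\textbf{The main obstacle.} The delicate step is justifying that non-accessibility forces an $su$-surface tangent to $E^s \oplus E^u$ without assuming volume preservation. The general dichotomy handles this: either accessibility holds, or the set $\Gamma(f)$ of non-open accessibility classes is a nonempty compact lamination by $C^1$ surfaces tangent to $E^s \oplus E^u$. This dichotomy is purely topological, and $\Gamma(f) = \emptyset$ would mean all classes are open, hence a single class since $M$ is connected. So non-accessibility produces a leaf $L$.

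An alternative route first shows volume preservation: $\alpha \wedge d\alpha$ is an $f$-invariant continuous volume form, because $f^\ast\alpha = \alpha$. Then Proposition~\ref{prop ham17hp25} applies directly, and both non-accessible cases exhibit $2$-tori tangent to $E^s \oplus E^u$, yielding the same contradiction. I would present this second route, noting that it requires only a continuous invariant volume, or a $C^\infty$ one after smoothing.
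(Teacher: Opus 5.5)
Your first route, through the Rodriguez Hertz--Rodriguez Hertz--Ures lamination $\Gamma(f)$, is correct, but it is not the paper's argument. Both start from the same reduction: it suffices that every accessibility class be open, since $M$ is connected. The paper then uses only a \emph{local} input, \cite[Proposition~5]{bhhtu08}: a non-open accessibility class is jointly integrable at each of its points. This gives a closed four-legged $su$-loop through $x,y,w,z$, bounding a small surface $\Omega$ transverse to $E^c$, hence to the Reeb field. Lemma~\ref{lemma non dege} then gives $\int_\Omega d\alpha\neq 0$, while $\int_{\partial\Omega}\alpha=0$ because $\ker\alpha=E^s\oplus E^u$.

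You instead import a global structure theorem, which packages this kind of local joint integrability together with the $C^1$ regularity of the non-open classes, and then argue infinitesimally on a leaf $L$. This is conceptually clean: a contact distribution is tangent to no surface. One point needs care. For a merely $C^1$ leaf, $\alpha|_L$ is only $C^0$, so $d\alpha|_{TL}=d(\alpha|_L)$ is not a classical identity. It follows, for instance, from Stokes on small discs in $L$ after $C^1$-approximating the embedding, which is exactly the paper's Stokes computation. The Reeb field is also unnecessary here, since $\alpha\wedge d\alpha\neq 0$ already means $d\alpha$ is nondegenerate on $\ker\alpha$. The paper's version needs neither the lamination nor any regularity of the accessibility classes.

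The route you say you would actually present, however, has a genuine gap. Proposition~\ref{prop ham17hp25} is stated for $C^2$ diffeomorphisms preserving a (smooth) volume, and at this stage you only have the continuous invariant form $\alpha\wedge d\alpha$. Smoothing does not help: a mollified volume form is no longer $f$-invariant. A smooth invariant volume exists only if $\log\det Df$ is a smooth coboundary. In the paper this is the \emph{next} lemma, deduced from Proposition~\ref{prop wil13}, which requires an open accessibility class, i.e.\ precisely what you are proving. So this order of argument is circular unless you re-check that the results of Hammerlindl and Fenley--Potrie behind Proposition~\ref{prop ham17hp25} hold with only a continuous invariant volume. That is plausible, but it is not contained in the statement you cite.

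There is also a minor inaccuracy: in the Anosov subcase of the integrable alternative, the $su$-leaves are planes rather than tori. Any $C^1$ surface tangent to $E^s\oplus E^u$ still gives your contradiction. Since all you need is that non-accessibility produces such a surface, and your first route provides one with no volume hypothesis at all, present that route.
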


\begin{proof}
We show that every accessibility class $A$ is open. Since $M$ is connected and accessibility classes are pairwise disjoint, this implies that $f$ is accessible.

We argue by contradiction. By~\cite[Proposition 5]{bhhtu08}, if $A$ is not open, then $f$ is jointly integrable at every point $x \in A$, i.e., there exists $\epsilon > 0$ such that for all $y \in \W^u_\epsilon(x)$ and $z \in \W^s_\epsilon(x)$, we have
\[
\W^s_{\mathrm{loc}}(y) \cap \W^u_{\mathrm{loc}}(z) \neq \emptyset.
\]
In particular, choose $w \in \W^s_{\mathrm{loc}}(y) \cap \W^u_{\mathrm{loc}}(z)$, and let $\Omega$ be a surface transverse to $E^c$ whose boundary consists of stable and unstable plaques joining $x, y, z, w$. Since $\ker \alpha = E^s \oplus E^u$, the integral of $\alpha$ along stable and unstable arcs (contained in leaves of $\W^s$ or $\W^u$) vanishes. Since $d\alpha$ is non-degenerate on $\Omega$, Stokes' theorem gives
\[
0 \neq \int_\Omega d\alpha = \int_{\partial \Omega} \alpha = 0,
\]
a contradiction.
\end{proof}

\begin{lemma}
The diffeomorphism $f$ preserves a $C^\infty$ volume form $\theta$.
\end{lemma}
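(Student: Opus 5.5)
The natural candidate is the contact volume $\theta:=\alpha\wedge d\alpha$. Since $f^\ast\alpha=\alpha$ and pullback commutes with $d$, we get $f^\ast\theta=f^\ast\alpha\wedge d(f^\ast\alpha)=\alpha\wedge d\alpha=\theta$. Because $\alpha$ is a $C^1$ contact form, $\theta$ is a continuous, nowhere vanishing $3$-form, and hence $f$ preserves a continuous volume. The remaining work is to upgrade the regularity of $\theta$ to $C^\infty$; equivalently, we show that $\alpha$ itself is $C^\infty$.

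\textbf{Step 1: $f$ preserves a smooth volume.} Fix a $C^\infty$ Riemannian volume $m_0$ and write $\theta=\psi\, m_0$ with $\psi$ continuous and positive. Invariance of $\theta$ gives
\[
\mathrm{Jac}_{m_0}(f)(x)=\psi(x)/\psi(f(x)),
\]
so $\log\mathrm{Jac}_{m_0}(f)$, a $C^\infty$ function, is a continuous coboundary. By the previous lemma $f$ is accessible. The measure $\theta$ is continuous and invariant, but $f$ need not yet be known to preserve a smooth volume, so Proposition~\ref{prop wil13}, which is stated for volume-preserving $f$, does not apply directly. I would instead invoke the regularity part of Wilkinson's Livshits theorem~\cite{wil13}, which only uses accessibility and strong bunching. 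Strong bunching here comes from the invariance of $\theta$: the contact structure forces $\lambda^c$ to be a coboundary, by the same computation as in Corollary~\ref{coro center iso} applied with $R\in E^c$ and $\alpha(R)=1$. Indeed, $f^\ast\alpha=\alpha$ gives $Df(R)=R\circ f$, so $f$ is center isometric in the metric with $\|R\|=1$, and hence strongly $\infty$-bunched. Wilkinson's theorem then gives that the continuous solution $\log\psi$ is $C^\infty$, so $\theta$ is a $C^\infty$ invariant volume form. This Livshits regularity step, without an a priori smooth invariant volume, is where I expect the main obstacle. If it causes trouble, a fallback is that a continuous transfer function for a smooth cocycle over an accessible, center-bunched system is automatically smooth along $\W^s$ and $\W^u$ (via the standard holonomy formulas) and along $\W^c$ by center isometry; Journé's lemma then concludes.

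\textbf{Step 2: $\alpha$ is $C^\infty$, and the remaining conclusions.} With $f$ now $C^\infty$ volume-preserving and $E^s\oplus E^u=\ker\alpha$ of class $C^1$ (hence Lipschitz), Theorem~\ref{theorem main} applies. Since $f$ is accessible, $E^s\oplus E^u$ is not integrable, so Theorem~\ref{theorem main} gives that $E^s\oplus E^u$ and $E^c$ are $C^\infty$ and yields the stated classification. To see that $\alpha$ itself is $C^\infty$, write $\alpha=\varphi\alpha_0$, where $\alpha_0$ is a $C^\infty$ form with the same kernel. Invariance of $\alpha$ makes $\varphi$ a continuous solution of a cohomological equation with $C^\infty$ data, so Proposition~\ref{prop wil13}(2) makes $\varphi$ smooth. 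This in turn shows that $\theta=\alpha\wedge d\alpha$ is $C^\infty$, consistent with Step~1.
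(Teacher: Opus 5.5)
Your Step 1 is essentially the paper's proof: $\theta=\alpha\wedge d\alpha$ is invariant, so the logarithm of its density is a continuous transfer function for $-\log\det Df$, and the invariant Reeb field lies in $E^c$, which makes $f$ center isometric and hence strongly $\infty$-bunched; Wilkinson's regularity statement (Proposition~\ref{prop wil13}(2), which needs only accessibility and strong bunching, not an a priori smooth invariant volume) then makes the density $C^\infty$. The fallback and Step 2 are not needed for this lemma, and the fallback's claim that center isometry alone gives smoothness along $\W^c$ would not be automatic anyway; Step 2 is nonetheless a valid route to the remaining conclusions of Corollary~\ref{theorem contact}, including smoothness of $\alpha$ itself.
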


\begin{proof}
Let $\theta = \alpha \wedge d\alpha$, then $\theta$ is a continuous $f$-invariant $3$-form. Let $m$ be the Riemannian volume form and let $\rho$ be the density function such that $\theta = \rho m$. The invariance $f^\ast \theta = \theta$ implies
\[
-\log \det(Df) = \log \rho \circ f - \log \rho.
\]
Since $f$ preserves the Reeb vector field $R$, it is a center isometry and, in particular, it is strongly $\infty$-bunched. By Proposition~\ref{prop wil13}, $\rho$ is $C^\infty$, and hence $\theta$ is a $C^\infty$ volume form.
\end{proof}

Since $\alpha$ is $C^1$, the distribution $\ker \alpha = E^s \oplus E^u$ is also $C^1$. Corollary~\ref{theorem contact} now follows from Theorem~\ref{theorem main}.

\subsection{Proof of Corollary~\ref{theorem c2}}

By Theorem~\ref{theorem main}, it suffices to consider the case where $f$ is either the time-one map of an Anosov flow or an isometric extension. In the flow case, the conclusion follows immediately from~\cite[Theorem 4.6]{ghy93} and~\cite[Theorem 1.1]{ghy87}.

We now consider the case where $f$ is an isometric extension. Since $\W^c$ is smooth, the quotient
\[
\hat f \colon M/\W^c \to M/\W^c
\]
is a $C^\infty$ Anosov diffeomorphism. Moreover, the induced bundles $\hat E^s$ and $\hat E^u$ are $C^{1+\mathrm{Lip}}$. By~\cite[Corollary 3.3]{ghy93}, the map $\hat f$ is $C^\infty$ conjugate to an affine automorphism, and $\hat E^s$ and $\hat E^u$ are $C^\infty$.

Since $E^s \oplus E^u$ is $C^\infty$ and transverse to $E^c$, the projection $\pi \colon M \to M/\W^c$ induces a $C^\infty$ vector bundle isomorphism
\[
d\pi|_{E^s\oplus E^u} \colon E^s \oplus E^u \to T(M/\W^c).
\]
Let $\tilde E^s := (d\pi|_{E^s\oplus E^u} )^{-1}(\hat E^s)$, then $\tilde E^s$ is a $C^\infty$ subbundle of $TM$, and it is invariant under $Df$ and exponentially contracted. By uniqueness of the stable bundle, it follows that $E^s = \tilde E^s$, hence $E^s$ is $C^\infty$. The same argument applies to $E^u$.

Finally, since $f$ is isometric along the center direction and descends to an affine automorphism on $\TT^2$,we can choose a $C^\infty$ global frame $X_\ast \subset E^\ast$, $\ast = s,c,u$, such that $Df(x)$ is represented by a constant matrix for all $x \in \TT^3$. The conclusion then follows from~\cite[Theorem 1.1]{am23}.





\section{An example on a Heisenberg manifold}\label{sec_example}
Let $f\colon M \to M$ be a $C^\infty$ volume-preserving partially hyperbolic diffeomorphism on a closed $3$-manifold $M$. By Theorem~\ref{theorem main}, if $f$ is accessible and the distributions $E^s \oplus E^u$ and $E^c$ are $C^\infty$, then, up to a finite cover and an iterate, $f$ is $C^\infty$-conjugate to one of the following models:
\begin{itemize}
    \item an isometric extension of a volume-preserving Anosov diffeomorphism on $\TT^2$, with total space $M$ a Heisenberg nilmanifold;
    \item the time-one map of a contact Anosov flow.
\end{itemize}

A natural question is whether one can obtain further rigidity, namely whether $E^s$ and $E^u$ are necessarily $C^\infty$, and whether $f$ is $C^\infty$-conjugate to an algebraic model as in Corollary~\ref{theorem c2}. 

In this section, we show that this is in general impossible: the classification results in Theorem~\ref{theorem main} and Corollary~\ref{theorem c2} are sharp. More precisely, without additional assumptions, diffeomorphisms arising in the classification of Theorem~\ref{theorem main} need not be even topologically conjugate to the corresponding algebraic models.

For contact Anosov flows, Foulon--Hasselblatt~\cite{fhflow} constructed examples that are not topologically orbit equivalent to algebraic flows. We therefore restrict our attention to partially hyperbolic systems on Heisenberg manifolds.
\begin{example}\label{example contact}
Let $M=\mathbb{R}^3/\sim$ be the $3$-dimensional Heisenberg manifold, where
\[
(x,y,z)\sim (x+m,\, y+n,\, z+k+my+\tfrac{1}{2}mn), \quad m,n,k\in\mathbb{Z}.
\]
Let $\alpha=dz-x\,dy$ be the standard contact form on $M$. Define
\[
L\colon (x,y,z)\mapsto \left(2x+y,\, x+y,\, z+x^2+xy+\tfrac{1}{2}y^2\right).
\]
A direct computation shows that $L^\ast \alpha=\alpha$.

For $\varepsilon\in\mathbb{R}$ sufficiently small, define
\[
u_\varepsilon\colon x \mapsto \varepsilon\sin(2\pi x), \qquad
U_\varepsilon\colon x\mapsto\varepsilon x\sin(2\pi x)+\frac{\varepsilon}{2\pi}\bigl(\cos(2\pi x)-1\bigr),
\]
and set
\[
H_\varepsilon\colon (x,y,z)\mapsto \bigl(x,\, y+u_\varepsilon(x),\, z+U_\varepsilon(x)\bigr).
\]
Since $U'_\varepsilon(x)=xu'_\varepsilon(x)$, we have
\[
H_\varepsilon^\ast\alpha
= d(z+U_\varepsilon)-x\,d(y+u_\varepsilon)
= dz - x\,dy
= \alpha.
\]
Thus $H_\varepsilon$ preserves the contact form.

We now verify that $H_\varepsilon$ is well-defined on $M$. Using the identities
\[
u_\varepsilon(x+1)=u_\varepsilon(x), \qquad U_\varepsilon(x+1)-U_\varepsilon(x)=u_\varepsilon(x),
\]
by a direct calculation, we can check that $H_\varepsilon$ commutes with the deck transformations defining the quotient. Therefore, $H_\varepsilon$ descends to a well-defined diffeomorphism on $M$.

Define
\[
F_\varepsilon := L \circ H_\varepsilon.
\]
Then $F_\varepsilon$ is partially hyperbolic for every sufficiently small $\epsilon>0$. 
\end{example}

\begin{lemma}
\label{lemma c2-}
The diffeomorphism $F_\varepsilon$ belongs to the class described in Theorem~\ref{theorem main} for all sufficiently small $\varepsilon>0$. In particular, it preserves volume, and the distributions $E^s \oplus E^u$ and $E^c$ are $C^\infty$. Moreover, for any $\eta>0$, there exists $\varepsilon(\eta)>0$ such that for all $0<\varepsilon<\varepsilon(\eta)$, the distributions $E^s$ and $E^u$ are of class $C^{2-\eta}$.
\end{lemma}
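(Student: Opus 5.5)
Since $F_\varepsilon = L\circ H_\varepsilon$ with $L^\ast\alpha=\alpha$ and $H_\varepsilon^\ast\alpha=\alpha$, we have $F_\varepsilon^\ast\alpha=\alpha$. The plan is to use this invariance to reduce everything to results already established, and then to handle the regularity of $E^s$ and $E^u$ by a bunching estimate.

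First, partial hyperbolicity for small $\varepsilon$. On the universal cover, $L$ has differential
\[
DL=\begin{bmatrix} 2&1&0\\ 1&1&0\\ 2x+y & x+y & 1\end{bmatrix},
\]
which is triangular with respect to the projection $(x,y,z)\mapsto(x,y)$. The $(x,y)$-block is the hyperbolic matrix $\begin{bmatrix}2&1\\1&1\end{bmatrix}$, and the $z$-direction is invariant with eigenvalue $1$. Thus $L$ is an isometric extension of the cat map, and it is partially hyperbolic with $E^c=\partial_z$. Partial hyperbolicity is $C^1$-open, and $H_\varepsilon\to\mathrm{Id}$ in $C^1$ as $\varepsilon\to0$, so $F_\varepsilon$ is partially hyperbolic for all small $\varepsilon$.

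Next, the structural properties. Since $F_\varepsilon$ preserves the $C^\infty$ contact form $\alpha$, Corollary~\ref{theorem contact} (proved above from the contact hypothesis alone) gives all of the following:
\begin{itemize}
\item $F_\varepsilon$ preserves the volume $\alpha\wedge d\alpha$;
\item $\ker\alpha = E^s\oplus E^u$ is $C^\infty$;
\item $E^c$ is spanned by the Reeb field $\partial_z$, hence is $C^\infty$;
\item $F_\varepsilon$ is accessible.
\end{itemize}
Consequently $F_\varepsilon$ lies in the accessible case of Theorem~\ref{theorem main}.

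For the regularity of $E^s$ and $E^u$, I will use the $C^r$ section theorem on the projectivized bundle $N=\PP(E^s\oplus E^u)$, as in the proof of point~\eqref{t1 boot} of Theorem~\ref{theorem main}. The bundle $N$ is now $C^\infty$. The section $\PP E^u$ is attracting for the induced fiber map $F$, with fiber contraction rate $\lambda^s_x/\lambda^u_x$. By the $C^r$ section theorem, $E^u$ is $C^r$ whenever
\[
\sup_x \frac{\lambda^s_x}{\lambda^u_x}\,\bigl(\max\|DF_\varepsilon^{-1}\|\bigr)^r<1,
\]
after passing to iterates if needed. For $L$ itself, with $\mu$ the unstable eigenvalue of the cat map, the fiber rate is $\mu^{-2}$. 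The base expansion relevant on $N$ is also $\mu$, since $E^c$ is isometric, so $\max\|DL^{-1}\|=\mu$. The threshold is therefore exactly $r<2$. For $\varepsilon$ small the rates move by $O(\varepsilon)$, so for any $\eta>0$ the condition holds with $r=2-\eta$ once $\varepsilon<\varepsilon(\eta)$. The same argument with $F_\varepsilon^{-1}$ handles $E^s$.

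The step I expect to need care is making this bunching estimate honest. The section theorem's base rate involves $\|DF_\varepsilon^{-1}\|$ on all of $TM$, but the relevant norms should be measured on the base $M$ of $N$. On $TM$, the center is isometric and the hyperbolic rate is $\mu^{\pm1}$ up to $O(\varepsilon)$, so the $C^1$-closeness of $F_\varepsilon$ to $L$ should give uniform rates $\mu^{\pm1}e^{\pm C\varepsilon}$. I would make this explicit using adapted cone fields for $L$, which persist under $C^1$ perturbation.

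Nothing more is required here, since the lemma only asserts $C^{2-\eta}$ regularity. Showing that this regularity is optimal, i.e.\ that $E^u$ fails to be $C^2$ for generic $\varepsilon$ and hence that $F_\varepsilon$ is not conjugate to an algebraic model, lies beyond this statement.
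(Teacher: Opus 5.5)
Your proposal is correct and follows the paper's route. The invariance $F_\varepsilon^\ast\alpha=\alpha$ gives volume preservation and, through Corollary~\ref{theorem contact}, smoothness of $E^s\oplus E^u$ and $E^c$; the $C^{2-\eta}$ regularity of $E^s$ and $E^u$ then comes from the $C^r$ section theorem on $\PP(E^s\oplus E^u)$, since the rates of $F_\varepsilon$ are $O(\varepsilon)$-close to those of $L$. Your explicit threshold computation $\mu^{-2}\mu^{r}<1 \iff r<2$ and your check of partial hyperbolicity make precise what the paper's short proof leaves implicit.
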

\begin{proof}
By construction, $F_\varepsilon := L \circ H_\varepsilon$ preserves the contact form $\alpha$, and hence the volume form $\alpha \wedge d\alpha$. It follows from Corollary~\ref{theorem contact} that $E^s \oplus E^u$ and $E^c$ are $C^\infty$.

For the regularity of $E^s$ and $E^u$, note that $F_\varepsilon$ is a $C^\infty$ perturbation of the linear map $L$, so the strength of domination can be made arbitrarily close to that of $L$ by taking $\varepsilon$ sufficiently small. In particular, the $C^{2-\eta}$ regularity of $E^s$ and $E^u$ follows from the $C^r$ section theorem~\cite{hps77}.
\end{proof}
\begin{lemma}
For any $\delta>0$, there exists $0<\varepsilon<\delta$ such that $F_\varepsilon$ is not $C^0$-conjugate to any affine automorphism of $M$.
\end{lemma}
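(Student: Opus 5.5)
We plan to find a topological conjugacy invariant that distinguishes $F_\varepsilon$ from every affine automorphism: the multiplier spectrum at periodic points. If $h\circ F_\varepsilon = A\circ h$ with $A$ affine, then $h$ maps periodic points of $F_\varepsilon$ to periodic points of $A$ of the same period. $F_\varepsilon$ is a small perturbation of $L$, and $L$ acts on the base $\TT^2=M/\W^c$ by the matrix $B=\begin{bmatrix}2&1\\1&1\end{bmatrix}$. Hence the homotopy data (the action on $\pi_1(M)$ modulo the center) force the base linear part of $A$ to be $B$ (up to conjugacy in $\mathrm{GL}_2(\ZZ)$). An affine automorphism of the Heisenberg manifold with base part $B$ is center isometric and contact-preserving, and its unstable multiplier along any periodic orbit of period $n$ is exactly $\lambda^n$, where $\lambda=\frac{3+\sqrt5}{2}$.

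Multipliers are not themselves $C^0$ invariants, so we will use a more robust invariant. The center direction of $F_\varepsilon$ is the Reeb flow of $\alpha$, i.e. $\partial_z$, which $F_\varepsilon$ preserves isometrically. So $F_\varepsilon$ is an isometric extension of the base Anosov map $g_\varepsilon(x,y)=B\circ(x,y+u_\varepsilon(x))$ of $\TT^2$, and a conjugacy $h$ must send center leaves to center leaves, since these are the unique minimal compact invariant objects, namely the circle fibers, characterized topologically as intersections of the $cs$ and $cu$ leaves. Thus $h$ descends to a conjugacy between $g_\varepsilon$ and the base map of $A$, which is affine (essentially $B$ composed with a translation). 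It therefore suffices to find arbitrarily small $\varepsilon$ for which the circle rotation numbers of $F_\varepsilon^n$ on periodic fibers differ from those of $A$.

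The rotation numbers are the key invariant. On a fiber over a periodic point $p=g_\varepsilon^n(p)$, $F_\varepsilon^n$ is a rotation by an amount $\tau_n(p)\in\RR/\ZZ$, and conjugacy preserves rotation numbers up to orientation. We compute $\tau_n(p)$ as the $z$-displacement, using the explicit $z$-coordinates $x^2+xy+\tfrac12y^2$ from $L$ and $U_\varepsilon(x)$ from $H_\varepsilon$. For $A$, the rotation over a periodic point is a polynomial (quadratic) expression in the rational coordinates of that point, with a fixed translation shift. For $F_\varepsilon$, the fixed points of $g_\varepsilon$ move analytically in $\varepsilon$, and $\tau_1(p_\varepsilon)$ acquires a term of the form $U_\varepsilon(x_p)$ plus corrections. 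We will compare two distinct fixed points, or a fixed point and a period-$2$ point. For $A$, the relation between their rotation numbers is rigid, given by the symplectic area of the triangle spanned by the points, which is rational with a bounded denominator. For $F_\varepsilon$, the same difference is a nonconstant real-analytic function of $\varepsilon$ (its derivative at $0$ is computed from $\sin$ and $\cos$ terms evaluated at the points, and we check that it is nonzero). Hence it takes irrational values for some $\varepsilon\in(0,\delta)$, and for such $\varepsilon$ no affine conjugacy can exist.

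We expect two steps to be the main obstacles. The first is justifying that $h$ preserves the fiber structure and rotation numbers: we will argue via the topological characterization of center leaves as compact fibers, and use that $A$ is also a circle isometric extension. The second is verifying that the first-order derivative in $\varepsilon$ of the rotation-number difference is nonzero. If that fails, we would instead use Lefschetz/area arguments and compare the contact volumes enclosed by periodic orbits.
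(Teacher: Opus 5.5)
Your strategy is the paper's: rotation numbers of the return maps on periodic center fibers are topological conjugacy invariants, and a nonzero first-order variation in $\varepsilon$ produces an irrational value. Your differencing step, however, is a real improvement and not cosmetic.

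The paper asserts that every periodic center fiber of an affine model carries a rational rotation. This is false for an automorphism followed by a central translation: the map $(x,y,z)\mapsto(2x+y,\,x+y,\,z+x^2+xy+\tfrac12y^2+c)$ with $c\notin\mathbb{Q}$ is affine on $M$ and rotates the fiber over $(0,0)$ by $c$. Comparing with a second periodic fiber cancels such a $c$. For $A$, rationality of the difference follows simply because a lattice automorphism is defined over $\mathbb{Q}$ and base periodic points are rational; you do not need the ``triangle area'' heuristic.

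In this example the step is especially cheap. Since $\det(B-I)=-1$, the base has a single fixed point, so two distinct fixed points are not available. Moreover, because $u_\varepsilon(0)=U_\varepsilon(0)=0$, the fiber over $(0,0)$ is pointwise fixed by $F_\varepsilon$ for every $\varepsilon$. A conjugacy therefore forces $A$ to have fixed points. Write $A$, after conjugating by a translation (possible since its base part is hyperbolic), as a lattice automorphism $\phi$ followed by a central translation by $c$. Having fixed points puts $c$ in the central lattice, so $A$ is conjugate to $\phi$. Your difference then reduces to the single period-$2$ rotation number the paper computes, and the orientation sign becomes irrelevant.

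Two points need repair when you write this out.

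First, center circles are not ``the unique minimal compact invariant objects'': non-periodic fibers are not invariant, and every point of the pointwise fixed fiber is itself a minimal set. A correct argument works in the universal cover. There, the center line through a point is exactly the set of points whose full orbits stay at bounded distance from its orbit, because the base map is a lift of an Anosov map and the fiber action is by a fixed central translation. A lift of a homeomorphism of a compact manifold is coarsely Lipschitz, so it preserves this property. This applies to both $F_\varepsilon$ and $A$.

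Second, when computing the rotation over the period-$2$ point you must apply the deck transformation. In $\mathbb{R}^2$ one has $g_\varepsilon^2(p_\varepsilon)=p_\varepsilon+(2,1)$. Returning to the fiber over $p_\varepsilon$ via $(x,y,z)\mapsto(x+2,\,y+1,\,z+k+2y+1)$ gives the rotation $\tau_\varepsilon(p_\varepsilon)+\tau_\varepsilon(q_\varepsilon)-2y(p_\varepsilon)$ modulo the central lattice. The raw sum $R(\varepsilon)$ used in the paper omits the last term and depends on the chosen lift. Since $\tfrac{d}{d\varepsilon}y(p_\varepsilon)=-\tfrac25\sin\tfrac{2\pi}{5}$ at $\varepsilon=0$, the correction contributes $\tfrac45\sin\tfrac{2\pi}{5}$. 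This cancels the first term of the paper's $R'(0)$ and leaves $\tfrac1\pi\bigl(\cos\tfrac{2\pi}{5}-1\bigr)\neq0$. As a consistency check, this value is the sum over the orbit of $\alpha(V)=\tfrac{1}{2\pi}(\cos 2\pi x-1)$, where $V$ generates the flow $\varepsilon\mapsto H_\varepsilon$. So the conclusion stands.
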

\begin{proof}
The idea is to compare the return maps on periodic center fibers. For the affine models, the first return map on any periodic center fiber is a circle rotation with rational rotation number. On the other hand, along a suitable periodic center fiber of $F_\varepsilon$, the rotation number of the return map depends $C^1$-smoothly on $\varepsilon$ and varies non-trivially at $\varepsilon=0$. In particular, the return map has irrational rotation number for some sufficiently small $\varepsilon>0$. This rules out a topological conjugacy to an affine automorphism. 

For any $\varepsilon>0$, write,
\[
F_\varepsilon\colon (x,y,z)\mapsto \bigl(g_\varepsilon(x,y),\, z+\tau_\varepsilon(x,y)\bigr),
\]
where
\[
g_\varepsilon \colon (x,y) \mapsto \bigl(2x+y+u_\varepsilon(x),\,x+y+u_\varepsilon(x)\bigr)
\]
and
\[
\tau_\varepsilon\colon (x,y)\mapsto  
U_\varepsilon(x)+x^2+x\bigl(y+u_\varepsilon(x)\bigr)
+\frac12\bigl(y+u_\varepsilon(x)\bigr)^2.
\]
At $\varepsilon=0$, the base map is the hyperbolic toral automorphism
\[
g_0\colon (x,y) \mapsto A(x,y):=(2x+y,\,x+y).
\]

Now consider the $2$-periodic orbit of $A$ given by
\[
p_0=\Bigl(\frac15,\frac25\Bigr),\qquad q_0=\Bigl(\frac45,\frac35\Bigr).
\]
Since $A^2-I$ is invertible, by the implicit function theorem there exists a unique $C^1$ family
of $2$-periodic points $p_\varepsilon$ of $g_\varepsilon$ with $p_\varepsilon\to p_0$ as $\varepsilon\to 0$. Let
\[
q_\varepsilon:=g_\varepsilon(p_\varepsilon).
\]
Then the first return map of $F_\varepsilon^2$ on the corresponding periodic center fiber is the circle rotation with angle
\[
R(\varepsilon):=\tau_\varepsilon(p_\varepsilon)+\tau_\varepsilon(q_\varepsilon)\in \mathbb R/\mathbb Z.
\]
Since $(\varepsilon,x,y)\mapsto \tau_\varepsilon(x,y)$ is smooth and $\varepsilon\mapsto p_\varepsilon,q_\varepsilon$ are $C^1$, it follows that $R(\varepsilon)$ is $C^1$.

To compute $R'(0)$, note that
\[
\tau_0(x,y)=x^2+xy+\frac12 y^2,
\qquad
\nabla\tau_0(x,y)=(2x+y,\ x+y),
\]
and
\[
\partial_\varepsilon\tau_\varepsilon(x,y)\big|_{\varepsilon=0}
=
(2x+y)\sin(2\pi x)+\frac{1}{2\pi}\bigl(\cos(2\pi x)-1\bigr).
\]
Differentiating the identity
\[
g_\varepsilon^2(p_\varepsilon)=p_\varepsilon
\]
at $\varepsilon=0$, one gets
\begin{equation}\label{eq:p0prime}
(A^2-I)p_0'=-\partial_\varepsilon(g_\varepsilon^2)(p_0)\big|_{\varepsilon=0}.
\end{equation}
Using
\[
\partial_\varepsilon g_\varepsilon(x,y)\big|_{\varepsilon=0}
=
\sin(2\pi x)(1,1),
\]
we get
\[
\partial_\varepsilon(g_\varepsilon^2)(p_0)\big|_{\varepsilon=0}
=
A\Bigl(\sin\frac{2\pi}{5},\sin\frac{2\pi}{5}\Bigr)
+
\Bigl(\sin\frac{8\pi}{5},\sin\frac{8\pi}{5}\Bigr)
=
\Bigl(2\sin\frac{2\pi}{5},\sin\frac{2\pi}{5}\Bigr).
\]
Solving~\eqref{eq:p0prime}, we obtain
\[
p_0'=\left(-\frac15\sin\frac{2\pi}{5},-\frac25\sin\frac{2\pi}{5}\right).
\]
Since \(q_\varepsilon=g_\varepsilon(p_\varepsilon)\), differentiating at \(\varepsilon=0\) gives
\[
q_0'=Dg_0(p_0)\,p_0'+\partial_\varepsilon g_\varepsilon(p_0)\big|_{\varepsilon=0}
=\left(\frac15\sin\frac{2\pi}{5},\frac25\sin\frac{2\pi}{5}\right).
\]
Therefore, by the chain rule,
\[
\begin{aligned}
R'(0)
&=
\partial_\varepsilon\tau_\varepsilon(p_0)\big|_{\varepsilon=0}
+\nabla\tau_0(p_0)\cdot p_0'+
\partial_\varepsilon\tau_\varepsilon(q_0)\big|_{\varepsilon=0}
+\nabla\tau_0(q_0)\cdot q_0'\\
&=
-\frac45 \sin\frac{2\pi}{5}
+\frac1\pi\left(\cos\frac{2\pi}{5}-1\right)\neq 0.
\end{aligned}
\]
In particular, we can choose $0<\varepsilon<\delta$ such that $R(\varepsilon)\notin \mathbb Q/\mathbb Z$. This finishes the proof. 
\end{proof}

\bibliographystyle{plain}
\bibliography{ref}
\end{document}